\renewcommand{\epsilon}{\varepsilon}
\newcommand{\Z}{\mathbb{Z}}
\newcommand{\Q}{\mathbb{Q}}
\newcommand{\R}{\mathbb{R}}
\newcommand{\C}{\mathbb{C}}
\renewcommand{\P}{\mathbb{P}}
\newcommand{\Cee}{\mathcal{C}}
\newcounter{mtheorem}
\renewcommand{\P}{\mathbb{P}}
\newcommand{{\vol}}{\rm vol}
\newcommand{\p}{\partial}
\newcommand{\D}{\mathcal{D}}
\newcommand{\M}{\mathcal{M}}
\newcommand{\Ric}{\operatorname{Ric}}
\def \C {\mathbb C}
\def \Z {\mathbb Z}
\def \R {\mathbb R}
\def \D {\mathcal D}
\def \M {\mathcal M}
\def \K {\mathcal K}
\def \G {\mathbf G}
\def \P {\mathbb P}
\def \H {\mathbf H}
\def \T {\mathbb T}
\def \p {\partial}
\def \bp {\bar{\partial}}
\def \O {\mathcal{O}}
\def \Id {\text{Id}}
\def \tv {\tilde{v}}
\def \tw {\tilde{w}}
\def \Cstar {\mathbb C^*}
\def \Cstarn {(\mathbb C^*)^n}
\def \p {\partial}
\def \bp {\bar{\partial}}
\def \ddt {\frac{\partial}{\partial t}}
\def \rad {r\frac{\partial}{\partial r}}
\def \Ric {\text{Ric}}
\def \Bl {\text{Bl}}
\def \t {\mathfrak{t}}
\def \Scal {\textnormal{Scal}}
\def \Scalv {\textnormal{Scal}_v}
\def\tr{\operatorname{tr}}
\def\Id{\operatorname{Id}}
\def \Cstarn{(\mathbb{C}^*)^n}
\def \Cstar{\mathbb{C}^*}
\def \t {\mathfrak{t}}
\def \bp {\bar{\partial}}
\def \scaryH {\mathcal{H}_{\alpha, T}^{\varepsilon}}
\def\tr{\operatorname{tr}}
\def\Ric{\operatorname{Ric}}
\def\tr{\operatorname{tr}}
\def\Id{\operatorname{Id}}
\def\vol{\operatorname{vol}}
\newtheoremstyle{fancy}{}{}{\itshape}{}{\textbf\bgroup}{.\egroup}{ }{}
\newtheoremstyle{fancy2}{}{}{\rm}{}{\textbf\bgroup}{.\egroup}{ }{}
\theoremstyle{fancy}
\newtheorem{theorem}{Theorem}[section]
\newtheorem{lemma}[theorem]{Lemma}
\newtheorem{corollary}[theorem]{Corollary}
\newtheorem{prop}[theorem]{Proposition}
\newtheorem{conj}[theorem]{Conjecture}
\theoremstyle{fancy2}
\newtheorem{definition}[theorem]{Definition}
\newtheorem{example}[theorem]{Example}
\newtheorem{remark}[theorem]{Remark}
\setlist{leftmargin=*}
\numberwithin{equation}{section}
\begin{document}
\title{Weighted K-stability for a class of non-compact toric fibrations}
\date{\today}

\author{Charles Cifarelli}
\address{D\'{e}partement de Math\'{e}matiques, Universit\'{e} de Nantes, 2 rue de la Houssini\`{e}re,  BP 92208, 44322 Nantes cedex 03, France}
\email{charles.cifarelli@univ-nantes.fr}

\begin{abstract}
    We study the weighted constant scalar curvature, a modified scalar curvature introduced by Lahdili \cite{LahdiliWeighted} depending on weight functions $(v, \, w)$, on non-compact semisimple principal toric fibrations. The latter notion is a generalization of the Calabi Ansatz originally defined by Apostolov-Calderbank-Gauduchon-T{\o}nnesen-Friedman \cite{ACGT}. This setup turns out to reduce the weighted cscK problem on the total space to a \emph{different} weighted cscK problem on a fixed toric fiber $M$. We show that the natural analog of the weighted Futaki invariant of \cite{LahdiliWeighted} can under reasonable assumptions be interpreted on an unbounded polyhedron $P \subset \R^n$ associated to $M$. In particular, we fix a certain class $\mathcal{W}$ of weights $(v, \,w)$, and prove that if $M$ admits a weighted cscK metric, then $P$ is K-stable, and we give examples of weights on $\C^2$ for which the weighted Futaki invariant vanishes but do not admit $(v,\, w)$-cscK metrics. Following \cite{simonYTD}, we introduce a weighted Mabuchi energy $\M_{v,w}$ and show that the existence of a $(v, \, w)$-cscK metric implies that it $\M_{v,w}$ proper. The well-definedness of $\M_{v,w}$ in this setting also allows us to prove a uniqueness result using the method of \cite{GuanUniqueness}.  As an application, we show that weighted K-stability of the abstract fiber $\C$ is sufficient for the existence of weighted cscK metrics on the total space of line bundles $L \to B$ over a compact K\"ahler base, extending the result in \cite{LahdiliWeighted} in the $\P^1$-bundles case. As a consequence, we recover a well-known existence result for shrinking K\"ahler-Ricci solitons \cite{FIK, FutWang, ChiLiexamples}. Finally, we give some interpretations in terms of asymptotic geometry. 
\end{abstract}

\maketitle 

\section{Introduction}

Let $(B, \,\omega_B)$ be a compact K\"ahler manifold, and suppose that we are given a line bundle $\pi:L \to B$ with $\omega_B \in 2\pi c_1(L)$ and a hermitian metric $h$ on $L$ with curvature equal to $\omega_B$. The \emph{Calabi Ansatz} \cite{Calabi-ansatz} seeks to find a K\"ahler metric on either the total space $Y$ of $L$ or its projectivization $\bar{Y} = \P(\O \oplus L)$, which takes the form
\begin{equation}\label{calabiansatz}
    \tilde{\omega} = \pi^*\omega_B + i\p\bp F(s), 
\end{equation}
where $s = || \cdot ||^2_{h}:L \to \R_+$ is the associated norm function and $F$ is a convex function on $\R_+$. One major advantage of this construction is that many equations on $\omega$ reduce to an ODE for $F$. This has been used with much success to construct constant scalar curvature (cscK) metrics \cite{HS}, K\"ahler-Ricci solitons \cite{FIK, Fut, FutWang, Yang}, and others.   Such a metric $\tilde{\omega}$ always admits a moment map $\mu_{\tilde{\omega}}:Y \to \R$ (resp. $\bar{Y} \to \R$) for the natural $S^1$-action, whose image contains important information about $\tilde{\omega}$. In particular, if one succeeds in constructing a metric on $\bar{Y}$, then the image is necessarily a bounded interval $[a,b]$. In the non-compact case on $Y$ the situation is slightly more subtle, but in many reasonable circumstances a complete metric will give rise to a moment map whose image is an unbounded interval $[a, \infty)$.

A key idea which can be found already in the original work of Calabi \cite{Calabi-ansatz}, but was greatly expanded upon by Apostolov-Calderbank-Gauduchon-T{\o}nnesen-Friedman \cite{ACGT}, is that metrics of the form \eqref{calabiansatz} in fact can be completely described in terms of an associated K\"ahler metric $\omega$ on an abstract fiber $\C$ (resp. $\P^1$), which admits a moment map $\mu: \C \to \R$ (resp. $\P^1 \to \R$) whose image is precisely the same as that of $\tilde{\omega}$. This is done using the fact that we may write 
\[ L = \C \times_{U(1)} U_h \to B, \hspace{.3in} \P(\O \oplus L) = \P^1 \times_{U(1)} U_h \to B, \]
where $U_h$ is the associated $U(1)$-bundle to $(L,\,h)$, and then by interpreting \eqref{calabiansatz} after pulling back to $\C \times U_h$ (resp. $\P^1 \times U_h$).  The upshot is that in many cases if we seek a canonical (cscK, K\"ahler-Ricci soliton, Sasaki-Einstein, etc.) metric $\tilde{\omega}$, then this can be interpreted as a \emph{weighted} cscK problem on the abstract fiber.

For the Calabi Ansatz \eqref{calabiansatz}, this is reflected in the usual derivation of an ODE for $F$. However this perspective allows for a broad generalization, with the help of the theory of $(v,\,w)$-cscK metrics initiated by Lahdili \cite{LahdiliWeighted}, and then developed by Apostolov-Lahdili-Jubert, and others \cite{ApJuLa, simonYTD}. The basic picture is to fix an action of a real torus $\T$ on a complex manifold $M$, and consider those K\"ahler metrics on $M$ which are invariant under the $\T$-action. In reasonable circumstances, such metrics will admit a moment map $\mu_\omega:M \to \R^n$ whose image is controlled by the cohomology class $\omega \in \alpha$. As such, one can fix weight functions $(v,\, w) \in C^\infty(\R^n)$ and study the equation $\Scalv = w$ for K\"ahler metrics $\omega$ in $\alpha$. We call such metrics \emph{$(v, \, w)$-cscK}. Here $\Scalv$, the \emph{$v$-scalar curvature}, is a modified notion of scalar curvature introduced by Lahdili \cite{LahdiliWeighted} which turns out to share many of the properties of the scalar curvature of a compact K\"ahler manifold. This includes an analog of the formal infinite-dimensional moment map picture of Donaldson \cite{DonSym} as well as a K-stability package equipped with energy functionals, geodesics, test configurations, etc. Importantly, for the appropriate choices of weights $(v, \, w)$ the theory recovers many known constructions, including of course the constant scalar curvature metrics, but also extremal metrics, K\"ahler-Ricci solitons, Ricci-flat K\"ahler cones, and others \cite{ApJuLa}. 


The idea of the semisimple principal fibration construction of \cite{ACGT} is to generalize this formulation of the Calabi Ansatz by replacing $U_h$ with a principal $\T$ bundle and the fiber $\C, \P^1$ by a hamiltonian $\T$-manifold $M$ as above. More precisely, suppose as before we have a compact K\"ahler base $(B,\, \omega_B)$ and a K\"ahler manifold  $(M,\, \omega_M)$ of dimension $n$ with a hamiltonian action of an $k$-dimensional torus $\T$ with momentum image $P \subset \t^* = \textnormal{Lie}(\T)^*$. Suppose further that we are given a principal $\T$-bundle $\pi_B: U \to B$ whose topology is determined by the cohomology class $[\omega_B]$ in an explicit way (see Section \ref{section-ssfibrations} for details). Then we can consider the \emph{semisimple principal fibration}, which is the total space of the associated bundle
\[ Y = M \times_\T U , \hspace{.3in} \pi_B:Y \to B. \]
Then $Y$ comes equipped with a natural complex structure and holomorphic $\T$-action. Moreover, if $\mu_M: M \to P$ is the moment map with respect to $\omega_X$, we can associate a corresponding $\T$-invariant K\"ahler metric $\omega_Y$ on $Y$. When pulled back to $M \times U$ by the quotient map $\pi_Y: M \times U \to Y$, this has the form
\begin{equation*}
\begin{split}
    \omega_Y &= \omega_M +  \pi_B^*\tilde{\omega}_{B} + d(\langle \mu_M, \theta \rangle),
\end{split}
\end{equation*}
where $\theta \in \Omega^1(U, \t)$ is a particular choice of connection $1$-form and $\tilde{\omega}_B$ is a (potentially different) K\"ahler metric on $B$. For example in the simplest cases, we will have $d\theta = \pi_B^{*}\omega_B \otimes p$ for $p \in \Z^k\subset \t$, and $\tilde{\omega}_B = c \omega_B$ for some constant $c$. Crucially, any such $\omega_Y$ admits a moment map for the $\T$-action $\mu_Y:Y \to \t^*$ whose image is also equal to $P$. The key observation of \cite{LahdiliWeighted, ApJuLa} is that if the scalar curvature of $\omega_B$ itself is constant, then the metric $\omega_Y$ is $(v, \, w)$-cscK if and only if $\omega_M$ is $(\tilde{v}, \, \tilde{w})$-cscK for \emph{different} weights $(\tilde{v}, \, \tilde{w}) \in C^\infty(\overline{P})$ (Lemma \ref{fibrationweights}). 

Therefore, just as for the usual Calabi Ansatz, one can attempt to study the existence problem for weighted constant scalar curvature metrics on $Y$ indirectly by studying a different existence problem on the fiber $M$. The perspective that we take in this paper is to restrict attention to the situation where $M$ is toric and our torus $\T$ satisfies $\dim_\R \T = \dim_\C M$. In this case we say that $Y$ is a \emph{semisimple principal toric fibration}. When $M$ is compact, this approach has already been rather fruitful, see \cite{LahdiliWeighted} and in particular \cite{simonYTD}, where a version of the Yau-Tian-Donaldson conjecture is established for semisimple principal toric fibrations with compact toric fiber $M$.

\subsection{Results}

In this paper, we study the weighted cscK problem on quasiprojective toric manifolds $M$ endowed with a special type of K\"ahler metric that we call an \emph{AK metric} $\omega$. This, by definition, is a K\"ahler metric on $M$ with respect to which the $\T$-action is hamiltonian, and which admits a proper moment map 
\[ \mu: M \to P \subset \t^* = \textnormal{Lie}(\T)^*,\]
where $P$ is now an \emph{unbounded} polyhedron. Crucially, the image $P$ of an AK metric depends only (up to translation in $\t^*$) on the cohomology class $\omega \in \alpha$ \cite{uniqueness}. A new feature in this context is that many results depend intimately on the behavior of the weights $(v, \, w)$ at infinity. For example, consider the \emph{weighted Futaki invariant} of \cite{LahdiliWeighted},  given by 
\[ \mathcal{F}_{v, w}(\ell) = 2 \int_{\p P} \ell v d\sigma - \int_P \ell w dx,\]
where $\ell$ is an affine-linear function on $P$. In particular, we see already that in order to establish a reasonable theory we must have that $v$ and $w$ satisfy some kind of integrability condition.

To begin to tackle the problem, we introduce a specific class $\mathcal{W} = \mathcal{W}(P)$ of weights $(v, \, w)$, which decay exponentially on $P$, i.e. 
\[ v = O(e^{-c_v|x|}), \hspace{.3in} w = O(e^{-c_w|x|}), \hspace{.3in} \textnormal{as } |x| \to \infty. \] 
See Definition \ref{exponential-decay} for the precise condition. One motivation for this type of restriction is the case of \emph{K\"ahler-Ricci solitons}. As we will see in Section \ref{section-KRS} there are in fact several distinct ways to interpret K\"ahler-Ricci solitons as weighted cscK metrics. In all cases, the weights have the form $v(x)= p(x)e^{-\langle x, \, b \rangle}, w(x)= q(x)e^{-\langle x, \, b \rangle}$, for $p, q$ rational functions $p, q$ on $P$ and $b \in \textnormal{Lie}(\T)$.

Given any $v \in \mathcal{W}$, we introduce two important families of associated spaces. We begin by defining a space $\scaryH = \scaryH(v)$ of K\"ahler metrics which are sufficiently controlled by $v$, at a rate parameterized by $\varepsilon \in [0, \frac{1}{2})$. In particular, for a K\"ahler metric $\omega \in \scaryH$ we will always have 
\[ \int_M \Scalv(\omega) \, \omega^n < \infty.\]
Following this, for suitable choices of $\beta > 0$ we can define a space $\Cee_\beta$ of convex functions on $P$, smooth on the interior, and which do not grow too quickly at infinity relative to $v$. This latter space is a direct analog of the function spaces considered in \cite{DonStabTor, ChenLiSheng2014, LiLianSheng1, simonYTD} and many others, adapted to the unbounded weighted setting. Once again we emphasize that a notable distinction here is that the weight $v$ plays a crucial role in controlling the behavior at infinity.

Given this setup, we can define the Futaki invariant $\mathcal{F}_{v,w}$ on all suitable $(v,\,w)$-integrable functions on $P$. We say that $P$ is \emph{K-semistable} if 
\begin{equation*}
    \mathcal{F}_{v,w}(f) \geq 0 
\end{equation*}
for every convex piecewise-linear function $f \in C^0(\overline{P})$. We say that $P$ is \emph{K-stable} if it is K-semistable and $ \mathcal{F}_{v,w}(f) = 0$ if and only if $f$ is affine-linear. Then we have the following generalization of a result of Zhou-Zhu \cite{ZhouZhu}:
\begin{theorem}\label{Mtheorem-existenceimpliesstable}
 Let $M$ be the quasiprojective toric variety associated to the Delzant polyhedron $P \subset \t^*$, and suppose that $(v, \, w)$ are weights in the class $\mathcal{W}(P)$. Then if $M$ admits a $(v,\,w)$-cscK metric $\omega$ with $\omega \in \scaryH$, then $P$ is $(v, \, w)$ K-stable.
\end{theorem}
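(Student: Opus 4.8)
The plan is to transport the entire statement to the polyhedron $P$ via the symplectic potential of the given metric, reducing $(v,w)$ K-stability to the positivity of a single convex-analytic integral. Under the Legendre correspondence between $\sexyH(v)$ and the space $\Cee_\beta$ set up earlier, the $(v,w)$-cscK metric $\omega\in\sexyH$ is described by a smooth strictly convex function $u$ on $\Int P$ satisfying the Guillemin boundary conditions along the finite facets of $P$; write $u^{ij}$ for the entries of $(\textnormal{Hess}\,u)^{-1}$. The identity I am aiming for is
\[
\mathcal{F}_{v,w}(f)=\int_P v\sum_{i,j}u^{ij}f_{ij}\,dx
\]
for every convex piecewise-linear, $(v,w)$-integrable $f\in C^0(\overline{P})$, where $f_{ij}$ is the (positive semidefinite) distributional Hessian of $f$. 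This is the weighted, unbounded analogue of Donaldson's toric integration-by-parts formula and of its soliton refinement by Zhou--Zhu \cite{ZhouZhu}.

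To obtain it I would begin from the weighted Abreu formula, expressing the $v$-scalar curvature of the toric metric with potential $u$ as
\[
\Scalv(u)=-\sum_{i,j}\frac{\p^2}{\p x_i\,\p x_j}\bigl(v\,u^{ij}\bigr),
\]
which reduces to Abreu's formula when $v\equiv 1$. Since $\omega$ is $(v,w)$-cscK we have $\Scalv(u)=w$ on $P$, so $\int_P f\,w\,dx=-\int_P f\sum_{i,j}\p_i\p_j(v\,u^{ij})\,dx$. Integrating by parts twice moves both derivatives onto $f$ and produces the interior term $-\int_P v\sum_{i,j}u^{ij}f_{ij}\,dx$ together with boundary contributions of two kinds: fluxes through the finite facets of $\p P$ and fluxes through the unbounded directions. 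The Guillemin boundary behaviour of $u$ along the finite facets is precisely what collapses the finite-facet fluxes to $2\int_{\p P}f\,v\,d\sigma$, with $d\sigma$ the lattice measure, reproducing the boundary term in the definition of $\mathcal{F}_{v,w}$; rearranging then gives the displayed identity.

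The main obstacle is to justify this double integration by parts on the \emph{unbounded} polyhedron, namely to show that the fluxes through the unbounded directions vanish and that every integral converges. This is exactly where membership $\omega\in\sexyH$ (equivalently $u\in\Cee_\beta$) and the exponential decay $v=O(e^{-c_v|x|})$, $w=O(e^{-c_w|x|})$ defining $\mathcal{W}(P)$ enter. I would exhaust $P$ by the compact truncations $P_R=P\cap\{|x|\le R\}$, carry out the integration by parts on each $P_R$, and let $R\to\infty$: the defining growth bounds of $\Cee_\beta$ control $v\,u^{ij}$ and $\p_j(v\,u^{ij})$ at most polynomially, while the exponential decay of $v$ forces the flux across $\{|x|=R\}$ to tend to $0$. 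The same interplay, against the at-most-linear growth of a convex piecewise-linear $f$, yields $\int_P|f|\,w\,dx<\infty$ and $\int_{\p P}|f|\,v\,d\sigma<\infty$, so that $\mathcal{F}_{v,w}(f)$ is well defined on this class, and is also what guarantees the finiteness $\int_M\Scalv(\omega)\,\omega^n<\infty$ built into $\sexyH$. Controlling these limits uniformly, rather than the algebra of the identity, is the delicate point; a cutoff argument replacing the convex piecewise-linear $f$ by smooth convex functions approximating it from below, then passing to the limit via the integrability bounds, lets one work with honest integrals and avoid handling the Hessian measure $f_{ij}$ directly.

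With the identity in hand, $(v,w)$ K-stability is immediate from positivity. For convex $f$ the Hessian measure $f_{ij}$ is positive semidefinite, and since $u$ is strictly convex and $v>0$ the matrix $v\,u^{ij}$ is positive definite; hence $v\sum_{i,j}u^{ij}f_{ij}$ is a nonnegative measure and $\mathcal{F}_{v,w}(f)\ge 0$, which is K-semistability. If moreover $\mathcal{F}_{v,w}(f)=0$ for some convex piecewise-linear $f$, the nonnegative integrand must vanish identically; positive definiteness of $v\,u^{ij}$ then forces $f_{ij}\equiv 0$, so that $f$ is affine-linear. This is exactly the extra condition in the definition of $(v,w)$ K-stability, and the argument is complete.
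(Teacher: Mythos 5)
Your proposal is correct and follows the same overall strategy as the paper: the weighted Abreu formula $\Scalv(u) = -\sum_{i,j}(vH_{ij})_{ij}$ (Proposition \ref{vscal-toric}), a double integration by parts on a compact exhaustion of $P$ in which the exponential decay of $(v,w)$ beats the polynomially controlled quantities $vH_{ij}$ and $(vH_{ij})_k$ to kill the fluxes at infinity (the paper's Lemmas \ref{regularboundaryterm1}--\ref{extraboundaryterm} and Corollary \ref{ibpPdelta}, with half-space truncations $H_{\delta^*}$ in place of your balls $\{|x|\le R\}$), and then positivity of $\H$ against convexity of $f$, with the equality case forcing $f$ affine. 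The one place you genuinely deviate is the treatment of the piecewise-linear test functions: you pair against the distributional Hessian measure $f_{ij}$ and propose smooth convex approximation, whereas the paper (Proposition \ref{existenceimpliesstable}, following Zhou--Zhu) never invokes Hessian measures; it subdivides $P = \bigcup_a \Delta_a$ into the regions of linearity, applies the smooth integration-by-parts formula to each affine piece $\ell_a$, and the positivity emerges as explicit crease integrals $\sum_{a<b}\int_{F_{ab}}(vH_{ij})(\ell_{a,i}-\ell_{b,i})(\ell_{a,j}-\ell_{b,j})\,d\sigma$ --- these are, of course, exactly the mass of your Hessian measure on the creases, so the two computations coincide; your smoothing route additionally needs weak-$*$ convergence of the Hessian measures and lower semicontinuity to run the equality case, which is routine but is machinery the paper's explicit decomposition avoids. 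Two small cautions: first, the hardest step in the paper is not the exhaustion at infinity but the vanishing of the facet flux $\lim_{\delta\to 0}\int_{\p P_\delta}(vH_{ij})f_i\,dx^{\widehat{j}} = 0$ (Lemma \ref{ibplemma}), which requires not only the Guillemin boundary conditions of Proposition \ref{boundaryconditions} but also the uniform second-derivative bound on $\H$ near $\p P$ (condition \ref{asymptotics4} of Definition \ref{asymptotic-metric-general}), a Taylor expansion of $\eta = \H\nu$, and the uniform continuity of $v^{\beta'}f$; you compress this into one sentence about ``Guillemin boundary behaviour,'' which as stated would not suffice. Second, the growth control on $vH_{ij}$ and $\p_j(vH_{ij})$ comes from $\omega\in\sexyH$ via Lemma \ref{hardterm}, not from $u\in\Cee_\beta$ (which only encodes weighted $C^0$ and $C^1$ bounds on $u$), and the relation between the two spaces is an implication (Lemma \ref{spacescompare-1}), not the equivalence you assert --- neither slip damages the argument, since only the implication and the $\sexyH$ bounds are actually used.
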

In Section \ref{section-nonexistence} we give some examples of weights $(v, \, w)$ on $\C^2$ such that the obstruction $\mathcal{F}_{v,w}$ vanishes on the affine-linear functions, but nonetheless we can use Theorem \ref{Mtheorem-existenceimpliesstable} to rule out the existence of $(v, \, w)$-cscK metrics $\omega \in \scaryH$ on $\C^2$ for any $\varepsilon >0$. Moreover, in Section \ref{section-testconfig} we return to the classical Calabi Ansatz and show that weighted K-stability is a sufficient criterion for the existence of certain weighted cscK metrics on line bundles over a compact cscK manifold $(B, \, \omega_B)$. This is a direct generalization of the $\P^1$-bundles case considered in \cite{LahdiliWeighted}, and can be stated as follows:
\begin{prop}[{Proposition \ref{Kstableimpliesexistence-cbundles}}]\label{mainprop}
    Let $P = [-1, \infty)$, and $(\tilde{v}, \, \tilde{w})$ be any weights on $P$ with $\tilde{v} > 0$. Then there exists a $(\tilde{v}, \, \tilde{w})$-cscK metric $\omega$ on $\C$ with moment image equal to $P$ if $P$ is $(\tilde{v}, \, \tilde{w})$ K-stable.  
    \end{prop}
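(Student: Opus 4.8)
The plan is to use the fact that, because the abstract fiber $\C$ is one-dimensional, the $(\tilde v,\tilde w)$-cscK equation is no longer a genuine fourth-order PDE but collapses to a second-order \emph{linear} ODE that can be integrated in closed form; the entire content of the proposition is then to convert the hypothesis of K-stability into the positivity and boundary behavior needed for the solution to define an honest AK metric. Passing to the symplectic (Abreu--Guillemin) picture, a $\T$-invariant AK metric on $\C$ with moment image $P=[-1,\infty)$ is encoded by a strictly convex symplectic potential $u$ on $(-1,\infty)$ satisfying the Guillemin condition at the vertex $x=-1$ and suitable decay at infinity. Writing $G:=1/u''>0$ for the one-dimensional inverse Hessian and $\phi:=\tilde v\,G$, the weighted analogue of Abreu's equation set up earlier becomes
\[
-\phi''=\tilde w\qquad\text{on }(-1,\infty),
\]
with boundary data $\phi(-1)=0$ and $\phi'(-1)=2\int_{\p P}\tilde v\,d\sigma$ forced by the Guillemin condition $G(-1)=0$, $G'(-1)>0$. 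Since $\tilde v,\tilde w$ decay exponentially, double integration yields a unique candidate $\phi_0$, and a candidate potential through $u''=\tilde v/\phi_0$. Note that no continuity method or a priori estimates are needed here, in contrast to the higher-dimensional toric situation.

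Next I would read the behavior of $\phi_0$ at infinity off K-semistability. Because an affine function $\ell$ and its negative $-\ell$ are both convex, and both are $(\tilde v,\tilde w)$-integrable (as $\tilde w$ decays exponentially while $\ell$ grows at most linearly), K-semistability forces the balancing identities $\mathcal{F}_{\tilde v,\tilde w}(1)=\mathcal{F}_{\tilde v,\tilde w}(x)=0$. Through the weighted integration-by-parts formula these two conditions are precisely equivalent to $\phi_0(\infty)=0$ and $\phi_0'(\infty)=0$; in other words, the a priori over-determined boundary value problem for $\phi_0$ (two conditions at $-1$, two at $\infty$) is consistent exactly because $P$ is K-semistable. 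With this vanishing at infinity, the integration-by-parts identity collapses to the clean form
\[
\mathcal{F}_{\tilde v,\tilde w}(f)=\int_{-1}^{\infty}\phi_0\,f''\,dx
\]
for every convex piecewise-linear $f\in C^0(\overline{P})$ with $\int_P|f|\,\tilde w<\infty$.

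The positivity step is the conceptual heart. Testing against the corner function $f_{x_0}(x)=\max(0,x-x_0)$, whose distributional second derivative is the Dirac mass at $x_0$, the displayed identity gives $\mathcal{F}_{\tilde v,\tilde w}(f_{x_0})=\phi_0(x_0)$. Since each $f_{x_0}$ is convex but not affine-linear, strict K-stability yields $\phi_0(x_0)>0$ for every $x_0\in(-1,\infty)$, so that $\phi_0>0$ on the open interval (vanishing at $-1$ by construction and at $\infty$ by the balancing). Consequently $u''=\tilde v/\phi_0>0$, and the candidate potential is genuinely strictly convex.

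Finally I would check that $u$ is the symplectic potential of a bona fide metric in the correct class. The relations $\phi_0(-1)=0$, $\phi_0'(-1)>0$ translate into $G(-1)=0$, $G'(-1)>0$, which is exactly the Guillemin/Abreu smoothness criterion ensuring that $u$ extends to a smooth $\T$-invariant K\"ahler metric across the vertex, i.e.\ on all of $\C$; the exponential decay of $\tilde v$ together with $\phi_0(\infty)=\phi_0'(\infty)=0$ controls $G=\phi_0/\tilde v$ at infinity, yielding completeness, properness of the moment map (so that the image is exactly $P$), and the integrability $\int_M\Scalv(\omega)\,\omega^n<\infty$ placing $\omega\in\sexyH$. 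The main obstacle is not the existence or positivity of $\phi_0$—which in one dimension is essentially algebraic once K-stability is invoked—but rather this last regularity-and-asymptotics bookkeeping: one must confirm that integrating $u''=\tilde v/\phi_0$ produces a potential with the correct expansion at the vertex $-1$ and the correct growth at $\infty$ to lie in $\sexyH$, and that no additional obstruction is hidden at either end of the unbounded polyhedron $P$.
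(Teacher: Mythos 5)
Your proposal is correct and follows the same overall strategy as the paper (reduce to the linear ODE $(\tilde v\,\Theta)'' = -\tilde w$ for the profile $\Theta = 1/u''$ with the Delzant initial conditions $\Theta(-1)=0$, $\Theta'(-1)=2$, integrate explicitly, and extract positivity of the profile from K-stability by testing corner functions with Dirac second derivative), but it diverges at the key testing step in a way worth comparing. The paper tests against $f_{x_0}(x) = \max\{x_0 - x,\,0\}$, which vanishes identically outside the compact set $[-1,x_0]$; the integration by parts is then justified with \emph{no} asymptotic information about $\Theta$, the boundary contributions at $x=-1$ being absorbed by the initial conditions $\phi_0(-1)=0$, $\phi_0'(-1)=2\tilde v(-1)$, and one gets $\mathcal{F}_{\tilde v,\tilde w}(f_{x_0}) = \tilde v(x_0)\Theta(x_0)$ directly. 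You instead test against $\max\{0,\,x-x_0\}$, which grows linearly, so you must first establish $\phi_0(\infty)=\phi_0'(\infty)=0$ to kill the boundary terms at infinity; you obtain this from $\mathcal{F}_{\tilde v,\tilde w}(1)=\mathcal{F}_{\tilde v,\tilde w}(x)=0$, which is legitimately forced by K-semistability under Definition \ref{PL-and-stable-def} (affine functions and their negatives are both convex piecewise-linear, and both are integrable against the exponentially decaying weights). Amusingly, both families evaluate to the same quantity $\phi_0(x_0)$, one route using the data at the vertex, the other the derived decay at infinity. What the paper's choice buys is that the \emph{existence} argument is entirely independent of the affine-vanishing of the Futaki invariant — the paper explicitly remarks on this — whereas your argument bundles in the content of the paper's separate lemma yielding $\tilde v\Theta(x) = -\int_x^\infty(t-x)\tilde w(t)\,dt$, which the paper reserves for the asymptotic analysis. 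Two small cautions: the smooth-extension condition at the vertex is $\Theta'(-1)=2$ exactly (not merely $\Theta'(-1)>0$ as you parenthetically state), though your normalization $\phi_0'(-1)=2\int_{\p P}\tilde v\,d\sigma = 2\tilde v(-1)$ encodes precisely this; and your closing claim that exponential decay of the weights alone places $\omega \in \sexyH$ overreaches — the paper proves membership in $\sexyH$ only in Proposition \ref{mainprop2-inthebody}, under the additional structural hypotheses that $\tilde v, \tilde w$ are rational/polynomial multiples of $e^{-\lambda x}$ and that $\mathcal{F}_{\tilde v,\tilde w}$ vanishes on affines (so that $\Theta$ and its derivatives are rational in $x$); fortunately the proposition as stated asks only for a metric with moment image $P$, for which positivity of $\Theta$ plus the boundary conditions suffice, so this does not affect the validity of your proof of the statement itself.
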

   As a corollary, we give a simple proof in the shrinking case of the following existence result of Futaki-Wang \cite{FutWang} generalizing work of Feldman-Ilmanen-Knopf \cite{FIK} (see also \cite{ChiLiexamples}):
   \begin{corollary}[{\cite{FIK, FutWang, ChiLiexamples}}]\label{Mcorollary2}
   	Let $B$ be a K\"ahler-Einstein Fano manifold, and let $L \to B$ be a negative line bundle with $L^{\frac{1}{\kappa}} = K_B$ for $0 < \kappa < 1 \in \mathbb{Q}$. Then there exists a shrinking K\"ahler-Ricci soliton on the total space of $L$. 
   \end{corollary}
 Moreover, under certain assumptions on the weights which are relevant in the context of shrinking K\"ahler-Ricci solitons (c.f. Lemmas \ref{fibrationsolitons}, \ref{vsolitonw}), we show that the metrics of Proposition \ref{mainprop} indeed have the type of asymptotic behavior that we consider in this paper.
\begin{prop}[{Proposition \ref{mainprop2-inthebody}}]\label{mainprop2}
        In the same situation as Proposition \ref{mainprop}, suppose that the Futaki invariant $\mathcal{F}_{\tilde{v}, \tilde{w}}$ vanishes on the affine-linear functions and that $(\tilde{v}, \, \tilde{w})$ satisfy 
     \[ \tilde{v}(x) = p(x)e^{-\lambda x}, \hspace{.3in} \tilde{w}(x) = q(x) e^{-\lambda x}, \]
        where $q(x)$ is a positive rational function on $P$, $q(x)$ is a polynomial, and $\lambda > 0$. Then the metric $\omega$ on $\C$ given by Proposition \ref{mainprop} satisfies $\omega \in \scaryH$ for any $\varepsilon > 0$.
\end{prop}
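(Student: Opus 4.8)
The plan is to exploit the fact that $\C$ is one complex-dimensional, so that the $(\tilde{v},\tilde{w})$-cscK equation underlying Proposition~\ref{mainprop} is an ordinary differential equation that can be integrated essentially explicitly, and then to read off the asymptotics of the resulting metric at the noncompact end $x\to\infty$ of $P=[-1,\infty)$. Concretely, I would describe $\omega$ through its symplectic potential $u$ on $P$ and set $\phi:=(u'')^{-1}$ for the momentum profile, so that the conditions $\phi(-1)=0$, $\phi'(-1)=2$ encode smoothness of $\omega$ at the vertex. In these coordinates the weighted Abreu identity reads $\Scalv(\omega)=-\tilde{v}^{-1}(\tilde{v}\,\phi)''$, so the equation $\Scalv(\omega)=\tilde{w}$ becomes the second-order linear ODE $(\tilde{v}\,\phi)''=-\tilde{v}\,\tilde{w}$. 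Substituting $\tilde{v}=p\,e^{-\lambda x}$ and $\tilde{w}=q\,e^{-\lambda x}$ (with $p$ a positive rational function and $q$ a polynomial, as in the hypothesis) turns the right-hand side into $-p\,q\,e^{-2\lambda x}$, a rational function times an exponentially decaying factor.

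I would then integrate twice. Writing $\Psi:=\tilde{v}\,\phi$, the homogeneous solutions of $\Psi''=0$ are the affine functions $a+bx$, while a particular solution coming from $-pq\,e^{-2\lambda x}$ decays like $e^{-2\lambda x}$ (times a rational factor), together with all of its derivatives. Hence $\Psi=\tilde{v}\,\phi=a+bx+O(e^{-2\lambda x})$ as $x\to\infty$, so that $\Psi$ grows at most linearly. The two integration constants, together with the data at $x=-1$, are precisely what is fixed by the construction in Proposition~\ref{mainprop}; here the hypothesis that $\mathcal{F}_{\tilde{v},\tilde{w}}$ vanishes on the affine-linear functions $1$ and $x$ is exactly the pair of integrability conditions ensuring that the boundary data at $x=-1$ are consistent with this normalization at infinity. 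Dividing by $\tilde{v}$, the profile itself satisfies $\phi(x)=\Psi(x)/\tilde{v}(x)=O\!\big(x\,e^{\lambda x}/p(x)\big)$, the exponential growth being forced by the exponential decay of $\tilde{v}$; the point I actually need, however, is only the at-most-linear growth of the product $\Psi=\tilde{v}\,\phi$.

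Finally I would feed this asymptotic into the defining conditions of the space $\sexyH=\sexyH(\tilde{v})$. After pushing forward to $P$ and expressing the metric through $\phi$, these reduce to the finiteness of weighted integrals of the schematic form $\int_{-1}^\infty \tilde{v}^{\,1-2\varepsilon}\,(\tilde{v}\,\phi)\,dx$, together with analogous expressions involving the derivatives of $\Psi$, which are controlled in the same way. Since $\tilde{v}^{\,1-2\varepsilon}=p^{1-2\varepsilon}\,e^{-(1-2\varepsilon)\lambda x}$ and $\tilde{v}\,\phi=\Psi=O(x)$, each such integrand is bounded by a rational factor times $x\,e^{-(1-2\varepsilon)\lambda x}$, which is integrable on $[-1,\infty)$ precisely when $1-2\varepsilon>0$, that is for every $\varepsilon\in(0,\tfrac{1}{2})$; near the vertex $x=-1$ the metric is smooth on a compact region, so no condition is binding there. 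This yields $\omega\in\sexyH$ for every $\varepsilon>0$, as claimed. I expect the main obstacle to be this last step: matching the concrete growth rate of $\phi$ to the exact integrability conditions defining $\sexyH$, including the control of the relevant derivatives of $\Psi$, and verifying that the exponential weight $\tilde{v}$ dominates the at-most-polynomial factor $\Psi$ uniformly, so that the optimal threshold $\varepsilon<\tfrac{1}{2}$ — rather than some smaller bound — is genuinely attained.
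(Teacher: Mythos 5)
Your outline follows the paper's route (integrate the one-dimensional weighted Abreu ODE, invoke the Futaki-vanishing hypothesis, read off asymptotics against Definition \ref{asymptotic-metric-general}), but two concrete errors break the argument. First, the ODE is set up wrong: in this paper's conventions (Proposition \ref{vscal-toric}) one has $\Scalv(\omega)=-(\tilde{v}\Theta)''$ with no factor of $\tilde{v}^{-1}$, so the equation $\Scalv(\omega)=\tilde{w}$ is $(\tilde{v}\Theta)''=-\tilde{w}$, i.e.\ \eqref{profileODE}, not $(\tilde{v}\Theta)''=-\tilde{v}\tilde{w}$. With the correct right-hand side $-q(x)e^{-\lambda x}$ and $q$ a polynomial, the particular solution is \emph{exactly} a polynomial times $e^{-\lambda x}$ (the antiderivative series $\int f e^{-\lambda x}dx=\sum_n f^{(n)}\lambda^{-(n+1)}e^{-\lambda x}$ terminates), and this exactness is what the paper exploits to control $\Theta$ together with all of its derivatives, not just a two-term asymptotic expansion.

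Second, and decisively: you retain the affine part of $\Psi=\tilde{v}\Theta$, concluding $\Psi=a+bx+O(\cdot)$ and hence $\Theta=O\bigl(x e^{\lambda x}/p(x)\bigr)$, and you assert that membership in $\sexyH$ reduces to finiteness of integrals like $\int\tilde{v}^{1-2\varepsilon}\Psi\,dx$. That misstates Definition \ref{asymptotic-metric-general}: its conditions are pointwise sup bounds, e.g.\ condition \ref{asymptotics1} demands $\sup v(\mu_\omega)^{\varepsilon}\|\H\|^2_{\t}<\infty$. With your claimed growth $\Theta\sim xe^{\lambda x}/p(x)$ one gets $\tilde{v}^{\varepsilon}\Theta^2\sim e^{(2-\varepsilon)\lambda x}$, which is unbounded for every admissible $\varepsilon\in[0,\tfrac12)$ — so your asymptotics would actually \emph{contradict} the conclusion. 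The Futaki-vanishing hypothesis is not a mere consistency check on boundary data; it is precisely what kills the affine part. The paper's lemma computes from $\mathcal{F}_{\tilde{v},\tilde{w}}(1)=\mathcal{F}_{\tilde{v},\tilde{w}}(t)=0$ that $\int_{-1}^{\infty}(x-t)\tilde{w}(t)\,dt=2\tilde{v}(-1)(1+x)$, and substituting into \eqref{profilesolution} yields the pure-decay representation
\begin{equation*}
(\tilde{v}\Theta)(x)=-\int_x^{\infty}(t-x)\,\tilde{w}(t)\,dt,
\end{equation*}
whence $\Theta$, $\Theta'$, and all higher derivatives are rational functions of polynomial growth. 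Only then do conditions \ref{asymptotics1} and \ref{asymptotics2} follow (via Lemma \ref{gradexpressionlemma}), condition \ref{asymptotics3} follows by applying the mean value theorem to $u''=\Theta^{-1}$ (rational, so $u\leq C(|x|^a+1)$), and condition \ref{asymptotics4} follows from Proposition \ref{boundaryconditions}. You need to supply this computation rather than treat the Futaki hypothesis as optional bookkeeping.
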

As we saw above, the existence of $(\tilde{v}, \, \tilde{w})$-cscK metrics on $\C$ is intimately related to weighted cscK metrics on the total space of certain line bundles $\pi: L \to B$ over compact cscK bases $(B, \, \omega_B)$ with respect to new weights $(v, \, w)$. These new weights can be computed explicitly in terms of the topological data of $L$, and we explore this relationship in Section \ref{section-testconfig}. Moreover, just as in \cite{LahdiliWeighted}, it turns out that we only need to consider a simple special class of piecewise-linear $f$, which in some sense have a uniformly controlled slope. This turns out to have an interesting interplay with \emph{uniform stability}. 

In order to fix our function spaces, we must control for the ambiguity of $P$ in $\t^*$ up to translation, as any two choices of polyhedron in the same translation class will give rise to the same polarized variety (see Lemma \ref{makeavariety}). Thus we normalize by choosing some fixed a representative containing $0 \in \t^*$. Given any $f \in \Cee_\beta$, we can therefore always ensure up to the addition of an affine function that $f$ attains a minimum value of zero at the origin $0 \in P$. We denote by $\Cee_\beta^*$ the set of thus normalized functions. For any $K> 0$, in Section \ref{section-uniform} we define a space $\Cee^*_\beta(K) \subset \Cee_\beta^*$, where we ask for a uniform control on $f$ in an asymptotic $L^1$ sense relative to $v$ (see \eqref{ceekbeta}). We then introduce a notion of uniform stability depending on an extra parameter $\gamma$, by demanding that 
\begin{equation}\label{uniform-intro}
    \mathcal{F}_{v,w}(f) \geq \lambda \int_{P} f v^{\gamma} dx
\end{equation} 
for all $f \in \Cee_\beta^*(K)$. In particular, we say that $P$ is \emph{$\beta$-uniformly K-stable} if \eqref{uniform-intro} holds for $\gamma = \beta$. 

We also define a weighted Mabuchi energy $\mathcal{M}_{v,w}$ on the whole of $\Cee_\beta$, following the constructions of \cite{LahdiliWeighted, simonYTD}. It is interesting to note that in the non-compact case the usual (unweighted) Mabuchi energy is not in general well-defined unless we restrict attention to metrics which are very close to a fixed model at infinity. As we will see, in order to make sense of the weighted functional $\M_{v,w}$, the strong decay of the weights at infinity allows us to relax this condition significantly. Summarizing the remaining main results of Section \ref{stability-section}, we have the following generalization of \cite{ChenLiSheng2014, simonYTD}:
\begin{theorem}\label{Mtheorem-uniform}
    Let $M$ be the quasiprojective toric variety associated to the Delzant polyhedron $P \subset \t^*$, and suppose that $(v, \, w)$ are weights in the class $\mathcal{W}$. Then we have 
    \begin{enumerate}
        \item if $M$ admits a $(v, \, w)$-cscK metric $\omega$ in $\scaryH$, then $P$ is $\beta$-uniformly  K-stable for all suitable $\beta$,
        \item if $P$ is $\beta$-uniformly K-stable, then the weighted Mabuchi energy is proper on $\Cee_\beta^*(K)$.
    \end{enumerate}
\end{theorem}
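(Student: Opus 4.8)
The plan is to prove both parts by the toric/convex-analytic mechanism underlying the compact case of \cite{ChenLiSheng2014, simonYTD, ZhouZhu}, with the genuinely new work concentrated in controlling the unbounded ends of $P$ by means of the exponential decay built into the class $\mathcal{W}$. First I would pass from the metric side to the polytope side: given $\omega \in \sexyH$, its Legendre-dual \emph{symplectic potential} $u$ is a convex function on $P$ satisfying the Guillemin-type boundary conditions along $\p P$, and the $(v,\,w)$-cscK equation $\Scalv(\omega) = w$ becomes the weighted Abreu equation
\begin{equation*}
    -\sum_{i,j} \p_i\p_j\bigl(v\,u^{ij}\bigr) = w \qquad \text{on } \mathrm{Int}(P),
\end{equation*}
where $(u^{ij})$ is the inverse of $\mathrm{Hess}\,u$. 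The key computational input is a weighted Donaldson integration-by-parts identity: in the normalization fixing $d\sigma$, for $f$ in a suitable integrability class one has
\begin{equation*}
    \mathcal{F}_{v,w}(f) = \int_P v\sum_{i,j} u^{ij} f_{ij}\,dx .
\end{equation*}
The content of this identity in the non-compact setting is that the boundary contributions \emph{at infinity} vanish, and this is exactly where the decay $v = O(e^{-c_v|x|})$ of Definition \ref{exponential-decay}, together with the growth control defining $\sexyH$ and $\Cee_\beta$, is used; it should be read off from the well-definedness of $\M_{v,w}$ established above.

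For statement (1), the identity immediately gives $\mathcal{F}_{v,w}(f)\ge 0$ for convex $f$, since $v>0$ and $(u^{ij})$ is positive definite. To upgrade this to the uniform inequality \eqref{uniform-intro} with $\gamma=\beta$, I would prove a weighted Poincar\'e/trace estimate. On any fixed compact region the smoothness of $\omega$ furnishes a lower bound $(u^{ij}) \ge c_0\,\Id$, so that $\int_P v\,u^{ij}f_{ij}\,dx \gtrsim \int_P f\,v^{\beta}\,dx$ there, using the convexity and the normalization $f(0)=0=\min_P f$ of $f\in\Cee_\beta^*(K)$; on the unbounded ends the uniform asymptotic $L^1$-control \eqref{ceekbeta} defining $\Cee_\beta^*(K)$ must be played against the decay of $v$ to absorb the tail. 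The admissible range of $\beta$ (the meaning of \emph{suitable}) is precisely the range for which $v^{\beta}$ decays slowly enough to be dominated by the left-hand side, yet fast enough for $\int_P f\,v^{\beta}\,dx$ to converge on $\Cee_\beta^*$, which is dictated by the decay rate of $v$ and the parameter $\varepsilon$ of $\sexyH$.

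For statement (2), I would use the toric expression of the weighted Mabuchi energy as an entropy term plus the linear Futaki term,
\begin{equation*}
    \M_{v,w}(u) = -\int_P v\,\log\det(\mathrm{Hess}\,u)\,dx + \mathcal{F}_{v,w}(u) + C_0,
\end{equation*}
valid on $\Cee_\beta$ by the well-definedness discussed above. The entropy term is bounded below on $\Cee_\beta^*$, as in the compact case, from the convexity of $t\mapsto -\log t$ and Jensen's inequality, now using the decay of $v$ to ensure that the slowly growing $\log\det(\mathrm{Hess}\,u)$ is integrated against an exponentially small weight. Assuming $\beta$-uniform K-stability, the linear term satisfies $\mathcal{F}_{v,w}(u)\ge \lambda\int_P u\,v^{\beta}\,dx$ on $\Cee_\beta^*(K)$, whence
\begin{equation*}
    \M_{v,w}(u) \ge \lambda\int_P u\,v^{\beta}\,dx - C,
\end{equation*}
which is exactly properness with respect to the weighted norm $\int_P u\,v^{\beta}\,dx$ natural in this class.

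The principal obstacle is the uniform lower bound in step (1): already in the compact toric case this strict improvement over mere semistability is the delicate point (\cite{ZhouZhu, ChenLiSheng2014}), and here it is compounded by the non-compactness of $P$. One must simultaneously rule out a loss of positivity of $(u^{ij})$ along the ends and show that the exponentially weighted mass $\int_P f\,v^{\beta}\,dx$ genuinely concentrates in a region where the lower bound on $(u^{ij})$ survives. Reconciling these with the precise growth permitted by $\Cee_\beta^*(K)$ and the rate $\varepsilon$ of $\sexyH$ is what pins down the range of \emph{suitable} $\beta$, and is the technical heart of the argument.
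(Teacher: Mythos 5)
Your reduction to the polytope, the role of Lemma \ref{ibplemma}, and the overall shape of the statement are all right, but both halves of your proof rest on estimates that you assert rather than prove, and in both cases the asserted estimate is precisely the content of the theorem — and in part (1) the estimate, as you formulate it, is not salvageable by ``absorbing the tail.'' For (1), you propose a weighted Poincar\'e/trace inequality of the form $\int_P v\, u^{ij}f_{ij}\,dx \gtrsim \int_P f\,v^{\beta}\,dx$, obtained from a local lower bound $(u^{ij})\ge c_0\Id$ plus tail absorption. Note the mismatch of weights: the left side measures Hessian mass against $v$, the right side zeroth-order mass against $v^{\beta}$ with $\beta<1$. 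For the family $f_R$ obtained by smoothing $\max\{\langle x,b\rangle-R,\,0\}$ with $b\in -C^*(P)$, Lemma \ref{ibplemma} localizes $\mathcal{F}_{v,w}(f_R)$ to a crease at distance $\sim R$, so $\mathcal{F}_{v,w}(f_R)$ decays like $v(R)$ (up to polynomial factors from $\H$), while $\int_P f_R\,v^{\beta}\,dx$ decays only like $v(R)^{\beta}$; the ratio degenerates as $R\to\infty$, so no pointwise or local argument can produce a uniform constant. The statement survives only because, after the normalization $\int_P f\,v^{\gamma}\,dx=1$, such concentrating families violate the tail condition \eqref{ceekbeta} and hence leave every $\Cee_\beta^*(K)$ — and exploiting this requires a global compactness mechanism, not absorption. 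This is exactly how the paper argues (Theorem \ref{existence-implies-uniform}, following \cite{ChenLiSheng2014}): assume failure, take $f_k\in\Cee_\beta^*(K)$ with $\mathcal{F}_{v,w}(f_k)\to 0$ and $\int_P f_k v^{\gamma}\,dx=1$, use \eqref{ceekbeta} to rule out escape of mass to infinity and a dilation argument near $\p P$ to rule out escape to the boundary, extract a locally uniform limit $f$ with $\int_P f v^{\gamma}\,dx=1$, and derive $f=0$ from Lemma \ref{ibplemma} — a contradiction. Your sketch names this as ``the technical heart'' but supplies no mechanism for it.

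For (2), your claim that the entropy term $-\int_P v\log\det(\mathrm{Hess}\,u)\,dx$ is bounded below on $\Cee_\beta^*$ ``by convexity of $-\log t$ and Jensen'' does not go through: Jensen gives a lower bound by $-\big(\int_P v\,dx\big)\log\big(\fint_P \det(\mathrm{Hess}\,u)\,v\,dx\big)$, and you have no uniform upper bound on $\int_P v\det(\mathrm{Hess}\,u)\,dx$ over the class (whatever control the $K$-condition imposes on large Hessians is an unproved interplay between convexity and growth, not a one-line Jensen estimate). The paper's route is different and avoids this entirely: in Theorem \ref{stability-implies-proper} one fixes a background $u_0$ (the Guillemin potential, per Corollary \ref{stability-impliesproper-cor}) with $w_0:=\Scalv(u_0)$ satisfying $|w_0-w|\le Cv^{\gamma}$; then $\M_{v,w_0}$ is bounded below on all of $\Cee_\beta$ not by any entropy estimate but by Proposition \ref{mabuchi-boundedbelow} — $u_0$ is a critical point of the convex functional $\M_{v,w_0}$, with the differentiability along linear paths justified via Lemma \ref{ibplemma} — and finally the error $\mathcal{F}_{v,w_0}(u)-\mathcal{F}_{v,w}(u)=O\big(\int_P u\,v^{\gamma}dx\big)$ is absorbed using the stability hypothesis itself. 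Your split ``entropy bounded below $+$ Futaki term coercive'' would yield the conclusion if the entropy bound held, but that bound is the missing ingredient, and the comparison $w\leftrightarrow w_0$ (which your sketch omits) is what lets the existence-free stability hypothesis substitute for it.
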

Using the strategy of \cite{GuanUniqueness}, the convexity of the weighted Mabuchi functional allows us to obtain the following basic uniqueness result: 
\begin{corollary}\label{Mcorollary}
    Let $M, P, (v,\, w)$ be as in Theorem \ref{Mtheorem-uniform}. For any suitable $\varepsilon \in \left(0 , \frac{1}{2}\right)$ (see Definition \ref{asymptotic-metric-general}), suppose there exists a $(v, \, w)$ cscK metric $\omega \in \scaryH$. Then if $\omega' \in \scaryH$ is another $(v, \, w)$-cscK metric which is uniformly equivalent to $\omega$, then there exists an automorphism $A:M \to M$ such that $A^*\omega' = \omega$.
\end{corollary}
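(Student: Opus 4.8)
The plan is to prove uniqueness by exploiting the convexity of the weighted Mabuchi functional $\M_{v,w}$ along geodesics, following the general strategy of Guan \cite{GuanUniqueness}. The key structural fact that I would use is that $(v,\,w)$-cscK metrics are precisely the critical points of $\M_{v,w}$, and that $\M_{v,w}$ is convex along (generalized) geodesics in the space of K\"ahler potentials. The hypothesis $\omega \in \sexyH$ together with the exponential decay of $(v,\,w)$ in the class $\mathcal{W}$ is exactly what is needed to guarantee that $\M_{v,w}$ is well-defined and finite along the relevant path — this is the point emphasized before Theorem \ref{Mtheorem-uniform}, and it is why the uniform equivalence hypothesis on $\omega'$ is imposed.

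First I would translate the problem into the symplectic/convex-analytic picture on $P$. Since $M$ is toric with $\dim_\R \T = \dim_\C M$, a $\T$-invariant K\"ahler metric in $\sexyH$ corresponds to a convex symplectic potential $u \in \Cee_\beta$ on $P$, and the two given cscK metrics $\omega, \omega'$ correspond to potentials $u_0, u_1$. The uniform equivalence of $\omega$ and $\omega'$ should ensure that the two potentials differ by a function with controlled growth, so that the linear interpolation $u_t = (1-t)u_0 + t u_1$ (the Mabuchi geodesic in the toric setting is simply linear interpolation of symplectic potentials) stays within the function space $\Cee_\beta$ on which $\M_{v,w}$ is defined and finite. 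I would then invoke the convexity of $t \mapsto \M_{v,w}(u_t)$ on $[0,1]$.

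Next, since both endpoints $u_0, u_1$ are critical points of $\M_{v,w}$, the derivative $\frac{d}{dt}\M_{v,w}(u_t)$ vanishes at both $t=0$ and $t=1$. Convexity of a function on $[0,1]$ whose derivative vanishes at both endpoints forces the function to be affine, hence constant, so the second derivative vanishes identically along the geodesic. The crucial step is to show that vanishing of the second variation of $\M_{v,w}$ forces $u_1 - u_0$ to be an affine-linear function on $P$. This is where the weighted structure enters: the second derivative of $\M_{v,w}$ along $u_t$ should be expressible as a nonnegative weighted Dirichlet-type energy (an integral over $P$ of a positive-definite quadratic form in the derivatives of $\dot u_t = u_1 - u_0$, weighted against $v$), which vanishes only when $u_1 - u_0$ is affine. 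One must check that the exponential decay of $v$ makes all the relevant boundary terms in the integration by parts vanish, so that no spurious contributions survive at infinity on the unbounded polyhedron $P$.

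The main obstacle I expect is precisely this control of the behavior at infinity. In the compact toric case of \cite{GuanUniqueness, simonYTD} the polytope is bounded and the integrations by parts produce only boundary terms on $\p P$ that are handled by the Guillemin boundary conditions; here $P$ is unbounded, so I must verify that the strong decay of $v$ (together with the constraint that $\omega, \omega' \in \sexyH$ and the uniform equivalence hypothesis, which bounds the growth of $u_1 - u_0$) kills the contribution from the ``ends'' of $P$ in every integration by parts used to establish both the convexity and the rigidity in the equality case. Once the second variation is shown to be a genuine weighted Dirichlet energy with no leftover terms at infinity, its vanishing gives that $u_1 - u_0$ is affine-linear, which by the standard toric dictionary means $\omega$ and $\omega'$ differ by an element of the torus-equivariant automorphism group; producing the automorphism $A$ with $A^*\omega' = \omega$ is then a routine translation back to the geometric side.
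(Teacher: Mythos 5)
Your proposal is correct and follows essentially the same route as the paper: both pass to symplectic potentials, take the linear path $u_t = (1-t)u_1 + t u_2$, use the uniform equivalence hypothesis (as a two-sided Hessian comparison $C^{-1}\Id \leq \textnormal{Hess}^{-1}(u_1)\textnormal{Hess}(u_2) \leq C\Id$) to ensure $\M_{v,w}$ is finite along the path and that the second variation is dominated, and conclude from the vanishing of the weighted Dirichlet-type second variation $\int_P \tr\bigl(\bigl[\H^1 \cdot \textnormal{Hess}(u_2-u_1)\bigr]^2\bigr) v \, dx$ that $u_2 - u_1$ is affine-linear, whence the automorphism $A \in i\t$. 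The only cosmetic difference is that the paper deduces constancy of $t \mapsto \M_{v,w}(u_t)$ from both endpoints being global minimizers (Proposition \ref{mabuchi-boundedbelow}) rather than from vanishing endpoint derivatives, and the boundary terms at infinity you flag are exactly those handled by Lemma \ref{ibplemma} and the decay of $v$.
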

The class of metrics $\scaryH$ that we consider is fairly broad. To give some geometric significance, we describe some explicit criteria on a K\"ahler metric $\omega$ in some special cases which ensure that $\omega \in \scaryH$. To this end, suppose that $V$ is a smooth projective toric variety and that $M_c$ is an affine toric variety whose singular set is equal either to the unique torus fixed point $\{o\}$ or is empty (in which case $M_c \cong \C^n$). Let $M$ be a smooth quasiprojective toric variety with $\pi: M \to V \times M_c$ a torus-equivariant proper birational morphism, which is an isomorphism away from the set $V \times \{o\} \subset V \times M_c$. We say that a K\"ahler metric $g$ on $M$ is \emph{asymptotically c-cylindrical} if there exists a K\"ahler metric $g_V$ on $V$ and a conical metric $g_c$ on $M_c$ with radial function $r = r(p) =\textnormal{dist}_{g_c}(p,\{o\})$ and 
\begin{equation}\label{accyl1}
		\left| \pi_*g- g_0 \right|_{g_0} < Cr^{- 2}, \hspace{.3in} \left|\nabla^{g_0}( \pi_*g- g_0) \right|_{g_0} < Cr^{-1}.
\end{equation}
In order to make a connection with the other ideas in this paper, we also require a technical condition on the symplectic structure associated to $(g, J)$ on $M$, see Definition \ref{asymptoticsproduct}.
\begin{theorem}\label{mtheorem-asymptotics}
    Let $\pi: M \to V \times M_c$ be a resolution of $V \times M_c$ as described above. Let $P \subset \t^*$ be a Delzant polyhedron corresponding to the K\"ahler class $\alpha \in \K_M$, and suppose that $(v, \, w)$ are weights in the class $\mathcal{W}$. Then any asymptotically c-cylindrical metric $\omega$ on $M$ with $\omega \in \alpha$ lies in $\scaryH$ for any $\varepsilon > 0$. Consequently if $M$ admits an asymptotically c-cylindrical $(v, \, w)$-cscK metric $\omega \in \alpha$, then $P$ is $(v, \, w)$ K-stable and $\beta$-uniformly  K-stable for all suitable $\beta$.
\end{theorem}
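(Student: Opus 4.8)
The plan is to prove first the central assertion — that an asymptotically c-cylindrical $\omega \in \alpha$ lies in $\sexyH$ for every admissible $\varepsilon$ — and then to read off the stability conclusions from Theorems \ref{Mtheorem-existenceimpliesstable} and \ref{Mtheorem-uniform}. The guiding principle is that the exponential decay of $(v,w) \in \mathcal{W}$ in $|x|$, once composed with the moment map, becomes \emph{Gaussian} decay in the radial variable $r$ of the cone factor, and this decay overwhelms every polynomially bounded geometric quantity and every $v^{\pm\varepsilon}$ weight entering Definition \ref{asymptotic-metric-general}.

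First I would pin down the asymptotics of the moment map, being careful to work with the genuine \emph{product} model rather than a single cone. The model at infinity is $g_0 = \pi_V^* g_V \oplus \pi_{M_c}^* g_c$ on $(V \times M_c) \setminus (V \times \{o\})$, with $k := \dim_\C V$; its moment map splits as $\mu_0 = \mu_V \oplus \mu_c$, where $\mu_V$ is bounded by compactness of $V$, while, since $g_c$ is a K\"ahler cone metric, $|\mu_c| \asymp r^2$ (for $M_c \cong \C^{\dim_\C M_c}$ the components of $\mu_c$ are the $\tfrac12 |z_j|^2$, and for a genuine cone the Reeb combination of the $\mu_c$-components equals $\tfrac12 r^2$). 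Hence $|\mu_0| \asymp r^2$ away from a compact set. Using \eqref{accyl1} together with the technical symplectic hypothesis of Definition \ref{asymptoticsproduct} — which forces $\omega$ and the $\T$-action to be asymptotic to their product counterparts — I would show $\mu_\omega - \mu_0 \to 0$, so that $|\mu_\omega| \asymp r^2$ as well. Consequently $v(\mu_\omega)$, $w(\mu_\omega)$, and each of their fixed-order derivatives evaluated at $\mu_\omega$ are $O(e^{-c r^2})$ up to polynomial factors in $r$.

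Next I would control the geometry of $\omega$ against this product model and assemble the integrals. By the product structure $\Rm(g_0) = \Rm(g_V) \oplus \Rm(g_c)$ is bounded — the $V$-factor contributes a bounded amount and the cone contributes $O(r^{-2})$ — so $\Scal(g_0) = \Scal(g_V) + O(r^{-2})$ is bounded and need \emph{not} decay, in contrast with the purely conical case; the rates $r^{-2}$ and $r^{-1}$ in \eqref{accyl1} then give $|\Rm(\omega)|_\omega = O(1)$ and control the deviations $\omega - \omega_0$ and $\mu_\omega - \mu_0$. Feeding this into Lahdili's formula for $\Scal_v(\omega)$, which expresses it as $v(\mu_\omega)\Scal(\omega)$ plus terms in which derivatives of $v$ at $\mu_\omega$ are contracted against the (bounded) Hessians of the moment-map components, yields $|\Scal_v(\omega)| = O(e^{-c r^2})$. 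Since $\omega^n$ is asymptotic to $\omega_0^n = \binom{n}{k}\,\omega_V^k \wedge \omega_c^{n-k}$, the volume grows like $\vol(V)\, r^{2(n-k)}$, so $\int_M e^{-c r^2} P(r)\, \omega^n < \infty$ for every polynomial $P$; in particular $\int_M \Scal_v(\omega)\,\omega^n < \infty$. Running the same estimate through each clause of Definition \ref{asymptotic-metric-general} — which amount to finiteness of $v$-weighted integrals of the curvature and of $\omega - \omega_0$, with the relevant weight being, up to bounded factors, of the shape $v^{\,1-2\varepsilon}$ (whence the restriction $\varepsilon < \tfrac12$, so that $v^{\,1-2\varepsilon} \sim e^{-(1-2\varepsilon)c r^2}$ still decays) — all the integrals converge by the same combination of Gaussian decay, polynomial geometric bounds, and product volume growth. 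This holds for every $\varepsilon \in (0,\tfrac12)$, giving $\omega \in \sexyH$ for any $\varepsilon > 0$.

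The stability conclusions are then immediate: if $\omega$ is moreover $(v,w)$-cscK, then $\omega \in \sexyH$ by the above, so Theorem \ref{Mtheorem-existenceimpliesstable} gives that $P$ is $(v,w)$ K-stable and Theorem \ref{Mtheorem-uniform}(i) gives $\beta$-uniform K-stability for all suitable $\beta$. The main obstacle I anticipate is the first step: extracting the quadratic growth $|\mu_\omega| \asymp r^2$ and the closeness $\mu_\omega - \mu_0 \to 0$ from the purely metric decay \eqref{accyl1} via Definition \ref{asymptoticsproduct}, and doing so while genuinely exploiting the \emph{product} model — bounded $V$-curvature together with $O(r^{-2})$ cone curvature — rather than a single global cone, since it is the product structure that governs both the volume growth $r^{2(n-k)}$ and the correct (non-decaying) size of $\Scal(\omega)$ that enter the convergence estimates.
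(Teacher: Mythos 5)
There is a genuine gap, and it comes from a mischaracterization of what membership in $\sexyH$ requires. You treat the clauses of Definition \ref{asymptotic-metric-general} as ``finiteness of $v$-weighted integrals of the curvature and of $\omega - \omega_0$,'' and accordingly you aim at the estimate $|\Scal_v(\omega)| = O(e^{-cr^2})$ and the integrability $\int_M \Scal_v(\omega)\,\omega^n < \infty$. But the definition consists of \emph{pointwise sup bounds on the toric data in moment coordinates}: $\sup v(\mu_\omega)^\varepsilon \|\mathbf{H}\|_\t^2 < \infty$, $\sup v(\mu_\omega)^\varepsilon \|d\mathbf{H}\|_\t^2 < \infty$, boundary control of the second derivatives of $H_{ij}$, and weighted $C^0/C^2$ growth bounds on the symplectic potential $u$. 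Gaussian decay of $\Scal_v$ against the product volume growth is neither one of these conditions nor implies them (integrability of $\Scal_v\,\omega^n$ is a \emph{consequence} of $\omega \in \sexyH$, as remarked in the introduction, not a criterion). In particular your proposal never touches the hardest clause, condition \ref{asymptotics2}: the paper's proof of $\|d\mathbf{H}\|_\t^2 \leq C(\|x\|_\t + 1)$ on $P_{\bar\delta}$ needs the Martelli--Sparks--Yau homogeneity of the cone's symplectic data (Lemma \ref{MSY-structure}, giving $\|d(\H_V + \H_C)\|_\t$ bounded on the product polyhedron), the Christoffel estimate $|\nabla^{g_0} Y_i|_{g_0} < Cr^2$ yielding $|d(g-g_0)(Y_i,Y_j)|_{g_0} \leq Cr$ from \eqref{asymptoticsproduct}, and then the conversion between the Riemannian gradient on $M$ and Euclidean derivatives on $P$ via Lemma \ref{gradexpressionlemma}, which requires the uniform lower bound $\H \geq C^{-1}\Id$ near $\p P$ from \eqref{accyl-symplecticboundary}. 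None of this is recoverable from bounded curvature plus decay of $\Scal_v$. Similarly, the weight $v^{1-2\varepsilon}$ you introduce to explain the restriction $\varepsilon < \tfrac12$ does not occur; that restriction comes from Definition \ref{asymptotic-metric-general} itself and is used later to make the interval $\beta \in (\varepsilon, 1-\varepsilon)$ nonempty.

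Your first step also overreaches in a way that would fail: you propose to show $\mu_\omega - \mu_0 \to 0$ from \eqref{accyl1}, but the decay $|\omega - \omega_0|_{g_0} \leq Cr^{-2}$ only gives $|\nabla^{g_0}\langle \mu_\omega - \mu_0, Y\rangle|_{g_0} \leq Cr^{-1}$ (since $|Y|_{g_0}$ grows like $r$ for $Y \in \t_C$), and integrating $r^{-1}$ along radial geodesics produces the logarithmically \emph{growing} bound $\|\mu_\omega - \mu_0\|_\t \leq C(\log r + 1)$ of Lemma \ref{approx-momentmap} — which is all the paper needs, since it still yields $r^2 \leq C(\|\mu_\omega\|_\t + 1)$ via the Reeb potential $f_{X_0}$. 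So your intuition that $|\mu_\omega| \asymp r^2$ and that exponential decay of $(v,w)$ dominates polynomial growth is correct and is indeed the engine of the paper's argument for condition \ref{asymptotics1} (via Lemma \ref{Handr-lemma}, $\|\mathbf{H}\|_\t^2 \leq C(r^2+1)$, hence $\|\mathbf{H}\|_\t^2 \leq C(\|x\|_\t+1)$ on $P$); and your final reduction of the stability statements to Theorems \ref{Mtheorem-existenceimpliesstable} and \ref{Mtheorem-uniform} matches the paper. But as written the proposal proves a different (and insufficient) set of estimates, and the derivative bound on $\mathbf{H}$ — the core of the actual proof — is missing.
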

In Section \ref{section-ssfibrations} we also take an interest in the case of shrinking K\"ahler-Ricci solitons. We explain how these metrics give examples of the type of weighted problem we consider in the case of semisimple principal toric fibrations. Returning to the question of asymptotics, we study the behavior at infinity of shrinking K\"ahler-Ricci solitons on the total space of certain negative line bundles $L \to B$ over a compact K\"ahler-Einstein Fano base,  generalizing the examples obtained via the Calabi Ansatz of Feldman-Ilmanen-Knopf \cite{FIK} on negative line bundles $\O(-k) \to \P^{n-1}$.  To the author's knowledge, the type of examples we consider here were first shown to exist in \cite{Yang}.  For another similar approach in the case that the base $B$ is not necessarily K\"ahler-Einstein but instead toric, see \cite{FutWang, Fut}.  We follow the presentation of Li \cite{ChiLiexamples}, who in fact finds K\"ahler-Ricci solitons on direct sum bundles $L \oplus \dots \oplus L \to B$. Specifically, we use the relationship to the semisimple principal fibration construction mentioned above to see that the metrics on $L$ come from weighted cscK-metrics on $\C$. We use the asymptotic model identified in \cite{ChiLiexamples} (which is a cone metric) to show that in fact these weighted metrics on $\C$ satisfy the asymptotic condition \eqref{accyl1}.

\subsection*{Acknowledgements}
I would like to begin by thanking Simon Jubert for our conversations while he was visiting the Universit\'{e} de Nantes during the Fall of 2022, which served as the inspiration for this article. I would like to thank Vestislav Apostolov for his kind guidance and many useful suggestions, and Song Sun for comments on preliminary versions of the article. I would also like to thank the referee for helpful comments and suggestions. Finally, I would like to thank Paula Burkhardt-Guim and David Tewodrose for useful discussions. The author is supported by the grant Connect Talent ``COCOSYM'' of the r\'{e}gion des Pays de la Loire and the Centre Henri Lebesgue,  programme ANR-11-LABX-0020-0. 

\section{Background}

\subsection{Polyhedra and toric varieties}

In this section we introduce some important classes of combinatorial objects that we will use throughout the paper.  Fix a real torus $\T = T^n$ of dimension $n$ and denote its Lie algebra by $\t$, and corresponding dual $\t^*$. Let $\Gamma \cong \Z^n \subset \t$ be the lattice defined by $b \in \Gamma \iff \exp(b) = \Id \in \T$.

\begin{definition}\label{polydefdef}
	A \emph{polyhedron} $P \subset \t^*$ is any convex finite intersection of affine half spaces $H_{\nu,a} =\{x \in \t^* \: | \:  \langle \nu, x \rangle \geq -a \}$ with $\nu \in \t,\, a \in \R$. A \emph{polytope} is a bounded polyhedron.  If $P$ has at least one vertex and all of the vertices of $P$ lie in the dual lattice $\Gamma^* \subset \t^*$, then we say that $P$ is \emph{rational}.
\end{definition}

We set $L_{\nu}(x) = \langle \nu, \, x \rangle$, so that the half space $H_{\nu, a}$ is defined by the linear inequality $L_\nu(x) \geq -a$. As such, we can write 
\begin{equation}\label{polydef}
 P = \left\{x \in \t^* \: | \:  L_{\nu_i}(x) + a_i \geq 0 , \, i = 1, \dots, N   \right\}.
\end{equation}
Given such a presentation, we refer to the collection $\nu_i \in \t$ as the \emph{inner normals} of $P$. For the purposes of this paper, we will always assume that a polyhedron $P$ has open interior. We will often not distinguish between a polyhedron $P$ and its interior, but where confusion may arise we will denote by $\overline{P}$ the closed object and $P$ the interior. The intersection of $P$ with the plane $L_{\nu} = -a$ is a polyhedron $F_\nu$ of one less dimension and is called a \emph{facet} of $P$. The intersections of any number of the facets $F_\nu$ form the collection of \emph{faces} of $P$. 

\begin{definition} \label{recessioncone}
Let $P$ be a polyhedron given by the intersection of the half spaces $H_{\nu_i, a_i}$. We define the \emph{recession cone} (or asymptotic cone) $C(P)$ by 

\begin{equation}\label{rpcone}
	C(P) = \left\{ x \in \t^* \: | \: L_{\nu_i}(x) \geq 0 \right\}.
\end{equation}
In general any polyhedron of the form \eqref{rpcone} with $\nu_i \in \Gamma$ is called a \emph{rational polyhedral cone}. 
\end{definition}
\noindent Given any convex cone $C \subset \t$, the \emph{dual cone} $C^* \subset \mathfrak{t}$ is defined by

\begin{equation} \label{dualcone}
	C^* = \{\xi \in \mathfrak{t} \: | \: \langle \xi, x \rangle \geq 0 \text{ for all } x \in C\}.
\end{equation}
Note that (the interior of) the dual recession cone $C(P)^*$ to a polyhedron $P$ is necessarily an open cone in $\mathfrak{t}$, even when $C(P)$ is not full-dimensional.

\begin{lemma}[{\cite[Theorem 7.1.10]{CLS}}]\label{makeavariety}
To any full-dimensional rational polyhedron $P$ in $\t^*$, there exists an associated quasiprojective variety $M_P$ with a regular action of the algebraic torus $\Cstarn$ and a unique orbit which is open and dense.  Moreover, if we set $\t_C^* \subset \t^*$ to be the smallest linear subspace which contains the recession cone $C(P)$, then there exists an affine variety $M_C$ associated to $C := C(P) \subset  \t_C^*$ in the same fashion, and there is a natural projective morphism $\pi_C: M_P \to M_C$. 
\end{lemma}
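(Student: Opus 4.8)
The statement is the standard realization of the toric variety of a rational polyhedron together with its projective morphism to the affine toric variety of the recession cone, and the plan is to reconstruct it from the normal-fan dictionary. First I would attach to $P$ its \emph{normal fan} $\Sigma_P$ in $\t$: to each face $F$ of $P$ one associates the cone $\sigma_F = \{\nu \in \t : \langle \nu, \cdot \rangle \text{ attains its minimum over } \overline{P} \text{ along } F\}$, and $\Sigma_P$ is the collection of all such $\sigma_F$. The rationality hypothesis (vertices in $\Gamma^*$, inner normals $\nu_i \in \Gamma$) guarantees that $\Sigma_P$ is a rational fan, so $M_P := X_{\Sigma_P}$ is a well-defined normal toric variety for the lattice $\Gamma \cong \Z^n$. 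It carries the regular $\Cstarn$-action, and its dense open orbit is the big torus $\Cstarn$ itself; uniqueness of this orbit is part of the general structure theory of normal toric varieties.

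Next I would identify the target and the morphism. A linear functional $\langle \nu, \cdot\rangle$ is bounded below on $\overline{P}$ if and only if it is nonnegative on the recession directions, so the support of the normal fan is exactly the dual recession cone, $|\Sigma_P| = C(P)^* \subset \t$. Passing to the smallest subspace $\t_C^*$ containing $C(P)$ makes $C(P)$ a full-dimensional strongly convex cone there (equivalently, it splits off the torus factor coming from the maximal linear subspace $(\t_C^*)^\perp \subseteq C(P)^*$ of the dual cone), and I would set $M_C := \operatorname{Spec}\big(\C[C(P) \cap \Gamma^*]\big)$. This is precisely the affine toric variety attached, in the same fashion, to the single cone $C(P)^*$, whose dual is $C(P)$ by biduality of closed convex cones, so $M_C$ is indeed affine. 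Since $\Sigma_P$ refines the fan consisting of $C(P)^*$ and its faces, and their supports coincide, the refinement induces a toric morphism $\pi_C: M_P \to M_C$ which is proper by the support criterion for properness of toric morphisms.

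It then remains to upgrade properness to projectivity, which is the one genuinely substantive point. Here I would invoke the Minkowski decomposition $\overline{P} = Q + C(P)$ with $Q$ a rational polytope, and observe that the support function $h_P(\nu) = \inf_{x \in \overline{P}} \langle \nu, x\rangle$ is convex and piecewise linear with respect to $\Sigma_P$, and strictly convex relative to the coarser fan of $C(P)^*$. This strict convexity produces a $\pi_C$-relatively ample $\Cstarn$-linearized divisor on $M_P$ and exhibits $M_P$ as $\operatorname{Proj}$ over $M_C$ of the graded algebra assembled from the lattice points of the dilates $k\overline{P}$, which is exactly what is needed to conclude that $\pi_C$ is projective.

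The main obstacle is this last step: verifying the strict convexity of the support function of $P$ relative to the recession fan, and thereby producing the relative polarization. Everything else — the fan, its support, the target, and properness — is bookkeeping in the normal-fan formalism, and the rationality hypothesis together with the passage to $\t_C^*$ serve only to keep the fan rational and the recession cone strongly convex. Since the full construction is recorded in \cite[Ch.~7]{CLS}, in the final write-up I would carry out the convexity check for the normal fan of $P$ and otherwise appeal directly to \cite[Theorem 7.1.10]{CLS}.
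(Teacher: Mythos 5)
Your proposal is correct and coincides with the paper's treatment: the paper gives no independent argument but cites \cite[Theorem 7.1.10]{CLS}, and your normal-fan construction --- support of $\Sigma_P$ equal to the dual recession cone, $M_C = \operatorname{Spec}\bigl(\C[C(P)\cap\Gamma^*]\bigr)$, properness from the support criterion, and projectivity via strict convexity of the support function exhibiting $M_P$ as a relative $\operatorname{Proj}$ over $M_C$ --- is precisely the content of that theorem and its proof in CLS Chapter 7. Since you ultimately defer the one substantive step (the relative polarization) to the same reference the paper cites, there is no divergence to report.
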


Moreover, it is essentially true that any quasiprojective toric variety $M$ is of the form $M = M_P$ for a rational polyhedron $P$, see \cite[Proposition 7.2.9]{CLS}.

\begin{definition} \label{Delzant}
	Let $P$ be a full-dimensional polyhedron in $\t^*$ with at least one vertex. Then $P$ is called \emph{Delzant} if, for each vertex $v \in P$, there are exactly $n$ edges $e_i$ stemming from $p$ which can be written $e_i = v + \lambda_i \varepsilon_i$ for $\lambda_i \in \R$ and $(\varepsilon_i)$ a $\Z$-basis of $\Gamma^*$. 
\end{definition}

This says that each vertex of a Delzant polyhedron, when translated to the origin, can be made to look locally like standard $\R^n_+$ via an element of $\text{GL}(n,\Z)$.  Hence we see that the inner normals $\nu_i$ to a Delzant polyhedron lie in $\Gamma$. 

\begin{lemma}[{\cite[Theorem 3.1.19]{CLS}}]\label{makeamanifold}
	The variety $M_P$ of Lemma \ref{makeavariety} is smooth if and only if $P$ is Delzant. 
\end{lemma}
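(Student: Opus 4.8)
The plan is to reduce the statement to the smoothness criterion for affine toric varieties via the normal fan of $P$. Recall that $M_P$ is the toric variety of the normal fan $\Sigma_P \subset \t$, whose cones are the normal cones $\sigma_Q$ of the faces $Q$ of $P$; it is covered by the affine charts $U_{\sigma} = \operatorname{Spec} \C[\sigma^\vee \cap \Gamma^*]$, and since smoothness is a local property, $M_P$ is smooth if and only if every $U_\sigma$ is smooth. By the standard criterion (\cite[Theorem 3.1.19]{CLS}), $U_\sigma$ is smooth exactly when $\sigma$ is a smooth cone, i.e. its primitive ray generators form part of a $\Z$-basis of $\Gamma$. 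The maximal cones of $\Sigma_P$ are precisely the full-dimensional normal cones $\sigma_v$ at the vertices $v$ of $P$ (a face $Q$ has $\dim \sigma_Q = n - \dim Q$, and $P$ is pointed since it has vertices), and every other cone of $\Sigma_P$ is a face of some $\sigma_v$. As a face of a smooth cone is again smooth, it suffices to test smoothness at the vertices. I emphasize that this reduction is insensitive to the non-compactness of $P$: for an unbounded polyhedron the fan $\Sigma_P$ merely fails to be complete — its support is governed by the recession cone $C(P)$ of Definition \ref{recessioncone} — but the correspondence between vertices and full-dimensional cones, and the cone-by-cone nature of the smoothness test, are unchanged from the polytope case.

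First I would make $\sigma_v$ explicit. If the facets of $P$ containing $v$ are $F_{i_1}, \dots, F_{i_m}$ with primitive inner normals $\nu_{i_1}, \dots, \nu_{i_m} \in \Gamma$, then $\sigma_v = \operatorname{Cone}(\nu_{i_1}, \dots, \nu_{i_m})$. Since $\sigma_v$ is full-dimensional we have $m \geq n$, and a full-dimensional smooth cone is necessarily simplicial with exactly $n$ primitive generators forming a $\Z$-basis; thus smoothness of $\sigma_v$ already forces exactly $n$ facets through $v$ and requires $(\nu_{i_j})_{j=1}^n$ to be a $\Z$-basis of $\Gamma$.

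The crux is then the duality between this basis condition and the edge condition of Definition \ref{Delzant}. At a vertex with exactly $n$ incident edges $e_j = v + \lambda_j \varepsilon_j$, the facet $F_i$ through $v$ is spanned by all edges except $e_i$, so the primitive inner normal $\nu_i$ satisfies $\langle \nu_i, \varepsilon_j \rangle = 0$ for $j \neq i$ and $\langle \nu_i, \varepsilon_i \rangle > 0$; consequently the primitive edge directions $(\varepsilon_j)$ and the primitive inner normals $(\nu_i)$ are, up to positive normalization, dual systems in $\t^*$ and $\t$, with $\langle \nu_i, \varepsilon_j\rangle \propto \delta_{ij}$. The key lattice-theoretic fact is that $(\varepsilon_j)$ is a $\Z$-basis of $\Gamma^*$ if and only if its dual basis $(\nu_i)$ is a $\Z$-basis of $\Gamma$ — a basis of a lattice has primitive dual-basis vectors, so both primitivity and the basis property transfer simultaneously. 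Combining with the previous paragraph: $P$ is Delzant $\iff$ at each vertex $(\varepsilon_j)$ is a $\Z$-basis of $\Gamma^*$ $\iff$ each $\sigma_v$ is smooth $\iff$ $M_P$ is smooth.

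Since this is \cite[Theorem 3.1.19]{CLS} transported to the symplectic conventions of the present paper, I do not expect a genuine analytic obstacle; the only real care is bookkeeping. The main points to verify are the duality computation $\langle \nu_i, \varepsilon_j\rangle \propto \delta_{ij}$ together with the mutual primitivity of dual lattice bases, and — the one item not present in the textbook polytope statement — the observation that the vertex-local analysis is unaffected by $P$ being an unbounded polyhedron, since smoothness is local and the maximal cones still lie over the vertices.
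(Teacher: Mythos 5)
The paper gives no proof of this lemma at all---it is quoted verbatim from \cite[Theorem 3.1.19]{CLS} (with \cite[Theorem 7.1.10]{CLS} supplying the polyhedron/normal-fan setup)---and your argument is exactly the standard one underlying that citation: $M_P$ is the toric variety of the normal fan, smoothness is tested cone-by-cone on the maximal cones over the vertices, and the duality $\langle \nu_i, \varepsilon_j \rangle \propto \delta_{ij}$ between primitive edge directions and primitive facet normals, together with the fact that dual bases of dual lattices transfer the basis property simultaneously, converts the smooth-cone criterion into the Delzant condition. Your reconstruction is correct as written, with the one implicit hypothesis (harmless here, since the polyhedra in this paper arise from manifolds with finite torus fixed-point set and hence are pointed) that $P$ actually has vertices: for a vertex-free full-dimensional polyhedron such as $P' \times \R$ the Delzant condition of Definition \ref{Delzant} is vacuous while $M_P \cong M_{P'} \times \C^*$ can be singular, so ``$P$ is pointed since it has vertices'' is the one spot where you assume what the paper's conventions, rather than your argument, guarantee.
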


Note that the Delzant condition \eqref{Delzant} is simpler than what one sometimes is forced to consider in other non-compact situations (see for example \cite{Ler, KarLer}). The difference is that in our setting, $M_P$ can be covered by holomorphic charts centered at the fixed points. In particular, if there is only one vertex of $P$, then $M_P \cong \C^n$. 

\begin{definition}\label{noncompacttoric}
 We say that a non-compact complex manifold $M$ is \emph{toric} if it is biholomorphic to the toric variety $M_P$ associated to a Delzant polyhedron $P \subset \t^*$. 
\end{definition}

Consequently, any toric manifold $M$ in this context satisfies $H^1(M) = 0$ and admits an effective and holomorphic $\Cstarn$-action with finite and nonempty fixed point set.

\newpage 

\subsection{K\"ahler metrics}

\begin{definition}\label{akmetric}
	Fix a K\"ahler class $\alpha \in \K_M \subset H^{1,1}(M)$. An \emph{AK metric} is then a K\"ahler metric $\omega \in \alpha$ which admits at least one hamiltonian potential $h: M \to \R$ which is proper and bounded below.  Fixing a base $\omega \in \alpha$, we set $\mathcal{H}_{\alpha,T} \subset C^\infty(M)$ to be the set of $\varphi$ such that $\omega_\varphi = \omega_0 + i\p\bp \varphi$ is an AK metric. 
\end{definition}
Note that clearly $\mathcal{H}_{\alpha,T}$ is convex as a subset of $C^\infty(M)$. 
\begin{lemma}[{\cite[Section 2.5]{uniqueness}}]\label{goodimage}
    Any $\omega \in \mathcal{H}_{\alpha,T}$ admits a moment map $\mu_\omega:M \to \t^*$ whose image is a Delzant polyhedron $P$, determined uniquely up to translation by $\alpha$. Moreover, $M$ is biholomorphic to $M_P$.
 \end{lemma}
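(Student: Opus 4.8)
The plan is to prove Lemma~\ref{goodimage}, which asserts that any AK metric $\omega \in \mathcal{H}_{\alpha,T}$ admits a moment map whose image is a Delzant polyhedron $P$ determined up to translation by the class $\alpha$, and that $M \cong M_P$. Let me sketch how I would approach each of the three assertions.

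First I would establish the existence of the moment map and the fact that its image is a polyhedron. By Definition~\ref{akmetric} there is at least one proper hamiltonian potential $h$ bounded below for some circle (or torus) generator; the assumption is that the full $\T$-action is hamiltonian. The standard Atiyah–Guillemin–Sternberg convexity theorem gives convexity of the image when $M$ is compact, so the main work is the non-compact adaptation. Here properness of $h$ is the key hypothesis: it ensures that sublevel sets $\{h \le c\}$ are compact, so one can run a convexity/connectedness argument on these exhausting compact pieces and take a limit. I would show the image $\mu_\omega(M)$ is closed and convex, and then argue it is \emph{locally polyhedral} by using the local normal form of the $\T$-action near each fixed point: since $M = M_P$ is toric with finite fixed-point set (as noted after Definition~\ref{noncompacttoric}), near each fixed point the action is linearizable and the moment map looks like the standard model $\R^n_+$-type image, whose inner normals lie in $\Gamma$. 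Patching these local models yields a globally polyhedral image with $\Z$-rational Delzant structure.

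Next I would identify the image with the Delzant polyhedron $P$ associated to $M$ and pin down the dependence on $\alpha$. The inner normals $\nu_i$ are determined by the combinatorics of the $\Cstarn$-action (the rays of the fan / facet data of $M_P$), hence are fixed once $M$ is fixed; what can vary are the constants $a_i$ in the presentation~\eqref{polydef}. These constants are exactly the ``symplectic areas''/periods encoded by the cohomology class $\alpha$. I would argue that $\p P$, i.e. the facet-defining constants $a_i$, are cohomological invariants: changing $\omega$ within $\alpha$ by $i\p\bp\varphi$ with $\varphi$ of appropriate decay changes the moment map by an exact shift that does not move the facets, while an overall translation in $\t^*$ corresponds to the freedom in choosing the additive normalization of the hamiltonian (equivalently, the choice of lift of the $\T$-action to a line bundle / the constant ambiguity in $h$). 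This is precisely where I would cite \cite[Section 2.5]{uniqueness} for the translation-invariant well-definedness of $P$; the point is that any two AK metrics in $\alpha$ have moment images differing by translation only.

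Finally, for the biholomorphism $M \cong M_P$, I would invoke Lemma~\ref{makeavariety} together with Lemma~\ref{makeamanifold}: the Delzant polyhedron $P$ produced above, being rational with inner normals in $\Gamma$ and satisfying the Delzant smoothness condition, determines a smooth quasiprojective toric variety $M_P$, and the equivariant symplectic/complex structure recovered from the moment data matches that of $M$ by the toric Delzant correspondence. The main obstacle I anticipate is the non-compact convexity argument in the first step: ordinary convexity theorems assume compactness, so one must carefully exploit properness and boundedness-below of $h$ to control the image and rule out pathological escape of the moment map at infinity. Concretely, the delicate point is showing that the exhaustion by compact sublevel sets produces a \emph{closed} convex polyhedral image (rather than, say, a convex set missing some limiting facets), and that the recession directions of this image are governed by the recession cone $C(P)$ of the underlying fan. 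I would lean on the properness hypothesis and the finite fixed-point structure to handle this, deferring the fine analytic estimates to the cited uniqueness reference.
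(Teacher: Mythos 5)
A preliminary point of comparison: the paper does not prove Lemma \ref{goodimage} at all --- it is imported verbatim from \cite[Section 2.5]{uniqueness}, so there is no in-paper argument to measure your sketch against, and your decision to ``defer the fine analytic estimates to the cited uniqueness reference'' in fact mirrors exactly what the paper does. Judged on its own terms, your outline follows the standard route for this kind of statement (non-compact convexity from properness, local normal forms for rationality and the Delzant condition, the toric correspondence of Lemmas \ref{makeavariety} and \ref{makeamanifold} for $M \cong M_P$), and is broadly consistent with the known proofs, which combine the Legendre-transform description of the dense orbit (Proposition \ref{guillemin-exact}, Lemma \ref{propernesslemma}) with the classification of non-compact symplectic toric manifolds with proper moment maps due to Karshon--Lerman.

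That said, two steps in your sketch are genuinely under-justified. (i) The convexity step: running a ``convexity/connectedness argument on the compact sublevel sets $\{h \le c\}$ and taking a limit'' is circular as phrased, because $\{h \le c\} = \mu_\omega^{-1}\left(\{\langle \cdot, b\rangle \le c\}\right)$, so its moment image is $\mu_\omega(M) \cap \{\langle \cdot, b \rangle \le c\}$, and knowing this is convex is equivalent to what you are trying to prove; moreover convexity theorems do not apply naively to the pieces, which are manifolds with boundary. The efficient fix is to note that properness and boundedness below of the \emph{single} potential $h = \langle \mu_\omega, b \rangle$ already forces the full moment map to be proper (for compact $K \subset \t^*$, $\mu_\omega^{-1}(K)$ is a closed subset of a compact sublevel set of $h$), after which the local--global convexity principle of Lerman--Meinrenken--Tolman--Woodward, or the Karshon--Lerman framework, gives convexity, closedness of the image, and connectedness of fibers in one stroke; your local-normal-form argument then correctly upgrades ``convex'' to ``rational polyhedral and Delzant.'' (ii) The translation-uniqueness step: you argue that replacing $\omega$ by $\omega + i\p\bp\varphi$ ``with appropriate decay'' does not move the facets, but Definition \ref{akmetric} imposes no decay whatsoever on $\varphi$ --- membership in $\mathcal{H}_{\alpha,T}$ only requires $\omega_\varphi$ to be AK, and $\varphi$ can a priori grow at infinity. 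So the facet constants $a_i$ cannot be pinned down by an asymptotic argument on potentials; they must be identified cohomologically (e.g.\ as pairings of $\alpha$ with invariant cycles, or through the uniqueness part of the Karshon--Lerman classification), and this is precisely the content one needs from \cite{uniqueness} rather than a detail that can be waved through. The remaining assertions --- existence of the moment map (which here also uses $H^1(M) = 0$, noted after Definition \ref{noncompacttoric}, and the normalization \eqref{normalizedgradientmomentmap}), and the biholomorphism $M \cong M_P$ by matching the fan of $M$ with the normal fan of the image --- are handled correctly in your sketch.
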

We can always choose a representative polyhedron $P$ such that the origin $0 \in \t^*$ is contained in the interior.  The restriction of a $\T$-invariant K\"ahler metric on $M$ to the dense orbit $\Cstarn \subset M$ has a characterization due to Guillemin. 

\begin{prop}[{\cite[Theorem 4.1]{Guil}}]\label{guillemin-exact} 
	Let $\omega$ be any $\T$-invariant K\"ahler form on $\Cstarn$. Then the action is hamiltonian with respect to $\omega$ if and only if there exists a $\T$-invariant potential $\phi$ such that $\omega = 2i\p\bp \phi$. 
\end{prop}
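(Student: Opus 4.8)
The plan is to work in the global logarithmic coordinates on $\Cstarn$. Writing $z_j = e^{\xi_j + i\theta_j}$, the functions $\xi_j = \log|z_j|$ are globally defined on $\Cstarn$ while the angular forms $d\theta_j$ are globally defined closed $1$-forms, and the real torus $\T = T^n$ acts by translation in the $\theta_j$. A $\T$-invariant tensor is thus one whose coefficients depend only on $\xi = (\xi_1, \dots, \xi_n)$. Since $\Cstarn$ is homotopy equivalent to $T^n$, the groups $H^1$ and $H^2$ are nontrivial (generated by the classes $[d\theta_j]$ and $[d\theta_j \wedge d\theta_k]$), so the Hamiltonian property is a genuine restriction, and the heart of the proof is to detect it cohomologically.

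For the easy direction, suppose $\omega = 2i\p\bp\phi$ with $\phi$ invariant. A direct computation in these coordinates gives $\omega = \sum_{j,k} \phi_{jk}\, d\xi_j \wedge d\theta_k$, where $\phi_{jk} = \p^2\phi/\p\xi_j\p\xi_k$. Contracting with the generator $X_\nu = \sum_j \nu_j \p_{\theta_j}$ of the $\T$-action in the direction $\nu \in \t$, one finds $\iota_{X_\nu}\omega = -d h_\nu$ with $h_\nu = \langle d_\xi\phi, \nu\rangle$; hence $\mu = d_\xi\phi$ is a moment map and the action is Hamiltonian.

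For the converse I would first write the most general invariant real $2$-form as $\omega = \sum A_{jk} d\xi_j\wedge d\xi_k + \sum B_{jk} d\xi_j\wedge d\theta_k + \sum C_{jk}d\theta_j\wedge d\theta_k$ with coefficients functions of $\xi$. Imposing that $\omega$ is of type $(1,1)$ (using $J^*d\xi_j = -d\theta_j$ and $J^*d\theta_j = d\xi_j$) forces $A = C$ and $B$ symmetric, while closedness $d\omega = 0$ forces $C$ to be constant together with the integrability relations $\p_{\xi_l}B_{jk} = \p_{\xi_j}B_{lk}$. The key observation is that $\iota_{X_j}\omega$ is always closed; its $d\xi$-part is exact by the integrability relations, so its periods over the $1$-cycles of $T^n$ compute $[\iota_{X_j}\omega] = \sum_m C_{jm}[d\theta_m]$. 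Thus the action is Hamiltonian if and only if $C \equiv 0$, which by the $(1,1)$-relation also gives $A \equiv 0$. We are then left with $\omega = \sum_{j,k} B_{jk}\, d\xi_j \wedge d\theta_k$ with $B$ symmetric and satisfying the integrability relations, and these are exactly the conditions for $B$ to be a Hessian: integrating the closed $1$-form $\sum_j B_{jk}\,d\xi_j$ on the contractible space $\R^n$ produces functions $\psi_k$ with $\p_{\xi_j}\psi_k = B_{jk}$, and symmetry of $B$ makes $\sum_k \psi_k\, d\xi_k$ closed, so a second integration yields $\phi$ with $\phi_{jk} = B_{jk}$. Pulling $\phi$ back as an invariant function on $\Cstarn$ gives $\omega = 2i\p\bp\phi$.

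The main obstacle is the converse, and specifically the cohomological step: one must argue carefully that Hamiltonicity is equivalent to the vanishing of the purely angular component $C$ of $\omega$, which is what rules out the nonzero classes in $H^2(\Cstarn)$. The remaining analytic content---passing from the integrability relations to a global invariant potential by two successive integrations---is routine precisely because the reduced $\xi$-space is contractible, but some care is needed because the angular coordinates $\theta_j$ are only defined modulo $2\pi\Z$, so all global statements must be phrased in terms of the globally defined objects $\xi_j$ and $d\theta_j$.
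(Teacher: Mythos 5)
Your proof is correct, and it is essentially the argument of the cited reference: the paper itself offers no proof, quoting the statement directly from Guillemin, whose Theorem~4.1 is proved by exactly this scheme --- write an invariant $(1,1)$-form in the global log-coordinates $(\xi,\theta)$, use closedness to make the purely angular coefficients $C$ constant, identify $[\iota_{X}\omega]$ with $\sum_m C_{jm}[d\theta_m]$ in $H^1(\Cstarn)$ so that Hamiltonicity forces $C\equiv 0$ (hence $A\equiv 0$), and then integrate the symmetric, curl-free matrix $B$ twice over the contractible $\xi$-space to produce the invariant potential. The only cosmetic point is a convention-dependent factor of $2$ in the period computation (depending on whether $C$ is summed over all pairs or over $j<k$), which does not affect the conclusion.
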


Hence, any AK metric on $M$ can be written $\omega= 2i\p\bp \phi$ on the dense orbit.  We fix once and for all such a basis $(Y_1, \dots, Y_n)$ for $\t$. This induces a background coordinate system $(\xi^1, \dots, \xi^n)$ on $\t$. We use the natural inner product on $\t$ to identify $\t \cong \t^*$ and thus can also identify $\t^* \cong \R^n$. For clarity, we will denote the induced coordinates on $\t^*$ by $(x^1, \dots, x^n)$. Let $(z_1, \dots, z_n)$ be the natural coordinates on $\Cstarn$ as an open subset of $\C^n$. There is a natural diffeomorphism $\text{Log}: \Cstarn \to \t \times \T$, which provides a one-to-one correspondence between $\T$-invariant smooth functions on $\Cstarn$ and smooth functions on $\t$. Explicitly, $\text{Log}(z_1, \dots, z_n) = (\log(r_1), \dots, \log(r_n), \theta_1, \dots, \theta_n)$, where $z_j = r_j e^{i \theta_j}$. Given a function $F(\xi)$ on $\t$, we can extend $F$ trivially to $\t \times \T$ and pull back by Log to obtain a $\T$-invariant function on $\Cstarn$. Clearly, any $\T$-invariant function on $\Cstarn$ can be written in this form.

In particular, we have that $\phi$ itself is determined by a smooth function on $\t$, and so we henceforth write $\phi = \phi(\xi)$.  Writing $w_i = \log(z_i)$, we see that

\begin{equation*}
	\omega = 2i\frac{\p^2 \phi}{ \p w^i \p\bar{w}^j} dw_i \wedge d\bar{w}_j = \frac{\p^2 \phi}{ \p \xi^i \p\xi^j} d\xi^i \wedge d\theta^j, 
\end{equation*}
and moreover the metric $g$ corresponding to $\omega$ is given on $\t \times \T$ by 

\begin{equation}\label{metricexpression-complexcoord}
	g = \phi_{ij}(\xi)d\xi^i d\xi^j + \phi_{ij}(\xi)d\theta^i d\theta^j.
\end{equation}
Since the hessian of $\phi$ is thus positive-definite, it follows that $\phi$ is strictly convex on $\t$, so that in particular $\nabla \phi$ is a diffeomorphism onto its image. In fact:

\begin{lemma} \label{propernesslemma}
	Let $\phi$ be any smooth and strictly convex function on $\R^n$. Let $\Omega = \nabla\phi(\R^n)$. Then if $0 \in \Omega$, there exists a $C  >0$ such that 	
	\begin{equation} \label{properness}
		\phi(\xi) \geq C^{-1}|\xi| - C.
	\end{equation}
\end{lemma}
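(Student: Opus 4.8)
The plan is to exploit the hypothesis $0 \in \Omega = \nabla\phi(\Omega')$ together with the \emph{openness} of $\Omega$ in order to produce a uniform family of supporting hyperplanes whose slopes fill out an entire sphere. Since $\phi$ is smooth and strictly convex, its Hessian is positive definite, so $\nabla\phi$ is a local diffeomorphism; strict convexity also forces the gradient to be strictly monotone, hence injective, so that $\nabla\phi:\Omega' \to \Omega$ is a diffeomorphism onto the open set $\Omega$. Because $0 \in \Omega$ and $\Omega$ is open, there is a $\delta > 0$ with $\overline{B_\delta(0)} \subset \Omega$; in particular the sphere $S = \{\delta u : |u| = 1\}$ is a compact subset of $\Omega$, and its preimage $K := (\nabla\phi)^{-1}(S)$ is a compact subset of $\Omega'$.

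Next I would invoke the supporting-hyperplane inequality. For any $\xi \in \Omega'$ and any $\eta \in \Omega'$, convexity of $\phi$ on the convex domain $\Omega'$ gives
\[ \phi(\xi) \geq \phi(\eta) + \langle \nabla\phi(\eta), \, \xi - \eta\rangle. \]
Given $\xi \neq 0$, I choose the unit vector $u = \xi/|\xi|$ and set $\eta = \eta_u := (\nabla\phi)^{-1}(\delta u) \in K$, so that $\nabla\phi(\eta_u) = \delta u$. Substituting and using $\langle \delta u, \xi\rangle = \delta|\xi|$ yields
\[ \phi(\xi) \geq \delta|\xi| + \big(\phi(\eta_u) - \delta\langle u, \eta_u\rangle\big) \geq \delta|\xi| + \big(\phi(\eta_u) - \delta|\eta_u|\big), \]
where the last step uses Cauchy--Schwarz.

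It remains to bound the bracketed term from below \emph{uniformly} in $u$, and this is where the compactness of $K$ is essential. The function $\eta \mapsto \phi(\eta) - \delta|\eta|$ is continuous on $\Omega'$, hence attains a finite minimum $-C'$ on the compact set $K$; thus $\phi(\eta_u) - \delta|\eta_u| \geq -C'$ for every unit vector $u$. Combining, $\phi(\xi) \geq \delta|\xi| - C'$ for all $\xi \in \Omega' \setminus \{0\}$ (and the same bound at $\xi = 0$, if $0 \in \Omega'$, follows by inserting any fixed $u$ in the inequality above). Choosing $C = \max\{\delta^{-1}, C'\}$ then gives $\phi(\xi) \geq C^{-1}|\xi| - C$, as desired. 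The only real subtlety — the main obstacle — is exactly this uniform control: a single supporting hyperplane at the minimizer yields only the trivial bound $\phi \geq \min\phi$, and it is the \emph{interior} hypothesis $0 \in \Omega$ that upgrades this to genuine linear coercivity by allowing the supporting slope $\delta u$ to range over a full sphere.
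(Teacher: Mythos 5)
Your proof is correct, and in fact the paper states Lemma \ref{propernesslemma} without proof, so there is no argument in the source to compare against; yours is the standard supporting-hyperplane argument one would supply, and the key point you identify — that $0$ lying in the image of the gradient lets the supporting slopes $\delta u$ range over a full sphere, upgrading a single hyperplane bound to linear coercivity — is exactly the right one.

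One small logical overreach to fix: strict convexity of a smooth function does \emph{not} imply the Hessian is positive definite (consider $\phi(x)=x^4$ at the origin), so your opening claim that $\nabla\phi$ is a local diffeomorphism is unjustified as stated. This does not damage the proof: strict convexity still gives the monotonicity inequality $\langle \nabla\phi(x)-\nabla\phi(y),\,x-y\rangle>0$, hence injectivity of $\nabla\phi$, and then Brouwer's invariance of domain shows $\nabla\phi$ is a homeomorphism from $\Omega'$ onto the (automatically open) set $\Omega$; this yields both the closed ball $\overline{B_\delta(0)}\subset\Omega$ and the compactness of $K=(\nabla\phi)^{-1}(S)$ that your argument uses. (In the paper's applications $\phi$ is a K\"ahler potential, so the Hessian is positive definite anyway and your original sentence is harmless there.) Alternatively, you could avoid compactness of $K$ entirely by taking a finite $\tfrac{\delta}{2}$-net $u_1,\dots,u_m$ of the unit sphere, picking one preimage $\eta_i$ of each $\delta u_i$, and using $\phi(\xi)\geq \max_i\bigl(\phi(\eta_i)+\delta\langle u_i,\xi\rangle-\delta\langle u_i,\eta_i\rangle\bigr)\geq \tfrac{\delta}{2}|\xi|-C'$, which needs only that $\Omega$ is open and each $\delta u_i$ has some preimage.
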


Considering that $\nabla \phi: \t \to \t^*$ is a diffeomorphism onto its image, we can therefore use it to change coordinates and study the geometry of the dense orbit $\Cstarn \subset M$ directly on $\Omega = \nabla \phi(\t) \subset \t^*$. Under this change, the geometry is now encoded by the \emph{Legendre transform} $u$ of $\phi$:

\begin{equation} \label{legendretransdef}
	\phi(\xi) + u(x) =  \langle \xi, x \rangle,
\end{equation}
where $x = \nabla \phi(\xi)$. The metric $g$ is given on $\Omega$ by 
\begin{equation*}
	g = u_{ij}(x)dx^i dx^j + u^{ij}(x)d\theta_i d\theta_j.
\end{equation*} 
Thus the metric structure is determined by the hessian of the function $u$, and so by analogy with the complex case this function is sometimes called the \emph{symplectic potential} for $g$. 

\begin{lemma} \label{legendreproperties}
	Let $\t$ be a real vector space and $\phi$ be a smooth and strictly convex function on $\t$. Then there is a unique function $L(\phi) = u$ defined on $\Omega = \nabla\phi(\Omega') \subset \t^*$ by \eqref{legendretransdef}:
	\begin{equation*} 
		\phi(\xi) + u(x) = \langle \xi, x \rangle
        \end{equation*}
 for $x = \nabla \phi (\xi)$. The function $u$ is smooth and strictly convex on $\Omega$. Moreover, $L$ has the following properties:

\begin{enumerate}
	\item $L(L(\phi)) = \phi $,
	\item $\nabla \phi: \Omega' \to \Omega$ and $\nabla u: \Omega \to \Omega'$ are inverse to each other,
	\item $\textnormal{Hess}_{\xi}\, \phi(\nabla u(x)) = \textnormal{Hess}\, u^{-1}(x) $,
	\item $L((1-t)\phi + t \phi') \leq (1-t)L(\phi) + t L(\phi')$.
\end{enumerate}

\end{lemma}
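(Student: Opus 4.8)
The plan is to establish well-definedness and smoothness first, then read off properties (1)--(3) directly from the inverse function theorem applied to $\nabla\phi$, and finally prove the convexity statement (4) by identifying $L(\phi)$ with its variational (Legendre--Fenchel) form and using that the supremum of a sum is dominated by the sum of the suprema.

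First I would verify that $\nabla\phi:\Omega' \to \Omega$ is a diffeomorphism onto its image. Smoothness together with strict convexity gives that $\textnormal{Hess}\,\phi$ is everywhere positive definite, so $\nabla\phi$ has invertible Jacobian and is a local diffeomorphism; injectivity follows from strict convexity via $\langle \nabla\phi(\xi_1) - \nabla\phi(\xi_2),\, \xi_1 - \xi_2\rangle > 0$ for $\xi_1 \neq \xi_2$. Hence $\Omega = \nabla\phi(\Omega')$ is open and $(\nabla\phi)^{-1}$ is smooth, which lets me define $u(x) = \langle (\nabla\phi)^{-1}(x),\, x\rangle - \phi((\nabla\phi)^{-1}(x))$. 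This $u$ is manifestly smooth and is the unique function satisfying \eqref{legendretransdef}.

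Next, differentiating $u(x) = \langle \xi(x),\, x\rangle - \phi(\xi(x))$ with $\xi(x) = (\nabla\phi)^{-1}(x)$ and using $x = \nabla\phi(\xi)$ to cancel the derivative-of-$\xi$ terms yields $\nabla u(x) = \xi(x) = (\nabla\phi)^{-1}(x)$, which is property (2). Property (3) then follows by differentiating $\nabla u \circ \nabla\phi = \textnormal{id}$ via the chain rule, giving $\textnormal{Hess}\,u(\nabla\phi(\xi)) \cdot \textnormal{Hess}\,\phi(\xi) = \Id$; this simultaneously shows $\textnormal{Hess}\,u = (\textnormal{Hess}\,\phi)^{-1}$ is positive definite, so $u$ is strictly convex. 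For property (1) I apply the same construction to $u$: since $\nabla u = (\nabla\phi)^{-1}$ has image $\Omega'$, the substitution $L(u)(\xi) = \langle \xi,\, \nabla\phi(\xi)\rangle - u(\nabla\phi(\xi)) = \phi(\xi)$ gives $L(L(\phi)) = \phi$.

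For property (4), the key step is to rewrite $L(\phi)$ in its variational form: for each $x \in \Omega$ the map $\xi \mapsto \langle \xi,\, x\rangle - \phi(\xi)$ is strictly concave with unique critical point $\xi = (\nabla\phi)^{-1}(x)$, so this critical point is the global maximum and $L(\phi)(x) = \sup_{\xi \in \Omega'}(\langle \xi,\, x\rangle - \phi(\xi))$. Once this identification is secured, (4) is immediate:
\[ L((1-t)\phi + t\phi')(x) = \sup_{\xi}\big[(1-t)(\langle \xi, x\rangle - \phi(\xi)) + t(\langle \xi, x\rangle - \phi'(\xi))\big] \leq (1-t)L(\phi)(x) + t\,L(\phi')(x). \]
The main obstacle I anticipate is purely the bookkeeping around domains in (4): the three transforms $L(\phi)$, $L(\phi')$, and $L((1-t)\phi + t\phi')$ are a priori defined on different image sets, so one must assert the inequality only where all three are finite, which is exactly what the supremum characterization (valid, possibly as $+\infty$, for all $x$) cleanly handles. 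The convexity computation itself is a single line; the only real content is verifying that the definition via $x = \nabla\phi(\xi)$ agrees with the Legendre--Fenchel supremum on $\Omega$.
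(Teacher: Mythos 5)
Your proof is correct, and there is nothing in the paper to compare it against: the paper states Lemma \ref{legendreproperties} without proof, as classical background on the Legendre transform. Your route --- inverse function theorem for $\nabla\phi$ to get well-definedness and smoothness, the cancellation argument for $\nabla u = (\nabla\phi)^{-1}$, the chain rule for the Hessian identity, involutivity by symmetry of the construction, and the Legendre--Fenchel supremum characterization for property (iv) --- is exactly the canonical argument, and your handling of the domain mismatch in (iv) via the extended-real-valued supremum is the right way to make the stated inequality precise.

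One imprecision is worth fixing. Your opening claim that smoothness together with strict convexity forces $\textnormal{Hess}\,\phi$ to be positive definite everywhere is false in general: $\phi(\xi) = \xi^4$ on $\R$ is smooth and strictly convex with vanishing second derivative at the origin, and there the dual potential $u(x) = 3(x/4)^{4/3}$ fails to be $C^2$ at $x = 0$, so item (iii) of the lemma would not even make sense. The hypothesis ``strictly convex'' must therefore be read in the strong sense of positive-definite Hessian --- the standard convention in this toric literature, and consistent with how the paper uses the lemma: immediately before its statement the paper observes that the Hessian of a toric K\"ahler potential is positive definite (deducing strict convexity \emph{from} that), and the lemma is only ever applied to such potentials. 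Under that reading, every step of your argument --- injectivity of $\nabla\phi$ from the monotonicity integral on the convex domain $\Omega'$, openness of $\Omega$ and smoothness of the inverse, and the identification of $L(\phi)(x)$ with $\sup_{\xi \in \Omega'}\left(\langle \xi, x\rangle - \phi(\xi)\right)$ via the unique interior critical point of a strictly concave function --- goes through and the proof is complete.
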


It's clear that the Euclidean gradient $\nabla \phi: \t \times \T \to \t^*$ satisfies 
	\begin{equation*}
		d  \langle \nabla \phi(\xi), b \rangle = - i_{Y_b} \omega
	\end{equation*}
for all $b \in \t$ where $Y_b = b^i \frac{\p}{\p \theta^i}$. Thus, either by adding a constant in $\t$ to $\mu$ or by adding a linear function to $\phi$, we can always ensure that 
\begin{equation}\label{normalizedgradientmomentmap}
    \mu_\omega|_{\Cstarn} = \nabla \phi.
\end{equation} 
Under this choice of normalization, any further change to $\phi$ given by the addition of a linear function will have the effect of translating the moment map $\mu$. If we assume that $\omega$ is an AK metric, then by Lemma \ref{goodimage} $\nabla \phi(\t)$ is equal to a Delzant polyhedron $P$ and $(M,J)$ is biholomorphic to $M_P$. In this setting, there is a natural K\"ahler metric on $M$ \cite{Del, Guil, BGL} whose symplectic potential is given by 
\begin{equation}\label{guilleminpotential}
    u_P = \frac{1}{2} \sum_{i = 1}^N (L_{\nu_i}(x) + a_i) \log(L_{\nu_i}(x) + a_i),
\end{equation}
where $P$ is presented as in \eqref{polydef}. 

For any AK metric $\omega$, let $\mathbf{H} = (H_{ij}) \in C^\infty(\t^* \otimes \t^*)$ be defined by 
    \begin{equation}\label{Hdef}
		H_{ij} = g(Y_i, Y_j),
	\end{equation} 
for a basis $Y_1, \dots, Y_n$ of $\t$. From \eqref{metricexpression-complexcoord} and Lemma \ref{legendreproperties} we see that at any point in the dense orbit we have that $\H = \textnormal{Hess}^{-1}(u)$. Then $\mathbf{G} = (G_{ij})$ is defined to be
	\begin{equation}\label{Gdef}
		\G = \H^{-1},
	\end{equation} 
so that $\G = \textnormal{Hess}(u)$ on the dense orbit. 
    
In \cite{uniqueness}, it was observed that the boundary conditions classically known to be satisfied by the symplectic potential in the compact setting \cite{ACGT} also hold here:

\begin{prop}[{\cite[Proposition 2.17]{uniqueness}, \cite[Proposition 1]{ACGT}}]\label{boundaryconditions}
	Let $\mu_\omega: M \to \t^*$ be the moment map associated to an AK metric $\omega \in \alpha$, whose image is equal to Delzant polyhedron $P$ by Lemma \ref{goodimage}. Then the symplectic potential $u_\omega$ associated to $\omega$ defined via the Legendre transform as above satisfies $u_\omega - u_P \in C^\infty(\overline{P})$, where $u_P$ denotes the Guillemin potential \eqref{guilleminpotential}. Moreover, the data $\H$ given by \eqref{Hdef} satisfies: 
    \begin{itemize}
        \item $\H$ is the restriction to $P$ of a smooth $C^\infty(\t^* \otimes \t^*)$-valued function on $\overline{P}$.
        \item For any point $y$ contained in a facet $F$ of $P$ with inner normal $\nu_F$, we have 
         \[ \H_y(\nu_F, \, \cdot ) = 0,  \hspace{.3in} d\H_y(\nu_F, \nu_F) = 2 \nu_F. \] 
         \item for any point $y$ contained in a face $F$ (of any codimension), the restriction of $\H_y(\cdot, \, \cdot)$ to $(\t/\t_F)^*$ is positive-definite. 
    \end{itemize}
\end{prop}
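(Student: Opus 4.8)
The plan is to exploit the fact that all three assertions are \emph{local} statements in a neighborhood of a point of $\p P$, and to reduce the analysis, via the Delzant condition, to an explicit standard model where everything can be computed directly. On the open interior $P$ there is nothing to prove: by Lemma \ref{legendreproperties} and the strict convexity of $u_\omega$, the Hessian $\G = \textnormal{Hess}(u_\omega)$ is smooth and positive-definite, hence so is its inverse $\H$ by \eqref{Gdef}. Thus the entire content lies in the behavior of $u_\omega$ and $\H$ as $x$ approaches a face $F$ of $P$. Since each condition is pointwise at a point $y$ in the relative interior of $F$, it suffices to work in a small neighborhood $U$ of $y$ meeting only the facets containing $F$.

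First I would normalize the local picture. If $F$ has codimension $k$, then exactly $k$ facets with inner normals $\nu_1, \dots, \nu_k$ meet along $F$, and the Delzant condition guarantees that $\nu_1, \dots, \nu_k$ extend to a $\Z$-basis of $\Gamma$. Applying the corresponding element of $\textnormal{GL}(n, \Z)$ --- which acts by a toric biholomorphism on $M_P$ and by the dual linear substitution on the variables $\xi, x$ and hence on $\phi, u$, preserving all of the asserted identities --- we may assume $\nu_i = e_i$ for $i \leq k$ and that $F = \{x_1 = \dots = x_k = 0\}$ inside $U$. In these coordinates $U \cap P$ is a neighborhood of $y$ in $\R^k_{\geq 0} \times \R^{n-k}$, the associated model toric variety is the smooth (by Lemma \ref{makeamanifold}) affine chart $\C^k \times (\C^*)^{n-k}$, and $F$ corresponds to the orbit $\{0\} \times (\C^*)^{n-k}$.

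Next I would verify the conclusions for the Guillemin model \eqref{guilleminpotential}. In the normalized coordinates one has $u_P = \tfrac{1}{2}\sum_{i=1}^k x_i \log x_i + (\textnormal{smooth across } F)$, so a direct computation gives $\textnormal{Hess}(u_P) = \textnormal{diag}\bigl(\tfrac{1}{2x_1}, \dots, \tfrac{1}{2x_k}, \ast\bigr) + O(1)$ near $y$; inverting, $\H_P = \textnormal{diag}(2x_1, \dots, 2x_k, \ast)$ extends smoothly to $\overline{P}$, satisfies $\H_P(e_i, \cdot)|_F = 0$ and $d\H_P(e_i, e_i) = 2\,dx_i = 2\nu_F$ on $F$ under the identification $(\t^*)^* \cong \t$, and restricts to a positive-definite form on the transverse space $(\t/\t_F)^*$. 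This confirms the three bullet points for $\H_P$, the factor $\tfrac12$ in \eqref{guilleminpotential} being precisely what produces the coefficient $2$. To transfer these properties to the given metric, recall that by Lemma \ref{goodimage} $(M,J)$ is biholomorphic to $M_P$, and near the orbit over $F$ it is $\T$-equivariantly biholomorphic to a neighborhood of $\{0\}\times(\C^*)^{n-k}$ in the model; since $\omega$ is a genuine smooth K\"ahler metric on $M$, it restricts there to a smooth $\T$-invariant K\"ahler form. The classical characterization of smoothness for toric K\"ahler metrics (the local statement underlying \cite[Proposition 1]{ACGT}) asserts that such a form extends smoothly across the orbit exactly when its symplectic potential differs from $u_P$ by a function smooth up to $\overline{P}$, in which case $\H = (\textnormal{Hess}\, u_\omega)^{-1}$ inherits the same boundary behavior as $\H_P$. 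Applying this locally near each face yields simultaneously $u_\omega - u_P \in C^\infty(\overline{P})$ and the three boundary conditions on $\H$.

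I expect the main obstacle to be exactly this last regularity transfer: proving that the mere smoothness of $\omega$ on $M_P$ forces the \emph{singular} parts of $u_\omega$ and $u_P$ to agree to infinite order at $\p P$ --- so that their difference is smooth up to the boundary and $\H$ acquires the sharp derivative identity $d\H_y(\nu_F, \nu_F) = 2\nu_F$ and not merely $\H_y(\nu_F, \cdot) = 0$. Because every step is purely local near a point of a face, the unboundedness of $P$ and the non-compactness of $M$ play no role here; it is the Delzant hypothesis, via the standard smooth local model $\C^k \times (\C^*)^{n-k}$, that reduces the claim to the established compact toric case of \cite{ACGT}.
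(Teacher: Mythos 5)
There is nothing to compare your argument against inside the paper: Proposition \ref{boundaryconditions} is stated without proof, being imported directly from \cite[Proposition 2.17]{uniqueness} and \cite[Proposition 1]{ACGT}. Judged on its own terms, your outline is a faithful reconstruction of the standard derivation behind those citations: all three assertions are indeed local at boundary points, the $\text{GL}(n,\Z)$ normalization of the $k$ facet normals through a face is legitimate (with one small point worth making explicit: for unbounded $P$ a face need not contain a vertex, so Definition \ref{Delzant} as stated does not directly give you the $\Z$-basis extension property; it instead follows from smoothness of $M_P$, which forces every cone of the normal fan to be smooth), the computation for the Guillemin model \eqref{guilleminpotential} in the chart $\C^k \times (\C^*)^{n-k}$ is correct, and you are right that unboundedness of $P$ plays no role in boundary regularity. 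The substantive caveat is the one you flag yourself: your ``regularity transfer'' step --- that smoothness of the $\T$-invariant K\"ahler form across the orbit over $F$ holds exactly when $u_\omega - u_P$ is smooth up to $\overline{P}$, with $\H$ inheriting the vanishing, the first-derivative identity $d\H_y(\nu_F,\nu_F) = 2\nu_F$, and transverse positivity --- is precisely the content of \cite[Proposition 1]{ACGT}, so what you have written is a correct reduction of the noncompact statement to the known compact-type local model rather than a self-contained proof. Since \cite[Proposition 2.17]{uniqueness} is exactly this localization of the ACGT boundary conditions to AK metrics over unbounded Delzant polyhedra, your proposal is in substance the same move the paper makes by citation, fleshed out with the correct local normalizations; it is sound as a reduction, and incomplete only in the sense that the paper's own treatment is.
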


\begin{remark}
 One can also consider a more general framework, where we allow arbitrary $C^\infty(\t^* \otimes \t^*)$-valued functions $\H$ satisfying the conditions above which are not necessarily given by the inverse hessian of a function. This will then give rise to \emph{almost} K\"ahler structures on $M$, i.e. to almost complex structures $J_{\H}$ on $M$ compatible with a fixed symplectic form $\omega$ but which are not necessarily integrable. This was in fact the original generality treated in \cite[Proposition 1]{ACGT} (see also \cite{Lejmi-almost, Legendre-almost}).  Many of the results here could surely be extended to the almost K\"ahler setting. In this paper, however, we will focus on the integrable case, so that $\H$ is indeed equal to the inverse hessian of a function.
\end{remark}

\begin{lemma}\label{gradexpressionlemma}
Let $M$ be toric with AK metric $\omega$ and moment image $P \subset \t^*$. Let $F$ be a smooth function on the interior of $P$. Then we have that for any $p$ lying in the dense orbit, we have 
\begin{equation}\label{gradexpression}
	| \nabla^{g}F(\mu_{\omega})|^2_{g}(p) = H_{kl}\frac{\p F}{\p x^i} \frac{\p F}{\p x^j}. 
\end{equation}
\end{lemma}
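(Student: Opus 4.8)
The plan is to compute the left-hand side of \eqref{gradexpression} on the dense orbit $\Cstarn \subset M$, where everything is explicit, by passing to the symplectic coordinates $x = \mu_\omega$ furnished by the Legendre transform and using that $F(\mu_\omega)$ is $\T$-invariant.

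First I would recall that, in the coordinates $(x,\theta)$ on $\Cstarn$, the metric $g$ takes the block-diagonal form
\[ g = u_{ij}(x)\,dx^i dx^j + u^{ij}(x)\,d\theta_i d\theta_j, \]
where $u = u_\omega$ is the symplectic potential, $u_{ij} = \textnormal{Hess}(u)$ and $u^{ij} = \textnormal{Hess}(u)^{-1}$. In these coordinates the pullback $F(\mu_\omega)$ is literally the function $F(x)$, which depends only on the $x$-variables and is constant along the torus directions $\p/\p\theta^i$.

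The key step is the elementary fact that, for a block-diagonal metric, the squared gradient norm of a function that is independent of $\theta$ only involves the inverse of the $x$-block. Thus
\[ |\nabla^{g}F(\mu_\omega)|^2_g = (u_{ij})^{-1}\,\frac{\p F}{\p x^i}\frac{\p F}{\p x^j} = u^{ij}\,\frac{\p F}{\p x^i}\frac{\p F}{\p x^j}. \]
To conclude I would invoke the identity $\H = \textnormal{Hess}^{-1}(u)$ established just after \eqref{Gdef}, that is $H_{ij} = u^{ij}$ on the dense orbit, which turns the right-hand side into $H_{ij}\frac{\p F}{\p x^i}\frac{\p F}{\p x^j}$, as claimed in \eqref{gradexpression}.

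There is no real obstacle beyond this bookkeeping: the entire content sits in the block-diagonal form of $g$ and the definition \eqref{Gdef} of $\H$. If one prefers not to change coordinates, the same conclusion follows from a single application of the chain rule in the complex coordinates $(\xi,\theta)$, where by \eqref{metricexpression-complexcoord} both the $\xi\xi$- and $\theta\theta$-blocks of $g$ equal $\textnormal{Hess}(\phi)$; since $x^k = \p\phi/\p\xi^k$ one has $\p_{\xi^i}(F\circ\nabla\phi) = \phi_{ki}\,\p F/\p x^k$, and the contraction $\phi^{ij}\phi_{ki}\phi_{lj} = \phi_{kl}$ together with $H_{kl} = \phi_{kl}$ (immediate from \eqref{Hdef} and \eqref{metricexpression-complexcoord}) again yields \eqref{gradexpression}.
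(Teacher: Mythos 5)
Your proposal is correct and essentially matches the paper's proof: the paper computes in the log-complex coordinates $(\xi,\theta)$ via the chain rule $\phi^{il}\p_{\xi^i} = \p_{x^l}$ from Lemma \ref{legendreproperties}, which is exactly your closing alternative, while your primary route simply performs the same Legendre change of variables up front by quoting the symplectic-coordinate form $g = u_{ij}\,dx^i dx^j + u^{ij}\,d\theta_i d\theta_j$ and reading off $|\nabla^g F|^2_g = u^{ij}F_iF_j = H_{ij}F_iF_j$. Both rest on the same two ingredients --- the block-diagonal toric structure of $g$ and the identity $\H = \textnormal{Hess}^{-1}(u)$ on the dense orbit --- so there is no substantive difference.
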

\begin{proof}
As usual, write $\omega = 2i\p\bp \phi$ on the dense orbit with $\phi$ normalized such that $\mu_\omega = \nabla \phi$.  We calculate
\begin{equation*}
 	| \nabla^{g}F(\mu_{\omega})|^2_{g}(p) = \phi_{kl} \phi^{il} \phi^{kj} \frac{\p }{\p \xi^i} F(\nabla \phi) \frac{\p }{\p \xi^j} F(\nabla \phi). 
\end{equation*}
Using the properties of the Legendre transform (Lemma \ref{legendreproperties}), we see that 
\begin{equation*}
	\phi^{il} \frac{\p }{\p \xi^i} = \frac{\p }{\p x^l},
\end{equation*}
which consequently gives \eqref{gradexpression}. 

\end{proof}

\subsection{The $v$-scalar curvature}\label{section-vscal}

The principal object of study in this paper will be the weighted scalar curvature, introduced by Lahdili in \cite{LahdiliWeighted}. Suppose that $M$ is an arbitrary complex manifold with an effective holomorphic $\T$-action, and which admits a K\"ahler metric $\omega$ with respect to which this action is hamiltonian. Denote as usual by $P$ the image $\mu_\omega(M) \subset \t^*$, but note for the moment with these minimal assumptions that $P$ need not have any special structure. Let $v \in C^\infty(\overline{P}, \R_{>0})$. Then we define the $v$-scalar curvature $\Scalv(\omega)$ of $\omega$ by 
\begin{equation}\label{vscal}
	\Scalv(\omega) = v(\mu_\omega) \Scal(\omega) + 2 \Delta_{\omega} v(\mu_{\omega}) +\langle g, \,  \mu_{\omega}^*\textnormal{Hess}(v) \rangle ,
\end{equation}
where $\langle \cdot, \, \cdot \rangle$ denotes the dual pairing between $\t \otimes \t$ and $\t^* \otimes \t^*$. Then the main object of interest for the purposes of this paper are solutions $\omega$ of the \emph{$(v,\,w)$-cscK equation}:
\begin{equation}\label{vwcscK}
    \Scalv(\omega) = w,
\end{equation}
for prescribed $w \in C^\infty(\overline{P})$. 

If $M$ is in addition toric, then we can understand the $v$-scalar curvature more explicitly. Indeed, the local computations in \cite[Section 3]{ApGi} (see also \cite[Section 9]{LahdiliWeighted}) give us:

\begin{prop}\label{vscal-toric}
Suppose that $M$ is toric in the sense of Definition \ref{noncompacttoric} and $\omega$ is an AK metric with symplectic potential $u_\omega$. Set $H = \textnormal{Hess}^{-1}(u_\omega)$ as above. Then the $v$-weighted scalar curvature of $\omega$ is given by 
\begin{equation}
	\Scalv(\omega) = -\sum \left( vH_{ij} \right)_{ij}.
\end{equation}
\end{prop}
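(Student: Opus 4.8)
The plan is to reduce the intrinsic definition \eqref{vscal} of $\Scalv$ to a purely real computation on the moment polytope using the symplectic (Legendre) picture, and then to recognize the resulting expression as the divergence-type quantity $-\sum_{i,j}(vH_{ij})_{ij}$. First I would recall the known toric formula for the ordinary scalar curvature in symplectic coordinates, namely $\Scal(\omega) = -\sum_{i,j} (G^{ij})_{ij} = -\sum_{i,j} (H_{ij})_{ij}$ on the dense orbit, where $G = \mathrm{Hess}(u_\omega)$ and $H = G^{-1} = \mathrm{Hess}^{-1}(u_\omega)$; this is the Abreu-type formula and is exactly the local content cited from \cite[Section 3]{ApGi} and \cite[Section 9]{LahdiliWeighted}. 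Since $\mu_\omega = \nabla\phi$ identifies the dense orbit with the interior of $P$ via the Legendre transform, all three terms of \eqref{vscal} become functions of $x \in P$.

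Next I would compute each of the three terms of \eqref{vscal} separately in the $x$-coordinates. The pairing term $\langle g, \mu_\omega^*\mathrm{Hess}(v)\rangle$ pairs the metric $g$ with the pullback of the Euclidean Hessian of $v$; in symplectic coordinates the relevant contraction against the inverse Hessian $H$ should give $\sum_{i,j} H_{ij} v_{ij}$. For the Laplacian term $2\Delta_\omega v(\mu_\omega)$, I would use Lemma \ref{gradexpressionlemma} and the standard toric formula for the Laplacian of a pulled-back function $F(\mu_\omega)$, which on the dense orbit reads $\Delta_\omega F(\mu_\omega) = \sum_{i,j}(H_{ij} F_i)_j$ (or an equivalent form involving the derivatives of $H$); applying this with $F = v$ yields $\sum_{i,j}(H_{ij}v_i)_j = \sum_{i,j}\big( (H_{ij})_j v_i + H_{ij} v_{ij}\big)$. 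The first term $v(\mu_\omega)\Scal(\omega)$ contributes $-v\sum_{i,j}(H_{ij})_{ij}$.

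The final step is to collect the three contributions and verify that the Leibniz expansion of $-\sum_{i,j}(vH_{ij})_{ij}$ reproduces exactly their sum. Expanding,
\[
-\sum_{i,j}(vH_{ij})_{ij} = -v\sum_{i,j}(H_{ij})_{ij} - 2\sum_{i,j}v_i(H_{ij})_j - \sum_{i,j}v_{ij}H_{ij},
\]
and I would check that the three pieces computed above — after fixing signs and the symmetrization of the cross terms — match this line by line. The scalar-curvature term gives the first summand; the pairing term gives the last summand (with the correct sign coming from the definition \eqref{vscal}); and the two pieces of the Laplacian term supply the cross term $-2\sum v_i (H_{ij})_j$ together with a second copy of $-\sum H_{ij}v_{ij}$, which must combine correctly with the pairing contribution.

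The main obstacle I anticipate is bookkeeping of signs and of the cross terms: the Laplacian expansion naturally produces both a $\sum (H_{ij})_j v_i$ term and a $\sum H_{ij} v_{ij}$ term, and one must verify that the coefficient $2$ in front of $\Delta_\omega v$ in \eqref{vscal} is precisely what is needed to supply the full cross term $-2\sum v_i(H_{ij})_j$ after accounting for the symmetry $H_{ij}=H_{ji}$, while the remaining Hessian pieces from the Laplacian and the pairing term combine (rather than cancel) to give the single $-\sum H_{ij}v_{ij}$. Getting the relative normalization of these terms exactly right — rather than the underlying toric identities, which are standard — is where the care is required.
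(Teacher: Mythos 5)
The paper itself gives no written proof of Proposition \ref{vscal-toric}: it is quoted directly from the local computations of \cite[Section 3]{ApGi} and \cite[Section 9]{LahdiliWeighted}, and those computations are precisely the term-by-term Legendre-transform calculation you outline. So your overall route is the right one: the pairing term is indeed $\langle g, \mu_\omega^*\textnormal{Hess}(v)\rangle = \sum_{i,j} H_{ij}v_{ij}$ (since $g(Y_i,Y_j)=H_{ij}$), the first term is $-v\sum_{i,j}(H_{ij})_{ij}$ by the Abreu formula, and it suffices to verify the identity on the dense orbit, both sides being smooth.

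However, your accounting of the Laplacian term contains a concrete sign error under which the verification fails. In the conventions of \eqref{vscal} (those of \cite{LahdiliWeighted} and the ACGT school), $\Delta_\omega$ is the \emph{positive} Laplacian $d^*d$; since $\det g = \det(\G)\det(\H) = 1$ in action--angle coordinates, on the dense orbit
\begin{equation*}
\Delta_\omega F(\mu_\omega) = -\sum_{i,j}\bigl(H_{ij}F_i\bigr)_j,
\end{equation*}
the \emph{opposite} of the divergence-form expression $\sum_{i,j}(H_{ij}F_i)_j$ you wrote down. With your sign, the three terms of \eqref{vscal} sum to $-v\sum(H_{ij})_{ij} + 2\sum v_i(H_{ij})_j + 3\sum H_{ij}v_{ij}$, which is not $-\sum(vH_{ij})_{ij}$. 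Your final tally is also internally inconsistent: you assert the pairing term ``gives the last summand'' $-\sum H_{ij}v_{ij}$ of the Leibniz expansion, which is impossible since it enters \eqref{vscal} with a plus sign and equals $+\sum H_{ij}v_{ij}$, while crediting the Laplacian with ``a second copy of $-\sum H_{ij}v_{ij}$'' would double-count and yield $-2\sum H_{ij}v_{ij}$ in total. The correct bookkeeping is $2\Delta_\omega v(\mu_\omega) = -2\sum v_i(H_{ij})_j - 2\sum H_{ij}v_{ij}$, and the pairing contribution $+\sum H_{ij}v_{ij}$ partially \emph{cancels} (rather than combines with) the Hessian piece of the Laplacian, leaving exactly $-v\sum(H_{ij})_{ij} - 2\sum v_i(H_{ij})_j - \sum H_{ij}v_{ij} = -\sum(vH_{ij})_{ij}$. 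A sanity check that pins the convention: for flat $\C^n$ with $P=\{x_i \geq -1\}$, $v=e^{-\sigma}$, $\sigma=\sum_i x_i$, one has $H_{ij}=2(x_i+1)\delta_{ij}$ and the right-hand side gives $\Scalv = 2(n-\sigma)e^{-\sigma}$, matching the Gaussian-soliton weight $w$ of Lemma \ref{vsolitonw}; only the $d^*d$ convention for $\Delta_\omega$ reproduces this from \eqref{vscal}, whereas your stated convention produces $(6\sigma+2n)e^{-\sigma}$.
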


In particular, toric solutions to the $(v,\,w)$-cscK equation \eqref{vwcscK} give rise to solutions $u \in C^\infty(P)$ to the \emph{generalized Abreu equation}: 

\begin{equation}\label{genAbreu}
    \sum \left( vH_{ij} \right)_{ij} = - w,
\end{equation}
where as above $\H = \textnormal{Hess}^{-1}(u)$ is the data \eqref{Hdef} associated to $\omega$. Moreover, if $\omega \in \mathcal{H}_{\alpha, T}$ then as we saw above we know that $P \subset \t^*$ is a polyhedron determined uniquely up to translation by the cohomology class $\alpha$.

\subsection{$v$-solitons and Real Monge-Amp\`ere equations}

A special case of particular interest is the notion of a \emph{$v$-soliton} metric, generalizing the notion of a K\"ahler-Ricci soliton. In the compact setting these appear to have been first introduced by Berman-Berndtsson \cite{BB} and Berman-Witt-Nystr\"om \cite{BWN-optimal}, and were recently treated in great detail by Han-Li \cite{HanLi}. By definition, a $v$-soliton is a K\"ahler metric $\omega \in 2\pi c_1(M)$ which satisfies the equation
\begin{equation}\label{vsolitoneq}
	\Ric_\omega - \,\omega = i \p \bp \log v(\mu_\omega).
\end{equation}
In particular, we recover the K\"ahler-Ricci solitons by setting $v(x) = e^{-\ell(x)}$ for $\ell$ a linear function on $\t^*$. These are related to $(v,\,w)$-cscK metrics by:
\begin{lemma}[{\cite[Lemma 2.2]{ApJuLa}}]\label{vsolitonw}
An AK metric $\omega$ is a $v$-soliton if and only if it is a $(v, w)$-cscK metric with  $w = 2\left( n + \langle d \log(v(\mu)) , \, \mu \rangle  \right)v(\mu)$. 
\end{lemma}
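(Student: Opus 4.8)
The plan is to reduce the equivalence to a single scalar divergence identity on $M$, so that both implications drop out of one computation together with an integration by parts. Since a $v$-soliton lives in $2\pi c_1(M)$, the Ricci form satisfies $\Ric_\omega - \omega = i\p\bp h$ for a $\T$-invariant Ricci potential $h$, unique up to an additive constant because $H^1(M) = 0$. Set $\psi := h - \log v(\mu_\omega)$, a $\T$-invariant function on $M$. By definition the $v$-soliton equation \eqref{vsolitoneq} is equivalent to $i\p\bp\psi = 0$; and since the only $\T$-invariant pluriharmonic functions extending smoothly across the toric divisors of $M$ are the constants (again using $H^1(M)=0$, the functions $\langle \xi, c\rangle$ being singular along the divisors), this is equivalent to $\psi$ being constant. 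The heart of the argument is therefore the pointwise identity
\[ \Scalv(\omega) - w = \Div_g\!\left( v(\mu_\omega)\, \nabla^g \psi \right) = v(\mu_\omega)\,\Delta_v\psi, \]
where $\Delta_v f = \Delta_g f + \langle \nabla^g \log v(\mu_\omega),\, \nabla^g f\rangle_g$ is the drift Laplacian that is self-adjoint for the weighted measure $v(\mu_\omega)\,\omega^n$. It is precisely in forcing this identity to be \emph{exact} that the stated value $w = 2\big(n + \langle d\log v(\mu_\omega),\,\mu_\omega\rangle\big)v(\mu_\omega)$ is used.

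To establish the identity I would work on the dense orbit and pass to the symplectic potential $u$ with $\H = \textnormal{Hess}^{-1}(u)$ as in \eqref{Hdef}. Using Proposition \ref{vscal-toric} for the left-hand side, the Legendre relations of Lemma \ref{legendreproperties}, and the contractions $g(\nabla\mu^a,\nabla\mu^b) = H_{ab}$ (Lemma \ref{gradexpressionlemma}), one writes the Ricci form as $-i\p\bp\log\det(\textnormal{Hess}_\xi\phi)$ and differentiates the resulting expression for $h$ once in the $x$-variables. The key algebraic step is that contracting this first-order relation against $\H\nabla v$ and using $\sum_m (\H\nabla v)_m\, u_{am} = v_{,a}$ collapses all the lower-order terms in $\Scalv$, reproducing $w$ up to a term linear in $\mu$ coming from the holomorphy-potential ambiguity in $h$; the boundary behaviour of $\H$ in Proposition \ref{boundaryconditions} together with the normalization of the moment map (the soliton vector field being the one built into $v$, as in the $v(x)=p(x)e^{-\langle x,b\rangle}$ examples) is what pins this ambiguity down and produces $w$ exactly. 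This already yields the forward implication: if $\omega$ is a $v$-soliton then $\psi$ is constant, $\nabla^g\psi \equiv 0$, and hence $\Scalv(\omega) = w$.

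For the converse, suppose $\Scalv(\omega) = w$, so the identity gives $\Div_g(v(\mu_\omega)\nabla^g\psi) = 0$. Pairing with $\psi$ and integrating over $M$, an integration by parts yields
\[ \int_M v(\mu_\omega)\,|\nabla^g\psi|_g^2\,\omega^n = -\int_M \psi\,\Div_g\!\left(v(\mu_\omega)\nabla^g\psi\right)\omega^n + (\text{boundary term at infinity}), \]
in which the bulk term vanishes. The decisive point — and the only genuinely non-compact difficulty, absent from the compact setting of \cite{ApJuLa} — is the vanishing of the boundary contribution at infinity: this is exactly where the exponential decay of $v$ in the class $\mathcal{W}$ enters, dominating the at-most-polynomial growth of $\psi$ and $\nabla^g\psi$ for metrics $\omega \in \sexyH$ and simultaneously guaranteeing that $\int_M v(\mu_\omega)|\nabla^g\psi|_g^2\,\omega^n$ converges. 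Granting this, we conclude $\nabla^g\psi \equiv 0$, so $\psi$ is constant and $\omega$ is a $v$-soliton. The main obstacle is thus twofold: pinning down the exact constant $w$ in the algebraic identity (handling the holomorphy-potential ambiguity and the convention-dependent normalizations), and the quantitative control of the weighted boundary term at infinity — the latter being the point at which the decay hypotheses defining $\mathcal{W}$ and $\sexyH$ are indispensable.
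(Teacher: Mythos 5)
The paper never proves this lemma: it is imported verbatim from \cite[Lemma 2.2]{ApJuLa}, where it is established on \emph{compact} manifolds, so there is no in-paper argument to compare against. Judged on its own merits, your proposal is correct in structure and is essentially the standard argument behind the cited result. In particular your central divergence identity is right, and it can be verified cleanly in the toric/Abreu formalism: with $\psi = -\rho_u - \log v$ in the notation of \eqref{rhodef} (which is the symplectic-coordinate expression of your $h - \log v(\mu_\omega)$, for the normalized Ricci potential $h = -\log\det\phi_{ij} - 2\phi$) and $w = 2\left(nv + \langle \nabla v, \, x\rangle\right)$, the relations $H_{ml}\left(\log\det u_{ij}\right)_{,l} = -\sum_j H_{mj,j}$ and $H_{ml}u_{kl} = \delta_{mk}$ give
\begin{equation*}
	vH_{ml}\psi_{,l} = -2x^m v - (vH_{mj})_{,j}, \qquad \left(vH_{ml}\psi_{,l}\right)_{,m} = \Scalv(\omega) - w,
\end{equation*}
which is exactly your identity, since in moment--angle coordinates the Riemannian volume is Lebesgue and the divergence of invariant fields takes divergence form. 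The forward implication is then immediate, as you say.

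Two caveats, one of which is substantive. First, the normalization issue you flag is genuinely load-bearing, not cosmetic: a solution of \eqref{vsolitoneq} only forces $\psi$ to be \emph{affine} in $\xi$, and $\Scalv = w$ holds iff the linear part vanishes, which is precisely the unique affine adjustment in \eqref{rMA1} forcing the moment image to be $P_{-K_M}$. Concretely, the Euclidean metric on $\C$ with $v = e^{-x}$ satisfies $\Scalv = w$ when $P = [-1,\infty)$, but $\Scalv - w = 2e^{-x} \neq 0$ if one normalizes $P = [0,\infty)$; so the lemma is only true with the anticanonical normalization built in. Second, and more seriously: your converse invokes hypotheses --- $v \in \mathcal{W}$, $\omega \in \sexyH$, growth control on $\psi$ and $\nabla^g\psi$ --- that are \emph{not} in the statement, which asserts the equivalence for an arbitrary AK metric; the compact-case proof of \cite{ApJuLa} gets the integration by parts for free, and neither the paper nor the citation justifies the ``only if'' direction for general noncompact AK metrics. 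So either the lemma should be read under such decay hypotheses or one should note the paper only uses it in that regime. Two smaller points within your scheme: when integrating over $M$ itself there is no flux along $\p P$ (facets correspond to interior points of $M$), so spatial infinity is indeed the only issue; and your claim of ``at-most-polynomial growth'' of $\psi$ for $\omega \in \sexyH$ is not what the definition gives --- condition \ref{asymptotics3} only yields bounds of order $v^{-\varepsilon}$, which can grow exponentially. The argument still closes because $\varepsilon < \tfrac{1}{2}$ makes the flux integrand of size $O(v^{1-2\varepsilon})$, decaying exponentially, which is plausibly the reason for the threshold $\varepsilon \in [0, \tfrac12)$ in Definition \ref{asymptotic-metric-general}; this estimate should be stated explicitly in place of the polynomial-growth claim.
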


Suppose that $\omega$ is a solution to \eqref{vsolitoneq}, and suppose that $\omega = 2i\p\bp \phi$ on the dense orbit. Then there exists an affine linear function $a(\xi)$ such that 
\begin{equation*}
	\log v(\nabla \phi) + 2\phi + \log\det\phi_{ij} = a , 
\end{equation*}
Hence there is a unique affine linear function $a'$ such that after modifying $\phi \mapsto \phi + a'$, $\phi$ satisfies
\begin{equation}\label{rMA1}
	v(\nabla \phi) \det\phi_{ij} = e^{-2\phi}.
\end{equation}
Let $u$ be the corresponding symplectic potential, i.e. $u = L(\phi)$ where $L$ denotes the Legendre transform.  Define a function $\rho_u \in C^\infty(P)$ by 
\begin{equation}\label{rhodef}
	\rho_u(x) = 2\left(\langle \nabla u ,\, x \rangle - u  \right)- \log\det u_{ij}.
\end{equation}
Clearly from \eqref{rMA1} we see that $u$ satisfies 
\begin{equation}\label{rMA2}
	e^{-\rho_u} = v(x).
\end{equation}
By Lemma \ref{propernesslemma}, it follows that if $\omega$ is an AK metric satisfying equation \eqref{vsolitoneq}, we must have that 
\begin{equation}\label{vsoliton-finitevolume}
	\int_P v dx < \infty. 
\end{equation}
For reasons that will become apparent, we define 
\begin{definition}\label{anticanonicallypolarizeddef}
    A toric manifold $M$ is \emph{anticanonically polarized} if $M = M_P$ for Delzant polyhedron of the form 
    \begin{equation}\label{anticanonicalpolyhedron}
        P = \left.\left\{ x \in \t^* \right| \langle x , \, \nu_i \rangle \geq -1, \, \nu_i \in \Gamma, \, i = 1, \dots, N \right\}.
    \end{equation}
\end{definition}
It turns out that this condition is equivalent to the condition that $-K_M$ is ample \cite{CLS}. Moreover, the set $\{\nu_i\}$ are determined uniquely by the anticanonical divisor $D_{-K_M}$ on $M$ once we fix a $\Z$-basis for $\Gamma$. As such, given such an $M$ we will sometimes denote the polyhedron \eqref{anticanonicalpolyhedron} by $P_{-K_M}$. With this in mind, the following is an immediate consequence of the same arguments as in \cite{Don1, uniqueness}.
\begin{lemma}
Let $\omega$ be an AK metric on $M$ which is also a $v$-soliton. If $\mu$ is the moment map normalized by \eqref{normalizedgradientmomentmap}, where $\phi$ is a solution to \eqref{rMA1}, then the image of $\mu$ is precisely $P_{-K_{M}}$. In particular, $M$ is anticanonically polarized. 
\end{lemma}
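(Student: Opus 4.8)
The plan is to follow the boundary-asymptotic method of Donaldson \cite{Don1}, adapted to the unbounded setting as in \cite{uniqueness}. The starting point is the identity \eqref{rMA2}, namely $e^{-\rho_u} = v(x)$, which holds on the interior of $P = \nabla\phi(\t)$; here $\rho_u$ is the function defined in \eqref{rhodef} from the symplectic potential $u = L(\phi)$. Since $v$ is smooth and strictly positive up to the boundary facets (as it is for the soliton weights $v = e^{-\ell}$, which are smooth and positive on all of $\t^*$), the function $\rho_u = -\log v$ must extend to a finite continuous function on each open facet of $\overline{P}$. The whole argument then reduces to computing the boundary behaviour of $\rho_u$ and checking that this finiteness forces every defining constant $a_i$ in \eqref{polydef} to equal $1$.

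Fix a facet $F$ with inner normal $\nu_F$ and set $t = L_{\nu_F}(x) + a_F$, so that $t \to 0^+$ as $x$ approaches $F$ from inside $P$. By Proposition \ref{boundaryconditions} we have $u - u_P \in C^\infty(\overline{P})$, and near $F$ the only singular contribution to the Guillemin potential \eqref{guilleminpotential} is the term $\tfrac{1}{2}t\log t$; thus $u = \tfrac{1}{2}t\log t + (\text{smooth up to } F)$ and $\nabla u = \tfrac{1}{2}(\log t + 1)\nu_F + (\text{smooth up to } F)$. Substituting $\langle \nu_F, x\rangle = t - a_F$ shows that the divergent part of $\langle \nabla u, x\rangle - u$ is $-\tfrac{a_F}{2}\log t$. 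For the determinant term I would instead use the data $\H = \mathrm{Hess}^{-1}(u)$ directly: the boundary conditions of Proposition \ref{boundaryconditions}, i.e.\ $\H(\nu_F,\cdot) = 0$ and $d\H(\nu_F,\nu_F) = 2\nu_F$ on $F$, together with positive-definiteness of the facet restriction, give $\det \H \sim 2t\, c(y)$ with $c(y) > 0$ bounded. Since $\mathrm{Hess}(u) = \H^{-1}$ on the dense orbit, this yields $\log\det u_{ij} = -\log\det\H = -\log t + O(1)$.

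Combining the two computations, one finds as $t \to 0^+$ that
\begin{equation*}
    \rho_u = 2\bigl(\langle \nabla u, x\rangle - u\bigr) - \log\det u_{ij} = -a_F \log t + \log t + O(1) = (1 - a_F)\log t + O(1).
\end{equation*}
Because $\rho_u = -\log v$ remains finite at $F$, the coefficient of $\log t$ must vanish, so $a_F = 1$. As $F$ was an arbitrary facet, every defining constant equals $1$, so the representative $P$ fixed by solving \eqref{rMA1} has exactly the form \eqref{anticanonicalpolyhedron}; hence $P = P_{-K_M}$ and $M$ is anticanonically polarized.

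I expect the main obstacle to be the careful isolation of the logarithmic divergence, in particular the determinant $\det\H$ near the corner-type degeneration encoded by the boundary conditions. One must verify that the off-diagonal entries $\H(\nu_F, e_a)$, which also vanish on $F$, contribute only at order $O(t^2)$ to $\det\H$, so that the leading coefficient $c(y)$ is genuinely the positive, bounded determinant of the facet restriction of $\H$. Once this is in place, the $C^\infty(\overline{P})$-regularity supplied by Proposition \ref{boundaryconditions} guarantees that all remaining error terms are truly bounded, and the conclusion follows by the same reasoning as in \cite{Don1, uniqueness}.
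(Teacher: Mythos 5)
Your proof is correct and is precisely the argument the paper invokes: the paper gives no inline proof, stating the lemma is ``an immediate consequence of the same arguments as in \cite{Don1, uniqueness}'', and those arguments are exactly your boundary-asymptotic computation, extracting the $(1-a_F)\log t$ divergence of $\rho_u = -\log v$ near each facet from the Guillemin-type singularity of $u$ and the determinant degeneration $\det\H \sim 2t\,c(y)$ guaranteed by Proposition \ref{boundaryconditions}, then forcing $a_F = 1$. Your handling of the two technical points --- the off-diagonal entries $\H(\nu_F, e_a) = O(t)$ contributing only $O(t^2)$ to $\det\H$, and the translation normalization being pinned down by the affine modification making $\phi$ solve \eqref{rMA1} --- correctly fills in the details the citation leaves implicit.
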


We also include the following elementary observation, whose proof we did not find in the literature: 
\begin{lemma}\label{vsolitonfutakiinvariantlemma}
Let $P$ be a Delzant polyhedron with a positive weight $v$ which is sufficiently integrable on $P$, and that $w$ is given by \eqref{vsolitonw}. Then 
\begin{equation}\label{vsolitonDFinvariantequiv}
	\int_P \ell w dx = 2\int_P\ell \left( nv + \langle \nabla v , \, x \rangle \right) dx = 2\int_{\p P} \ell v d\sigma -  2\int_P \ell  v dx, 
\end{equation}
for all linear functions $\ell$ on $P$ if and only if $P = P_{-K_M}$ for some anticanonically polarized toric manifold $M$. 
\end{lemma}
What we mean by ``sufficiently integrable'' will be clear in the course of the proof below. A more precise formulation of this type of integrability condition will be given in Section \ref{stability-section}.

\begin{proof}
We begin by observing that if we define the $(n-1)$-form 
\begin{equation}
	\eta = \left(\sum_{j=1}^n (-1)^{j+1} x^j \, dx^1 \wedge \dots \wedge \widehat{dx}^j \wedge \dots \wedge dx^n \right), 
\end{equation}
then 
\begin{equation*}
	d \left(\ell v  \eta  \right) = \ell \left( nv + v_i x^i \right) dx + \ell v dx. 
\end{equation*}
Thus
\begin{equation*}
\begin{split}
	\int_P  \ell \left( nv + v_i x^i \right) dx &= \int_P \ell v \, d\eta  = \int_{\p P} \ell v\, \eta  -  \int_P \ell  v \, dx. 
\end{split}
\end{equation*}
We claim that $\eta$ satisfies 
\begin{equation*}
\left. \eta \right|_{\partial P} = d\sigma
\end{equation*}
if and only if each facet $F$ of $P$ is defined by a linear equality of the form $\langle b , \, x \rangle = -1$, where $b$ is an inner normal to $F$.  To this end, let $F$ be some such facet, and suppose that it has defining equation $\langle b , \, x \rangle = c$. Setting $b = (b_1, \dots, b_n)$, we can assume without loss of generality that $b_1 \neq 0$ so that, for all $x \in F$, 
\begin{equation*}
	x^1 = \frac{c}{b_1} - \sum_{j = 2}^n \frac{b_i}{b_1}x^i. 
\end{equation*}
In particular, for any $j \neq 1$ we have 
\begin{equation*}
\begin{split}
 (-1)^{j+1} dx^1 \wedge \dots \wedge \widehat{dx}^j \wedge \dots \wedge dx^n&=  (-1)^{j+2} \frac{b_j}{b_1}dx^j \wedge dx^2 \wedge \dots \wedge \widehat{dx}^j \wedge \dots \wedge dx^n  \\
 		& = + \frac{b_j}{b_1} dx^2 \wedge \dots \wedge dx^n, 
\end{split}
\end{equation*}
so that 
\begin{equation*}
\begin{split}
	\left. \eta \right|_{F} &= \left(\frac{c}{b_1} -  \sum_{j = 2}^n \frac{b_j}{b_1}x^i \right) dx^2 \wedge \dots \wedge dx^n + \sum_{j=2}^n \frac{b_i}{b_1}x^i dx^2 \wedge \dots \wedge dx^n = \frac{c}{b_1} dx^2 \wedge \dots \wedge dx^n. 
\end{split}
\end{equation*}
Extending $\left.\eta \right|_{F}$ as an $(n-1)$-form on $\R^n$ near $F$, clearly we have 
\begin{equation*}
	d\ell \wedge \left. \eta \right|_{F} = c dx,
\end{equation*}
and so we see that $d\ell \wedge \left. \eta \right|_{F}  = - dx$ if and only if $c = - 1$. 
\end{proof}
As we will see in Section \ref{stability-section}, this amounts to the fact that the weighted Futaki invariant $\mathcal{F}_{v,w}$ associated to the $(v,\,w)$-cscK problem, when restricted to just the linear functions, coincides up to a multiple with the invariant $\mathcal{F}_v:\t \to \R$ given by
\[ \mathcal{F}_v(b) = \int_P \langle x, \, b \rangle v dx. \]
 When $P$ is compact, this was introduced for more general weights by Berman-Berndtsson \cite{BB} and Berman-Witt-Nystr\"om \cite{BWN-optimal}, generalizing the modified Futaki invariant of Tian-Zhu \cite{TZ2}. Indeed, even for unbounded $P$, when $v = e^{-\langle x,\, b \rangle}$ for $b \in C^*(P)$, $\mathcal{F}_v$ coincides with the invariant in \cite{TZ2}, itself an extension of the original invariant introduced by Futaki \cite{Futakiobstruction}. By a simple argument which to the author's knowledge is originally due to Donaldson \cite{Don1}, the existence of a $v$-soliton metric on $M_P$ implies the vanishing of the Futaki invariant.
\begin{lemma}\label{futakivanishes?}
Suppose that $v\in C^\infty(\overline{P}, \R_{>0})$ admits a K\"ahler metric $\omega \in \mathcal{K}_\alpha$ which is a $v$-soliton. Then 
\begin{equation}
	\int_P \ell \, v dx = 0
\end{equation}
for every linear function $\ell$ on $\t^*$.  
\end{lemma}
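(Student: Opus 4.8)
The plan is to reduce the statement to a divergence computation on the Lie algebra $\t$ using the real Monge–Ampère form \eqref{rMA1} of the soliton equation. It suffices to treat a homogeneous linear function $\ell(x) = \langle b, x\rangle$ with $b \in \t$, since these span the linear functions on $\t^*$. First I would pass to the dense orbit and write $\omega = 2i\p\bp\phi$ with $\phi$ normalized exactly so that \eqref{rMA1} holds, i.e. $v(\nabla\phi)\det\phi_{ij} = e^{-2\phi}$ on $\t$. By \eqref{normalizedgradientmomentmap} the moment map is $\mu_\omega = \nabla\phi$, and by Lemma \ref{legendreproperties} the map $\nabla\phi : \t \to P$ is a diffeomorphism (with $0$ in the interior of $P = P_{-K_M}$). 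Changing variables $x = \nabla\phi(\xi)$, for which $dx = \det\phi_{ij}\,d\xi$, the Jacobian $\det\phi_{ij}$ cancels against $v(\nabla\phi) = e^{-2\phi}/\det\phi_{ij}$, giving
\[
\int_P \ell\, v\, dx = \int_\t \langle b, \nabla\phi(\xi)\rangle\, e^{-2\phi(\xi)}\, d\xi .
\]

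The key observation is that the integrand on the right is an exact divergence. Since the components $b^i$ are constant, $\langle b, \nabla\phi\rangle e^{-2\phi} = b^i \phi_i e^{-2\phi} = -\tfrac{1}{2} b^i \p_i\!\left(e^{-2\phi}\right) = -\tfrac{1}{2}\operatorname{div}\!\left(b\, e^{-2\phi}\right)$. Integrating over a ball $B_R \subset \t$ and applying the divergence theorem yields
\[
\int_{B_R} \langle b, \nabla\phi\rangle\, e^{-2\phi}\, d\xi = -\tfrac{1}{2} \int_{\p B_R} \langle b, n\rangle\, e^{-2\phi}\, dS ,
\]
so the entire question reduces to showing that this boundary flux tends to $0$ as $R \to \infty$, after which I let $R\to\infty$ and use absolute integrability of the integrand to identify the limit of the left-hand side with $\int_P \ell\, v\, dx$.

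The main (and essentially only) point is to control this boundary term at infinity, which is where the noncompactness of $P$ enters. Here I would invoke Lemma \ref{propernesslemma}: since $\phi$ is smooth, strictly convex, and proper with $0$ in the interior of its gradient image, it satisfies $\phi(\xi) \geq C^{-1}|\xi| - C$. Consequently $e^{-2\phi(\xi)} \leq e^{2C}e^{-2C^{-1}|\xi|}$ decays exponentially, so, using only this lower bound on $\phi$, the flux is estimated by $\left|\int_{\p B_R}\langle b,n\rangle e^{-2\phi}\,dS\right| \leq |b|\, e^{2C}\, \omega_{n-1}\, R^{n-1} e^{-2C^{-1}R} \to 0$. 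The same exponential factor dominates the growth of $\langle b, \nabla\phi\rangle$, which is precisely the meaning of ``sufficiently integrable'' in this context—equivalently $\ell v \in L^1(P)$, consistent with $\int_P v\,dx<\infty$ from \eqref{vsoliton-finitevolume}—and guarantees absolute convergence of $\int_\t \langle b, \nabla\phi\rangle e^{-2\phi}\,d\xi$. Letting $R \to \infty$ then gives $\int_P \ell\, v\, dx = 0$, as claimed. The only delicate bookkeeping I anticipate is verifying this integrability/growth balance, which again follows from strict convexity together with the linear lower bound on $\phi$.
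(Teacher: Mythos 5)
Your proposal is correct and follows essentially the same route as the paper's own proof: the change of variables $x=\nabla\phi(\xi)$ turning $v\,dx$ into $e^{-2\phi}\,d\xi$ via the real Monge--Amp\`ere normalization \eqref{rMA1}, the observation that $\langle b,\nabla\phi\rangle e^{-2\phi}=-\tfrac12\operatorname{div}\bigl(b\,e^{-2\phi}\bigr)$ is an exact divergence, and the vanishing of the boundary flux over $\p B_R$ using the linear lower bound $\phi(\xi)\ge C^{-1}|\xi|-C$ from Lemma \ref{propernesslemma}. You merely spell out the divergence-theorem bookkeeping and the integrability check slightly more explicitly than the paper does.
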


\begin{proof}
	 Using \eqref{rMA1}, \eqref{rMA2}, we see that under the diffeomorphism $\nabla \phi: \t \to P$,  the volume form $v dx$ satisfies 
	\begin{equation}
		v \, dx = e^{-2\phi} d\xi. 
	\end{equation}
Thus for any $k =0, \dots, n$ we have that
\begin{equation*}
	 \int_P x_k \,  v dx = \int_{t} \phi_k e^{-2\phi} d\xi = - \frac{1}{2}\int_{\t} \frac{\p}{\p \xi_k} \left(e^{-2\phi} \right) d\xi = 0.
\end{equation*}
Here we have used that the boundary term $\int_{\p B_R(0) }e^{-2\phi} d\Theta \xrightarrow{R \to \infty} 0$ since $\phi \geq C^{-1}|\xi| - C$. 
\end{proof}
As we will see in Section \ref{stability-section}, this is also true for the more general $(v,\, w)$-cscK problem under certain assumptions on the weights and the metric. Notably, in the absence of the real Monge-Amp\`ere equation \eqref{rMA1}, there is no analog of the a priori integrability condition \eqref{vsoliton-finitevolume}.

\subsection{K\"ahler Cones}

We summarize some of the fundamentals of Sasakian geometry, with an emphasis on the context considered in this paper. There are many equivalent formulations, we primarily stick to the description of \cite{MSY, MSY2} (see also \cite{SunHe-Frankel, FOW}), as it is particularly suitable for our present setting. We do not attempt to give a complete description, only briefly mention those results that we will use later on. For all details we refer to the sources mentioned above.

A \emph{Riemannian cone} $(M_c,  \, g_c)$ is a smooth manifold $M_c \cong \R_+ \times L$ for a compact smooth manifold $L$ with a metric of the form 
\[ g_c = dr^2 + r^2 g_L,\]
 where $g_L$ is the pullback of a Riemannian metric on the $L$. Given $(M_c, \, g_c)$ we often identify $L$ with the set $\{r = 1\} \subset M_c$. We say that the metric $g_c$ is a \emph{K\"ahler cone} metric if $M_c$ is given a complex structure $J_c$ with respect to which $g_c$ is K\"ahler. In this case we also say that that the metric $g_L$ on $L$ is \emph{Sasakian}. Then there is an associated \emph{Reeb field} $K_c = J_c(\rad)$ which is tangent to $L$. Both $\rad$ and $K_c$ are real holomorphic and $K_c$ is $g$-Killing \cite[Appendix A]{MSY2}. In fact, it was proved by van Coevering that any K\"ahler cone is biholomorphic to the regular part of an affine variety \cite[Theorem 3.1]{VC2}.


A K\"ahler cone $(M_c,\, J_c, \, g_c)$ is \emph{toric} if $(M_c, \, J_c)$ is toric in the sense of Definition \ref{noncompacttoric} and if the action of the real torus $\T$ restricts to an effective action on $L$ and if the Reeb field satisfies $K_c \in \t$. We then have a natural identification between $\t$ and $\t_L$, representing the vector spaces of fundamental vector fields for the $\T$-action on $M_c$ and $L$, respectively. Toric K\"ahler cones have been studied extensively, particularly in the search for complete Ricci-flat metrics and related ideas \cite{MSY, MSY2, FOW, Ler, VC1,VC2}. 

K\"ahler cone metrics are always exact, in the sense that 
\begin{equation}\label{cone-exact}
    \omega_c = \frac{i}{2}\p\bp r^2,
\end{equation}
globally on $M_c$. Therefore, restricting to the dense orbit $\Cstarn \subset M_c$, we have a natural normalization for the choice of K\"ahler potential $\phi$ from Proposition \ref{guillemin-exact}, namely $\phi = r^2/4$. We continue to use the convention that the moment map $\mu_c$ is normalized to be equal to $\nabla \phi$ when restricted to the dense orbit. With this choice, it follows that the image of $\mu_c$ is a rational polyhedral cone 
\begin{equation}
    \mu_c(M_c) = C := \left\{ x \in \t^* \: | \: L_{\nu_i}(x) \geq 0, \, i = 1, \dots, N_C \right\},
\end{equation}
and that the hamiltonian potential for $K_c$ is given by 
\begin{equation*}
    \langle \mu_c, \, K_c \rangle = \frac{r^2}{2} = 2\phi.
\end{equation*}
Just as in the previous section we use the symplectic coordinate system on $C$ determined by the moment map. Here we have that the radial vector field satisfies 
\begin{equation}\label{radial-poly}
    \rad = 2 x^i \frac{\p}{\p x^i}, 
\end{equation}
and as before the Guillemin potential is 
\begin{equation}\label{guilleminpot-cone}
    u_C = \frac{1}{2}\sum_{i = 1}^{N_C} L_{\nu_i}(x) \log( L_{\nu_i}(x) ). 
\end{equation}
Set $L_\infty(x) = \sum_{i = 1}^{N_C}L_{\nu_i(x)}$. Then for any $b \in \t$, we define 
\[ u_b = \frac{1}{2}L_b(x) \log(L_b(x)) -  \frac{1}{2} L_\infty(x) \log(L_\infty(x)). \]
We have the following from \cite{MSY}:
\begin{lemma}\label{MSY-structure}
    Let $\omega_c$ be a toric K\"ahler cone metric on $M_c$. Then the Reeb vector field $K_c$ satisfies $K_c = Y_b$ for some $b \in C^*$. Moreover, the symplectic potential $u$ associated to $\omega_c$ has the property that 
    \begin{equation}\label{symplecticpotential-cone}
        u = u_C + u_b + h, 
    \end{equation}
    where $h \in C^\infty(\overline{C})$ is homogeneous of degree 1 on $C$. In particular, the data $\G = \textnormal{Hess}(u)$ is homogenous of degree $-1$ and $\H = \textnormal{Hess}^{-1}(u)$ is homogeneous of degree 1. 
\end{lemma}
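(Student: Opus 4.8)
The plan is to run everything off two facts already assembled in the excerpt: the radial field is $\rad = 2x^i\p_{x^i}$ in symplectic coordinates by \eqref{radial-poly}, and the Hamiltonian of the Reeb field is $\langle \mu_c, K_c\rangle = r^2/2$.

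First I would settle the Reeb claim. Since the cone is toric, $K_c\in\t$ by definition, so $K_c = Y_b$ for a unique $b\in\t$. With $\mu_c$ normalized by \eqref{normalizedgradientmomentmap}, the Hamiltonian potential of $Y_b$ on the dense orbit is exactly $\langle \mu_c, b\rangle = L_b(x)$; comparing with $\langle \mu_c, K_c\rangle = r^2/2 > 0$ shows $L_b > 0$ on $\Int C$, hence by linearity and continuity $L_b\ge 0$ on $\overline{C}$, i.e. $b\in C^*$. This also records the identity $\phi = r^2/4 = \tfrac12 L_b(x)$, viewed as a function on $C$ via the moment map.

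Second, I would derive the inhomogeneous Euler relation governing $u$. Using $\nabla u(x)=\xi$ (Lemma \ref{legendreproperties}(ii)) together with the Legendre identity \eqref{legendretransdef}, one computes
\[ \rad(u) = 2x^i\p_{x^i}u = 2\langle x, \nabla u(x)\rangle = 2\langle x,\xi\rangle = 2\bigl(u+\phi\bigr) = 2u + L_b, \]
i.e. $x^i\p_{x^i}u = u + \tfrac12 L_b$. Differentiating once gives $x^i\p_{x^i}(\p_k u) = \tfrac12 b_k$, and a second differentiation gives $x^i\p_{x^i}(\p_k\p_l u) = -\p_k\p_l u$. This is precisely the ``in particular'' statement: $\G = \textnormal{Hess}(u)$ is homogeneous of degree $-1$, and hence $\H = \G^{-1}$ is homogeneous of degree $1$.

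Third, I would produce the decomposition by matching this Euler relation against the model potentials. Since $x^i\p_{x^i}L_\nu = L_\nu$, a direct computation gives $x^i\p_{x^i}u_C = u_C + \tfrac12 L_\infty$ and $x^i\p_{x^i}u_b = u_b + \tfrac12 L_b - \tfrac12 L_\infty$, so that $x^i\p_{x^i}(u_C+u_b) = (u_C+u_b) + \tfrac12 L_b$ — exactly the relation satisfied by $u$. Therefore $h := u - u_C - u_b$ obeys $x^i\p_{x^i}h = h$, i.e. $h$ is homogeneous of degree $1$. For the regularity of $h$ I would argue as follows: a toric K\"ahler cone metric is an AK metric in the sense of Definition \ref{akmetric} (its potential $\phi = r^2/4$ is proper and bounded below) with moment image the Delzant cone $C$, so the local facet behavior of Proposition \ref{boundaryconditions} applies away from the apex and gives $u - u_C \in C^\infty(\overline{C}\setminus\{0\})$; since $u_b$ is manifestly smooth on $\overline{C}\setminus\{0\}$ (both $L_b$ and $L_\infty$ are positive there), so is $h$.

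The main obstacle is precisely the regularity of $h$ at the apex, and the correct reading of $C^\infty(\overline{C})$ there. Away from the cone point the argument is clean, but a nonzero degree-$1$ homogeneous function is never genuinely smooth at the tip unless it is linear; the apex is the (possibly singular) torus-fixed point where the cone metric itself degenerates, so Proposition \ref{boundaryconditions} cannot be invoked at $0$ and the statement must be understood as smoothness up to the boundary facets of $C$ away from the apex, with the apex behavior pinned down by homogeneity. The delicate point to verify carefully is thus that the facet boundary conditions for $u - u_C$ genuinely hold at every codimension-one stratum of the unbounded cone $C$, since this is what secures the smoothness of $h$ along $\p C$ and lets the singular structure separate cleanly into the facet part ($u_C$), the Reeb part ($u_b$), and the smooth homogeneous remainder ($h$).
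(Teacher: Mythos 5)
Your proof is correct and is essentially the same argument as the one the paper relies on: the lemma is stated with a citation to \cite{MSY} rather than proved in the text, and the proof there runs exactly along your lines --- the Reeb claim via $L_b = \langle \mu_c, K_c\rangle = r^2/2 > 0$, the Euler identity $x^i\p_{x^i}u = u + \tfrac{1}{2}L_b$ from the Legendre transform together with $\phi = r^2/4$, differentiation to get degree $-1$ homogeneity of $\G$, and matching against $x^i\p_{x^i}(u_C + u_b) = u_C + u_b + \tfrac{1}{2}L_b$ to isolate the degree-$1$ remainder $h$, with boundary regularity supplied by the Abreu--Guillemin facet conditions away from the apex. Your reading of $h \in C^\infty(\overline{C})$ --- smoothness up to the facets away from the apex, since a nonlinear degree-$1$ homogeneous function can never be smooth at the tip --- is also the correct interpretation of the statement.
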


\section{Weighted stability}\label{stability-section}

\subsection{Preliminaries}

We will use the following repeatedly in this section.
\begin{definition}[{\cite{DonStabTor}}]\label{Pdelta}
	For all $\delta > 0$ sufficiently small, let $P_\delta$ be the interior polyhedron to $P$ with facets parallel to those of $P$ separated by a distance $\delta$. 
\end{definition}

 Given a domain $\Omega \subset \overline{P}$, $\beta \in \R$, and $v \in C^\infty(P)$, we define a weighted function space $C^k_{\beta}(\Omega) \subset C^k(\Omega)$ to be those functions $f$ with $k$ continuous derivatives on the interior of $P$ such that 
\begin{equation}\label{Ckbeta}
	|| f ||_{C_\beta^k(\Omega)} = \sum_{|\alpha| \leq k} \sup_\Omega \left|v^{\beta} \p_{x_{\alpha}}f \right|.
\end{equation}

 \begin{definition}\label{exponential-decay}
 	We say that the weights $(v,\, w)$ are \emph{exponentially decaying with derivatives} on $P$ if the following are satisfied. First, we ask that for every $k =0 , 1 , \dots $, there exist constants $C_1(k), C_2 > 0$ such that 
\begin{equation}\label{exponentialdecay-eqn1}
\begin{split}
  |v|  &\leq C_1(0) e^{-C_2|x|},  \textnormal{ and } 
  \sum_{|\alpha| \leq k} \left| v^{-1} \p_{x_{\alpha}}v \right|  \leq C_1(k) \textnormal{ for }  k \geq 1.
\end{split}
\end{equation}
Note that the second condition is precisely the condition that $||v||_{C^k_{-1}(P)} < C(k)$. Then we ask that there exists $\beta^* > 0$ and constants $C_3(k), k = 0, 1, \dots $ such that 
\begin{equation}
    ||w||_{C^{k}_{-\beta^*}(P)} < C_3(k).
\end{equation}
In this case we say that $(v, \, w) \in \mathcal{W} = \mathcal{W}(P)$.
 \end{definition}
 As we will see in Section \ref{section-KRS}, this is a natural class of examples from the perspective of shrinking gradient K\"ahler-Ricci solitons. In this case, the weights $v, w$ are both of the form $p(x)e^{-\langle x, \, b \rangle}$ for $b \in C(P)$ and $p(x)$ a positive rational function, and hence satisfy \eqref{exponentialdecay-eqn1} for any $\beta^* < 1$. In some practical applications, it is impossible to take $\beta^* = 1$, see for example \ref{toricSGKRSweight},\ref{fibrationsolitons}.
 \begin{remark}
     Many of the results of this paper do not require the full exponential decay \eqref{exponentialdecay-eqn1} of the weights, and could be modified to work for more general weights $(v, \, w)$. In particular, all of the results of the current section and Section \ref{section-functionspaces} can be generalized to weights with sufficiently fast polynomial decay. Nonetheless we will stick to the exponential case for clarity of exposition, as the introduction of more general rates requires a more careful analysis of the various parameters that will arise in future sections. 
 \end{remark}

Here we introduce the technical condition on the metric under which we will work.  
\begin{definition}\label{asymptotic-metric-general}
	Let $\omega \in \alpha$ be an AK metric on $M$ with polyhedron $P \subset \t^*$ containing the origin in its interior, corresponding data $\H$ \eqref{Hdef} and symplectic potential $u \in C^\infty(P)$.  Let $v \in \mathcal{W}$ be a weight satisfying the conditions of Definition \ref{exponential-decay}. We say that $\omega$ is in the class $\scaryH$ if there exists a single $\varepsilon \in [0,  \min\{ \beta^*, \frac{1}{2}\})$ such that all of the following hold:
	\begin{enumerate}
		\item\label{asymptotics1}  $\sup_{p \in M}v(\mu_{\omega})^{\varepsilon}|| \mathbf{H} ||_{\t}^2 < \infty,$ 
		\item\label{asymptotics2} $\sup_{p \in M}v(\mu_{\omega})^{\varepsilon}||d \mathbf{H} ||_{\t}^2  < \infty,$    \item\label{asymptotics4} there exists a $\bar{\delta} > 0$ and a $C > 0$ such that  $\sup_{x \in \overline{P}\backslash P_{\bar{\delta}}} \left| \frac{\p^2 H_{ij}}{\p x^k \p x^l } \right|^2 < C$ for all $i,j,k,l = 1, \dots, n$,
		\item\label{asymptotics3}  $||u||_{C^0_{\varepsilon}(P)} < \infty$ and  $||u||_{C^2_{\varepsilon}(P_\delta)} < \infty$ for all $\delta > 0$ sufficiently small.
	\end{enumerate} 
 Here $||d \mathbf{H} ||_{\t}^2 := \sum_{i,j,k}\left| \frac{\p}{\p x^k}H_{ij} \right|^2 = \sum_{i,j,k}\left| H_{ij,k} \right|^2$.
\end{definition} 		
In general, as a slight abuse of notation, we will say that $\H \in \scaryH$ for a positive-definite $\H = (H_{ij})$ on $P$ if it satisfies the conditions of Definition \ref{asymptotic-metric-general} with respect to $g = G_{ij}dx^i \otimes dx^j$ on $P$. Moreover we will say that a convex function $u\in C^\infty(P)$ lies in $\scaryH$ if $\H \in \scaryH$ for $\H = \textnormal{Hess}^{-1}(u)$. The following is clear from item \ref{asymptotics2}:

\begin{lemma}\label{hardterm}
	Suppose that $\H$ satisfies the conditions of Definition \ref{asymptotic-metric-general} and $v$ is an exponentially decaying weight as in Definition \ref{exponential-decay}.  Then there exists a $C > 0$ such that for any $i,j = 1, \dots, n$ we have 
	\begin{equation*}
		\sup_{P} \left( v^{\varepsilon - 1} \sum_{k} | (vH_{ij})_k | \right) < C
	\end{equation*}
  In particular, for every $j = 1, \dots, n$ 
  \begin{equation*}
		\sup_{P} \left( v^{\varepsilon - 1 } \sum_{i} | (vH_{ij})_i | \right) < C.
	\end{equation*}
\end{lemma}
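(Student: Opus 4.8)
The plan is to prove the first (stronger) inequality by the Leibniz rule and then read off the ``in particular'' clause by restricting the outer sum. Since the estimate is pointwise on the interior $P$, I would work directly with $\H$ as a positive-definite matrix-valued function on $P$, using the convention recorded after Definition \ref{asymptotic-metric-general} that conditions \ref{asymptotics1}--\ref{asymptotics2} are read on $P$ via the moment map (so that $v(\mu_\omega)$ corresponds to $v(x)$ and the $\t$-norms to the Euclidean entry-wise norms). First I would expand
\begin{equation*}
    (vH_{ij})_k = v_k\, H_{ij} + v\, H_{ij,k},
\end{equation*}
so that
\begin{equation*}
    v^{\varepsilon-1}\bigl|(vH_{ij})_k\bigr| \leq \bigl(v^{-1}|v_k|\bigr)\,v^{\varepsilon}|H_{ij}| + v^{\varepsilon}|H_{ij,k}|.
\end{equation*}
The whole point is that although the prefactor $v^{\varepsilon-1}$ blows up as $v \to 0$ (because $\varepsilon < \tfrac12 < 1$), each summand on the right carries a compensating factor of $v$ or of $v_k$, and the latter decays like $v$ itself.

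Next I would bound the three factors separately. From the derivative control \eqref{exponentialdecay-eqn1} in Definition \ref{exponential-decay} we have $v^{-1}|v_k| \leq C_1(1)$, and from the zeroth-order bound $v \leq C_1(0)e^{-C_2|x|} \leq C_1(0)$, so in particular $v^{\varepsilon/2} \leq C_1(0)^{\varepsilon/2}$ is bounded. Condition \ref{asymptotics1} gives $v^{\varepsilon}H_{ij}^2 \leq v^{\varepsilon}\|\H\|_\t^2 \leq C$, hence $v^{\varepsilon/2}|H_{ij}| \leq \sqrt{C}$; similarly condition \ref{asymptotics2} gives $v^{\varepsilon/2}|H_{ij,k}| \leq \sqrt{C}$. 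Writing $v^{\varepsilon} = v^{\varepsilon/2}\cdot v^{\varepsilon/2}$ and distributing one factor of $v^{\varepsilon/2}$ onto each of $|H_{ij}|$ and $|H_{ij,k}|$, both $v^{\varepsilon}|H_{ij}|$ and $v^{\varepsilon}|H_{ij,k}|$ become uniformly bounded on $P$. Combining these bounds shows that $v^{\varepsilon-1}|(vH_{ij})_k|$ is bounded uniformly in $i,j,k$ and in the point of $P$; summing over the $n$ values of $k$ then yields the first inequality.

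Finally, the ``in particular'' clause follows at once: for fixed $j$ and each $i$, the quantity $v^{\varepsilon-1}|(vH_{ij})_i|$ is a single summand (the $k=i$ term) of the already-bounded left-hand side of the first inequality, hence is bounded by the same $C$; summing over the $n$ values of $i$ gives $\sup_P v^{\varepsilon-1}\sum_i|(vH_{ij})_i| \leq nC$, and absorbing $n$ into the constant completes the argument. I do not anticipate any genuine obstacle here, since the content is entirely the bookkeeping of powers of $v$; the only point requiring care is the half-power splitting $v^{\varepsilon} = v^{\varepsilon/2}\cdot v^{\varepsilon/2}$, which is what lets conditions \ref{asymptotics1}--\ref{asymptotics2} (which control $v^{\varepsilon}|\cdot|^2$) be applied to linear rather than quadratic expressions.
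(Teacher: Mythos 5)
Your proof is correct and is exactly the argument the paper intends: the paper offers no written proof, merely asserting that the lemma ``is clear from item \ref{asymptotics2}'', and your Leibniz expansion $(vH_{ij})_k = v_kH_{ij} + vH_{ij,k}$ together with the bound $v^{-1}|v_k|\leq C_1(1)$ from \eqref{exponentialdecay-eqn1} and the half-power splitting is the standard way to fill that in. Note that your write-up is in fact slightly more careful than the paper's attribution, since the term $v_kH_{ij}$ genuinely requires condition \ref{asymptotics1} (and the boundedness of $v$) in addition to condition \ref{asymptotics2}, which the paper's one-line remark elides.
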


\subsection{Function spaces and K-stability}\label{section-functionspaces}
In this section we will fix a Delzant polyhedron $P \subset \t^*$ and an exponentially decaying weight $v \in \mathcal{W}(P)$ as in Definition \ref{exponential-decay}. Let $\beta \in (\varepsilon, 1 - \varepsilon)$, which by our assumption that $0 \leq \varepsilon  < \frac{1}{2}$ is nonempty. 

We define the set $\Cee_\beta$ to be the set of convex functions $f \in C^\infty(P) \cap C^0(\overline{P})$ such that 
\[ \begin{array}{ccl}

           f \in \Cee_\beta &\Rightarrow &  || f ||_{C_\beta^0(\overline{P})} < \infty \textnormal{ and }  || f ||_{C_\beta^1(\overline{P}_{\delta})} < \infty  \textnormal{ for all } \delta << 1.
\end{array}\] 
We set $\Cee_\beta^* \subset \Cee_\beta$ to be the set of \emph{normalized} functions, i.e. those which satisfy 
\begin{equation}
	\Cee_\beta^* = \{ f \in \Cee_\beta \: | \: f \geq f(0) = 0 \},
\end{equation}
recalling that we always normalize $P$ such that $0$ lies in the interior. 

\begin{lemma}\label{spacescompare-1}
    Suppose $u \in C^\infty(P) \cap C^0(\overline{P})$ is the symplectic potential corresponding to an AK metric $\omega \in \scaryH$, and $v \in \mathcal{W}$ be an exponentially decaying weight. Then $u \in \Cee_\beta$ for any $\beta \geq \varepsilon$. 
\end{lemma}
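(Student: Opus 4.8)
The plan is to reduce everything to the estimates already packaged into the definition of $\sexyH$ (Definition \ref{asymptotic-metric-general}) and to absorb the gap between the exponents $\varepsilon$ and $\beta$ using only the boundedness of $v$. By hypothesis $u$ is convex and lies in $C^\infty(P)\cap C^0(\overline P)$, so the only thing left to verify is that the two weighted norms appearing in the definition of $\Cee_\beta$, namely $\|u\|_{C^0_\beta(\overline P)}$ and $\|u\|_{C^1_\beta(\overline P_\delta)}$ for all $\delta \ll 1$, are finite.

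First I would record the input coming from $\omega \in \sexyH$: item \ref{asymptotics3} of Definition \ref{asymptotic-metric-general} supplies $\|u\|_{C^0_\varepsilon(P)}<\infty$ together with $\|u\|_{C^2_\varepsilon(P_\delta)}<\infty$ for all small $\delta$, and the latter already controls the weighted first derivatives $\sup_{P_\delta}|v^\varepsilon \partial_{x_i}u|$. Next I would invoke Definition \ref{exponential-decay} to note that $v\le C_1(0)e^{-C_2|x|}\le C_1(0)$ on $\overline P$, so that $v$ is bounded above; consequently, for $\beta\ge\varepsilon$ the factor $v^{\beta-\varepsilon}$ is bounded on $\overline P$ by the constant $C_v:=\big(\max\{1,C_1(0)\}\big)^{\beta-\varepsilon}$.

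The key computation is then the elementary splitting $v^\beta = v^{\beta-\varepsilon}\,v^\varepsilon$. Applying it term by term yields
\[
\|u\|_{C^0_\beta(\overline P)}\le C_v\,\|u\|_{C^0_\varepsilon(P)},\qquad
\|u\|_{C^1_\beta(\overline P_\delta)}\le C_v\,\|u\|_{C^2_\varepsilon(P_\delta)},
\]
where on the right I pass freely between the open polyhedra $P$, $P_\delta$ and their closures: the quantities $v^\varepsilon u$ and $v^\varepsilon\partial_{x_i}u$ extend continuously (indeed $\overline P_\delta\subset P$, where $u$ is smooth), so the suprema over the interiors coincide with those over the closures. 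Both right-hand sides are finite by the first step, and since $\delta$ was an arbitrary small positive number this establishes $u\in\Cee_\beta$.

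There is essentially no serious obstacle here; the content of the lemma is simply that the decay of $v$ lets one trade the exponent $\varepsilon$ coming from the metric class for any larger exponent $\beta$. The only points requiring care are the passage from suprema over the open polyhedra to their closures (handled by continuity, using $\overline P_\delta\subset P$) and the observation that the hypothesis $\beta\ge\varepsilon$ is exactly what keeps $v^{\beta-\varepsilon}$ bounded — were $\beta<\varepsilon$, this factor would blow up at infinity as $v\to 0$, and the conclusion would fail.
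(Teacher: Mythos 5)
Your proof is correct and is exactly the paper's argument written out in full: the paper disposes of this lemma in one line, observing that it is immediate from condition \ref{asymptotics3} of Definition \ref{asymptotic-metric-general}, and your splitting $v^\beta = v^{\beta-\varepsilon}v^\varepsilon$ together with the boundedness of $v$ from Definition \ref{exponential-decay} is precisely the implicit content of that remark. The additional care you take (continuity up to $\overline{P}_\delta$, convexity of $u$, and why $\beta \geq \varepsilon$ is needed) is all sound and consistent with the paper.
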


\begin{proof}
This is clear from condition \ref{asymptotics3} of Definition \ref{asymptotic-metric-general}.
\end{proof}

The main goal of this section is to prove the following analog of \cite[Lemma 3.3.5]{DonStabTor}:
\begin{lemma}\label{ibplemma}
Suppose that the weights $(v,\,w)\in \mathcal{W}$ and that $\H$ is the data \eqref{Hdef} corresponding to a $(v, w)$-weighted cscK metric $\omega$ on $M$ satisfying the conditions of Definition \ref{asymptotic-metric-general}. Then, for all $f \in \Cee_\beta$, we have 
\begin{equation}\label{ibpformula}
\begin{split}
	\int_{P} (vH_{ij}) f_{ij} dx  &=  \int_P(vH_{ij})_{ij}f dx + 2\int_{\p P} f v d\sigma \\
						& =  2\int_{\p P} f v d\sigma -  \int_{P} f w dx. 
\end{split} 
\end{equation}
\end{lemma}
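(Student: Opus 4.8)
The plan is to read off the second equality directly from the weighted cscK equation, and to obtain the first by integrating by parts twice, using the boundary behaviour of $\H$ recorded in Proposition \ref{boundaryconditions} to evaluate the boundary contributions along $\p P$. For the second equality, Proposition \ref{vscal-toric} identifies the $(v,w)$-cscK condition \eqref{vwcscK} for $\omega$ with the generalized Abreu equation \eqref{genAbreu}, so that $\sum_{i,j}(vH_{ij})_{ij}=-w$ on $P$; multiplying by $f$ and integrating gives $\int_P(vH_{ij})_{ij}f\,dx=-\int_P fw\,dx$. All three integrals in \eqref{ibpformula} converge: since $f\in\Cee_\beta$ obeys $|f|\lesssim v^{-\beta}$ with $\beta<1$ while $v$ and $w$ decay exponentially (Definition \ref{exponential-decay}), both $fw$ and $fv$ decay exponentially; and since item \ref{asymptotics1} of Definition \ref{asymptotic-metric-general} gives $|vH_{ij}|\lesssim v^{1-\varepsilon/2}$, the convexity of $f$ makes $(vH_{ij})f_{ij}=v\sum_{i,j}H_{ij}f_{ij}\ge 0$ locally integrable (the identity will then show its total integral is finite).

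To prove the first equality I would work on the truncated region $P_\delta\cap B_R$ (see Definition \ref{Pdelta}), on which all data are smooth and the domain is compact, and integrate by parts twice. Taking $X^j=\sum_i(vH_{ij})f_i$, the first integration by parts replaces $\int(vH_{ij})f_{ij}$ by $-\int(vH_{ij})_j f_i$ plus a boundary integral of $\sum_{i,j}(vH_{ij})f_i$ against the inner normals; taking $Y^i=\sum_j(vH_{ij})_j f$, a second integration by parts turns $-\int(vH_{ij})_j f_i$ into $\int(vH_{ij})_{ij}f$ plus a boundary integral of $\sum_{i,j}(vH_{ij})_j f$ against the inner normals. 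It then remains to pass to the limits $R\to\infty$ and $\delta\to0$ and to evaluate the two boundary contributions along $\p P$.

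The boundary conditions of Proposition \ref{boundaryconditions} are exactly what is needed here. On a facet $F$ with primitive inner normal $\nu_F$ one has $\H(\nu_F,\cdot)=0$, i.e. $\sum_j H_{ij}(\nu_F)_j=0$; hence the first boundary integrand $\sum_{i,j}(vH_{ij})f_i(\nu_F)_j=v\sum_i f_i\big(\sum_j H_{ij}(\nu_F)_j\big)$ vanishes on $\p P$, and that boundary term tends to $0$. For the second, working in lattice coordinates adapted to $F$ so that $\nu_F=e_1$ and $F\subset\{x^1=0\}$, the condition $\H(\nu_F,\cdot)=0$ gives $H_{i1}\equiv0$ on $F$, whence the tangential derivatives $\p_j H_{1j}=0$ for $j\ge2$, while $d\H(\nu_F,\nu_F)=2\nu_F$ gives $H_{11,1}=2$; therefore $\sum_{i,j}(\nu_F)_i H_{ij,j}=H_{11,1}=2$ on $F$. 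Since in addition $\sum_{i,j}(\nu_F)_i v_j H_{ij}=\sum_j v_j\big(\sum_i H_{ij}(\nu_F)_i\big)=0$, the second boundary integrand equals $2vf$ on $\p P$, producing exactly $2\int_{\p P}fv\,d\sigma$ and hence the first equality.

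The main obstacle is the justification of these two limits. For $R\to\infty$ the contributions over $P_\delta\cap\p B_R$ must vanish: here I would combine $|f|,|f_i|\lesssim v^{-\beta}$ (from $f\in\Cee_\beta$) with $|vH_{ij}|\lesssim v^{1-\varepsilon/2}$ from item \ref{asymptotics1} of Definition \ref{asymptotic-metric-general} and $|(vH_{ij})_j|\lesssim v^{1-\varepsilon}$ from Lemma \ref{hardterm}; since $\beta<1-\varepsilon$, both integrands decay like a strictly positive power of $v$, hence exponentially, which beats the polynomial growth of the sphere and forces these terms to $0$. For $\delta\to0$ the interior integrals converge by monotone convergence (the nonnegative integrand $(vH_{ij})f_{ij}$, which also yields finiteness of the left-hand side a posteriori) and dominated convergence. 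The genuinely delicate point, exactly as in the compact case \cite{DonStabTor}, is the first boundary term: the factor $\sum_j(vH_{ij})(\nu_F)_j$ vanishes on $\p P$ only to first order while the normal derivative $f_i$ of the convex function $f$ may blow up, so one must show the product still tends to $0$ in $L^1(\p P_\delta)$. This is controlled by convexity together with $f\in C^0(\overline P)$, which forces the normal derivative of $f$ to be integrable up to $\p P$ and hence the relevant contribution to vanish; the smoothness of $\H$ up to $\overline P$ from Proposition \ref{boundaryconditions} provides the required first-order vanishing. I expect this boundary analysis, rather than the algebraic evaluation of the limiting boundary integrands, to be where the real work lies.
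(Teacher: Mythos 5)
Your overall architecture is the paper's: establish the divergence identity on truncated domains (the paper cuts with half-spaces $H_{\delta^*}=\{\langle b_+,x\rangle\le\delta^*\}$, $b_+\in C^*$, rather than balls $B_R$, but either way the exponential decay of $v^{1-\varepsilon-\beta}$ against polynomial volume growth kills the far boundary, as in Lemmas \ref{regularboundaryterm1}, \ref{extraboundaryterm} and Corollary \ref{ibpPdelta}), read the second equality off the generalized Abreu equation \eqref{genAbreu} via Proposition \ref{vscal-toric}, and evaluate the limiting boundary integrands using Proposition \ref{boundaryconditions}. Your algebra there is correct and is exactly the computation behind the factor $2\int_{\p P}fv\,d\sigma$: in coordinates adapted to a facet, $\H(\nu_F,\cdot)=0$ kills both $\sum_{i,j}v_jH_{ij}(\nu_F)_i$ and the leading part of $\sum_{i,j}(vH_{ij})f_i(\nu_F)_j$, while $d\H(\nu_F,\nu_F)=2\nu_F$ and the vanishing of tangential derivatives give $\sum_j H_{1j,j}=2$ on the facet.

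The genuine gap is the step you yourself flag as ``where the real work lies'': proving $\lim_{\delta\to0}\int_{\p P_\delta}(vH_{ij})f_i\,dx^{\widehat{j}}=0$. Your proposed mechanism --- convexity plus $f\in C^0(\overline P)$ forcing integrability of the normal derivative, with the first-order vanishing of $\H\nu_F$ supplied by the smoothness of $\H$ up to $\overline P$ --- does not close in this noncompact setting. Pointwise, convexity and continuity only give $\delta\,\p_\nu f\to0$ along each normal ray, with no uniformity in the point of the facet; since the facets $F_\delta$ are \emph{unbounded}, pointwise vanishing of the integrand does not force the integral to vanish, and the $C^1_\beta$ bound in the definition of $\Cee_\beta$ holds only on each $\overline P_\delta$ with a constant $C(\delta)$ that may blow up as $\delta\to0$, so it cannot be used here. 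Moreover, Proposition \ref{boundaryconditions} gives smoothness of $\H$ up to $\overline P$ but no \emph{uniform} control of the Taylor remainder near infinity along a facet --- this is precisely why Definition \ref{asymptotic-metric-general} imposes conditions \ref{asymptotics2} and \ref{asymptotics4}, which your argument never invokes at this step (you use only \ref{asymptotics1} and Lemma \ref{hardterm}, for the truncation at infinity). The paper's proof supplies the missing quantitative argument: writing $\eta_i=\sum_jH_{ij}\nu_j$, Taylor expansion at the foot point on $F$ with remainder bounded by $C\delta^2$ \emph{uniformly} via condition \ref{asymptotics4} gives $\eta(x)=2\delta\nu+R(x)$; one then produces $y\in\p P$ on the line $x-\lambda\eta$ with $|y-x|\le C\delta$, uses condition \ref{asymptotics2} together with $\eta(y)=0$ to get $v|\eta(x)|\le Cv^{1-\varepsilon}|y-x|$, converts this by convexity into $v|\p_\eta f|\le Cv^{1-\varepsilon}|f(y)-f(x)|$, and finally exploits the uniform continuity of $v^{\beta'}f$ on $\overline P$ for some $\beta<\beta'<1$ (available because of the decay built into $\Cee_\beta$) and the integrability of $v^{1-\varepsilon-\beta'}$ on facets to conclude the facet integral is $O(\delta)$. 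Without an argument of this kind, uniform over the unbounded facet, your $\delta\to0$ limit is unjustified.
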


\begin{remark}
    In proving Lemma \ref{ibplemma}, we primarily use the $(v,\,w)$-cscK assumption on our metric $\omega$ to ensure that $\Scalv(\omega)$ is sufficiently integrable on $P$. By the same proofs that follow, \eqref{ibpformula} holds without the condition that $\omega$ is $(v,\,w)$-cscK as long as $\Scalv(\omega)$ decays sufficiently rapidly. 
\end{remark}

Let $dx^{\widehat{j}} = (-1)^{j+1}dx^1 \wedge \dots \wedge d\widehat{x}^{j}\wedge \dots \wedge dx^n $, so that in particular $dx^j \wedge dx^{\widehat{j}} = +dx$.

\begin{lemma}\label{regularboundaryterm1}
	For all $f \in \Cee_\beta$, we have 
	\begin{equation*}
		\int_{\p P_\delta}   (vH_{ij}) f_{i} - (vH_{ij})_{i}f  dx^{\widehat{j}} < \infty. 
	\end{equation*}
\end{lemma}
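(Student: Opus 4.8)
The plan is to bound the integrand pointwise by a positive power of the weight $v$ and then exploit the exponential decay of $v$ furnished by Definition \ref{exponential-decay}. Note first that $\p P_\delta \subset \overline{P}_\delta$, so that the $C^1_\beta(\overline{P}_\delta)$ control on $f$ coming from $f \in \Cee_\beta$ is available on the whole domain of integration. The integrand splits into two families of terms, $(vH_{ij})f_i$ and $(vH_{ij})_i f$ (summed over $i$ for each fixed $j$, with $dx^{\widehat{j}}$ carrying the index $j$), and I would estimate each separately.

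For the first term, condition \ref{asymptotics1} of Definition \ref{asymptotic-metric-general} gives $\|\H\|_\t \leq C v^{-\varepsilon/2}$, hence $|vH_{ij}| \leq C v^{1-\varepsilon/2}$; combined with the bound $|f_i| \leq C v^{-\beta}$ coming from $\|f\|_{C^1_\beta(\overline{P}_\delta)} < \infty$, this yields $|(vH_{ij})f_i| \leq C v^{1-\varepsilon/2-\beta}$. For the second term, Lemma \ref{hardterm} gives $\sum_i |(vH_{ij})_i| \leq C v^{1-\varepsilon}$, and $\|f\|_{C^0_\beta(\overline{P})} < \infty$ gives $|f| \leq C v^{-\beta}$, so that $|(vH_{ij})_i f| \leq C v^{1-\varepsilon-\beta}$. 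Since $\beta \in (\varepsilon, 1-\varepsilon)$, both exponents are strictly positive: $1-\varepsilon-\beta > 0$ and $1-\varepsilon/2-\beta > \varepsilon/2 \geq 0$. Consequently the entire integrand is dominated by $C v^s$ for some $s > 0$.

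It then remains to integrate $v^s$ over $\p P_\delta$. Using $v \leq C_1(0) e^{-C_2|x|}$ we obtain $v^s \leq C e^{-sC_2|x|}$ with $sC_2 > 0$. The boundary $\p P_\delta$ is a finite union of facets, each lying in an affine hyperplane that we may identify with $\R^{n-1}$; since $|x|$ grows at least linearly to infinity along any such hyperplane, the restriction of $e^{-sC_2|x|}$ is integrable there, and summing over the finitely many facets gives the finiteness claim. I do not expect a genuine analytic obstacle here: the only real content is the bookkeeping of exponents, which closes precisely because $\beta$ was chosen in the interval $(\varepsilon, 1-\varepsilon)$, and the mild point of care is that the domain of integration is unbounded, which is exactly what the exponential decay of $v$ is designed to handle.
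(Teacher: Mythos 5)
Your proposal is correct and follows essentially the same route as the paper: bound $|(vH_{ij})f_i|$ via condition \ref{asymptotics1} of Definition \ref{asymptotic-metric-general} together with the $C^1_\beta(\overline{P}_\delta)$ control on $f$, bound $|(vH_{ij})_i f|$ via Lemma \ref{hardterm} and the $C^0_\beta$ control, and conclude by integrating a positive power of the exponentially decaying weight $v$ over the unbounded boundary, which closes because $\varepsilon + \beta < 1$. The only cosmetic difference is that the paper absorbs both terms into the single bound $C(\delta)\, v^{1-\varepsilon-\beta}$ (using boundedness of $v$ to subsume your sharper exponent $1-\varepsilon/2-\beta$), while you track the two exponents separately and spell out the facet-by-facet integrability, which the paper leaves implicit.
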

\begin{proof}
  We have 
 \begin{equation*}
\begin{split}
		\left| \int_{\p P_\delta}   (vH_{ij}) f_{i} - (vH_{ij})_{i}f  dx^{\widehat{j}} \right| &\leq \int_{\p P_\delta}  \left|(vH_{ij})_{i}f\right| + \left|(vH_{ij}) f_{i}\right| dx^{\widehat{j}} \\
			& \leq  2 C(\delta) \int_{\p P_\delta}   v^{1-\varepsilon-\beta}  dx^{\widehat{j}} < \infty
\end{split}
	\end{equation*}
by Definition \ref{exponential-decay} and Lemma \ref{hardterm}, since $\varepsilon + \beta < 1$.
\end{proof}

Let $b_{+} \in C^*$ be arbitrary,  and for $\delta^* > 0$, let $H_{\delta^*}$ be the half space 
\begin{equation*}
	H_{\delta^*} = \{x \in \t^* \: | \: \langle b_{+}, \, x \rangle \leq \delta^*  \}.
\end{equation*}
	Since $b_{+} \in C^*$, it follows that for any $\delta$ sufficiently small that the interior polyhedron $P_{\delta}$ is nonempty, the polytope $Q_{\delta, \delta^*} = P_{\delta} \cap H_{\delta^*}$ is bounded.  As a shorthand, set $\p H_{\delta,  \delta^*} := \overline{P}_{\delta} \cap \p H_{\delta^*}$. Then we have
\begin{lemma}\label{extraboundaryterm}
	For every $\delta > 0$,  and for every $f \in \Cee_\beta$, we have
\begin{equation*}
	\lim_{\delta^* \to \infty}	\int_{\p H_{\delta, \delta^*}}      (vH_{ij}) f_{i} - (vH_{ij})_{i}f dx^{\widehat{j}}  = 0. 
\end{equation*}
\end{lemma}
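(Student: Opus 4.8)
The plan is to estimate the integrand on the ``vertical'' piece $\p H_{\delta, \delta^*}$ using the exponential decay of $v$ and show it vanishes as $\delta^* \to \infty$. The key point is that on $\p H_{\delta^*}$ we have $\langle b_+, x\rangle = \delta^*$ with $b_+ \in C^*$, so that by the definition of the dual cone and the geometry of the recession cone, the constraint $x \in \overline{P}_\delta$ forces $|x| \to \infty$ as $\delta^* \to \infty$. More precisely, since $b_+$ lies in the interior of the dual recession cone $C(P)^*$, the function $\langle b_+, \cdot\rangle$ is proper and bounded below on $\overline{P}$, and the slice $\p H_{\delta, \delta^*}$ recedes to infinity. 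Hence on this slice $v(x) = O(e^{-C_2 |x|}) = O(e^{-C_2' \delta^*})$ for some $C_2' > 0$.

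First I would bound the integrand pointwise. By Lemma \ref{hardterm}, we have $\sum_i |(vH_{ij})_i| \leq C v^{1-\varepsilon}$, while the definition of $\Cee_\beta$ gives $|f| \leq \|f\|_{C^0_\beta(\overline{P})} v^{-\beta}$ and, on $\p H_{\delta, \delta^*} \subset \overline{P}_\delta$, also $|f_i| \leq \|f\|_{C^1_\beta(\overline{P}_\delta)} v^{-\beta}$. For the term $vH_{ij}f_i$, condition \ref{asymptotics1} of Definition \ref{asymptotic-metric-general} yields $|H_{ij}| \leq C v^{-\varepsilon/2}$ (so that $v|H_{ij}| \leq C v^{1-\varepsilon/2}$), and even more crudely $v|H_{ij}| \leq C v^{1-\varepsilon}$ suffices. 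Combining these, both terms in the integrand are bounded by $C(\delta) v^{1-\varepsilon-\beta}$, exactly as in the proof of Lemma \ref{regularboundaryterm1}. Since $\varepsilon + \beta < 1$, the exponent $1 - \varepsilon - \beta$ is strictly positive.

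Next I would carry out the integration over the slice. Writing $\lambda := 1 - \varepsilon - \beta > 0$, we have the bound
\begin{equation*}
	\left| \int_{\p H_{\delta, \delta^*}} (vH_{ij})f_i - (vH_{ij})_i f \, dx^{\widehat{j}} \right| \leq C(\delta) \int_{\p H_{\delta, \delta^*}} v^{\lambda} \, dx^{\widehat{j}}.
\end{equation*}
On $\p H_{\delta, \delta^*}$ we have $v^\lambda \leq C e^{-\lambda C_2 |x|}$, and because $b_+$ is interior to $C(P)^*$, there is a constant $\kappa > 0$ with $|x| \geq \kappa \langle b_+, x\rangle = \kappa \delta^*$ on this slice. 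Thus $v^\lambda \leq C e^{-\lambda C_2 \kappa \delta^*}$, and the $(n-1)$-dimensional measure of $\p H_{\delta, \delta^*}$ grows at most polynomially in $\delta^*$ (it is a facet of the bounded polytope $Q_{\delta, \delta^*}$, whose diameter is $O(\delta^*)$). Therefore the integral is bounded by $C(\delta)\, (\delta^*)^{n-1} e^{-\lambda C_2 \kappa \delta^*}$, which tends to $0$ as $\delta^* \to \infty$.

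The main obstacle is establishing the geometric inequality $|x| \geq \kappa \langle b_+, x\rangle$ on the receding slices, i.e. verifying that the exponential decay of $v$ in the ambient radial coordinate $|x|$ genuinely controls the behavior along the direction picked out by $b_+$. This rests on the fact that $b_+ \in C^*$ lies in the interior of the dual of the recession cone, so that $\langle b_+, \cdot\rangle$ grows linearly along every recession direction of $\overline{P}$; I would extract $\kappa$ from the compactness of the set of unit recession directions together with the strict positivity of $\langle b_+, \cdot\rangle$ there. Once this linear comparison is in hand, the remaining estimates are routine applications of Lemma \ref{hardterm} and the $\Cee_\beta$ norm bounds, and the exponential factor dominates the polynomial volume growth to give the limit.
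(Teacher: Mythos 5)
Your proof is correct and follows essentially the same route as the paper: the pointwise bound $|(vH_{ij})f_i - (vH_{ij})_i f| \leq C(\delta)\,v^{1-\varepsilon-\beta}$ on $P_\delta$ (via Lemma \ref{hardterm} and the $\Cee_\beta$ norms), with the exponential decay of $v$ along the receding slices beating the polynomial growth of their measure. The only remark worth making is that the ``main obstacle'' you flag is immediate: $|x| \geq |b_+|^{-1}\langle b_+, x\rangle = |b_+|^{-1}\delta^*$ on $\p H_{\delta,\delta^*}$ by Cauchy--Schwarz, so no compactness argument over recession directions is needed there; the interiority of $b_+$ in $C(P)^*$ is only used to guarantee that the slices $Q_{\delta,\delta^*}$ are bounded with diameter $O(\delta^*)$.
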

\begin{proof}
The proof is very similar to that of Lemma \ref{regularboundaryterm1}.  Indeed, on all of $P_\delta$, we have 
	\begin{equation*}
		\left|    (vH_{ij}) f_{i} - (vH_{ij})_{i}f   \right| \leq 2C(\delta)  v^{1-\varepsilon-\beta}.
	\end{equation*}
	  The result now follows since $v$ is exponentially decaying, $\varepsilon + \beta < 1$, and for fixed $\delta$ the volume of $\p H_{\delta, \delta^*}$ is a polynomial in $\delta^*$ of degree $n_C-1$, where $n_C = \dim \t_C$ (see Lemma \ref{makeavariety}). 
\end{proof}

\begin{corollary}\label{ibpPdelta}
	For every $\delta > 0$, 
\begin{equation}\label{ibpformuladelta}
	\int_{P_\delta} (vH_{ij}) f_{ij} - (vH_{ij})_{ij}f dx  = 	\int_{\p P_\delta}    (vH_{ij}) f_{i} - (vH_{ij})_{i}f  dx^{\widehat{j}}, 
\end{equation}
 for every $f \in \Cee_\beta$.
\end{corollary}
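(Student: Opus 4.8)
The plan is to deduce \eqref{ibpformuladelta} from the ordinary divergence theorem on the \emph{bounded} polytopes $Q_{\delta,\delta^*} = P_\delta \cap H_{\delta^*}$, and then let $\delta^* \to \infty$, using Lemma \ref{extraboundaryterm} to discard the contribution of the cap $\p H_{\delta,\delta^*}$ and Lemma \ref{regularboundaryterm1} to control the remaining boundary piece. First I would introduce the vector field $W = (W^1,\dots,W^n)$ on the interior of $P$ with components $W^j = \sum_i \big( (vH_{ij}) f_i - (vH_{ij})_i f \big)$, so that $\sum_j W^j \, dx^{\widehat j}$ is exactly the $(n-1)$-form integrated on the right of \eqref{ibpformuladelta} and $d\big(\sum_j W^j dx^{\widehat j}\big) = \big(\sum_j \p_j W^j\big)\, dx$. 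A direct computation gives
\[ \sum_j \p_j W^j = \sum_{i,j}\Big( (vH_{ij})_j f_i + (vH_{ij}) f_{ij} - (vH_{ij})_{ij} f - (vH_{ij})_i f_j \Big), \]
and the first and last terms cancel: since $H_{ij} = g(Y_i,Y_j)$ is symmetric, relabelling $i \leftrightarrow j$ turns $\sum_{i,j}(vH_{ij})_j f_i$ into $\sum_{i,j}(vH_{ij})_i f_j$. Hence $\sum_j \p_j W^j = \sum_{i,j}\big( (vH_{ij}) f_{ij} - (vH_{ij})_{ij} f\big)$, which is precisely the integrand on the left of \eqref{ibpformuladelta}.

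Next, for fixed $\delta,\delta^* > 0$ the set $Q_{\delta,\delta^*}$ is bounded (as noted before Lemma \ref{extraboundaryterm}, because $b_{+} \in C^*$), and its closure is a compact subset of the interior of $P$ on which $f$, $H$, and therefore $W$, are smooth; so Stokes' theorem applies verbatim and yields
\[ \int_{Q_{\delta,\delta^*}} \sum_{i,j}\big( (vH_{ij}) f_{ij} - (vH_{ij})_{ij} f\big)\, dx = \int_{\p P_\delta \cap H_{\delta^*}} \sum_j W^j dx^{\widehat j} + \int_{\p H_{\delta,\delta^*}} \sum_j W^j dx^{\widehat j}, \]
after splitting $\p Q_{\delta,\delta^*}$ into the portion lying on $\p P_\delta$ and the cap $\p H_{\delta,\delta^*} = \overline{P}_\delta \cap \p H_{\delta^*}$. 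Letting $\delta^* \to \infty$, the cap integral tends to $0$ by Lemma \ref{extraboundaryterm}, while Lemma \ref{regularboundaryterm1} shows that $\sum_j W^j dx^{\widehat j}$ is absolutely integrable over $\p P_\delta$, so dominated convergence (as $H_{\delta^*} \uparrow \t^*$) identifies the first boundary term with $\int_{\p P_\delta} \sum_j W^j dx^{\widehat j}$, i.e. with the right-hand side of \eqref{ibpformuladelta}.

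It remains to pass to the limit on the left, and this is the main obstacle: $f \in \Cee_\beta$ carries no a priori bound on its second derivatives, so $\int_{P_\delta}(vH_{ij})f_{ij}\,dx$ cannot be estimated termwise. Here I would invoke the $(v,w)$-cscK hypothesis. By Proposition \ref{vscal-toric} and \eqref{vwcscK} we have $\sum_{i,j}(vH_{ij})_{ij} = -\Scalv(\omega) = -w$, so the integrand rewrites as $(vH_{ij})f_{ij} + wf$, in which $(vH_{ij})f_{ij} \ge 0$ by convexity of $f$ and positivity of $\H$. The term $wf$ is integrable on $P_\delta$: Definition \ref{exponential-decay} gives $|w| \le C v^{\beta^*}$ and $f \in \Cee_\beta$ gives $|f| \le C v^{-\beta}$, whence $|wf| \le C v^{\beta^*-\beta}$, which lies in $L^1(P_\delta)$ by the exponential decay of $v$ for $\beta$ in the admissible range. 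Consequently $\int_{Q_{\delta,\delta^*}} wf\,dx \to \int_{P_\delta} wf\,dx$ by dominated convergence, while $\int_{Q_{\delta,\delta^*}}(vH_{ij})f_{ij}\,dx$ increases monotonically in $\delta^*$; since the whole left-hand side equals the right-hand side, which has a finite limit, this monotone integral converges to a finite value, showing that $(vH_{ij})f_{ij} \in L^1(P_\delta)$ and yielding \eqref{ibpformuladelta}. In short, the uncontrolled second-order combination is traded for $wf$ via the cscK equation, and its integrability together with the nonnegativity coming from convexity makes the volume integral over the unbounded $P_\delta$ meaningful.
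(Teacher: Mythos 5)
Your proof is correct and takes essentially the same route as the paper: apply the divergence theorem on the bounded truncations $Q_{\delta,\delta^*}$, split $\p Q_{\delta,\delta^*}$ into $\p P_\delta \cap H_{\delta^*}$ and the cap $\p H_{\delta,\delta^*}$, and let $\delta^* \to \infty$ using Lemma \ref{extraboundaryterm} for the cap and Lemma \ref{regularboundaryterm1} for the remaining boundary piece. Your last paragraph --- trading $(vH_{ij})_{ij}f$ for $-wf$ via the cscK equation and Proposition \ref{vscal-toric}, then combining integrability of $wf$ with monotone convergence for the nonnegative term $(vH_{ij})f_{ij}$ --- correctly fills in the convergence of the volume integral, a step the paper's one-line ``Taking $\delta^* \to \infty$'' leaves implicit and which is exactly the role of the cscK hypothesis flagged in the remark following Lemma \ref{ibplemma}.
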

\begin{proof}
	For every $\delta, \delta^* > 0$,  we know that 
\begin{equation*}
\begin{split}
	\int_{Q_{\delta, \delta^*}} (vH_{ij}) f_{ij} - (vH_{ij})_{ij}f dx  &= 	\int_{\p Q_{\delta, \delta^*}}    (vH_{ij}) f_{i} - (vH_{ij})_{i}f   dx^{\widehat{j}}.  \\
				& =  	\int_{\p P_\delta \cap H_{\delta^*}}     (vH_{ij}) f_{i} - (vH_{ij})_{i}f  dx^{\widehat{j}} +  	\int_{\p H_{\delta,\delta^*}}     (vH_{ij}) f_{i} - (vH_{ij})_{i}f   dx^{\widehat{j}}.  
\end{split}
\end{equation*}
Taking $\delta^* \to \infty$ gives \eqref{ibpformuladelta}. 
	
\end{proof}

\begin{proof}[Proof of Lemma \ref{ibplemma}]
	The proof proceeds as in \cite[Lemma 3.3.5]{DonStabTor}. The key fact that we must establish is that 
 \begin{equation}\label{ibplemmaproofmainlimit}
     \lim_{\delta \to 0} \int_{\p P_\delta} (vH_{ij}) f_{i}dx^{\widehat{j}} = 0.
 \end{equation}
 To see this, we use the same strategy as in \cite{DonStabTor}. Fix a facet $F_\delta$ of $P_\delta$ with unit inner normal $\nu$, and write 
 \[ \int_{\p F_\delta} (vH_{ij}) f_{i}dx^{\widehat{j}} = -\int_{F_\delta} v \p_{\eta}f d\sigma, \]
 with $\eta$ given by $\eta_i = \sum_j H_{ij}\nu_j$. Fix a point $x \in F_\delta$, let $F$ be the facet of $P$ with inner normal $\nu$, and set $\hat{y} \in F$ to be the point $\hat{y} = x - \delta \nu$. We consider the Taylor approximation to $\eta(x)$ based at $\hat{y}$, so that 
  \begin{equation*}
    \begin{split}
      \eta(x) &= \eta(\hat{y}) + d\H_{\hat{y}}(\nu, x - \hat{y}) + R(x). 
    \end{split}
  \end{equation*}
  By Proposition \ref{boundaryconditions}, $\eta(\hat{y}) = \H_{\hat{y}}(\nu,\, \cdot) = 0$, whereas $d\H_{\hat{y}}(\nu, x - \hat{y}) = \delta d\H_{\hat{y}}(\nu, \nu) = 2\delta \nu$. Therefore 
  \begin{equation}\label{eta-expansion}
      \eta(x) = 2\delta \nu + R(x). 
  \end{equation}
  By condition \ref{asymptotics4} of Definition \ref{asymptotic-metric-general}, as long as $\delta \leq \bar{\delta}$, then there exists a constant $C$ such that 
  \[ \left|R(x) \right| < C\delta^2. \]
  Next we let $y = y(x)$ be the point in $\p P$ closest to $x$ which lies on the line $x - \lambda \eta$, $\lambda \in \R$. By \eqref{eta-expansion}, we see that as long as $\delta$ is sufficiently small, then we will have that $y(x) \in F$ for any $x \in F_{\delta}$. In particular 
  \begin{equation*} 
  \begin{split}
      -a = \langle y, \nu \rangle &= \langle x,\, \nu \rangle - \lambda \langle \eta, \, \nu \rangle \\
        & = -a + \delta - 2\lambda |\nu|^2 - \lambda \langle R(x), \, \nu \rangle,
  \end{split} 
  \end{equation*}
  so that 
  \begin{equation*}
      \begin{split}
          | y - x| = \left| \lambda \right| = \left| \frac{\delta}{2 |\nu|^2 +  \langle R(x), \, \nu \rangle } \right| \leq \frac{\delta}{2 |\nu|^2 -  C\delta^2 |\nu| } \leq C \delta,
      \end{split}
  \end{equation*}
  as long as $\delta$ is sufficiently small. Then by condition \ref{asymptotics2} of Definition \ref{asymptotic-metric-general}, there exists a constant $C$ independent of $x$ such that 
 \[v \left|\eta(y) - \eta(x) \right| \leq C v^{1-\varepsilon}|y-x|. \]
 By the same reasoning as above, it follows from the local boundary behavior of $\H$ given by Proposition \ref{boundaryconditions} that $\eta(y) = 0$. Therefore $v|\eta(x)| < Cv^{1-\varepsilon}|y-x|$, and so by the convexity of $f$ we have 
 \[ v | \p_\eta f| \leq C v^{1-\varepsilon}|f(y) - f(x)|, \]
 again for a constant $C$ independent of $x$.

 Then we have 
 \begin{equation*}
 \begin{split}
     \left| \int_{F_\delta} (vH_{ij}) f_{i}dx^{\widehat{j}}\right| &= \left| \int_{F_\delta} v \p_{\eta}f d\sigma, \right| \leq C \int_{F_\delta} v^{1-\varepsilon}|f(y) - f(x)| d\sigma  \\
        &  \leq C\max_{x \in F_{\delta}}\{|v^{\beta'}f(y) - v^{\beta'}f(x)| \} \int_{F_\delta} v^{1-\varepsilon - \beta'} d\sigma \\
        &\leq C \max_{x \in F_{\delta}}\{|y - x|  \} \\
        &\leq C \delta,
\end{split}
 \end{equation*}
for some choice of $\beta < \beta' < 1$, owing to the fact that $v^{\beta'}f$ is uniformly continuous on $\overline{P}$. This establishes \eqref{ibplemmaproofmainlimit}. The convergence of the remaining terms follows as in \cite{DonStabTor}, using condition \ref{asymptotics1} of Definition \ref{asymptotic-metric-general}, together with Proposition \ref{boundaryconditions} and the uniform continuity of $v^{\beta'}f$ for $\beta < \beta' < 1$.
 \end{proof}



Given \eqref{ibpformula}, then following \cite{DonStabTor, LahdiliWeighted, LiLianSheng1, simonYTD} we define:
\begin{definition}\label{DonaldsonFutakidef}
Let $P$ be a Delzant polyhedron and $(v,\,w)$ exponentially decaying weights. We define the \emph{weighted Futaki invariant} $\mathcal{F}_{v,w}: \Cee_\beta \to \R$ by
    \begin{equation}\label{DonaldsonFutakieqn}
        \mathcal{F}_{v,w}(f) =  2\int_{\p P} f v d\sigma - \int_{P} f w dx.
    \end{equation}
\end{definition}

\begin{definition}\label{PL-and-stable-def}
    We say that a function $f \in C^0(\overline{P})$ is \emph{convex piecewise-linear} if 
    \begin{equation*}
        f = \max\{ \ell_1, \dots, \ell_k\},
    \end{equation*}
    for finitely many affine-linear functions $\ell_1, \dots, \ell_k$. We say that $P$ is \emph{$(v,\,w)$ K-semistable} if 
    \begin{equation}\label{semistable-nonneg}
        \mathcal{F}_{v,w}(f) \geq 0
    \end{equation}
    for all convex piecewise-linear functions on $P$. Moreover we say that $P$ is \emph{$(v,\,w)$ K-stable} if it is K-semistable and equality holds in \eqref{semistable-nonneg} if and only if $f = \ell_1$ is affine-linear.  
\end{definition} 

With this in place, we can now prove Theorem \ref{Mtheorem-existenceimpliesstable}. The proof, originally due to \cite{ZhouZhu} is the same as in the compact case (see also \cite{vestisnotes}) once we have Lemma \ref{ibplemma}, but we include it here for the convenience of the reader. 
\begin{prop}\label{existenceimpliesstable}
    Let $P$ be a Delzant polyhedron and suppose that $(v,\, w)$ are weights in the class $\mathcal{W}$. Suppose that there is a solution $u \in \scaryH$ to the weighted Abreu equation 
    \[ \sum_{i,j}\left( v H_{ij} \right)_{ij} = - w,\]
    where $\H = (H_{ij}) = \textnormal{Hess}^{-1}(u)$. Then $P$ is $(v,\,w)$ K-stable.     
\end{prop}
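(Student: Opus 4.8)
The plan is to read off $(v,w)$ K-stability directly from the integration-by-parts identity of Lemma~\ref{ibplemma} together with the positivity of $v$ and of the data $\H$. Since $u \in \sexyH$ solves the weighted Abreu equation, Lemma~\ref{ibplemma} applies and yields, for every $f \in \Cee_\beta$,
\[ \mathcal{F}_{v,w}(f) = \int_P v H_{ij} f_{ij}\, dx. \]
When $f$ is smooth and convex the integrand is $v\,\langle \H, \textnormal{Hess}\, f\rangle$, which is nonnegative pointwise: $v > 0$, while $\H = \textnormal{Hess}^{-1}(u)$ and $\textnormal{Hess}\, f$ are both positive semidefinite, so the trace of their product is $\geq 0$. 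Hence $\mathcal{F}_{v,w}(f) \geq 0$, with equality exactly when $\textnormal{Hess}\, f \equiv 0$, i.e. when $f$ is affine-linear.

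To reach the convex piecewise-linear functions appearing in Definition~\ref{PL-and-stable-def}, which are only Lipschitz and hence not a priori in $\Cee_\beta$, I would first prove semistability by smoothing. Given $f = \max\{\ell_1, \dots, \ell_k\}$, set $f_t = t\log\sum_a e^{\ell_a/t}$; these are smooth, convex, satisfy $f \leq f_t \leq f + t\log k$, and have gradients bounded by $\max_a|\nabla\ell_a|$. Because the $\ell_a$ grow at most linearly and $v$ (hence $v^\beta$) decays exponentially by Definition~\ref{exponential-decay}, each $f_t \in \Cee_\beta$; moreover $\int_{\p P} v\, d\sigma$ and $\int_P |w|\, dx$ are finite and $\|f_t - f\|_{\infty} \leq t\log k \to 0$, so $\mathcal{F}_{v,w}(f_t) \to \mathcal{F}_{v,w}(f)$ by dominated convergence. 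Since each $\mathcal{F}_{v,w}(f_t) \geq 0$, this gives $\mathcal{F}_{v,w}(f) \geq 0$ and hence K-semistability.

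For K-stability I must show $\mathcal{F}_{v,w}(f) = 0$ if and only if $f$ is affine-linear. The ``if'' direction is immediate from the displayed identity, since $\textnormal{Hess}\,\ell \equiv 0$ forces $\mathcal{F}_{v,w}(\ell) = 0$. For the converse I would pass to the limit at the level of Hessian measures. If $f$ is convex piecewise-linear and not affine on $P$, its distributional Hessian $D^2 f$ is a nonzero positive matrix-valued measure supported on the codimension-one crease set $\Sigma \subset P$, of the form $|[\nabla f]|\,\nu\otimes\nu\,\mathcal{H}^{n-1}|_\Sigma$ with $\nu$ the unit conormal and $[\nabla f]$ the gradient jump. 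Since the convex functions $f_t$ converge to $f$ locally uniformly, $D^2 f_t \rightharpoonup D^2 f$ weakly as measures, and testing against the continuous, bounded matrix $v\H$ would give
\begin{equation*}
0 = \mathcal{F}_{v,w}(f) = \lim_{t\to 0}\int_P v H_{ij}(f_t)_{ij}\, dx = \int_{\Sigma} v\, \H(\nu,\nu)\, |[\nabla f]|\, d\mathcal{H}^{n-1}.
\end{equation*}
As $\Sigma$ lies in the interior, where $\H$ is positive-definite (Proposition~\ref{boundaryconditions}), and $v>0$, the integrand is strictly positive on $\Sigma$; so $\Sigma$ must be $\mathcal{H}^{n-1}$-negligible, forcing $f$ to be affine-linear.

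The main obstacle is precisely this equality case: justifying the convergence $\int_P vH_{ij}(f_t)_{ij}\, dx \to \int_\Sigma v\,\H(\nu,\nu)\,|[\nabla f]|\, d\mathcal{H}^{n-1}$ requires a tightness statement ruling out loss of Hessian mass to infinity or onto $\p P$. This is where the asymptotic conditions of Definition~\ref{asymptotic-metric-general} do the work, exactly as they did in Lemma~\ref{ibplemma}: condition~\ref{asymptotics1} gives $\|v\H\|_{\textnormal{op}} \leq C v^{1-\varepsilon}$, and since $f_t$ is convex with uniformly bounded gradient the total mass of $\Delta f_t$ on a ball $B_R$ grows only polynomially in $R$, so the exponential decay of $v^{1-\varepsilon}$ makes the tail contribution uniformly small in $t$. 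The remaining points — that $f_t \in \Cee_\beta$ and the dominated-convergence estimates — are routine consequences of the exponential decay of the weights.
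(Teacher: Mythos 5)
Your proof is correct, but it takes a genuinely different route from the paper. The paper (following Zhou--Zhu and Donaldson) works directly with the piecewise-linear $f$: it subdivides $P = \bigcup_a \Delta_a$ into the maximal regions where $f = \ell_a$ is affine, integrates by parts on each piece via Corollary~\ref{ibpPdelta} and Lemma~\ref{ibplemma}, and collects the internal boundary terms along the creases $F_{ab}$ to arrive at the exact identity
\begin{equation*}
\mathcal{F}_{v,w}(f) = \sum_{a<b}\int_{F_{ab}} v\,H_{ij}\,(\ell_{a,i}-\ell_{b,i})(\ell_{a,j}-\ell_{b,j})\, d\sigma,
\end{equation*}
from which nonnegativity and the equality case both drop out at once from positive-definiteness of $\H$. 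You instead regularize by log-sum-exp, apply Lemma~\ref{ibplemma} to the smooth approximants, and recover the crease term in the limit as the mass of the distributional Hessian $D^2 f$; note that your limiting integral $\int_\Sigma v\,\H(\nu,\nu)\,|[\nabla f]|\, d\mathcal{H}^{n-1}$ is precisely the paper's crease sum, so the two arguments compute the same quantity. Each has its merits: the paper's decomposition is elementary and yields the explicit formula, which the paper actually reuses (e.g.\ Remark~\ref{remarkdsigma-notquadratic}, and the computation $\mathcal{F}_{\tilde{v},\tilde{w}}(f_{x_0}) = p(x_0)v(x_0)\Theta(x_0)$ in Section~\ref{section-testconfig}), while your approximation argument bypasses the bookkeeping of internal boundary terms and would extend verbatim to general convex, non-piecewise-linear degenerations. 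Two small remarks: condition~\ref{asymptotics1} gives $\|\H\| \leq C v^{-\varepsilon/2}$, so your bound should read $\|v\H\|_{\textnormal{op}} \leq Cv^{1-\varepsilon/2}$ rather than $Cv^{1-\varepsilon}$ (harmless, since both decay); and the tightness issue you flag as the main obstacle can in fact be sidestepped entirely in the equality case, since the integrand $vH_{ij}(f_t)_{ij}$ is pointwise nonnegative: if $f$ had a crease, pick a compact $K \Subset P$ meeting $\Sigma$ with $D^2f(\p K)=0$, and then $\mathcal{F}_{v,w}(f_t) \geq \int_K vH_{ij}(f_t)_{ij}\,dx \to \int_{K\cap\Sigma} v\,\H(\nu,\nu)\,|[\nabla f]|\,d\mathcal{H}^{n-1} > 0$ by weak-$*$ convergence on $K$ alone, contradicting $\mathcal{F}_{v,w}(f_t)\to 0$; no control of mass near $\p P$ or at infinity is needed for this direction.
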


\begin{proof}
Suppose that $f$ is convex piecewise-linear, so that we can write 
    \[ P = \bigcup \Delta_a, \]
    where $\ell_a := f|_{\Delta_a}$ is affine-linear. We write 
    \[ \p\Delta_a =  \p P_a \cup \bigcup F_{ab} ,  \] 
    where $\p P_a$ is the corresponding portion of $\p P$ and $F_{ab}$ meet the interior of $P$. Note that $F_{ab}$ are defined by the equation $L_{ab}(x) := \ell_a(x) - \ell_b(x) = 0$. Then by Corollary \ref{ibpPdelta} applied to each $\Delta_a$ and Lemma \ref{ibplemma}, we have that 
    \begin{equation*}
    \begin{split}
        - \int_{\Delta_a} f w dx &= \int_{\Delta_a} \ell_a \sum_{i,j}\left( v H_{ij} \right)_{ij}dx \\
            &= -2\int_{\p P_a} f v d\sigma + \sum_{b} \int_{F_{ab}} (vH_{ij}) \ell_{a,i} - (vH_{ij})_{i}\ell_a dx^{\widehat{j}} \\
            & = -2\int_{\p P_a} f v d\sigma + \sum_{b} \int_{F_{ab}} \left( (vH_{ij}) \ell_{a,i} - (vH_{ij})_{i}\ell_a \right) L_{ab,j} d\sigma ,
    \end{split}
    \end{equation*}
    since $f_{ij} \equiv 0$. Here we have extended the definition of $d\sigma$ in the obvious manner to define a positive measure on $F_{ab}$. In this way, the last line above is justified using the convexity of $f$ to conclude that $d\ell_a$ defines an inward normal to $F_{ab}$ on $\Delta_a$ whereas $d\ell_b$ defines an outward normal. By construction we have that $L_{ba} = - L_{ab}$ and $\ell_a|_{F_{ab}} = \ell_b|_{F_{ab}}$, from which we can see that 
    \begin{equation}\label{piecewiseibp}
    \begin{split}
        -\int_{P} f w dx &= -\sum_{a} \int_{\Delta_a} \ell_a w dx \\
                & = -\sum_{a}2\int_{\p P_a} \ell_a v d\sigma +\sum_{a,b} \int_{F_{ab}} (vH_{ij}) \ell_{a,i}  L_{ab,j} d\sigma \\
                & = -2\int_{\p P} f v d\sigma + \sum_{a,b} \int_{F_{ab}} (vH_{ij}) \ell_{a,i}(\ell_{a,j} - \ell_{b,j}) d\sigma  \\
                & = -2\int_{\p P} f v d\sigma + \sum_{a < b} \int_{F_{ab}} (vH_{ij}) (\ell_{a,i} - \ell_{b,i})(\ell_{a,j} - \ell_{b,j}) d\sigma.
    \end{split}
    \end{equation}
Since $\H$ is positive-definite, we then have that $\mathcal{F}_{v,w}(f) \geq 0$. Moreover, if  $\mathcal{F}_{v,w}(f) = 0$, then in fact the set of ``creases'' $\{F_{ab}\}$ must be empty, and hence $f = \ell_1$ is actually affine.
\end{proof}

\begin{remark}\label{remarkdsigma-notquadratic}
Note that by definition of the measure $d\sigma$ on $F_{ab}$, the last term in \eqref{piecewiseibp} scales appropriately as we scale $f$. That is to say, if for $K > 0$ we scale $f \mapsto Kf$, then the corresponding measure scales as $d\sigma \mapsto K^{-1}d\sigma$.
\end{remark}

\subsection{Nonexistence} \label{section-nonexistence}

\begin{prop}\label{specialvwexample}
    Let $P = \R^2_{\geq -1}$ and $v, w$ be weights of the form 
    \begin{equation}\label{specialvwexample2}
    v = q_1(x) e^{-(x_1 + x_2)}, \hspace{.5in} w = q_2(x) e^{-\lambda(x_1 + x_2)}, 
    \end{equation}
    where $q_1, q_2$ are positive symmetric rational functions on $P$, $\lambda \in (0,1)$. Then even if
        \[\mathcal{F}_{v,w}(f) = 2\int_{\p P} f v d\sigma - \int_{P}f w dx = 0 \] 
    for all affine-linear $f$, $\C^2$ does not admit any $(v,\,w)$-cscK metric satisfying \eqref{asymptotic-metric-general}.
\end{prop}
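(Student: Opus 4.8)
The plan is to argue by contradiction using Theorem~\ref{Mtheorem-existenceimpliesstable} (equivalently Proposition~\ref{existenceimpliesstable}) together with an explicit convex piecewise-linear destabilizer. First I would check that the weights \eqref{specialvwexample2} lie in $\mathcal{W}(P)$. Since the recession cone of $P = \R^2_{\geq -1}$ is the first quadrant, one has $x_1 + x_2 \geq |x|$ whenever $x_1, x_2 \geq 0$, so $v = q_1 e^{-(x_1+x_2)}$ decays exponentially and the log-derivative bounds in \eqref{exponentialdecay-eqn1} hold because $\p_i q_1/q_1$ is a bounded rational function on $P$; moreover, taking any $\beta^* \in (0,\lambda)$, the factor $v^{-\beta^*}w = q_1^{-\beta^*}q_2\, e^{(\beta^* - \lambda)(x_1+x_2)}$ is bounded with all derivatives, so $\|w\|_{C^k_{-\beta^*}(P)} < \infty$. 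Thus $(v,w) \in \mathcal{W}$, and so if $\C^2$ admitted a $(v,w)$-cscK metric $\omega \in \sexyH$, Theorem~\ref{Mtheorem-existenceimpliesstable} would force $P$ to be $(v,w)$ K-stable, in particular $(v,w)$ K-semistable, meaning $\mathcal{F}_{v,w}(f) \geq 0$ for every convex piecewise-linear $f$.

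The core of the argument is to violate this inequality. Writing $s = x_1 + x_2$, consider the convex piecewise-linear functions $f_c = \max\{0, s - c\}$ for large $c > 0$, which grow into the unbounded $(1,1)$-direction of $P$. I would estimate the two contributions to $\mathcal{F}_{v,w}(f_c)$ separately. By the symmetry of the weights, the boundary term equals $2\int_{\p P} f_c v\, d\sigma = 4\int_{F_1} f_c v\, d\sigma$, where on the facet $F_1 = \{x_1 = -1\}$ one has $v = q_1(-1,x_2)e^{1-x_2}$ and $f_c = (x_2 - 1 - c)_+$; a one-dimensional Laplace estimate then gives $2\int_{\p P} f_c v\, d\sigma \leq A(c)\, e^{-c}$ with $A$ of at most polynomial growth. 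For the bulk term I would pass to the coordinates $(s, t) = (x_1 + x_2, x_1 - x_2)$ and integrate out $t$, obtaining $\int_P f_c w\, dx = \tfrac12 \int_c^\infty (s - c)\, e^{-\lambda s} B(s)\, ds$, where $B(s) = \int_{|t| \leq s+2} q_2\, dt$. Since $q_2 > 0$, the function $B$ is strictly positive and bounded below by a fixed negative power of $s$, so Watson's lemma yields $\int_P f_c w\, dx \geq c'\, c^{-M} e^{-\lambda c}$ for constants $c' > 0$, $M$ and all large $c$.

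Combining the two estimates,
\begin{equation*}
    \mathcal{F}_{v,w}(f_c) = 2\int_{\p P} f_c v\, d\sigma - \int_P f_c w\, dx \leq A(c)\, e^{-c} - c'\, c^{-M} e^{-\lambda c}.
\end{equation*}
Because $\lambda < 1$, the negative bulk contribution dominates the boundary contribution as $c \to \infty$ (their ratio is $\sim e^{-(1-\lambda)c}$ up to polynomial factors), so $\mathcal{F}_{v,w}(f_c) < 0$ for all $c$ sufficiently large. As $f_c$ is a genuine convex piecewise-linear function on $\overline{P}$, this contradicts the K-semistability forced by Theorem~\ref{Mtheorem-existenceimpliesstable}, and hence no such $(v,w)$-cscK metric can exist. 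I would emphasize that the vanishing of $\mathcal{F}_{v,w}$ on affine-linear functions is never used in producing the contradiction: it is recorded only to show that the classical linear Futaki obstruction is satisfied, so the obstruction detected here is strictly finer.

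I expect the main obstacle to be making the two one-variable asymptotics precise and matching the exponents correctly, namely confirming that the boundary term decays like $e^{-c}$ while the bulk term decays no faster than $e^{-\lambda c}$ times a fixed power of $c$. The latter requires a lower bound on the transverse integrals $B(s)$ of the rational weight $q_2$; once the rates $e^{-c}$ versus $e^{-\lambda c}$ are correctly pinned down, the strict inequality $\lambda < 1$ makes the comparison robust against all polynomial prefactors.
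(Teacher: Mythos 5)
Your proposal is correct and follows essentially the same route as the paper: the paper also tests semistability (via Proposition \ref{existenceimpliesstable}) against the family $f_R = \max\{x_1+x_2-R,\,0\}$ and compares the boundary decay rate $e^{-R}$ with the bulk decay rate $e^{-\lambda R}$, concluding from $\lambda < 1$ that $\mathcal{F}_{v,w}(f_R) < 0$ for large $R$. The only difference is bookkeeping: where you track polynomial prefactors explicitly (your $A(c)$ and the lower bound on $B(s)$ via Watson's lemma), the paper absorbs the rational factors $q_1, q_2$ into small exponential shifts, choosing $\varepsilon > 0$ with $\lambda + \varepsilon \leq 1 - \varepsilon$ so that $q_1 e^{-\varepsilon s} \leq 1$ and $q_2 e^{\varepsilon s} \geq 1$ on $\{s \geq R\}$ — both devices rest on the same fact that positive rational weights decay at most polynomially.
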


\begin{proof}
    For $R > 0$, set 
    \[ f_R(x) = \left\{ \begin{array}{lr} x_1 + x_2 - R & x_1 + x_2 \geq R \\ 0 & x_1 + x_2 \leq R \end{array}\right. . \]
    For any fixed $\varepsilon > 0$ sufficiently small, we will have that 
    \[ q_1(x) e^{-\varepsilon(x_1 + x_2)} \leq 1, \hspace{.2in} q_2(x) e^{ \varepsilon (x_1 + x_2)} \geq 1 \]
    for $x_1 + x_2 \geq R$, for all $R$ sufficiently large. It is then straightforward to compute that 
    \[ \int_{\p P} f_R v d\sigma \leq \int_{\p P} f_R e^{-(1-\varepsilon)(x_1 + x_2)} d\sigma = O\left(e^{-(1-\varepsilon)R} \right), \]
    whereas 
    \[ \int_{ P} f_R w dx \geq  \int_{ P} f_R e^{-(\lambda+\varepsilon)(x_1 + x_2)} dx = O\left(Re^{-(\lambda+\varepsilon)R} \right). \]
    For any $\lambda \in (0, 1)$, we can choose an $\varepsilon$ such that $\lambda + \varepsilon \leq 1 - \varepsilon$. Thus, for a sufficiently large choice of $R$ we will have that $a$ times the latter dominates twice the former, and hence $\mathcal{F}_{v,aw}(f_R) < 0$.
\end{proof}

\begin{example}
    We give an example of weights $(v,\,w)$ of the form \eqref{specialvwexample2} which also satisfy 
    \begin{equation}\label{simultaneously-vanishing}
        \int_{\p P} \ell v d\sigma = \int_{P} \ell w dx = 0,
    \end{equation}
    for all linear $\ell$ on $P$. Once we have \eqref{simultaneously-vanishing}, then for an appropriate positive multiple $a > 0$ we will have $\mathcal{F}_{v,aw}$ vanishes on the affine-linear functions. Indeed, since we have \eqref{simultaneously-vanishing}, in order to find such an $a > 0$ we only need to choose $a$ such that $\mathcal{F}_{v,aw}(1) = 0$. Thus we set 
    \[ a = 2\frac{\int_{\p P} v d\sigma}{\int_P w dx}.\]
    To find a suitable $q_1,q_2$, we first claim that \eqref{simultaneously-vanishing} holds for $v$ if we set 
    \[q_1(x_1,x_2) = x_1^2x_2^2+1.\]
    Indeed, note that
    \[ \int_{-1}^\infty (x^2 + 1)(x - 1) e^{-x} dx = 0. \]
    Then we compute 
    \begin{equation*}
    \begin{split}
        \int_{\p P} x_1 q(x_1,x_2)e^{-(x_1 + x_2)} dx &= e\int_{-1}^\infty x_1 q(x_1,-1)e^{-x_1} dx_1 - e\int_{-1}^\infty  q(-1,x_2)e^{-x_2} dx_2 \\
            &=e\int_{-1}^\infty (x - 1)(x^2+1) e^{-x}dx = 0,
    \end{split}
    \end{equation*}
    and symmetrically for $x_2$. Hence \eqref{simultaneously-vanishing} holds for $v$ with this choice of $q_1$. To find an appropriate $w$, one can observe that for $c > 0$, the function
    \[ c \mapsto \int_{-1}^\infty \frac{x}{c + x^4}  e^{-\lambda x}dx\]
    changes sign at least once. We denote $c_\lambda \in (0, \infty)$ a point where the integral vanishes. Then it follows that 
    \[ \int_{P} \left(\frac{x_i}{(c_\lambda + x_1^4)(c_\lambda + x_2^4)} \right) e^{-\lambda( x_1 + x_2)} dx_1dx_2 = 0 \]
    for $i = 1,2$, and therefore \eqref{simultaneously-vanishing} holds for $q_2(x) = [(c_\lambda + x_1^4)(c_\lambda + x_2^4)]^{-1} $. Then with these choices, $\mathcal{F}_{v,aw}$ vanishes on the affine-linear functions, but by Proposition \ref{specialvwexample} there is no $(v,\,w)$-cscK metric on $\C^2$ satisfying \eqref{asymptotic-metric-general}. In particular, as we will see in Section \ref{section-productmodels} (c.f. Theorem \ref{asym-product-satisfiesconditions}), if such a metric exists it is not asymptotically conical in the sense of Definition \ref{asymptoticsproduct}. 
\end{example}

\begin{remark}
    We take a moment to comment on the choice of weights in Proposition \ref{specialvwexample}. Clearly it also holds for $q_1,q_2$ not necessarily rational, as long as $q_1$ is positive, $q_2$ is \emph{eventually} positive, and the growth rate is controlled. Note that the Euclidean metric on $\C^2$ is $v$-soliton with $v = e^{-(x_1 + x_2)}$. In this case $w$ is given by Lemma \ref{vsolitonw} as $w = 2(n - (x_1 + x_2))e^{-(x_1 + x_2)}$, so in the context of Proposition \ref{specialvwexample} we have $\lambda = 1$, but even more crucially this is not eventually positive on $P$. Indeed for arbitrary polyhedra $P$, the weight $w$ associated to $v$-soliton by Lemma \ref{vsolitonw} where $v = e^{-\langle x ,\, b \rangle}$ with $b \in C(P)^*$ will be eventually negative on $P$.
\end{remark}

\subsection{Uniform stability}\label{section-uniform}

We will continue to use the notation of Section \ref{section-functionspaces}, with the additional shorthand that $Q_\delta = Q_{\delta, \delta^{-1}}$.

\begin{definition}\label{Kstab-def}
    Let $P$ be a Delzant polyhedron in $\t^*$, and $(v,\,w) \in \mathcal{W}$. We set 
     \begin{equation}\label{ceekbeta}
         \Cee_{\beta}^*(K) = \left\{ f \in \Cee_{\beta}^* \left| \int_{P\backslash H^{\delta^*}} f v^\beta \leq K (\delta^*)^{-1}\right\}\right.
     \end{equation}
     We say that $P$ is \emph{$(v,\,w)$-uniformly K-stable} with parameters $(K,\beta, \gamma)$ if there exists a $\lambda_{K,\beta, \gamma} > 0$ such that 
    \begin{equation}\label{Kstab-ineq-1}
        \mathcal{F}_{v,w}(f) \geq \lambda_{K,\beta,\gamma} \int_{ P} f v^{\gamma} dx
    \end{equation}
    for all $f \in \Cee_\beta^*(K).$ In particular, we say that $P$ is \emph{$\beta$-weighted K-stable} if for all $K > 0$, there is a $\lambda_\beta(K) > 0$ such that
     \begin{equation}\label{Kstab-ineq-2}
        \mathcal{F}_{v,w}(f) \geq \lambda_\beta(K) \int_{P} f v^{\beta} dx
    \end{equation}
    for all $f \in \Cee_\beta^*(K).$
\end{definition}

\begin{remark}\label{remark-normcompare}
    A few remarks about the definition. One way to think about condition \eqref{ceekbeta} is as a weak version of an a priori assumption on the asymptotics of $f$, as follows. Suppose that $\beta_1 \in (\varepsilon, \beta)$. Then as we saw in Lemma \ref{spacescompare-1}, if $u\in C^\infty(P)$ is a symplectic potential for a metric $\omega \in \scaryH$, then $u \in \Cee_{\beta_1}$. After normalizing we may assume that $u \in \Cee_{\beta_1}^*$. Since $\beta_1 < \beta$, the set $\D(K_1)$ of all $f\in \Cee^{*}_\beta$ with $||f-u||_{C^0_{\beta_1}} < K_1$ is a convex subset of $\Cee_\beta^*$. Then for any $f \in \D(K_1)$ we will have 
    \begin{equation*}
    \begin{split}
        \int_{P\backslash H_{\delta^*}} f v^\beta dx &\leq  \int_{P\backslash H_{\delta^*}} u v^\beta dx + \int_{P\backslash H_{\delta^*}} |f - u| v^{\beta} dx  \\ 
        & \leq (C(u) + K_1) \int_{P\backslash H_{\delta^*}} v^{\beta - \beta_1} dx \leq K(u,K_1,\beta_1) (\delta^*)^{-1},
     \end{split}   
    \end{equation*}
    and therefore $\D(K_1) \subset \Cee_{\beta}^*(K(u,K_1,\beta_1))$. If for example $\beta_1 = 0$ and $f$ is the symplectic potential for another K\"ahler metric $\omega' \in \scaryH$, then the assumption that $f \in \D(K_1)$ is equivalent to the assumption that $\sup_M|\varphi| < 2K$, where $\varphi$ is a normalized choice of K\"ahler potential $\omega' = \omega + i\p\bp \varphi$. We will see another interpretation of the condition \eqref{ceekbeta} in Section \ref{section-testconfig}.
    
    Second, since stability is tested on normalized potentials, the condition \eqref{Kstab-ineq-1} implies the existence of a $\lambda' > 0$ such that
    \begin{equation}\label{Kstab-ineq-3} 
        \mathcal{F}_{v,w}(f) \geq \lambda' \int_{ P} f v  dx,
    \end{equation}
    for all $f \in \Cee_\beta^*(K)$. Indeed, since $f \geq 0$ and $v$ is exponentially decaying we see that there exists $C(\gamma, v)$  such that $ v^\gamma \geq C(\gamma,v)^{-1}  v $ and consequently
   \[ \int_P u v^\gamma dx \geq C^{-1}\int_P uv dx.  \]
   Hence if \eqref{Kstab-ineq-1} holds we have that \eqref{Kstab-ineq-3} holds for $\lambda' = C^{-1}\lambda_{K,\beta,\gamma}$.
\end{remark}

Following \cite{ChenLiSheng2014}, we have the first item in Theorem \ref{Mtheorem-uniform}:
\begin{theorem}\label{existence-implies-uniform}
    Let $P$ be a Delzant polyhedron and suppose that $(v,\, w)$ are weights in the class $\mathcal{W}$. Suppose that there is an $\varepsilon \in [0,1)$ and a solution $u \in \scaryH$ to the weighted Abreu equation 
    \[ \sum_{i,j}\left( v H_{ij} \right)_{ij} = - w,\]
    where $\H = (H_{ij}) = \textnormal{Hess}^{-1}(u)$. Then $P$ is $(v,\,w)$-uniformly K-stable for all $\beta, \gamma$ with $\gamma \geq \beta$. 
\end{theorem}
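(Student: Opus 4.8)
The plan is to derive the coercive estimate from existence in two stages: first upgrade the plain stability of Proposition \ref{existenceimpliesstable} from convex piecewise-linear functions to \emph{all} convex $f \in \Cee_\beta$, and then promote the resulting rigid inequality to \eqref{Kstab-ineq-1} by a compactness argument. I would begin by reducing to the case $\gamma = \beta$. Since $v$ is continuous on $\overline{P}$ and decays at infinity it is bounded, say $M := \sup_{\overline{P}} v < \infty$, so for $\gamma \ge \beta$ and $f \ge 0$ one has $v^\gamma = v^\beta v^{\gamma - \beta} \le M^{\gamma - \beta} v^\beta$ and hence $\int_P f v^\gamma\,dx \le M^{\gamma-\beta}\int_P f v^\beta\,dx$. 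Thus any estimate of the form \eqref{Kstab-ineq-2} yields \eqref{Kstab-ineq-1} for every $\gamma \ge \beta$ upon replacing $\lambda$ by $\lambda M^{-(\gamma-\beta)}$, and it suffices to establish $\beta$-uniform K-stability.

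The key identity comes from Lemma \ref{ibplemma}: since $u \in \sexyH$ solves the weighted Abreu equation, $\Scalv(\omega)$ is sufficiently integrable and for every convex $f \in \Cee_\beta$ we have
\[ \mathcal{F}_{v,w}(f) = \int_P (vH_{ij}) f_{ij}\, dx. \]
Because $v > 0$, the data $\H = (H_{ij})$ is positive-definite on $P$, and $\textnormal{Hess}(f) \ge 0$ for convex $f$, the integrand is pointwise non-negative; hence $\mathcal{F}_{v,w}(f) \ge 0$, with equality if and only if $\textnormal{Hess}(f) \equiv 0$, i.e. $f$ is affine. For $f \in \Cee_\beta^*$, normalized by $f \ge f(0) = 0$ with $0$ interior, an affine function must vanish identically. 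This is the rigid form of plain stability now valid on all of $\Cee_\beta$ rather than merely on piecewise-linear functions, and it is exactly here that the existence of the solution $u$ is used.

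To obtain the uniform bound I would argue by contradiction. If $\beta$-uniform K-stability fails for some $K$, there is a sequence $f_k \in \Cee_\beta^*(K)$ with $0 \le \mathcal{F}_{v,w}(f_k) < \tfrac1k I_k$, where $I_k := \int_P f_k v^\beta\,dx > 0$. Rescaling $g_k := f_k/I_k$ gives $\int_P g_k v^\beta\,dx = 1$ and $\mathcal{F}_{v,w}(g_k) \to 0$. The $g_k$ are convex, nonnegative, and have uniformly bounded $v^\beta$-mass, so they are locally uniformly bounded and, by convexity, equi-Lipschitz on compact subsets of the interior; a subsequence therefore converges locally uniformly to a convex $g_\infty$ with $g_\infty \ge g_\infty(0) = 0$. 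One then checks that $\mathcal{F}_{v,w}$ is lower semicontinuous along the sequence — the bulk term $\int_P g_k w\,dx$ passing to the limit by dominated convergence using the decay of $w$ and the tail control, and the boundary term $2\int_{\p P} g_k v\,d\sigma$ being lower semicontinuous under convex convergence — so that $\mathcal{F}_{v,w}(g_\infty) \le \liminf_k \mathcal{F}_{v,w}(g_k) = 0$. Combined with $\mathcal{F}_{v,w}(g_\infty) \ge 0$ from the previous step, $g_\infty$ is affine, hence $g_\infty \equiv 0$, contradicting $\int_P g_\infty v^\beta\,dx = 1$, \emph{provided} the unit mass is not lost to infinity.

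The main obstacle is precisely this last proviso. In the non-compact setting the $v^\beta$-mass can a priori escape to infinity under the weak limit, and after rescaling the uniform tail bound $\int_{P\setminus H^{\delta^*}} f_k v^\beta \le K(\delta^*)^{-1}$ defining $\Cee_\beta^*(K)$ only controls the tail of $g_k$ by $K(I_k \delta^*)^{-1}$, which degenerates if $I_k \to 0$. The heart of the argument is therefore to show that a normalized convex sequence with $\mathcal{F}_{v,w} \to 0$ cannot concentrate its mass at infinity — equivalently that $I_k$ stays bounded away from $0$ — so that the defining tail bound of $\Cee_\beta^*(K)$ furnishes the tightness keeping $\int_P g_\infty v^\beta\,dx = 1$. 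This is the weighted, unbounded analogue of the compactness step in \cite{ChenLiSheng2014}, where the exponential decay of $v$ and the tail constraint \eqref{ceekbeta} do the essential work; establishing the lower semicontinuity of the boundary integral under convex convergence is the second technical point requiring care.
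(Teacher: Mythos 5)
Your overall scheme is the right one --- a contradiction-compactness argument in the style of \cite{ChenLiSheng2014}, with Lemma \ref{ibplemma} supplying both the nonnegativity of $\mathcal{F}_{v,w}$ on all of $\Cee_\beta$ and the rigidity (equality iff affine), and your reduction to $\gamma = \beta$ via boundedness of $v$ is fine. The problem is that your proposal stops exactly where the proof has to be done. You write that ``the heart of the argument is therefore to show that $I_k$ stays bounded away from $0$'' and offer no argument for it; your dominated-convergence step for the bulk term even presupposes the tail control for the rescaled $g_k$, which is precisely what fails if $I_k \to 0$, so the sketch is circular at its crux. The paper organizes the contradiction so that this lemma is never proved: it takes the violating sequence normalized from the outset, $f_k \in \Cee_\beta^*(K)$ with $\int_P f_k v^\gamma\,dx = 1$ and $\mathcal{F}_{v,w}(f_k) \to 0$ (using the scale invariance of \eqref{Kstab-ineq-1}; your observation that rescaling interacts with the class constant $K$ is a fair one, but the paper absorbs it into the choice of violating sequence rather than proving a mass floor), and then rules out loss of the unit mass by three explicit controls: at infinity by the defining tail bound \eqref{ceekbeta} together with $v^\gamma \leq v^\beta$ (this is where $\gamma \geq \beta$ enters); near the \emph{finite} boundary $\p P$ by first extracting $\int_{\p P} f_k v\, d\sigma \leq C$ from $\mathcal{F}_{v,w}(f_k) \to 0$ combined with $\int_P f_k |w|\, dx \leq C \int_P f_k v^{\beta^*} dx \leq C$ (where $\gamma \leq \beta^*$ is needed), and then converting this boundary bound into collar smallness $\int_{P\setminus(P_\delta \cap H_{\delta^*})} f_k v^\gamma dx \leq C\delta$ uniformly in $k$, using that normalized convex functions are increasing along radial lines from $0$ (a dilation argument); and on the compact piece $Q_{\delta,\delta^*}$ by locally uniform convergence. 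These give $\int_P f v^\gamma dx = 1$ for the limit, after which the local argument of \cite{ChenLiSheng2014} with Lemma \ref{ibplemma} forces $f = 0$, a contradiction.

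Two concrete gaps follow from this comparison. First, your proposal has no mechanism at all against concentration of $v^\gamma$-mass at the polyhedral boundary $\p P$: locally uniform convergence of convex functions holds only on compact subsets of the interior, and the constraint \eqref{ceekbeta} truncates only at infinity (via $H^{\delta^*}$), so it says nothing about the collar along $\p P$; the radial-dilation argument fed by the boundary bound $\int_{\p P} f_k v\,d\sigma \leq C$ is an essential missing ingredient, not a refinement. Second, the lower semicontinuity of the boundary term $2\int_{\p P} f_k v\,d\sigma$ under interior convex convergence, which you list as your second pillar, is itself delicate (boundary traces of convex functions can jump under locally uniform interior convergence) and is never needed in the paper's route: only an \emph{upper} bound on the boundary term is used, with the rigidity implemented through the interior identity $\mathcal{F}_{v,w}(f) = \int_P (vH_{ij}) f_{ij}\,dx$ localized to compact subsets and weak convergence of Hessians of convex functions. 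So while your plan correctly identifies the strategy and even the subtle point about rescaling, as written it defers rather than proves the two steps on which the theorem actually turns.
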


\begin{proof}
 We follow the same basic strategy of \cite[Theorem 4.3]{ChenLiSheng2014}. If $P$ is not $(v,\, w)$-uniformly K-stable, then for some $K > 0$ we can find a sequence $f_k \in \Cee_\beta^*(K)$ such that 
     \[ \mathcal{F}_{v,w}(f_k) \xrightarrow{k \to \infty} 0 \hspace{.3in} \textnormal{ and } \hspace{.3in} \int_{ P} f_k v^{\gamma} dx = 1. \]  
     In particular we have a uniform local $L^1$ bound on $f_k$, which by convexity implies that, after passing to a subsequence, we may assume that there exists a continuous convex function $f \in C^0(\textnormal{int}(P))$ such that $f_k \to f$ uniformly on compact subsets of $\textnormal{int}(P)$. By definition we have
    \begin{equation*}
        \int_{P \backslash H^{\delta^*}} f_k v^{\gamma} dx \leq  \int_{P \backslash H^{\delta^*}} f_k v^\beta dx \leq K(\delta^*)^{-1}.
    \end{equation*}
    which goes to zero uniformly in $k$ as $\delta^* \to \infty$. Hence we can find a $\delta^* > 0$ such that $\int_{P \backslash H^{\delta^*}} f_k v^\gamma dx$ is as small as we like, uniformly for all $k$. Fix such a $\delta^*$. Then we also have that 
    \[ \int_P f_k |w| dx \leq C \int_P f_k v^{\beta^*}dx \leq C \int_P f_k v^\gamma dx = C \]
    as long as $\gamma \leq \beta^*$. Since $\mathcal{F}_{v,w}(f_k) \to 0$, this means that 
    \begin{equation}\label{boundarylimit} \int_{\p P} f_k  d\sigma < C. \end{equation}
    Since $f_k$ are normalized, we know that they are increasing along radial lines. Let fix $\delta_1 <<1$ sufficiently small, and let $\delta^*_1 \geq \delta^*$ be some fixed constant with the property that the radial dilation of $\p P_{\delta_1} \cap H_{\delta^*}$ out to the boundary of $P$ is contained in $\p P \cap H_{\delta^*_1}$. Then it follows that the corresponding radial dilation of $\p P_{\delta} \cap H_{\delta^*}$ is contained in $\p P \cap H_{\delta^*_1}$ for all $\delta < \delta_1$.
    Then \eqref{boundarylimit} implies 
    \[ \int_{\p (P \cap H_{\delta^*_1})} f_k d\sigma < C(v), \]
    since $v$ is bounded away from zero on $H_{\delta^*}$. From the previous discussion, we see that 
    \begin{equation*}
        \int_{P\backslash(P_{\delta} \cap H_{\delta^*})} f_k v^{\gamma} dx \leq \delta C(v,\gamma) \int_{\p (P \cap H_{\delta^*_1})} f_k d\sigma  \leq C(v) \delta.
    \end{equation*}
In particular, we can find a $\delta$ small enough such that $\int_{P\backslash(P_{\delta} \cap H_{\delta^*})} f_k v dx$ is as small as we like, uniformly in $k$, and we thus fix such a $\delta$. Finally, since $f_k \to f$ uniformly on $Q_{\delta, \delta^*}$, it follows that we can find an $N_0$ sufficiently large such that
\[\left| \int_{Q_{\delta, \delta^*}} f_k v^\gamma dx - \int_{Q_{\delta, \delta^*}} f v^\gamma dx \right|  \]
can be made arbitrarily small for all $n \geq N_0$. Hence we have 
\[ \lim_{n \to \infty} \int_{P}f_k v^\gamma dx = \int_{P} f v^\gamma dx. \]
On the other hand, by the (local) arguments of \cite[Proof of Theorem 4.3]{ChenLiSheng2014} we see that it follows from Lemma \ref{ibplemma} that $\mathcal{F}_{v,w} \to 0$ implies $f = 0$. This is a contradiction, and thus the result follows. 
\end{proof}

\subsection{Toric test configurations}\label{section-testconfig}

Suppose for the moment that $N$ is a compact toric manifold with Delzant polytope $\Delta$. Then in this case there is a well-known interpretation of the piecewise-linear convex functions considered in \ref{PL-and-stable-def} in terms of \emph{test configurations} \cite{DonStabTor} (see \cite{LahdiliWeighted} for a generalization to the compact weighted case). To see this, let $f=\max\{\ell_1, \dots, \ell_k \} \in C^0(\Delta)$ be convex piecewise-linear and $R >0$ be some constant such that $f \leq R$ on $\Delta$. Then we can define a polytope $Q_{f,R} \subset \Delta \times \R$ by 
\begin{equation}\label{testconfig-poly}
    Q_{f,R} = \left\{ (x, \, y) \in \Delta \times \R \: | \: 0 \leq y \leq R - f(x) \right\}.
\end{equation}
We assume further that $Q_{f,R}$ has rational vertices $v \in \Gamma_{\Q} \times \Q$, so that by Lemma \ref{makeavariety} there exists a compact toric variety $\mathcal{X}^f$ together with a line bundle $\mathcal{L}^f \to \mathcal{X}^f$ associated to $Q_{f,R}$. Now a general feature of toric geometry is that one can compute the sections of such line bundles in an explicit way, as follows \cite{CLS}. If we let $m = (m_1, \dots, m_{n+1}) \in (\Gamma \times \Z)  \cap Q_{f,R}$ be an integer point in $Q_{f,R}$, then one can associate a character $\chi^m:(\Cstar)^{n+1} \to \Cstar$ by setting $\chi^m(z) = z_1^{m_1}\dots z_{n+1}^{m_{n+1}}$. Then any character of this form compactifies to a well-defined section $s^m: \mathcal{X}^f \to \mathcal{L}^f$, and since here $\mathcal{X}^f$ is compact the sections $s^m$ for $m \in (\Gamma \times \Z) \cap Q_{f,R}$ in fact form a basis for $H^0(\mathcal{X}^f, \mathcal{L}^f)$. Moreover, we have a distinguished $\Cstar$-action on $\mathcal{X}^f$ coming from the extra $\R$-direction, whose weight on any section $s^m$ is equal to $m_{n+1}$. For $\bar{m} \in \Gamma \cap \Delta$ define $\bar{m}_i = (\bar{m}, i) \in (\Gamma \times \Z) \cap Q_{f,R}$. Therefore, as Donaldson shows \cite{DonStabTor}, we have a well-defined map 
\[ \pi: \mathcal{X}^f \to \P^1 \hspace{.1in} \textnormal{ given by } \hspace{.1in} \pi(p) = [s^{\bar{m}_{i}}(p): s^{\bar{m}_{i+1}}(p)]\]
for any choice of $s^{\bar{m}_{i+1}}, s^{\bar{m}_{i}}$ with $s^{\bar{m}_{i+1}}(p)$ nonvanishing. Then $\pi$ is $\Cstar$-equivaraint, has fiber $\pi^{-1}(\infty) \equiv N$, and in fact gives a test configuration in the sense of \cite{DonStabTor}. Moreover, the Donaldson-Futaki invariant of $(\mathcal{X}^f, \mathcal{L}^f)$ is up to a fixed multiplicative factor equal to \cite[Proposition 4.2.1]{DonStabTor}
\[\mathcal{F}_{1,1}(f) = 2\int_{\p \Delta} f d\sigma - \int_{\Delta}f dx. \]
In \cite{LahdiliWeighted}, this was generalized to the compact weighted case, to show that there is a Donaldson-Futaki invariant associated to the weighted problem given the same data $(\mathcal{X}^f, \mathcal{L}^f)$ which coincides with $\mathcal{F}_{v,w}(f)$. 

In the non-compact setting, the construction \eqref{testconfig-poly} will not in general give us a test configuration, as we clearly will not always have that $f$ is bounded above. Hence we have 
\begin{definition}\label{admissiblePL}
    Let $M$ be a smooth quasiprojective toric variety with polyhedron $P$. We say that a convex piecewise-linear function $f = \max\{\ell_1, \dots, \ell_k\}$ on $P$ is \emph{admissible} if each $\ell_j(x) = \langle x, \, b_j \rangle + a_j$ has the property that $b_j \in -C^*(P)$.
\end{definition} 
It is not hard to see that $f$ is eventually negative on $P$ if and only if it is admissible. Given an admissible $f$ on $P$, we have therefore that $f$ is bounded from above and hence for a suitable choice of $R$ we can define $Q^f \subset P \times \R$ exactly as in \eqref{testconfig-poly}. Then just as above the polarized toric variety $(\mathcal{X}^f, \, \mathcal{L}^f)$ admits a $\Cstar$-equivariant map $\pi: \mathcal{X}^f \to \P^1$ sharing the same properties as above. For our present purposes, we simply define the $(v,\, w)$ Donaldson-Futaki invariant of $(\mathcal{X}^f, \, \mathcal{L}^f)$ to be a fixed multiple of $\mathcal{F}_{v,w}(f)$.

Before moving on, we give an example that illustrates some interesting phenomena. Let $B = B_{1} \times \dots \times B_k$ be a compact K\"ahler manifold endowed with a product metric $\omega_B = \sum_{a=1}^\ell \omega_a$, and suppose that $\omega_a$ are individually cscK and that $[\omega_{B_a}] \in H^2(B_a, 2\pi \Z)$ are integral. In particular for $p_a \in \Z$ there exists a line bundle $L \to B$ whose curvature form $\theta$ satisfies
\begin{equation*}
    d\theta = \sum_{a=1}^k p_a  \pi_{B_a}^{*}\omega_{B_a}
\end{equation*}
when pulled back to the total space $Y$ of $L$. Set $P = [-1, \infty)$ and suppose we have $(v,\,w) \in \mathcal{W}(P)$. Then we seek $(v,\ w)$-cscK metric on the total space $Y$ of a special form. We wish to find a function $\Theta:[-1, \infty) \to \R$ such that the data 
\begin{equation}\label{calabiansatz-cbundles}
\begin{split}
    & g_Y = \sum_{a=1}^k (p_a x + c_a) \pi_{B_a}^{*}g_{B_a} + \frac{dx^2}{\Theta(x)} + \Theta(x) \theta^2,\\
    &\omega_Y = \sum_{a=1}^k (p_a x + c_a) \pi_{B_a}^{*}\omega_{B_a} + dx \wedge \theta,
\end{split}
\end{equation}
solves the $(v,\,w)$ cscK-equation on $Y$. This is another formulation of the Calabi Ansatz \ref{calabiansatz} which we will meet again in more detail in Section \ref{section-chili}. This same problem was studied in the projective bundles case in \cite{LahdiliWeighted, AMT-projbundles} (see also \cite{Sz06}). We defer to Section \ref{section-ssfibrations} for details, but for now it suffices to note that the function $x:Y \to \R$ is a moment map for the $S^1$-action on the fibers of $L \to B$ whose image is equal to $P = [-1, \infty)$. In this case the data $g_{\Theta} = \frac{dx^2}{\Theta(x)} + \Theta(x) \theta^2,  \, \, \omega_\theta = dx \wedge \theta$ corresponds to an AK metric $\omega$ on $\C$ (with the standard complex structure) if and only if the function $\Theta$ satisfies certain boundary conditions at $x = -1$, and the function $\Theta$ is precisely the data $\H = H_{11}$ of \eqref{Hdef} associated to $\omega$. As we will see in Section \ref{section-ssfibrations}, this is special case of the \emph{semisimple principal fibration} construction. In particular, by Lemma \ref{fibrationweights} below (see also \cite{ApJuLa, LahdiliWeighted}), we in fact have that a metric of the form \eqref{calabiansatz-cbundles} on $Y$ is $(v,\, w)$-cscK if and only if the metric $\omega$ on $\C$ which is $(\tilde{v}, \, \tilde{w})$-cscK, where
\begin{equation}\label{cbundles-newweights}
    p(x) = \Pi_{a=1}^k(p_a x + c_a)^{n_a}, \hspace{.2in} \tilde{v} = pv, \hspace{.2in} \tilde{w} = p\left(w - v\sum_{a = 1}^k \frac{s_a}{p_a x + c_a}\right),
\end{equation}
where $s_a := \Scal(\omega_{B_a})$ are constant by assumption. Clearly we have $(\tilde{v},\, \tilde{w}) \in \mathcal{W}$. Then the $(\tilde{v}, \, \tilde{w})$-cscK equation becomes (compare \eqref{genAbreu})
\begin{equation}\label{profileODE}
    \left(\tilde{v} \Theta \right)''(x) = -\tilde{w} = p\left(v\sum_{a = 1}^k \frac{s_a}{p_a x + c_a}  - w\right).
\end{equation}
Then the conditions we require on $\Theta$ in order to ensure that the metric $\omega$ is well-defined are
\begin{equation}\label{profileboundary}
    \Theta(-1) = 0, \hspace{.3in} \Theta'(-1) = 2,
\end{equation}
and 
\begin{equation}\label{profilepositive}
    \Theta > 0.
\end{equation}
We can compute directly as in \cite{AMT-projbundles} that the unique solution to \eqref{profileODE} with initial conditions \eqref{profileboundary} is given by 
\begin{equation}\label{profilesolution}
    \tilde{v}\Theta(x) = 2\tilde{v}(-1)(1 + x) - \int_{-1}^x(x-t)\tilde{w}(t) dt.
\end{equation}
In order for a solution to exist then we only need to verify that $\Theta(x) > 0$. Suppose then that $P$ is $(\tilde{v}, \, \tilde{w})$ K-stable. Then following \cite{LahdiliWeighted}, for any given point $x_0 \in (-1, \infty)$ we set 
\begin{equation}\label{simplePL}
    f_{x_0}(x) = \max\{ x_0 - x , \, 0 \}.
\end{equation} 
Integrating by parts, it follows just as in \eqref{piecewiseibp} that 
\begin{equation*}
    \mathcal{F}_{\tilde{v}, \tilde{w}}(f_{x_0}) = p(x_0)v(x_0)\Theta(x_0).
\end{equation*}
Note that the integration by parts is justified with no assumptions on $\Theta$ in this case, as $f_{x_0}$ is identically zero outside of the compact set $K_{x_0} = [-1,\, x_0]$. Therefore, in combination with Proposition \ref{existenceimpliesstable} we have:
\begin{prop}\label{Kstableimpliesexistence-cbundles}
    There exists a $(\tilde{v}, \tilde{w})$-cscK metric on $\C$, and consequently a $(v,\, w)$-cscK metric on $Y$ of the form \eqref{calabiansatz-cbundles} if $P = [-1, \infty)$ is $(\tilde{v},\, \tilde{w})$ K-stable.
\end{prop}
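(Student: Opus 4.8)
The plan is to construct the metric directly from the explicit profile $\Theta$ and then extract its positivity from the stability hypothesis, so that the proof amounts to assembling the pieces already prepared in the discussion preceding the statement. First I would take $\Theta$ to be the function determined by \eqref{profilesolution},
\[ \tilde{v}\,\Theta(x) = 2\tilde{v}(-1)(1 + x) - \int_{-1}^x (x-t)\,\tilde{w}(t)\, dt. \]
Since $\tilde{v} > 0$ on $\overline{P}$ and $(\tilde v,\tilde w) \in \mathcal{W}$, the right-hand side is smooth on $[-1,\infty)$, and dividing by $\tilde v$ produces a smooth $\Theta$ on $(-1,\infty)$ solving \eqref{profileODE}. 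Integrating $(\tilde v\Theta)'' = -\tilde w$ twice from $-1$ confirms that $\Theta$ satisfies the boundary conditions \eqref{profileboundary}, i.e.\ $\Theta(-1)=0$ and $\Theta'(-1)=2$: these are exactly the conditions guaranteeing that the data $g_\Theta = \Theta^{-1}dx^2 + \Theta\,\theta^2$, $\omega_\theta = dx\wedge\theta$ glues to a smooth $\T$-invariant AK metric $\omega$ on $\C$ with $H_{11}=\Theta$ and moment image $P=[-1,\infty)$, \emph{provided} $\Theta>0$. As $\omega$ solves \eqref{profileODE} by construction, it is automatically $(\tilde v,\tilde w)$-cscK once positivity is established.

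Thus the only remaining point is the positivity \eqref{profilepositive}, and this is where K-stability enters. For each $x_0 \in (-1,\infty)$ I would test stability against the convex piecewise-linear function $f_{x_0} = \max\{x_0 - x, 0\}$ of \eqref{simplePL}. Because $x_0$ lies in the interior of $P$, $f_{x_0}$ has a genuine crease at $x_0$ and is therefore \emph{not} affine-linear; it moreover vanishes identically for $x \geq x_0$, so all integrals converge and the integration by parts yielding the identity
\[ \mathcal{F}_{\tilde v,\tilde w}(f_{x_0}) = \tilde v(x_0)\,\Theta(x_0) \]
established just above the statement is valid with no a priori hypothesis on $\Theta$. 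By the definition of $(\tilde v,\tilde w)$ K-stability (Definition \ref{PL-and-stable-def}), $\mathcal{F}_{\tilde v,\tilde w}(f_{x_0}) > 0$, and since $\tilde v(x_0) > 0$ this forces $\Theta(x_0) > 0$. As $x_0$ was arbitrary, $\Theta > 0$ on the interior of $P$, completing the construction of the $(\tilde v,\tilde w)$-cscK metric $\omega$ on $\C$.

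To pass from $\C$ to the total space $Y$, I would invoke the fibration correspondence of Lemma \ref{fibrationweights}: under the weight transformation \eqref{cbundles-newweights}, a metric of the form \eqref{calabiansatz-cbundles} on $Y$ solves the $(v,w)$-cscK equation precisely when its associated fiber metric $\omega$ on $\C$ is $(\tilde v,\tilde w)$-cscK, which we have just arranged. This produces the desired $(v,w)$-cscK metric on $Y$.

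Regarding the main obstacle: within the proof there is little genuine analytic difficulty, since both the solution formula \eqref{profilesolution} and the Futaki identity $\mathcal{F}_{\tilde v,\tilde w}(f_{x_0})=\tilde v(x_0)\Theta(x_0)$ are already in hand. The one point requiring care is the \emph{strict} sign in the positivity step: one must use full K-stability rather than mere K-semistability, together with the fact that $f_{x_0}$ is non-affine for every interior $x_0$, to upgrade $\Theta \geq 0$ to $\Theta > 0$ on all of $(-1,\infty)$, since semistability alone would permit the metric to degenerate at an interior point.
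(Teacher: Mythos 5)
Your proposal is correct and follows essentially the same route as the paper: the explicit solution \eqref{profilesolution} satisfying \eqref{profileODE}--\eqref{profileboundary}, positivity of $\Theta$ extracted pointwise from $\mathcal{F}_{\tilde v,\tilde w}(f_{x_0}) = \tilde v(x_0)\Theta(x_0)$ (the paper writes this as $p(x_0)v(x_0)\Theta(x_0)$, which is the same thing since $\tilde v = pv$) using that $f_{x_0}$ is compactly supported so the integration by parts needs no hypothesis on $\Theta$, and then Lemma \ref{fibrationweights} to transfer to $Y$. Your closing observation — that full K-stability, not mere semistability, is needed because $f_{x_0}$ is non-affine for every interior $x_0$ and one needs the strict inequality $\Theta(x_0)>0$ — is exactly the point on which the paper's argument turns.
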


Notice that, at least in order to obtain our formal solution $\Theta$, we did not need to assume that the weighted Futaki invariant $\mathcal{F}_{\tilde{v}, \tilde{w}}$ vanishes on the affine-linear functions. Next we show that, if we do indeed include this assumption, then we can make connections with the asymptotic conditions of Definition \ref{asymptotic-metric-general}, at least for certain choices of weights $(\tilde{v}, \, \tilde{w})$. 
\begin{lemma}
    Let $P = [-1, \infty)$,  $(\tilde{v}, \, \tilde{w}) \in \mathcal{W}(P)$, and $\Theta$ be the formal solution to \eqref{profileODE} given by \eqref{profilesolution}. Then, if $\mathcal{F}_{\tilde{v}, \tilde{w}}$ vanishes on the affine-linear functions, we have that 
    \begin{equation}\label{profile-futakivanishes}
        \left(\tilde{v}\Theta\right)(x) = - \int_{x}^\infty (t - x) \tilde{w}(t) dt.
    \end{equation}
\end{lemma}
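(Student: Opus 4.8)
The plan is to translate the hypothesis that $\mathcal{F}_{\tilde{v},\tilde{w}}$ vanishes on affine-linear functions into two explicit moment identities for $\tilde{w}$, and then to verify the claimed formula by a direct splitting of the integral. First I would record that since $P = [-1,\infty)$ has boundary $\partial P = \{-1\}$, the boundary integral in \eqref{DonaldsonFutakieqn} reduces to the point mass at $-1$, so that for an affine-linear $\ell(x) = ax + b$ we have
\[
\mathcal{F}_{\tilde{v},\tilde{w}}(\ell) = 2\,\ell(-1)\,\tilde{v}(-1) - \int_{-1}^\infty \ell(x)\tilde{w}(x)\,dx.
\]
Applying this to the basis $\{1,\,x\}$ of the affine-linear functions, and using that the moments $\int_{-1}^\infty t^k \tilde{w}(t)\,dt$ converge absolutely because $(\tilde{v},\tilde{w}) \in \mathcal{W}(P)$ forces $\tilde{w}$ to decay exponentially, the vanishing hypothesis yields the two conditions
\[
\int_{-1}^\infty \tilde{w}(t)\,dt = 2\tilde{v}(-1), \qquad \int_{-1}^\infty t\,\tilde{w}(t)\,dt = -2\tilde{v}(-1).
\]

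Next I would start from the explicit formula \eqref{profilesolution}, namely $(\tilde{v}\Theta)(x) = 2\tilde{v}(-1)(1+x) - \int_{-1}^x (x-t)\tilde{w}(t)\,dt$, and observe that the claimed identity \eqref{profile-futakivanishes} is equivalent, after transferring the integral over $[-1,x]$ to the other side and combining it with the tail integral, to the single relation
\[
2\tilde{v}(-1)(1+x) = \int_{-1}^\infty (x-t)\tilde{w}(t)\,dt.
\]
Expanding the right-hand side as $x\int_{-1}^\infty \tilde{w}(t)\,dt - \int_{-1}^\infty t\,\tilde{w}(t)\,dt$ and substituting the two moment identities produces $2\tilde{v}(-1)x + 2\tilde{v}(-1)$, which is precisely $2\tilde{v}(-1)(1+x)$, establishing the equality.

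There is no serious analytic obstacle in this argument; the only point requiring care is the absolute convergence of the tail integral $\int_x^\infty (t-x)\tilde{w}(t)\,dt$ and of the two moments, which I would justify from the exponential decay of $\tilde{w}$ guaranteed by Definition \ref{exponential-decay} (with $\beta^* > 0$ controlling $\tilde{w}$ in the $C^0_{-\beta^*}$ norm). Granting this convergence, the proof is purely algebraic and follows immediately from the two moment conditions read off from the vanishing of the weighted Futaki invariant on the affine-linear functions.
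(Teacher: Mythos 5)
Your proof is correct and follows essentially the same route as the paper: the paper likewise reads off the two moment identities from $\mathcal{F}_{\tilde{v},\tilde{w}}(1) = \mathcal{F}_{\tilde{v},\tilde{w}}(t) = 0$ to obtain $\int_{-1}^{\infty}(x-t)\tilde{w}(t)\,dt = 2\tilde{v}(-1)(1+x)$ and then combines this with \eqref{profilesolution}. You have merely spelled out the steps the paper calls ``straightforward to compute,'' including the convergence remark, which is fine.
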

\begin{proof}
    It is straightforward to compute using $\mathcal{F}_{\tilde{v}, \tilde{w}}(1) = \mathcal{F}_{\tilde{v}, \tilde{w}}(t) = 0$ that 
    \begin{equation*}
        \int_{-1}^\infty(x-t) \tilde{w}(t) dt = 2\tilde{v}(-1)(1 + x).
    \end{equation*}
    Combining this with \eqref{profilesolution} gives \eqref{profile-futakivanishes}.
\end{proof}
Using this, we see that, under certain assumptions on the weights, our solution $\Theta$ will give rise to a metric on $\C$ which satisfies the conditions of Definition \ref{asymptotic-metric-general}. 
\begin{prop}\label{mainprop2-inthebody}
    Suppose that the weights satisfy 
    \[ \tilde{v}(x) = p(x) e^{-\lambda x}, \hspace{.3in} \tilde{w}(x) = q(x) e^{-\lambda x},\]
    where $p(x)$ is a positive rational function on $P$, $q(x)$ is a polynomial, and $\lambda > 0$. Suppose that $P$ is $(\tilde{v}, \, \tilde{w})$ K-stable, so that by Proposition \ref{Kstableimpliesexistence-cbundles}, there is a $(\tilde{v}, \, \tilde{w})$-cscK metric $\omega$ on $\C$ whose data $\H$ as in \eqref{Hdef} is precisely the solution $\Theta$ to \eqref{profileODE}. Then $\omega \in \scaryH$ for any $\varepsilon > 0$.
\end{prop}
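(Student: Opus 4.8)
The plan is to exploit the fact that in this one-dimensional situation ($n=1$) the data $\H = (H_{11})$ reduces to the single function $\Theta$, so that all four conditions of Definition \ref{asymptotic-metric-general} become statements about the growth at infinity of $\Theta$, of its derivatives, and of the symplectic potential $u$ (which here satisfies $u'' = 1/\Theta$, since $\G = \textnormal{Hess}(u) = \H^{-1}$). Since $P = [-1,\infty)$ is assumed $(\tilde{v}, \tilde{w})$ K-stable, applying K-semistability to $f = \pm\ell$ shows that $\mathcal{F}_{\tilde{v}, \tilde{w}}$ vanishes on all affine-linear functions, so the closed formula \eqref{profile-futakivanishes} is available: $(\tilde{v}\Theta)(x) = -\int_x^\infty (t-x)\tilde{w}(t)\,dt$. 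The structural observation that drives the whole argument is that, for weights of the stated form, this formula exhibits $\Theta$ as an explicit \emph{rational} function of $x$.

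To see this, I would substitute $s = t-x$ and expand $q$ in its (finite) Taylor series about $x$, obtaining
\[ \int_x^\infty (t-x)q(t)e^{-\lambda t}\,dt = e^{-\lambda x}\sum_{j=0}^{\deg q}\frac{q^{(j)}(x)}{j!}\int_0^\infty s^{j+1}e^{-\lambda s}\,ds = e^{-\lambda x}\,\tilde{q}(x), \]
where $\tilde{q}(x) := \sum_{j}(j+1)\lambda^{-(j+2)}q^{(j)}(x)$ is again a polynomial of the same degree as $q$. Dividing by $\tilde{v} = p\,e^{-\lambda x}$ gives $\Theta = -\tilde{q}/p$, a rational function. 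Because $p > 0$ throughout $P$, this $\Theta$ has no poles on $[-1,\infty)$, is smooth up to the boundary, and together with each of its derivatives grows at most polynomially as $x \to \infty$; its only zero is the expected one at $x = -1$, while positivity on $(-1,\infty)$ is guaranteed by the existence part of Proposition \ref{Kstableimpliesexistence-cbundles}.

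With this in hand, conditions \ref{asymptotics1}, \ref{asymptotics2}, and \ref{asymptotics4} are immediate. For every $\varepsilon > 0$ the factor $\tilde{v}^\varepsilon = p^\varepsilon e^{-\lambda\varepsilon x}$ decays exponentially, so $\tilde{v}^\varepsilon\Theta^2$ and $\tilde{v}^\varepsilon(\Theta')^2$ are each the product of an exponentially decaying factor with a polynomially growing one, hence bounded on $P$ (and finite near $x=-1$ by smoothness); condition \ref{asymptotics4} asks only for boundedness of $\Theta''$ on the bounded set $\overline{P}\setminus P_{\bar{\delta}} = [-1,-1+\bar{\delta})$, where $\Theta''$ is continuous. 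This is precisely the point at which $\varepsilon > 0$ is essential: at $\varepsilon = 0$ the unweighted $\Theta^2$ need not be bounded.

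The remaining condition \ref{asymptotics3} concerns the symplectic potential $u$, and here I would integrate $u'' = 1/\Theta$ twice. Since $\Theta$ is rational with at-most-polynomial growth, so is $1/\Theta$, whence $u'$ and $u$ grow at most polynomially (up to a possible $\log$ factor in the borderline case $1/\Theta \sim c/x$); moreover $u$ extends continuously to $x=-1$ because $u - u_P \in C^\infty(\overline{P})$ by Proposition \ref{boundaryconditions}. Multiplying by $\tilde{v}^\varepsilon$ again kills this growth, giving $\|u\|_{C^0_\varepsilon(P)} < \infty$, and for $\|u\|_{C^2_\varepsilon(P_\delta)}$ one works on $P_\delta = [-1+\delta,\infty)$, away from the boundary zero of $\Theta$, so that $u'' = 1/\Theta$ is finite on compacta and at most polynomially large at infinity. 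The main obstacle — in fact the only step requiring genuine care — is the clean reduction of $\Theta$ to a rational function in the second paragraph; once the polynomial-versus-exponential dichotomy is in place, every estimate is routine, the only remaining subtlety being to track the possible $\log$ factor in the asymptotic integration of $u''=1/\Theta$, which is harmless since $\tilde{v}^\varepsilon$ dominates any polynomial-times-logarithm growth for every $\varepsilon > 0$.
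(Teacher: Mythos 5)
Your proposal is correct and follows essentially the same route as the paper: both extract from \eqref{profile-futakivanishes} that $\Theta$ (and all its derivatives) is a rational function of $x$ --- the paper via the antiderivative identity $\int f(x)e^{-\lambda x}dx = \sum_j \lambda^{-(j+1)}f^{(j)}(x)e^{-\lambda x}$, you via the equivalent Taylor-expansion evaluation of $\int_x^\infty (t-x)q(t)e^{-\lambda t}\,dt$ --- and then dispatch conditions \ref{asymptotics1}--\ref{asymptotics3} by the exponential-beats-polynomial dichotomy, with condition \ref{asymptotics4} handled near the compact boundary (the paper cites Proposition \ref{boundaryconditions}, while your direct continuity observation for the pole-free rational $\Theta''$ is equally valid). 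Your extra details --- deducing the vanishing of $\mathcal{F}_{\tilde v,\tilde w}$ on affine functions from (semi)stability applied to $\pm\ell$, and tracking the possible logarithmic factor when integrating $u''=1/\Theta$ --- only make explicit steps the paper leaves implicit.
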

\begin{proof}
    Using the identity 
     \[ \int f(x) e^{-\lambda x} dx = \sum_{i = 0}^\infty \frac{f^{(n)}(x)}{\lambda^{n+1}} e^{-\lambda x}, \] 
      it follows from \eqref{profile-futakivanishes} together with our assumptions on the weights that both $\Theta$ and $\Theta'$ (and in fact all derivatives), are rational functions in $x$. Conditions \ref{asymptotics1} and \ref{asymptotics2} of Definition \ref{asymptotic-metric-general} follow immediately from this together with Lemma \ref{gradexpressionlemma}. Condition \ref{asymptotics3} is similar. Indeed, let $u$ be a symplecitc potential for $\omega$ on $P$ so that $(u'')^{-1} = \H = \Theta$. It follows that $u''$ itself is rational in $x$, so that by the mean value theorem there exists an $a \in \R$ and a $C > 0$ such that 
      \[ u(x) \leq C(|x|^a + 1).  \]
      Condition \ref{asymptotics4} is immediate from the boundary conditions \ref{boundaryconditions}. 
\end{proof}
A noteworthy feature of the above construction is that in order to test for existence, in this case it suffices to check on test configurations generated by piecewise-linear functions of the form \eqref{simplePL}. In particular, we only need to test on a family of functions whose slope at infinity is \emph{uniformly bounded}. It is straightforward to see that for this family $f_{x_0}$, the uniform condition \eqref{ceekbeta} is satisfied for any choice of $K>0$ sufficiently large. With this in mind, we define 
\begin{definition}\label{D-admissible}
    Let $D > 0$ be a fixed constant. Let $f = \max\{\ell_1, \dots, \ell_k\}$ be an admissible convex piecewise-linear function, so that we can decompose $P = \cup \Delta_i$ where $f(x) = \ell_i(x) = \langle x, \, b_i \rangle + a_i$ on $\Delta_i$. Suppose that $0 \in \overline{\Delta}_{i_1} \cap \dots \cap \overline{\Delta}_{i_r}$ We say that $f$ is $D$-admissible if there exists $j \in \{i_1, \dots, i_r\}$ such that 
    \[ |b_j| < D, \hspace{.1in} \textnormal{ and } \hspace{.1in} f(x) \leq a_j + D \hspace{.1in} \textnormal{ for all } \hspace{.1in} x \in \{ \langle x, \, - b_j \rangle \geq 1 \}.\]
    We say that a toric test configuration $(\mathcal{X}^{f}, \, \mathcal{L}^f)$ is $D$-admissible if it is generated by a $D$-admissible function $f$.
\end{definition}
Then for any $D$-admissible $f$, set $b_+ = - b_j$. Then the set $H_1 = \{\langle x, \, b_+ \rangle \leq 1 \}$ is bounded in $P$ as $b_+ \in C^*(P)$. Then if we set 
\begin{equation}\label{PLchange}
    f_+(x) := f(x) - \ell_j(x), 
\end{equation}
we will have that $f_+(0) = 0$ and 
\[ f_+(x) \leq \langle x , \, b_+ \rangle + D \leq D \left( |x| + 1 \right) \]
for all $x \in P\backslash H_1$. Suppose then that we are given $(v, \, w) \in \mathcal{W}(P)$. Then clearly we have that $f_+ \in \Cee_\beta(K)$ for some fixed $K = K(D)$, independent of our original $f$ (note that the assignment $f \mapsto f_+$ is not unique, but any such $f_+$ will be in $\Cee_\beta(K)$). In particular, we can view Definition \ref{Kstab-def} as a version of uniform K-stability whose test configurations are $D$-admissible. 

Taking the example of \eqref{simplePL}, we claim that $f_{x_0}=  \max\{x_0 - x, \, 0\}$ is $1$-admissible. Indeed, we can take $\ell_j(x) = x_0 - x$, $b_j = -1$, $a_j = x_0$. Then since globally $f_{x_0}(x) \leq f_{x_0}(0) = x_0 + 1$, we see that $f_{x_0}$ satisfies Definition \ref{D-admissible} with $D = 1$. Therefore we can interpret Proposition \ref{Kstableimpliesexistence-cbundles}:
\begin{corollary}
    In the line bundles problem \eqref{calabiansatz-cbundles} above, there exists an AK $(\tilde{v}, \, \tilde{w})$-cscK metric on $\C$ with moment image equal to $P = [-1, \infty)$ if and only if $\mathcal{F}_{\tilde{v}, \tilde{w}}$ is strictly positive on $1$-admissible test configurations.
\end{corollary}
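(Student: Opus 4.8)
The plan is to exploit the one-dimensional structure of the problem, reducing everything to the single profile function $\Theta$ and the explicit formula $\mathcal{F}_{\tilde{v},\tilde{w}}(f_{x_0}) = p(x_0)v(x_0)\Theta(x_0) = \tilde{v}(x_0)\Theta(x_0)$ computed above. The starting observation is that, by the discussion preceding \eqref{profileODE}, any $\T$-invariant AK $(\tilde{v},\tilde{w})$-cscK metric on $\C$ with moment image $[-1,\infty)$ is determined by its datum $\H = H_{11} = \Theta$, which must solve the second-order generalized Abreu equation \eqref{profileODE} subject to the two boundary conditions \eqref{profileboundary} at $x=-1$; by uniqueness of solutions to this initial value problem, $\Theta$ necessarily equals the formal solution \eqref{profilesolution}. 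Hence the existence of such a metric is \emph{equivalent} to the single positivity requirement \eqref{profilepositive}, namely $\Theta > 0$ on $(-1,\infty)$.

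For the direction $(\Leftarrow)$, I would assume that $\mathcal{F}_{\tilde{v},\tilde{w}}$ is strictly positive on $1$-admissible test configurations. Since each $f_{x_0}$ of \eqref{simplePL} is $1$-admissible (as verified after Definition \ref{D-admissible}) and is genuinely non-affine for $x_0 \in (-1,\infty)$, it generates a non-trivial such test configuration, so $\mathcal{F}_{\tilde{v},\tilde{w}}(f_{x_0}) = \tilde{v}(x_0)\Theta(x_0) > 0$. As $\tilde{v} > 0$ this forces $\Theta(x_0) > 0$ for every $x_0 \in (-1,\infty)$, i.e. \eqref{profilepositive} holds, and the construction of Proposition \ref{Kstableimpliesexistence-cbundles} then produces the desired metric. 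This direction uses only the atoms $f_{x_0}$ and requires no further integration by parts.

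For $(\Rightarrow)$, if the metric exists then $\Theta = \H > 0$, and I must show $\mathcal{F}_{\tilde{v},\tilde{w}}(f) > 0$ for every non-affine $1$-admissible $f$. Writing such an $f = \max\{\ell_1,\dots,\ell_k\}$ with breakpoints $x_1 < \dots < x_m$, whose distributional second derivative is a sum of positive masses $c_k\delta_{x_k}$, I would decompose $f = \ell + \sum_k c_k f_{x_k}$, where $\ell$ is the affine function carrying the (non-positive) slope at infinity and $c_k > 0$; admissibility guarantees $\ell$ has slope $\le 0$, so each $f_{x_k}$ is compactly supported in $P$. By linearity, $\mathcal{F}_{\tilde{v},\tilde{w}}(f) = \mathcal{F}_{\tilde{v},\tilde{w}}(\ell) + \sum_k c_k \tilde{v}(x_k)\Theta(x_k)$, and the sum is strictly positive since $f$ is non-affine (some $c_k > 0$) and $\Theta > 0$. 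It remains to check $\mathcal{F}_{\tilde{v},\tilde{w}}(\ell) = 0$: integrating \eqref{profileODE} twice against $\ell$ and using $\Theta(-1)=0$, $\Theta'(-1)=2$ collapses the endpoint contribution at $x=-1$ to exactly $2\ell(-1)\tilde{v}(-1)$, which cancels the boundary term in the definition of $\mathcal{F}_{\tilde{v},\tilde{w}}(\ell)$, provided the terms at $x=\infty$ vanish. This last point is the main obstacle: one must know that $\tilde{v}\Theta$ and $(\tilde{v}\Theta)'$ decay at infinity, equivalently that $\mathcal{F}_{\tilde{v},\tilde{w}}(1) = \lim_{x\to\infty}(\tilde{v}\Theta)'(x) = 0$. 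This is precisely where the exponential decay of $(\tilde{v},\tilde{w}) \in \mathcal{W}(P)$ enters, controlling $\tilde{v}\Theta = -\int_x^\infty (t-x)\tilde{w}\,dt$ as in \eqref{profile-futakivanishes}. Alternatively, one can bypass the decomposition and invoke the integration-by-parts identity \eqref{piecewiseibp}, which in one variable reads $\mathcal{F}_{\tilde{v},\tilde{w}}(f) = \sum_k \tilde{v}(x_k)\Theta(x_k)(\text{slope jump at }x_k)^2$ and is manifestly positive once $\Theta>0$, at the cost of verifying that the constructed $\omega$ lies in $\sexyH$ so that Lemma \ref{ibplemma} applies.
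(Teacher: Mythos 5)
Your reduction of existence to the single positivity condition $\Theta > 0$ (via uniqueness of the initial value problem for \eqref{profileODE}, \eqref{profileboundary} and Proposition \ref{boundaryconditions}), and your $(\Leftarrow)$ direction, are correct and are exactly the paper's implicit argument: the corollary is meant as a restatement of Proposition \ref{Kstableimpliesexistence-cbundles} through the unconditional identity $\mathcal{F}_{\tilde v,\tilde w}(f_{x_0}) = \tilde v(x_0)\Theta(x_0)$ (valid for the formal solution \eqref{profilesolution} since $f_{x_0}$ vanishes outside $[-1,x_0]$) together with the verification that each $f_{x_0}$ is $1$-admissible.

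The genuine gap is in your $(\Rightarrow)$ direction, at precisely the point you flagged and then dismissed. The claim $\mathcal{F}_{\tilde v,\tilde w}(\ell_\infty) = 0$ does \emph{not} follow from existence of the metric plus exponential decay of the weights: the formula \eqref{profile-futakivanishes} that you invoke is proved under the additional hypothesis that $\mathcal{F}_{\tilde v,\tilde w}$ vanishes on affine-linear functions, and the paper explicitly remarks after Proposition \ref{Kstableimpliesexistence-cbundles} that this hypothesis is neither needed for, nor implied by, the construction. What decay of $\tilde w$ gives you is only that the limit $\lim_{x\to\infty}(\tilde v\Theta)'(x) = 2\tilde v(-1) - \int_{-1}^\infty \tilde w\,dt = \mathcal{F}_{\tilde v,\tilde w}(1)$ exists, not that it vanishes (and for $\ell_\infty$ with nonzero slope you would additionally need $\mathcal{F}_{\tilde v,\tilde w}(x)=0$). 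Concretely, take $\tilde w \equiv 0$ and any positive exponentially decaying $\tilde v$: then $\tilde v\Theta = 2\tilde v(-1)(1+x) > 0$, so the AK $(\tilde v,\tilde w)$-cscK metric exists, yet $\mathcal{F}_{\tilde v,\tilde w}(1) = 2\tilde v(-1) > 0$, and the $1$-admissible, non-affine function $f(x) = -\tfrac{x}{2} + a + \tfrac{1}{2}\max\{x-1,\,0\}$ satisfies $\mathcal{F}_{\tilde v,\tilde w}(f) = 2\left(a+\tfrac{1}{2}\right)\tilde v(-1) \leq 0$ once $a \leq -\tfrac{1}{2}$. Your fallback via \eqref{piecewiseibp} fails for the same reason: Lemma \ref{ibplemma} requires $\omega \in \sexyH$, which fails in this example ($\Theta$ grows like $\tilde v^{-1}$, violating condition \textup{(i)} of Definition \ref{asymptotic-metric-general}), and whenever \eqref{ibpformula} does hold it already forces $\mathcal{F}_{\tilde v,\tilde w}$ to annihilate affine functions (apply it to $\pm\ell$). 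So your $(\Rightarrow)$ argument goes through only under the extra normalization that $\mathcal{F}_{\tilde v,\tilde w}$ vanishes on affine-linear functions (together with weight hypotheses as in Proposition \ref{mainprop2-inthebody} placing $\omega$ in $\sexyH$); without it, the necessity asserted in the corollary should be read, as the paper's own chain of reasoning does, through the subfamily \eqref{simplePL} alone: existence $\Rightarrow \Theta > 0 \Rightarrow \mathcal{F}_{\tilde v,\tilde w}(f_{x_0}) > 0$. The literal stronger statement you tried to prove — strict positivity on \emph{all} non-affine $1$-admissible generators — genuinely requires this caveat, and your decomposition $f = \ell_\infty + \sum_k c_k f_{x_k}$, while correct, cannot close it.
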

\begin{remark}
    At least for test configurations $\mathcal{X}^{x_0}$ generated by piecewise linear functions of the form \eqref{simplePL} (viewed as a test configuration for $Y$), it is possible to interpret $\mathcal{F}_{\tilde{v}, \tilde{w}}(f_{x_0})$ as a Futaki invariant associated directly to $\mathcal{X}^{x_0}$. Indeed, in this setting we have that $\mathcal{X}^{x_0}$ is smooth. Then for an appropriate choice of K\"ahler metric $\Omega$ on $\mathcal{X}$, one can define
    \[ \mathcal{F}_{v,w}(\mathcal{X}^{x_0}) = \int_{\mathcal{X}^{x_0}}\left(\Scalv(\Omega) - w(\mu_{\Omega})\right) \Omega^{n+1} - 8\pi \int_Y v(\mu_{\omega_Y}) \omega_Y^n.\] 
    By the arguments of \cite[Sections 9, 10]{LahdiliWeighted}, it follows that there is a universal constant $ C(B_a, [\omega_{B_a}])$ such that $C(B_a, [\omega_{B_a}])\mathcal{F}_{v,w}(\mathcal{X}^{x_0}) = \mathcal{F}_{\tilde{v}, \tilde{w}}(f_{x_0})$. In this way Proposition \ref{Kstableimpliesexistence-cbundles} can be interpreted in terms of stability criteria directly on $Y$ as is done in \cite{LahdiliWeighted}.
\end{remark}

\subsubsection{Proof of Corollary \ref{Mcorollary2}} 

Let $P = [-1, \, \infty)$ and suppose that $\tv$ is a positive weight function on $P$, and let $\tw = 2\left(v + x \frac{d\tv}{dx}\right) $ be the weight corresponding to the $v$-soliton equation via Lemma \ref{vsolitonw}. Suppose that $(\tv, \, \tw) \in \mathcal{W}(P)$. The next lemma says that the vanishing of the weighted Futaki invariant associated to the $v$-soliton problem (see Section 2.4) is sufficient in this special case to deduce K-stability of $P$ with respect to $(\tv, \, \tw)$. 
\begin{lemma}\label{1Dvsoliton}
If the weight $\tv$ satisfies 
\[ \int_{-1}^\infty x \tv \, dx = 0, \]
then $P$ is $(\tv, \, \tw)$ K-stable. 
\end{lemma} 

\begin{proof}
It's straightforward to see, using the fact that $\tw = 2\left(\tv + x \frac{d\tv}{dx}\right) = 2d(x\tv)$, that 
\[ \mathcal{F}_{\tv, \, \tw}(1) = -2\tv(-1) - \int_{-1}^\infty \tw \, dx = 0.   \] 
Note that the minus sign on the boundary term comes from the interpretation of the measure $d\sigma$ in this case. Together with Lemma \ref{vsolitonfutakiinvariantlemma}, it follows that $\mathcal{F}_{\tv, \, \tw}$ vanishes on the affine linear functions. 

Now let $f$ be any piecewise linear convex function. Since $\mathcal{F}_{\tv, \, \tw}$ vanishes on the affine linear functions, we can assume without loss of generality that $f \geq 0$ and is strictly positive for all $x \in P$ sufficiently large. Now suppose that $f$ is equal to the affine linear function $\ell_j$ on $[b_j, \, b_{j+1}]$ for $j = 1, \dots, N-1$ and $\ell_N$ on $[b_N, \, \infty)$. Then, applying Lemma \ref{vsolitonfutakiinvariantlemma} on each interval gives us 
\begin{equation*}
\begin{split}
	\mathcal{F}_{\tv, \, \tw}(f)  &= \int_{\p P} f \tv \, d\sigma  - \int_P f\tw \, dx \\
			&= - 2f(-1)\tv(-1) - \sum_{j=1}^{N-1}\int_{b_j}^{b_{j+1}} \ell_j \tw \, dx  - \int_{b_N}^\infty \ell_N \tw \, dx \\
			&= - 2f(-1)\tv(-1) - \sum_{j=1}^{N-1}\left[2 f\tv \, \bigg|_{b_{j}}^{b_{j+1}}- 2\int_{b_j}^{b_{j+1}} \ell_j \tv \, dx \right] +  2f(b_{N})\tv(b_{N}) + 2\int_{b_N}^\infty \ell_N \tv \, dx \\
			&= 2 \int_{-1}^\infty f \tv \, dx -2f(-1)\tv(-1) - 2\sum_{j=1}^{N-1}\left[f(b_{j+1})\tv(b_{j+1}) - f(b_{j})\tv(b_{j})\right] + 2f(b_{N})\tv(b_{N})  \\
			&= 2 \int_{-1}^\infty f \tv \, dx. 
\end{split}
\end{equation*}
 Since $f$ and $\tv$ are both positive, it follows that $\mathcal{F}_{\tv, \, \tw}(f) > 0$. 
\end{proof}

Let $(B, \, L)$ be as in the statement of Corollary \ref{Mcorollary2}, i.e. $B$ is a K\"ahler-Einstein Fano manifold of dimension $n$ and $L \to B$ is a negative line bundle such that $L^{\frac{1}{\kappa}} = K_B$, $0 < \kappa < 1$. Let $\omega_B$ be the K\"ahler-Einstein metric on $B$ with $\omega_B = \tau \Ric_B$, normalized such that 
\[ \tau = \frac{1}{1 - \kappa}. \]
Note that this condition implies that $\kappa x + \tau^{-1} > 1$ on $P = [-1, \, \infty)$. By Lemma \ref{fibrationsolitons} below, if $\C$ admits a $v$-soliton with weight 
\[ \tv(x) = (\kappa x + \tau^{-1})^n e^{-\lambda x} \]
for some $\lambda > 0$, then the total space of $L \to B$ will admit a shrinking K\"ahler-Ricci soliton with respect to a multiple of the radial scaling vector field on the fibers of $L$. By Proposition \ref{Kstableimpliesexistence-cbundles} and Lemma \ref{1Dvsoliton}, $\C$ admits such a $\tv$-soliton as long as $\tv$ satisfies $\int_{-1}^\infty x \tv \, dx = 0$. It is then straightforward to see that we can always choose a $\lambda > 0$ such that $\tv$ has this property. Indeed, the function 
\[ \lambda \mapsto \int_{-1}^\infty  (\kappa x + \tau^{-1})^n e^{-\lambda x} \, dx  \]
is convex, and can be seen to be proper by an argument similar to that in \cite[Proposition 3.1]{uniqueness}. Therefore it has a critical point which is precisely the value of $\lambda$ we need. This completes the proof of Corollary \ref{Mcorollary2}. 

\subsection{The weighted Mabuchi energy}

Fix a background AK metric $\omega_0 \in \scaryH$ on $M$ with associated symplectic potential $u_0 \in C^\infty(P)$. As in \cite{LahdiliWeighted, simonYTD}, we can define formally the weighted Mabuchi energy by:
\begin{definition}
The weigted Maubchi functional is defined for $u \in \Cee_\beta$ as 
\begin{equation}\label{mabuchidef}
        \M_{v,w}(u) = \mathcal{F}_{v,w}(u) - \int_P \log\det\left((u_0)^{ik}u_{kj} \right)v dx.
\end{equation}
\end{definition}

\begin{lemma}[{c.f. \cite[Proposition 7.7]{simonYTD}}]\label{mabuchi-welldefined}
    The Mabuchi energy $\M_{v,w}$ is well-defined on $\Cee_\beta$ as a functional taking values in $(-\infty, \infty]$.
\end{lemma}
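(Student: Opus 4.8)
The plan is to treat $\M_{v,w}$ as the sum of the linear \emph{Futaki part} $\mathcal{F}_{v,w}(u)$ and the \emph{entropy part} $E(u) := -\int_P \log\det\big((u_0)^{ik}u_{kj}\big)\, v\, dx$, and to show that the former is finite while the latter is bounded below (but may equal $+\infty$), so that the sum takes values in $(-\infty,\infty]$. Throughout I would use that $u\in\Cee_\beta$ satisfies the growth bound $|u|\le C v^{-\beta}$ on $\overline{P}$ together with $|\nabla u|\le Cv^{-\beta}$ on each $\overline{P}_\delta$, and that $v,w$ decay exponentially with derivatives in the sense of Definition \ref{exponential-decay}. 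Since $\beta$ ranges in $(\varepsilon,1-\varepsilon)$ while for the weights of $\mathcal{W}$ the exponent $\beta^*$ may be taken arbitrarily close to $1$, I will assume $\beta<\beta^*$ for $\beta$ in the admissible range.

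For the Futaki part I would estimate the two integrals separately. On $\p P$ one has $|u|\,v\le C v^{1-\beta}$, and since $1-\beta>\varepsilon\ge 0$ and $v$ decays exponentially, $\int_{\p P} v^{1-\beta}\,d\sigma<\infty$. For the bulk term, $|u\,w|\le C v^{-\beta}\cdot v^{\beta^*}=C v^{\beta^*-\beta}$, which is integrable on $P$ because $\beta^*-\beta>0$; hence $\mathcal{F}_{v,w}(u)$ is finite.

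For the entropy part, write $\log\det\big((u_0)^{ik}u_{kj}\big)=\log\det\mathrm{Hess}(u)-\log\det\mathrm{Hess}(u_0)$ and apply the elementary inequality $\log t\le t-1$ to the matrix $(u_0)^{ik}u_{kj}=\H_0\,\mathrm{Hess}(u)$, giving the pointwise bound $\log\det\big((u_0)^{ik}u_{kj}\big)\le \tr\!\big(\H_0\,\mathrm{Hess}(u)\big)-n$. Since $u$ is convex, the integrand $v(H_0)_{ij}u_{ij}=v\,\tr\!\big(\H_0\,\mathrm{Hess}(u)\big)$ is non-negative, so $\int_P \tr\!\big(\H_0\,\mathrm{Hess}(u)\big)\,v\,dx$ is well defined in $[0,\infty]$. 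Applying Lemma \ref{ibplemma} to the background metric $\omega_0$ — legitimate by the Remark following that lemma, because $\omega_0\in\sexyH$ and hence $w_0:=\Scalv(\omega_0)$ decays sufficiently — identifies this integral with $2\int_{\p P}u\,v\,d\sigma-\int_P u\,w_0\,dx$, which is finite by the estimates of the previous paragraph. Consequently the positive part of $\log\det\big((u_0)^{ik}u_{kj}\big)\,v$ is integrable, the integral $\int_P \log\det\big((u_0)^{ik}u_{kj}\big)\,v\,dx$ is well defined in $[-\infty,\infty)$, and $E(u)\ge n\int_P v\,dx-\int_P\tr\!\big(\H_0\,\mathrm{Hess}(u)\big)\,v\,dx>-\infty$, with $E(u)=+\infty$ occurring precisely when $\mathrm{Hess}(u)$ degenerates on a set of positive measure. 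Combining with the finiteness of $\mathcal{F}_{v,w}(u)$ yields $\M_{v,w}(u)\in(-\infty,\infty]$.

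The main obstacle, as is typical in the non-compact setting, is the boundary analysis: for a general $u\in\Cee_\beta$ I control only the zeroth and (on $\overline{P}_\delta$) first derivatives of $u$, with no control on $\mathrm{Hess}(u)$ near $\p P$, so the reduction of $\int_P v(H_0)_{ij}u_{ij}\,dx$ to a boundary integral plus a zeroth-order term via Lemma \ref{ibplemma} is exactly what makes the trace integral tractable. Verifying the hypotheses of that lemma for $\omega_0$ amounts to checking that $w_0=\Scalv(\omega_0)=-\sum_{ij}(v(H_0)_{ij})_{ij}$ is integrable against $u$; rather than estimating the uncontrolled second derivatives of $\H_0$ at infinity, I would integrate by parts once and invoke Lemma \ref{hardterm}, which gives $\sum_i|(v(H_0)_{ij})_i|\le C v^{1-\varepsilon}$, together with $|\nabla u|\le Cv^{-\beta}$ and $1-\varepsilon-\beta>0$ to produce the integrable bound $Cv^{1-\varepsilon-\beta}$, the boundary contributions being controlled exactly as in the proofs of Lemmas \ref{regularboundaryterm1} and \ref{ibplemma}. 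This is the step where the exponential decay of $v$ and the precise range $\varepsilon<\beta<1-\varepsilon$ are essential.
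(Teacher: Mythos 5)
Your proposal is correct and follows essentially the same route as the paper: the paper's proof likewise splits $\M_{v,w}$ into the (finite) Futaki part and the entropy part, and controls the latter via the convexity of $-\log\det$ (your $\log t \le t-1$ bound is precisely its linearization at $u_0$) together with Lemma \ref{ibplemma} applied to the difference $f-u_0$, citing the standard argument of \cite[Corollary 3.3.10]{DonStabTor} and \cite[Proposition 7.7]{simonYTD}. Your extra care in justifying the integration-by-parts formula for the non-cscK background $\omega_0$ --- avoiding the uncontrolled second derivatives of $\H_0$ by integrating by parts once and invoking Lemma \ref{hardterm}, with the boundary limits handled as in the proof of Lemma \ref{ibplemma} --- is exactly the point the paper delegates to the Remark following Lemma \ref{ibplemma}, and your standing assumption $\beta<\beta^*$ matches the paper's implicit notion of ``suitable'' $\beta$ (needed there too, e.g.\ in the proof of Theorem \ref{existence-implies-uniform}).
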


\begin{proof}
    We have already seen that $\mathcal{F}_{v,w}$ is well-defined on $\Cee_\beta$. For the second term, we first let $u = u_0$ be the symplectic potential of our fixed background AK metric $\omega_0 \in \scaryH$. By Lemma \ref{spacescompare-1} we have $ u_0 \in \Cee_\beta$, so that clearly $\M_{v,w}(u_0) $ is well-defined and finite. Then for any $f \in \Cee_\beta$, \ref{mabuchi-welldefined} follows from the convexity of $-\log\det$ and Lemma \ref{ibplemma} applied to the difference $f - u_0$ by a standard argument, see \cite[Corollary 3.3.10]{DonStabTor}, \cite[Proposition 7.7]{simonYTD}.
\end{proof}

\begin{prop}\label{mabuchi-boundedbelow}
     Suppose that $(v, \, w) \in \mathcal{W}$, $u \in \scaryH$ is a solution to \eqref{genAbreu}, and $u_0$ is a background metric with the property that the Mabuchi energy defined relative to $u_0$ satisfies $\M_{v,w}(u) < \infty$. Then the weighted Mabuchi energy defined relative to $u_0$ is bounded from below on $\Cee_\beta$. 
\end{prop}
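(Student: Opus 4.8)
The plan is to run the standard convexity argument of \cite[Corollary 3.3.10]{DonStabTor} (see also \cite[Proposition 7.7]{simonYTD}): since $\M_{v,w}$ is convex along linear paths in the convex set $\Cee_\beta$ and the solution $u$ of \eqref{genAbreu} is a critical point, $u$ is in fact a global minimizer, so that $\M_{v,w} \geq \M_{v,w}(u) > -\infty$ throughout. I would phrase it so as to avoid differentiating under the integral sign, deducing $\M_{v,w}(f) \geq \M_{v,w}(u)$ directly from a pointwise tangent-line estimate. First I record that, up to the additive constant $\int_P \log\det\textnormal{Hess}(u_0)\,v\,dx$, we have $\M_{v,w}(u') = \mathcal{F}_{v,w}(u') + I(u')$ with $I(u') := -\int_P \log\det\textnormal{Hess}(u')\,v\,dx$, the first summand affine and the second convex in $u'$. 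Fix $f \in \Cee_\beta$; we may assume $\M_{v,w}(f) < \infty$, since otherwise the bound is immediate.

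The convexity of $A \mapsto -\log\det A$ on positive-definite symmetric matrices, applied to $A = \textnormal{Hess}(u)$ and $B = \textnormal{Hess}(f)$ with $\textnormal{Hess}^{-1}(u) = \H$, yields the pointwise inequality
\begin{equation*}
  -\log\det\textnormal{Hess}(f) \;\geq\; -\log\det\textnormal{Hess}(u) \;-\; \sum_{i,j} H_{ij}\left(f_{ij} - u_{ij}\right)
\end{equation*}
on $P$, with the left side read as $+\infty$ wherever $\textnormal{Hess}(f)$ degenerates. I then integrate this against $v\,dx \geq 0$. The contribution $\sum_{i,j}H_{ij}u_{ij} = \textnormal{tr}\!\left(\textnormal{Hess}^{-1}(u)\,\textnormal{Hess}(u)\right) = n$ integrates to the genuinely finite quantity $n\int_P v\,dx$, which by Lemma \ref{ibplemma} equals $\mathcal{F}_{v,w}(u)$; and the contribution $\int_P \sum_{i,j} H_{ij} f_{ij}\, v\, dx$ is evaluated by Lemma \ref{ibplemma} to be $\mathcal{F}_{v,w}(f)$, where I use that $\H$ corresponds to the $(v,w)$-cscK metric and that $f \in \Cee_\beta$. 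Thus the integrated inequality reads $I(f) - I(u) \geq -\left(\mathcal{F}_{v,w}(f) - \mathcal{F}_{v,w}(u)\right)$.

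Adding $\mathcal{F}_{v,w}(f) - \mathcal{F}_{v,w}(u)$ to both sides produces the desired cancellation,
\begin{equation*}
  \M_{v,w}(f) - \M_{v,w}(u) = \left(\mathcal{F}_{v,w}(f) - \mathcal{F}_{v,w}(u)\right) + \left(I(f) - I(u)\right) \geq 0 .
\end{equation*}
Since $f \in \Cee_\beta$ was arbitrary, $\M_{v,w}(u) < \infty$ by hypothesis, and $\M_{v,w} > -\infty$ by Lemma \ref{mabuchi-welldefined}, this exhibits $\M_{v,w}(u)$ as a finite lower bound for $\M_{v,w}$ on $\Cee_\beta$.

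The step I expect to be the main obstacle is the passage from the pointwise to the integrated inequality, specifically making rigorous the evaluation $\int_P \sum_{i,j} H_{ij} f_{ij}\, v\, dx = \mathcal{F}_{v,w}(f)$. This is not an absolutely convergent bulk integral, since $\Cee_\beta$ controls $f$ and $\nabla f$ but not $\textnormal{Hess}(f)$; it must instead be read as the limit $\lim_{\delta \to 0}\int_{P_\delta}(vH_{ij})f_{ij}\,dx$ furnished by Lemma \ref{ibplemma}, whose convergence rests on the asymptotic conditions of Definition \ref{asymptotic-metric-general} (through Lemma \ref{hardterm}), the exponential decay of $v$, and the range $\varepsilon + \beta < 1$. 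Correspondingly, one should integrate the tangent-line inequality first over the compact pieces $Q_{\delta,\delta^*}$ and then let $\delta \to 0$ and $\delta^* \to \infty$, controlling the $\log\det$ terms by the same estimates used in Lemma \ref{mabuchi-welldefined}; the $u$-term requires no such care, being bounded pointwise by $n$ and hence absolutely integrable against $v \in L^1(P)$, and the case $\M_{v,w}(f) = \infty$ is disposed of at the outset.
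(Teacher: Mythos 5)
Your argument is correct, and it reaches the paper's conclusion by a genuinely different (and somewhat more elementary) technical route. The paper also exploits convexity of $\M_{v,w}$ on the convex set $\Cee_\beta$, but it works along the segment $u_t = (1-t)u + tf$: it justifies differentiation under the integral sign via the dominated-convergence bound $\left|\ddt (g_t v)\right| \leq 2nv + 2\sum_{i,j} vH_{ij}f_{ij}$ (the right side being integrable by Lemma \ref{ibplemma}), computes $\left.\ddt\right|_{t=0}\M_{v,w}(u_t) = \mathcal{F}_{v,w}(u-f) + \mathcal{F}_{v,w}(f-u) = 0$, and concludes that the convex function $t \mapsto \M_{v,w}(u_t)$ is minimized at $t = 0$. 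You instead integrate the pointwise supporting-hyperplane inequality for $-\log\det$ once, which collapses the whole differentiability discussion; the two evaluations you need, $\int_P v\,\tr\!\left(\H\,\textnormal{Hess}(f)\right) dx = \mathcal{F}_{v,w}(f)$ and $\int_P v\,\tr\!\left(\H\,\textnormal{Hess}(u)\right) dx = n\int_P v\,dx = \mathcal{F}_{v,w}(u)$, are exactly the two applications of Lemma \ref{ibplemma} that the paper makes (the second legitimate because $u \in \Cee_\beta$ by Lemma \ref{spacescompare-1}). Your version is shorter and closer in spirit to \cite[Corollary 3.3.10]{DonStabTor}; what the paper's version buys is the differentiability of $t \mapsto \M_{v,w}(u_t)$ at interior $t$, which is not needed for the lower bound itself but is reused verbatim in the proof of Corollary \ref{Mcorollary}, where first and second derivatives along such segments are computed.

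Two small remarks. First, the step you flag as the main obstacle is in fact a non-issue: since $f$ is convex and $\H > 0$, the integrand $v\,\tr\!\left(\H\,\textnormal{Hess}(f)\right)$ is pointwise nonnegative, so the bulk integral is unconditionally defined in $[0,\infty]$, and monotone convergence over the exhaustion $Q_{\delta,\delta^*}$ together with Lemma \ref{ibplemma} identifies it as the finite number $\mathcal{F}_{v,w}(f)$; no conditional-convergence care is required. Second, your reduction ``up to the additive constant $\int_P \log\det\textnormal{Hess}(u_0)\,v\,dx$'' tacitly assumes that constant is finite, which is not among the hypotheses; this is harmless, since the identical tangent-line argument runs on the relative quantity $\log\det\!\left((u_0)^{ik}u'_{kj}\right)$ appearing in \eqref{mabuchidef} — pointwise differences of the relative log-determinants agree with differences of the absolute ones, so the $u_0$-dependence cancels in $\M_{v,w}(f) - \M_{v,w}(u)$ and only the finiteness of $\M_{v,w}(u)$, assumed in the statement, is used.
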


\begin{proof}
    To begin we observe that $\Cee_\beta$ is convex. That is, if $f_1, f_2 \in \Cee_\beta$, then it is clear that $f_t = tf_1 + (1-t)f_2 \in \Cee_\beta$. Since $\mathcal{F}_{v,w}$ is linear, then it follows from the convexity of $-\log\det$ that $\M_{v,w}$ is convex on $\Cee_\beta$. To see morover that there exists at least one choice of $u_0$ with the required properties, simply take $u_0 = u$. Clearly then we have $\M_{v,w}(u) = \mathcal{F}_{v,w}(u) - \log(n)\int_{P}v dx < \infty.$

    We now investigate the differentiability of $\M_{v,w}$ in $\Cee_\beta$ at the distinguished point $u$ corresponding to our solution of \eqref{genAbreu}. Now let $f \in \Cee_\beta$ and set $u_t = (1-t)u + t f$, $\H_t = \textnormal{Hess}^{-1} u_t$. Then simply from the convexity of $f$ we have
    \begin{equation}\label{hessian-(1-t)}
        (u_t)_{ij} \geq (1-t) u_{ij}. 
    \end{equation}
By comparing the eigenvalues of $\textnormal{Hess}(u_t)$ and $\textnormal{Hess}(u)$ we see that therefore 
\begin{equation*}
    \H_t \leq \frac{1}{1-t}\H
\end{equation*}
If we write $g_t(x) = -\log\det (u_t)_{ij}$, then on $P_\delta$ we have, for $t < \frac{1}{2}$ say
\begin{equation*}
    \begin{split}
    \left| \ddt (g_t v) \right| &= \left|\sum_{i,j}v (H_t)_{ij}(u - f)_{ij} \right| \\
    &\leq \frac{1}{1-t} \left|\sum_{i,j}vH_{ij}u_{ij}\right| + \frac{1}{1-t}\left|\sum_{i,j}vH_{ij}f_{ij}\right| \\
     &\leq 2nv + 2\sum_{ij}vH_{ij}f_{ij},
    \end{split}
\end{equation*}
noting that both terms in the last inequality are positive. By Lemma \ref{ibplemma}, we have that the right hand side is a fixed integrable function on $P$ independent of $t$, hence by another application of Lemma \ref{ibplemma}, 
\begin{equation*}
\begin{split}
    \left.\ddt \right|_{t = 0} \int_{P} g_t v dx &= \int_{P} \sum_{i,j} vH_{ij}(u - f)_{ij} dx \\
        &= \int_{P} \sum_{i,j} vH_{ij} u_{ij} dx - \int_{P} \sum_{i,j} vH_{ij} f_{ij} dx \\
    & =  \mathcal{F}_{v,w}(u) - \mathcal{F}_{v,w}(f) = \mathcal{F}_{v,w}(u-f).
\end{split}
\end{equation*}
The linear part of $\M_{v,w}$ is clearly differentiable along this path, and hence we see that 
\begin{equation*}
    \left.\frac{\p}{\p t}\right|_{t = 0} \M_{v,w}(u_t) = 0.
\end{equation*}
Since $f \in \Cee_\beta$ was arbitrary it follows from the convexity that $\M_{v,w}$ attains its minimum on $\Cee_\beta$ at $u$.
\end{proof}
With this in place, we can now prove Corollary \ref{Mcorollary}.
\begin{proof}[Proof of Corollary \ref{Mcorollary}]
     We use the general strategy of \cite{GuanUniqueness} (see \cite[Theorem 3.1]{vestisnotes}). Suppose that $\varepsilon \in \left[ 0, \max\{ \beta^*, \frac{1}{2}\} \right)$, and that $\omega_1, \omega_2 \in \scaryH$ satisfy 
    \[ \Scalv(\omega_1) = \Scalv(\omega_2) = w. \]
    As we have seen, for the appropriate normalization the moment maps $\mu_1, \mu_2$ associated to $\omega_1, \omega_2$ will have common image equal to a Delzant polyhedron $P \subset \t^*$. We thus have two symplectic potentials $u_1, u_2 \in C^\infty(P) \cap C^0(\overline{P})$ which satisfy \eqref{genAbreu}. By Lemma \ref{spacescompare-1} it follows that $u_i \in \Cee_\beta$ for any $\beta \geq \varepsilon$. We then define the Mabuchi energy $\M_{v,w}$ \eqref{mabuchidef} on $\Cee_\beta$ relative to $u_1$, and we claim that 
    \begin{equation}\label{usethis2} \M_{v,w}(u_2) = \mathcal{F}_{v,w}(u_2) - \int_P \log\left( (u_1)^{ik}(u_2)_{kj}\right)v dx < \infty.\end{equation}
    Indeed as $u_2 \in \Cee_\beta$, we have $\mathcal{F}_{v,w}(u_2)< \infty.$ For the second term we use the condition that $\omega_1, \omega_2$ are uniformly equivalent, which in this case implies that 
    \begin{equation} \label{usethis3} C^{-1} \Id \leq \textnormal{Hess}^{-1}(u_1)\textnormal{Hess}(u_2)  \leq C \Id, \end{equation}
    which gives \eqref{usethis2}. Set $u_t = tu_2 + (1-t)u_1$. Then by Proposition \ref{mabuchi-boundedbelow}, we have that the function $\M_{v,w}(t)$ given by
    \[ t \mapsto \M_{v,w}(u_t)\]
    is convex and attains its minimum value at the two endpoints $t = 0, 1$, hence is constant. We have already seen that $\M_{v,w}(t)$ is differentiable in $t$ for any $t < 1$ with first variation 
    \[ \M'_{v,w}(t) = \ddt \M_{v,w}(u_t) = \mathcal{F}_{v,w}(f) - \int_P \sum_{i,j} vH^t_{ij} f_{ij} dx,\]
    where $f = u_2 - u_1$. We would like to show that in fact $\M_{v,w}$ is twice differentiable at $t = 0$. To do this, we compute 
    \begin{equation*}
    \begin{split} 
    \left| \ddt \left( -\sum_{i,j} vH^t_{ij} f_{ij} \right) \right|&= v  \tr\left(\left[ \H^t \cdot \textnormal{Hess}(f) \right]^2\right)  \\
        & \leq v  \left[\tr\left( \H^t \cdot \textnormal{Hess}(f) \right)\right]^2  \\
        &= v  \left[\tr( \H^t \cdot \textnormal{Hess}(u_2)) - \tr(\H^t \cdot \textnormal{Hess}(u_1))  \right]^2  \\
        & \leq v \left(\frac{1}{1-t} \right)^2  \left[ \tr(\H^1\cdot \textnormal{Hess}(u_2))^2 + 2n \tr(\H^1\cdot \textnormal{Hess}(u_2))^2  + n^2  \right] \\
        & \leq C \left(\frac{1}{1-t}\right)^2 v,
    \end{split}
    \end{equation*}
where the second to last line follows from the fact that $\H^t \leq \frac{1}{1-t} \H^1$ as in Proposition \ref{mabuchi-boundedbelow}, and the last line follows once again using \eqref{usethis3}. It follows that in fact $\M_{v,w}(t)$ is twice differentiable for $t < 1$, so that in particular we have 
\[ \M''_{v,w}(0) = \left.\frac{\p^2}{\p t^2}\right|_{t=0}\M_{v,w}(u_t) = \int_P   \tr\left(\left[ \H^1 \cdot \textnormal{Hess}(u_2 - u_1) \right]^2\right) v dx, \]
from which it follows that $(u_2 - u_1)_{ij} \equiv 0$. Therefore $u_2 = u_1 + \ell$ for an affine-linear $\ell$. Translating back to the complex picture, this implies that the automorphism $A \in i\t \subset \Cstarn \subset \textnormal{Aut}(M,J)$ determined by the linear part of $\ell$ has the property that $A^*\omega' = \omega$.
\end{proof}

\begin{theorem}[{c.f. \cite[Proposition 7.9]{simonYTD}}]\label{stability-implies-proper}
    Suppose that $(v, \, w) \in \mathcal{W}$ and that $u_0 \in \scaryH$ satisfies $||\textnormal{Scal}_v(u_0)||_{C^k_{\gamma}(\overline{P})} \leq C(k) $ and
    \begin{equation}\label{vscal-a}
        \left| \textnormal{Scal}_v(u_0) - w \right| < C v^\gamma
    \end{equation}
    for $\gamma \geq \beta$. Then if $\D \subset \Cee_\beta^*$ is any convex subset with the property that there exists a $\lambda_\D > 0$ such that  
    \begin{equation}\label{dkstable}
        \mathcal{F}_{v,w}(f) \geq \lambda_\D \int_{P} f v^{\gamma} dx,
    \end{equation}
    then there exists a constant $C_\D > 0$ such that 
    \begin{equation}\label{mabuchiproper-a}
        \M_{v,w}(u) \geq C_\D^{-1}\int_P u v^\gamma dx - C_\D
    \end{equation}
    for all $u \in \D$. In particular, if $P$ is $(v,\, w)$-uniformly K-stable with parameters $(\beta, \gamma)$, then there exist constants $C_{K,\beta,\gamma}$ such that 
    \begin{equation*}
        \M_{v,w}(u) \geq C_{K,\beta,\gamma}^{-1}\int_P u v^\gamma dx - C_{K,\beta,\gamma}
    \end{equation*}
    for all $u \in \Cee_\beta^*(K)$.
\end{theorem}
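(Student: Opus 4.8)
The plan is to adapt the argument of \cite[Proposition 7.9]{simonYTD} (compare \cite[Theorem 4.3]{ChenLiSheng2014}): split $\M_{v,w}$ into its linear Futaki part and an entropy part, and then use the stability hypothesis \eqref{dkstable} to dominate the entropy. Write $\H_0 = \textnormal{Hess}^{-1}(u_0)$ and $S_0 = \Scalv(u_0) = -\sum_{i,j}(v(H_0)_{ij})_{ij}$, and for $u \in \D$ introduce the reference functional $\mathcal{F}_0(u) = 2\int_{\p P} uv\,d\sigma - \int_P uS_0\,dx$. The first step is to record the identity
\[ \mathcal{F}_0(u) = \int_P \sum_{i,j}(v(H_0)_{ij})u_{ij}\,dx = \int_P \tr\!\big((u_0)^{ik}u_{kj}\big)\,v\,dx, \]
which I would obtain by applying the integration-by-parts formula of Lemma \ref{ibplemma} with data $\H_0$ and test function $u$. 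Here one uses the generalized version noted in the Remark after Lemma \ref{ibplemma}: $u_0$ need not be exactly $(v,w)$-cscK, only that $S_0$ decays fast enough, which is guaranteed by \eqref{vscal-a} together with $(v,w)\in\mathcal{W}$ and $u_0 \in \sexyH$. The decay hypotheses (in particular $\gamma\le\beta^*$, which makes $\int_P u|w|\,dx$ finite exactly as in the proof of Theorem \ref{existence-implies-uniform}) ensure that every integral appearing below converges.

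The decisive step is the entropy estimate, and this is where the argument must improve on the convexity bound $\log t \le t-1$ used to prove boundedness below in Proposition \ref{mabuchi-boundedbelow}. Setting $A = A(x) = (u_0)^{ik}u_{kj}$, which is conjugate to the symmetric positive-definite matrix $\H_0^{1/2}\textnormal{Hess}(u)\H_0^{1/2}$ and so has positive eigenvalues $\lambda_i(x)$ with $\tr A = \sum_i\lambda_i$ and $\log\det A = \sum_i\log\lambda_i$, I would invoke the sharper elementary inequality: for each $\kappa>0$ there is a constant $C_\kappa = -\log\kappa-1$ with $\log t \le \kappa t + C_\kappa$ for all $t>0$. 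Applying this eigenvalue-by-eigenvalue and integrating against $v$ gives
\[ \int_P \log\det\!\big((u_0)^{ik}u_{kj}\big)\,v\,dx \;\le\; \kappa\,\mathcal{F}_0(u) + nC_\kappa\int_P v\,dx, \]
the point being that the trace term now carries the adjustable small factor $\kappa$ (if the left side is $-\infty$ the bound, and all that follows, hold trivially).

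Finally I would feed in stability. Writing $\mathcal{F}_{v,w}(u) = \mathcal{F}_0(u) + \int_P u(S_0-w)\,dx$, the bound $|S_0-w|\le Cv^\gamma$ from \eqref{vscal-a}, the sign $u\ge0$, and \eqref{dkstable} give $\mathcal{F}_0(u) \le \big(1 + C\lambda_\D^{-1}\big)\mathcal{F}_{v,w}(u) =: C_1\mathcal{F}_{v,w}(u)$, where $\mathcal{F}_{v,w}(u)\ge\lambda_\D\int_P uv^\gamma\,dx\ge0$. Substituting the entropy estimate into $\M_{v,w}(u) = \mathcal{F}_{v,w}(u) - \int_P\log\det((u_0)^{ik}u_{kj})v\,dx$ yields $\M_{v,w}(u) \ge (1-\kappa C_1)\mathcal{F}_{v,w}(u) - nC_\kappa\int_P v\,dx$; choosing $\kappa = (2C_1)^{-1}$ and applying \eqref{dkstable} once more produces \eqref{mabuchiproper-a} with $C_\D$ depending only on $\lambda_\D$, $C_1$, and $\int_P v\,dx$. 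The last assertion then follows by taking $\D = \Cee_\beta^*(K)$, which is convex as the intersection of $\Cee_\beta^*$ with the linear constraints \eqref{ceekbeta} and satisfies \eqref{dkstable} with $\lambda_\D = \lambda_{K,\beta,\gamma}$ exactly when $P$ is $(v,w)$-uniformly K-stable with parameters $(\beta,\gamma)$. I expect the main obstacle to be the rigorous justification of the integration by parts for the merely approximately-cscK reference $u_0$, along with the attendant convergence of $\mathcal{F}_0(u)$ and $\int_P uS_0\,dx$; this is precisely where the decay built into $\sexyH$, \eqref{vscal-a}, and the class $\mathcal{W}$ enters. The remaining ingredients are the elementary inequality above and the bookkeeping of constants.
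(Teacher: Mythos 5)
Your proof is correct, and it reaches \eqref{mabuchiproper-a} by a genuinely different mechanism than the paper at the decisive step. Both arguments begin identically: set $w_0=\Scalv(u_0)$ and use \eqref{vscal-a}, the sign $u\ge 0$, and \eqref{dkstable} to absorb $\left|\int_P u(w_0-w)\,dx\right|\le C\int_P uv^\gamma\,dx$ into the stability bound, giving $\mathcal{F}_{v,w_0}(u)\le C_1\mathcal{F}_{v,w}(u)$. The paper then finishes with a rescaling trick: since $\mathcal{F}_{v,w_0}$ is linear and $\log\det$ shifts by $n\log C_2\int_P v\,dx$ under $u\mapsto C_2u$, it rewrites the lower bound as $\M_{v,w_0}(C_2u)$ plus controlled terms and invokes Proposition \ref{mabuchi-boundedbelow} — convexity of the auxiliary Mabuchi energy $\M_{v,w_0}$ together with the vanishing of its first variation at $u_0$, which is tautologically a $(v,w_0)$-cscK potential — to conclude $\M_{v,w_0}\ge -C$ on $\Cee_\beta$. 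You instead estimate the entropy term directly, via the identity $\int_P \sum_{i,j}(v(H_0)_{ij})u_{ij}\,dx=\mathcal{F}_{v,w_0}(u)$ from Lemma \ref{ibplemma} (in the extended non-cscK form of the remark following it, which is exactly what your hypotheses $u_0\in\sexyH$ and $\|\Scalv(u_0)\|_{C^k_\gamma(\overline{P})}\le C(k)$ license) combined with the scalar inequality $\log t\le \kappa t-\log\kappa-1$ applied eigenvalue-wise. This bypasses the differentiability and dominated-convergence analysis inside Proposition \ref{mabuchi-boundedbelow} entirely and produces explicit constants; indeed, taking $\kappa=1$ in your entropy bound yields $\M_{v,w_0}\ge n\int_P v\,dx$ on $\Cee_\beta$, a direct and quantitative reproof of the boundedness the paper derives from convexity. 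What the paper's route buys is economy — it reuses a proposition already established for the uniqueness result — while your $\kappa$-inequality is, after unwinding, the same underlying mechanism as the scaling $u\mapsto C_2u$ (both exploit $\log\det(\kappa A)=\log\det A+n\log\kappa$ against linearity of the Futaki term). The analytic fine print you flag — justifying the integration by parts for the merely approximately-cscK reference, and finiteness of $\int_P uv^\gamma\,dx$ in the borderline case $\gamma=\beta$ (where membership in $\Cee_\beta^*(K)$ supplies integrability via \eqref{ceekbeta}) — is shared equally by the paper's proof, so it is not a gap specific to your approach.
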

\begin{proof}
    Let $w_0 = \textnormal{Scal}_v(u_0)$. Then by \eqref{vscal-a}, we have that, for any $u \in \Cee_\beta^*$
    \begin{equation*}
        \left| \mathcal{F}_{v, w_0}(u) - \mathcal{F}_{v, w}(u) \right| \leq C \int_P f v^\gamma dx. 
    \end{equation*}
    By \eqref{dkstable}, it follows that 
    \begin{equation*}
        C \int_{P} f v^\gamma dx = 2C \int_{P} f v^\gamma dx - C \int_{P} f v^\gamma dx  \leq 2C\lambda^{-1}\mathcal{F}_{v,w}(f) - C\int_{P} f v^\gamma dx .
    \end{equation*}
    In particular, there exists a constant $C_1$ independent of $u$ such that 
    \begin{equation*}
        \mathcal{F}_{v, w_0}(u) \leq C_1\mathcal{F}_{v, w}(u) - C\int_{P} f v^\gamma dx. 
    \end{equation*}
    Therefore 
    \begin{equation*}
    \begin{split}
        \M_{v,w}(u) &= \mathcal{F}_{v,w}(u) - \int_P \log\det\left((u_0)^{ij}u_{ij} \right)v dx  \\
        &\geq  C_2 \mathcal{F}_{v, w_0}(u) + C_3\int_{P} f v^\gamma dx  - \int_P \log\det\left((u_0)^{ij}u_{ij} \right)v dx \\
        & = \mathcal{F}_{v, w_0}(C_2u) - \int_P \log\det\left((u_0)^{ij}(C_2 u)_{ij} \right)v dx + n\log(C_2) \int_{P}v dx + C_3\int_{P} f v^\gamma dx \\
        &= \M_{v,w_0}(C_2 u)  + n\log(C_2) \int_{P}v dx + C_3\int_{P} f v^\gamma dx.
    \end{split}
    \end{equation*}
    By construction, the background potential $u_0$ satisfies $\Scalv(u_0) = w_0$. Hence by Proposition \ref{mabuchi-boundedbelow}, we know that $\M_{v,w_0} \geq - C$ on $\Cee_\beta$ (note that $C_2 u \in \Cee_\beta^*$). Rearranging gives \eqref{mabuchiproper-a}.
    \end{proof}

This gives the following corollary, which is the second item in Theorem \ref{Mtheorem-uniform}:
\begin{corollary}\label{stability-impliesproper-cor}
Let $P$ be a Delzant polyhedron and $(v,\, w) \in \mathcal{W}$. If $P$ is $\beta$-uniformly K-stable, then there are constants $C_K$ such that
    \begin{equation}\label{mabuchiproper-beta}
        \M_{v,w}(u) \geq C_K^{-1}\int_P u v^\beta dx - C_K
    \end{equation}
    for all $u \in \Cee_\beta^*(K)$. In particular, by remark \ref{remark-normcompare}, there exist $C'_K$ such that
    \begin{equation}\label{mabuchiproper-1}
        \M_{v,w}(u) \geq (C'_K)^{-1}\int_P u v dx - C'_K.
    \end{equation}
\end{corollary}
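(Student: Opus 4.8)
The plan is to read the statement as precisely the $\gamma=\beta$ specialization of Theorem \ref{stability-implies-proper}, and then to upgrade the resulting $v^\beta$-bound to a $v$-bound using the elementary comparison already recorded in Remark \ref{remark-normcompare}. So the proof is almost entirely a matter of matching hypotheses; the only genuine work is supplying an admissible background potential.

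First I would unwind the definitions. By Definition \ref{Kstab-def}, saying that $P$ is $\beta$-uniformly K-stable means exactly that for every $K>0$ there is a $\lambda_\beta(K)>0$ with $\mathcal{F}_{v,w}(f)\geq \lambda_\beta(K)\int_P f v^\beta\,dx$ for all $f\in\Cee_\beta^*(K)$. This is precisely hypothesis \eqref{dkstable} of Theorem \ref{stability-implies-proper} taken with $\gamma=\beta$ and $\D=\Cee_\beta^*(K)$, once I check that $\Cee_\beta^*(K)$ is convex. That check is immediate from \eqref{ceekbeta}: a convex combination of two normalized convex functions is again convex and vanishes together with its infimum at the origin, while the defining integral bound $\int_{P\backslash H^{\delta^*}} f v^\beta\leq K(\delta^*)^{-1}$ is preserved under convex combinations by linearity of the integral.

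Next I must supply a background potential meeting the standing hypotheses of Theorem \ref{stability-implies-proper}, i.e.\ some $u_0\in\sexyH$ with $\|\Scalv(u_0)\|_{C^k_\gamma(\overline{P})}\leq C(k)$ and $|\Scalv(u_0)-w|<Cv^\gamma$ for the relevant $\gamma=\beta$. The natural choice is the Guillemin potential $u_P$ of \eqref{guilleminpotential} (or any fixed model in $\sexyH$). Writing $\Scalv(u_P)=-\sum (vH_{ij})_{ij}$ via Proposition \ref{vscal-toric} and expanding the derivatives, the dominant contribution is $v_{ij}H_{ij}$; using the boundary behaviour and at-most-linear growth of $\H$ from Proposition \ref{boundaryconditions}, together with the bounds $\|v\|_{C^k_{-1}}\leq C(k)$ built into $\mathcal{W}$, one obtains $|\Scalv(u_P)|\leq C\,v\,|x|$, and since $v$ decays exponentially one has $v|x|\leq C v^\gamma$ for every $\gamma<1$, in particular for $\gamma=\beta$; the higher $C^k_\gamma$-norms are controlled identically, and $|w|\leq C v^{\beta^*}$ absorbs into $Cv^\beta$ for suitable $\beta$ (namely $\beta\leq\beta^*$). \textbf{This verification is the one step requiring real estimation, and hence the main, if routine, obstacle}; everything else is bookkeeping.

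With both inputs in place, the ``in particular'' clause of Theorem \ref{stability-implies-proper} produces for each $K$ a constant $C_K:=C_{K,\beta,\beta}$ with $\M_{v,w}(u)\geq C_K^{-1}\int_P u\,v^\beta\,dx-C_K$ for all $u\in\Cee_\beta^*(K)$, which is exactly \eqref{mabuchiproper-beta}. To deduce \eqref{mabuchiproper-1} I would invoke the comparison of Remark \ref{remark-normcompare}: since $v$ is continuous, positive and exponentially decaying on $\overline{P}$ it is bounded, say $0<v\leq M$, and because $\beta<1$ this gives $v^\beta\geq M^{\beta-1}v$ pointwise; as every $u\in\Cee_\beta^*$ satisfies $u\geq 0$, integrating yields $\int_P u\,v^\beta\,dx\geq M^{\beta-1}\int_P u\,v\,dx$. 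Substituting into \eqref{mabuchiproper-beta} gives \eqref{mabuchiproper-1} with $C'_K=C_K\max\{M^{1-\beta},1\}$, completing the argument.
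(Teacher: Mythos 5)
Your proposal is correct and follows essentially the same route as the paper: the paper's proof likewise applies Theorem \ref{stability-implies-proper} with $\gamma=\beta$, taking the Guillemin potential $u_P$ as the background and verifying $\|\Scalv(u_P)\|_{C^k_a(\overline{P})}\leq C(k)$ and \eqref{vscal-a} for any $a<\beta^*$ via the estimate $|\p_\alpha \Scalv(u_P)|\leq C(|\alpha|)(|x|+1)v^{\beta^*}$, before absorbing $v^\beta$ into $v$ exactly as in Remark \ref{remark-normcompare}. The only cosmetic difference is that the paper derives the linear growth of $\H_P$ from its degree-one homogeneity rather than from Proposition \ref{boundaryconditions} (which governs only the boundary behaviour, not growth at infinity), so you should cite homogeneity there; otherwise your verification, including the caps $\gamma\leq\beta^*$ from $|w|\leq Cv^{\beta^*}$ and the convexity check on $\Cee_\beta^*(K)$, matches the paper's argument.
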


\begin{proof}
We claim that the Guillemin potential $u_P$ \eqref{guilleminpotential} satisfies $||\textnormal{Scal}_v(u_P)||_{C^k_{a}(\overline{P})} \leq C(k)$ and \eqref{vscal-a} for any $a < \beta^*$. Indeed, since $\H_P$ is homogeneous of degree $1$ on $P$, it follows that 
    \begin{equation*}
        \left|\p_{\alpha}\textnormal{Scal}_v(u_P) \right| \leq \sum_{|\alpha'| = |\alpha| + 2} \sum_{ij} \left|\p_{\alpha'}(v H_{ij})\right| \leq C(|\alpha|) (|x| + 1) v^{\beta^*} \leq C(|\alpha|) v^{a}.
    \end{equation*}
    Note that for $v$ exponentially decaying with derivatives, it is clear that $u_P \in \scaryH$. The fact that $u_P$ satisfies \eqref{vscal-a} follows from the above for $|\alpha| = 0$ together with the fact that $|w| \leq C v^{\beta^*}$ by the fact that $(v,\,w) \in \mathcal{W}$.
\end{proof}


As is clear from the proof, the condition that $\omega$ and $\omega'$ be uniformly equivalent could be replaced by a weaker condition using the weight $v$. For example, the same result is true if we only demand that that there exists $C > 0, k > 0$ such that 
\[ C^{-1}v(\mu_{\omega})^{2k} \omega' < v(\mu_{\omega})^k \omega < C \omega'.\] 

\section{Product models }\label{section-productmodels}

\subsection{Definitions and asymptotic geometry}

Let $P \subset \t^*$ be a Delzant polyhedron defined by \eqref{polydef}, and let $C = C(P)$ be its corresponding recession cone.  Suppose that $C$ has dimension equal to $k\in 1, \dots, n$ (of course we could also consider the case where $k = 0$,  but we will primarily be focused on the non-compact situation in this paper).  Let $\t_C^* \subset \t^*$ denote the smallest linear subspace of $\t^*$ containing $C$, so that the interior of $C$ is open in $\t_C^*$.  Let $\t_V^* = \left( \t_C^*\right)^\perp \subset \t^*$.  By the definition of $C$ \eqref{recessioncone}, it follows that we can decompose the set $I = \{1, \dots, N\} = I_1 \cup I_2 \cup I_3$, where 
\begin{equation*}
\begin{array}{lcl}
 i \in I_1 & \Rightarrow &  \nu_i \in \t_V^*, \, \textnormal{i.e. } \langle \nu_i , \, c \rangle = 0, \textnormal{ for all } c \in C  \\
 i \in I_2 & \Rightarrow & \langle \nu_i , \, c \rangle > 0 \textnormal { for all } c \in C \\
 i \in I_3 & \Rightarrow & \textnormal{there exists } c_1, c_2 \in C, \, \langle c_1 , \, \nu_i \rangle = 0, \,  \langle c_2 , \, \nu_i \rangle > 0,
\end{array}
\end{equation*}
and moreover that the collection $\{ \nu_i \: | \: i \in I_1 \}$ spans $\t_V^*$. Clearly a facet $F$ of $P$ is compact if and only if its inner normal $\nu_i$ satisfies $i \in I_2$.

\begin{definition}\label{asymptotic-complexstructure}
We say that $M = M_P$ is a \emph{c-cylindrical resolution} if there exists a compact toric variety $V$, an affine toric variety $M_c$, and a torus-equivariant birational morphism  $\pi: M  \to V \times M_c$ which restricts to an isomorphism $\pi: M \backslash E \to  V \times \left( M_c \backslash \{o\} \right)$, where $\{o\} \in M_c$ is the unique torus fixed point, and $E = \pi^{-1}(0)$ is called the exceptional set. We set $M_0 =  V \times \left( M_0 \backslash \{ o\} \right) $. 
\end{definition}

In the picture above, this is equivalent to the condition that the set $P_V = \{ x \in \t_V^* \: | \: \langle \nu_i , \, x \rangle \geq -a_i, \, i \in I_1 \}$ is a polytope and that, outside of a neighborhood of the set $F_E = \cup_{i \in I_2} F_{\nu_i}$, $P$ coincides with a translate of the product polyhedron 
\begin{equation}\label{productpoly}
	P_{V \times M_c} = \{ (x_1, x_2) \in \t_V^* \oplus \t_C^* \: | \: x_1 \in P_V,  \, x_2 \in C  \}. 
\end{equation}
This is not always satisfied, for example take $M$ to be the blowup of $\C^2 \times \P^1$ along an axis of $\C^2$. Note that we necessarily have that $V \cong M_{P_V}$ and $M_c = M_C$, where $C = C(P)$.

\begin{definition}\label{asymptoticallyproduct}
	We say that an AK metric $\omega$ on $M = M_P$ is \emph{asymptotically c-cylindrical} if there exists an AK metric $\omega_V$ on $V$ and a toric K\"ahler cone metric $\omega_c$ on $M_c$ with radial function $r$ such that 
	\begin{equation}\label{asymptoticsproduct}
		\left| \pi_*g- g_0 \right|_{g_0}   < Cr^{- 2}, \hspace{.3in} \left|\nabla^{g_0} (\pi_*g- g_0) \right|_{g_0} < C r^{-1},
	\end{equation}
where $\omega_0 = \omega_V + \omega_c$ is the product metric and $\pi: M_P \backslash E \to  V \times M_0$ is the map of Definition \ref{asymptotic-complexstructure}, together with a technical condition on the associated symplectic structures. Specifically, we assume that there exists constants $a, C, C(\delta)>0$ such that a symplectic potential $u \in C^0(\overline{P}) \cap C^\infty(P)$ satisfies 
\begin{equation}\label{accyl-symplecticgrowth}
    |u(x)|  \leq C\left(|x|^a +1 \right) \hspace{.3in} \textnormal{for } x \in \overline{P},
\end{equation}
and 
\begin{equation}\label{accyl-symplecticgrowth2}
    \sum_{|\alpha| \leq 2}\left|\p_{\alpha}u \right| \leq C(\delta)\left(|x|^a +1 \right) \hspace{.3in} \textnormal{for } x \in P_\delta,
\end{equation}
for all $\delta << 1$, and moreover that there exists a $\bar{\delta} > 0$ such that 
\begin{equation}\label{accyl-symplecticboundary}
    \sup_{x \in \overline{P}\backslash P_{\bar{\delta}}} \left\{ \sum_{i,j,k = 1}^n \frac{1}{|x|^a + 1} \left|\frac{\p H_{ij}}{\p x^k } \right|^2 + \sum_{i,j,k,l = 1}^n \left| \frac{\p^2 H_{ij}}{\p x^k \p x^l } \right|^2 \right\} < C, \hspace{.3in} \H \geq C^{-1}\Id \textnormal{ on } P_{\bar{\delta}}.
\end{equation}
\end{definition}

In this case, we let $J_0, \omega_0, \mu_0$ be the corresponding product complex structure, K\"ahler form, and associated moment map on $ V \times M_c$, respectively. We denote by $\H_0, \G_0$ the data \eqref{Hdef}, \eqref{Gdef} associated to $\omega_0$.

\begin{remark}
   The technical conditions \eqref{accyl-symplecticgrowth}, \eqref{accyl-symplecticgrowth2}, \eqref{accyl-symplecticboundary} are all satisfied by the model metric $g_0$ itself by Lemma \ref{MSY-structure}. The conditions \eqref{accyl-symplecticgrowth}, \eqref{accyl-symplecticgrowth2} can be thought of as follows. The asymptotics \eqref{asymptoticsproduct} in particular give a bound of the form $|| \G - \G_0|| < C$. Heuristically at least, this gives a relationship between the hessians $\textnormal{Hess}(u)$ and $\textnormal{Hess}(u_0)$. Since $P$ and $P_V \times C$ coincide outside of a compact set up to a fixed translation, we can loosely imagine that the symplectic potential $u_0$ differs from the Guillemin potential $u_P$ by $O(|x|\log|x|)$, using the cone property. Therefore one would think that $u$ cannot grow faster than, say, $|x|^3$ on $P$, and similarly for the first and second derivatives on $P_\delta$. In practice, however, a comparison of this form involves rather complicated expressions in terms of $\mu_{0} \circ \mu_\omega^{-1}: P \to P_V \times C$, which in general are difficult to manage. We will see examples where this line of reasoning can be made to hold rigorously in Section \ref{section-chili} in the one-dimensional case (i.e. $P = [a, \infty)$), made significantly easier by the fact that the boundary $\p P$ is compact.
\end{remark}

\begin{lemma}\label{approx-momentmap}
	Let $\omega$ be any asymptotically c-cylindrical AK metric on $M$ with respect to $\omega_0 =  \omega_V + \omega_c$ on $ M_0 $.  Then using $\pi$ to identify $M \backslash E $ with $M_0$, we have 
		\begin{equation*}
	 			||\mu_\omega - \mu_{0}||_\t < C \big( \log(r) + 1 \big) .
	\end{equation*}
\end{lemma}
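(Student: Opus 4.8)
The plan is to differentiate the difference of the two moment maps, estimate the resulting $1$-form pointwise using the asymptotic hypothesis \eqref{asymptoticsproduct}, and then integrate radially. Since the resolution $\pi: M\backslash E \to M_0 = V \times M_c$ is $\T$-equivariant and restricts to a biholomorphism (so that the complex structure $J$ of $\omega$ is identified with the product structure $J_0$ under $\pi$), the fundamental vector fields $Y_b$, $b \in \t$, agree on $M_0$ for the two K\"ahler structures. Subtracting the defining relations $d\langle \mu_\omega, b\rangle = -i_{Y_b}\omega$ and $d\langle \mu_0, b\rangle = -i_{Y_b}\omega_0$ then gives, for each fixed $b \in \t$,
\[ d\langle \mu_\omega - \mu_0, \, b\rangle = -\, i_{Y_b}(\omega - \omega_0) \]
on $M_0$, where $\omega$ is pulled back by $\pi$. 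Writing $f_b := \langle \mu_\omega - \mu_0, \, b\rangle$, the problem becomes one of bounding a function whose differential we control.

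For the pointwise estimate, since $g_0$ is $J_0$-invariant and $J = J_0$, we have $|\omega - \omega_0|_{g_0} = |\pi_*g - g_0|_{g_0} < Cr^{-2}$ by \eqref{asymptoticsproduct}, and the contraction bound gives $|df_b|_{g_0} \leq |Y_b|_{g_0}\, |\omega - \omega_0|_{g_0}$. Everything therefore hinges on the growth of $|Y_b|_{g_0}$. Here I use $|Y_b|_{g_0}^2 = \mathbf{H}_0(b,b)$ together with Lemma \ref{MSY-structure}: under the splitting $M_0 = V \times M_c$ the data $\mathbf{H}_0 = \textnormal{Hess}^{-1}(u_0)$ is block diagonal, with the compact $V$-directions bounded and the cone directions homogeneous of degree $1$ in the symplectic coordinate $x$. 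Since $\langle \mu_c, K_c \rangle = r^2/2$ is also homogeneous of degree $1$, we have $|x| \sim r^2$ on $M_c$, whence $\mathbf{H}_0(b,b) \leq C(1 + r^2)$. This yields $|Y_b|_{g_0} \leq C(1 + r)$, and combining with the above, $|df_b|_{g_0} \leq Cr^{-1}$ for $r$ large.

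It remains to integrate. The radial field $\p_r$ satisfies $|\p_r|_{g_0} = 1$ since $g_c = dr^2 + r^2 g_L$, so $r$ is the arc-length parameter along its flow lines. Choose $R_0$ large enough that \eqref{asymptoticsproduct} holds on $\{r \geq R_0\}$; the reference set $V \times \{r = R_0\}$ is compact and $f_b$ is continuous, hence bounded there by some $C_0$, while by properness of $\pi$ (and continuity of $\mu_0$ across the apex) $f_b$ is bounded on the compact core $\{r \leq R_0\}$ as well. For $p$ with $r(p) = r \geq R_0$, flowing back to the reference level set and integrating the gradient bound,
\[ |f_b(p)| \leq C_0 + \int_{R_0}^{r} |df_b|_{g_0}\, dt \leq C_0 + \int_{R_0}^{r} \frac{C}{t}\, dt = C_0 + C\log\!\left(\frac{r}{R_0}\right) \leq C'(\log r + 1). \]
Ranging $b$ over a basis of $\t$ converts the bounds on each $f_b$ into the stated estimate on $\|\mu_\omega - \mu_0\|_\t$ at infinity, the compact core being handled by the boundedness just noted.

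The main obstacle is the linear growth $|Y_b|_{g_0} \leq Cr$, as this is precisely what turns the integrable decay rate $r^{-2}$ of $\omega - \omega_0$ into the borderline rate $r^{-1}$ whose radial integral produces the logarithm; it depends essentially on the exact cone structure of $M_c$ through the degree-$1$ homogeneity of $\mathbf{H}_0$ furnished by Lemma \ref{MSY-structure}. The remaining ingredients — the identification of the $Y_b$ under $\pi$ and the fact that $r$ is the arc-length parameter along radial curves — are formal consequences of the $\T$-equivariance of $\pi$ and the product cone form of $g_0$, respectively.
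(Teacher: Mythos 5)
Your proof is correct and follows essentially the same route as the paper: subtract the moment map identities to get $d\langle \mu_\omega - \mu_0, b\rangle = -i_{Y_b}(\omega-\omega_0)$, combine the $Cr^{-2}$ decay with the linear growth $|Y_b|_{g_0} \leq C(1+r)$ to obtain the borderline bound $|df_b|_{g_0} \leq Cr^{-1}$, and integrate to produce the logarithm. The only cosmetic difference is that you integrate purely radially from the compact level set $\{r = R_0\}$ (where $f_b$ is bounded by continuity), whereas the paper connects to a fixed point $q$ in a compact core via a radial geodesic followed by a path inside the level set $\{r_0\} \times L \times V$ of length $r_0 \cdot L_{g_{L\times V}}(\gamma_2)$ --- both yield the same $C(\log r + 1)$ estimate.
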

\begin{proof}
	Fix $Y$ such that $JY \in \t$, $f_Y = \langle \mu_\omega, \, JY \rangle$, $f^0_Y = \langle \mu_{0}, \, JY \rangle$. 
	\begin{equation*}
	\begin{split}
		|\nabla^{g_0}(\langle \mu - \mu_0 , \, Y\rangle)|_{g_0}  &= |\nabla^{g_0}(f_Y-f^0_Y)|_{g_0}  =  |d(f_Y-f^0_Y)|_{g_0} \\
			 &= |(\omega-\omega_0)(Y,-)|_{g_0} \leq |\omega-\omega_0| _{g_0} |Y|_{g_0} \leq C r^{-1}
	\end{split}
\end{equation*}
Suppose that $p \in M_0$ is any point and set $r_0 = r(p)$. Choose a fixed compact set $K \subset V \times M_c$ containing  $(V \times \{0\})$, and set $K_0 = K \backslash(V \times \{0\}) \subset M_0$. Then we have
\begin{equation}\label{momentmap-accyl-compactcompare}
\sup_{ K_0} || \mu_\omega - \mu_{0} || < C(K).
\end{equation}
 Choose any point $q \in K_0$. We define a piecewise smooth curve $\gamma:[0,d_2] \to M_0$ joining $q$ to $p$, as follows. Let $r_0 = r(p)$, so that $p$ lies in the hypersurface $\{r_0\} \times L \times V \subset \R_+ \times L \times V \cong M_0$. Set $\gamma_1: [0, d_1] \to M_0$ to be the unit-speed radial geodesic joining $q$ to the hypersurface $\{r_0\} \times L \times V$, and then $\gamma_2:[d_1, d_2] \to M_0$ be any path in $L \times V$ joining $\gamma_1(d_1)$ to $p$ of length less than the diameter $\textnormal{diam}(g_{L \times V})$, under the obvious identifications. Then we let $\gamma$ be the amalgamation of $\gamma_1$ and $\gamma_2$. Thus
\begin{equation*}
\begin{split}
	|(f_Y - f^0_Y)(p) - (f_Y - f^0_Y)(q) | &\leq \int_0^{d_1} \left|\ddt (f_Y - f^0_Y)(\gamma_1(s))\right| ds + \int_{d_1}^{d_2} \left|\ddt (f_Y - f^0_Y)(\gamma_1(s))\right| ds \\
  &\leq \int_0^{d_1} |\nabla^{g_0}(f_Y - f^0_Y)|_{g_0} ds + \int_{d_1}^{d_2} |\dot\gamma_2 |_{g_0}|\nabla^{g_0}(f_Y - f^0_Y)|_{g_0} ds  \\
  & \leq C \int_0^{d_1} r^{-1}(\gamma_1(s)) ds + C r_0^{-1}L_{g_0}(\gamma_2) \\
  & \leq C \big( \log(r_0) - \log(r(q)) + 1 \big) \leq C \big( \log(r_0) + 1 \big),
\end{split}
\end{equation*}
since the length $L_{g_0}(\gamma_2)$ satisfies $L_{g_0}(\gamma_2) = r_0 L_{g_{L \times V}}(\gamma_2)$. This together with \eqref{momentmap-accyl-compactcompare} gives the result. 
 \end{proof}
 
 Let $K_c = J_c\left( \rad \right) \in \t_C$ be the Reeb vector field on $(M_c, \omega_c)$.  Using $\pi$, we identify $\t = \t_V \oplus \t_C = \t_V \oplus \t_L$, recalling that we have a natural identification $\t_C \cong \t_L$. In particular, every vector field $Y \in \t$ is tangent to $L \times V$.  Moreover, we see that there is a unique global vector field $X_0$ on $M$ such that $\pi_*(JX_0) = K_c$, namely the one determined by $X_0 = -J\left( 0 \oplus K_c\right)$. 
 
 \begin{lemma}\label{Handr-lemma}
Let $\omega$ be any asymptotically c-cylindrical AK metric on $M$.  Then for any $p \in M$,
\begin{equation}\label{Handr}
	|| \mathbf{H} (\mu_\omega(p))||_{\t}^2 < C (r^2(p) + 1), 
\end{equation}
where $||\, \cdot \, ||_{\t}$ denotes any choice of intrinsic flat metric on $\t$. 
\end{lemma}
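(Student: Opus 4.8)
The plan is to estimate $\H$ by comparison with the data $\H_0$ of the product model metric $g_0 = g_V + g_c$, using that an asymptotically c-cylindrical metric is uniformly equivalent to $g_0$ once $r$ is large. First I would dispose of the region near the exceptional set: on any fixed compact set $K \subset M$ containing $E = \pi^{-1}(0)$ the radial function $r$ is bounded, and since $\omega$ is a genuine AK metric whose data $\H$ is the restriction to $\overline{P}$ of a smooth $\t^*\otimes\t^*$-valued function (Proposition \ref{boundaryconditions}), $\|\H\|_\t$ is bounded on $K$; this accounts for the additive constant in \eqref{Handr}. It then suffices to treat points $p \in M \setminus E \cong M_0 = V \times (M_c \setminus \{o\})$ with $r(p) \geq r_0$, which we do by using $\pi$ to identify the two spaces.

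On this region the $C^0$ part of the asymptotic condition \eqref{asymptoticsproduct}, namely $|\pi_*g - g_0|_{g_0} < Cr^{-2}$, shows that $\pi_* g \to g_0$ at infinity, so after enlarging $r_0$ we obtain a uniform equivalence $\tfrac12 g_0 \leq \pi_* g \leq 2 g_0$ of Riemannian metrics on $M_0$. Evaluating on the fundamental vector fields $Y_i \in \t$ (which, under $\pi$, agree on $M$ and on $M_0$) this says exactly that $\tfrac12 \H_0 \leq \H \leq 2\H_0$ as quadratic forms on $\t$, whence $\|\H(\mu_\omega(p))\|_\t \leq 2\|\H_0\|_\t$. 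The problem thus reduces to the growth of the model data $\H_0$.

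Next I would compute the growth of $\|\H_0\|_\t$ from the product structure. Decomposing $\t = \t_V \oplus \t_C$, the form $\H_0$ is block diagonal: the $V$-block $\H_V$ is the restriction to the compact polytope $\overline{P_V}$ of a smooth tensor and is therefore bounded, while the cone block is the data $\H_c$ of the toric K\"ahler cone $\omega_c$. By Lemma \ref{MSY-structure}, $\H_c = \textnormal{Hess}^{-1}(u_c)$ is homogeneous of degree $1$ in the cone variable $x_2 \in C$; and by \eqref{radial-poly} together with the normalization $\langle \mu_c, K_c \rangle = r^2/2$, the moment coordinate satisfies $|x_2| \leq C(r^2 + 1)$ along $M_c$. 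Hence $\|\H_0\|_\t \leq C(r^2 + 1)$, and combining with the metric comparison of the previous step gives $\|\H(\mu_\omega(p))\|_\t \leq C(r^2(p) + 1)$; squaring and absorbing constants yields \eqref{Handr}.

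The main obstacle is the passage in the second step. One must check that the error $\H - \H_0$, controlled only by $|\pi_* g - g_0|_{g_0}\,|Y_i|_{g_0}|Y_j|_{g_0} \leq C r^{-2}\sqrt{(H_0)_{ii}(H_0)_{jj}}$, genuinely remains of lower order; this succeeds precisely because $|Y_i|_{g_0}^2 = (H_0)_{ii}$ grows no faster than $r^2$ by the cone homogeneity, so the two factors of $r$ are exactly absorbed by the decay $r^{-2}$ of $\pi_* g - g_0$. Equivalently, the uniform equivalence of metrics is only available once $r$ is large enough that $Cr^{-2} < \tfrac12$, which is why the compact region around $E$ must be handled separately and produces the additive constant. (The first derivative estimate of \eqref{asymptoticsproduct} is not needed here; it is the decay of $\pi_*g - g_0$ itself, matched against the cone growth of $\H_0$, that drives the bound, exactly as in the moment-map comparison of Lemma \ref{approx-momentmap}.)
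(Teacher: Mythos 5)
Your proof is correct and follows essentially the same route as the paper: both arguments bound $H_{ij}(\mu_\omega(p)) = \pi_*g(Y_i,Y_j)$ by comparison with the product model, using that $\|\H_0\|_\t^2 \leq Cr^2$ (which the paper reads off directly from $g_0(Y_i,Y_j) = r^2 g_L(Y_i,Y_j)$ on the cone directions, while you derive it from the degree-$1$ homogeneity of Lemma \ref{MSY-structure} together with $\langle \mu_c, K_c\rangle = r^2/2$) and that the error $|(\pi_*g - g_0)(Y_i,Y_j)| \leq Cr^{-2}|Y_i|_{g_0}|Y_j|_{g_0} \leq C$ is exactly absorbed by the $r^{-2}$ decay --- the very inequality you spell out in your final paragraph. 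Your additional packaging (splitting off a compact set around $E$ and phrasing the comparison as a uniform equivalence $\tfrac12 g_0 \leq \pi_*g \leq 2g_0$ for large $r$) is a harmless, slightly more careful variant of the same computation.
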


\begin{proof}
The product metric $g_0$ satisfies 
\begin{equation*}
	\begin{array}{lcl}
			g_0(Y_i, Y_j) = r^2 g_L(Y_i, Y_j), &  & Y_i, Y_j \in \t_L \\
			g_0(Y_i, Y_j) =  g_V(Y_i, Y_j), &  & Y_i, Y_j \in \t_V \\ 
			g_0(Y_i, Y_j) = 0, & & Y_i \in \t_C, Y_j \in \t_V,
	\end{array}
\end{equation*}
from which it follows immediately that 
\begin{equation*}
	|| \H_0||_{\t}^2  < C r^2. 
\end{equation*}
Then we have that
\begin{equation*}
\begin{split}
 |H_{ij}(\mu_\omega(p))| &= \left| \pi_*g(Y_i, Y_j) \right|(p) \\
        &\leq \left| (\pi_*g - g_0)(Y_i, Y_j) \right| + \left| g_0(Y_i, Y_j) \right|  \\
  &\leq C r^{-2} |Y_i|_{g_0}|Y_j|_{g_0} + Cr^2 \\
  & \leq C(r^2(p) + 1). 
 \end{split}
\end{equation*}
\end{proof}
 
 \subsection{Asymptotically c-cylindrical $(v, w)$-cscK metrics}

We are now in a position to prove Theorem \ref{mtheorem-asymptotics}. The precise statement is:
 \begin{theorem}\label{asym-product-satisfiesconditions}
 	Let $(M,\omega)$ be an asymptotically c-cylindrical $(v,\, w)$-cscK metric with weights $(v,\, w)$ satisfying \ref{exponential-decay}. Then $\omega \in \scaryH$ for  any $\varepsilon \in (0, \frac{1}{2})$, and consequently $P$ is weighted K-stable and $\beta$-uniformly K-stable for all $\beta \in (0,\beta^*)$.
 \end{theorem}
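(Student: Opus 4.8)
The plan is to verify directly that any asymptotically c-cylindrical metric $\omega \in \alpha$ satisfies the four conditions of Definition \ref{asymptotic-metric-general}, for every $\varepsilon \in (0,\tfrac12)$; once $\omega \in \sexyH$ is established, the K-stability of $P$ follows from Proposition \ref{existenceimpliesstable} and the $\beta$-uniform K-stability from Theorem \ref{existence-implies-uniform}, applied to the solution $u$ of the generalized Abreu equation \eqref{genAbreu} furnished by the $(v,\,w)$-cscK condition. The single quantitative input that drives everything is a comparison between the weight $v(\mu_\omega)$ and the cone radius $r$. By Lemma \ref{MSY-structure} the Reeb field is $K_c = Y_b$ with $b$ in the interior of $C^*$, and the cone normalization gives $\langle \mu_c, K_c \rangle = r^2/2$; this forces $|\mu_0| \geq |\mu_c| \geq c\,r^2$ on $M_0$ (the $V$-factor contributes only a bounded amount, since its moment image $P_V$ is a polytope). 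Combining with Lemma \ref{approx-momentmap}, which gives $||\mu_\omega - \mu_0||_\t < C(\log r + 1)$, we obtain $|\mu_\omega| \geq c'\,r^2 - C$ on $M_0$, and hence, by the exponential decay of Definition \ref{exponential-decay},
\begin{equation*}
 v(\mu_\omega)^\varepsilon \leq C\,e^{-\varepsilon c'' r^2} \qquad \text{on } M_0,
\end{equation*}
for constants depending only on $\varepsilon$ and the weight. On the complementary region $M \setminus M_0$, which is relatively compact, all the quantities below are continuous and hence bounded, so it suffices to argue in the asymptotic region.

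Conditions \ref{asymptotics1}, \ref{asymptotics4} and \ref{asymptotics3} are then nearly immediate. For \ref{asymptotics1}, Lemma \ref{Handr-lemma} gives $|| \mathbf{H}(\mu_\omega(p)) ||_\t^2 < C(r^2 + 1)$, so that $v(\mu_\omega)^\varepsilon || \mathbf{H} ||_\t^2 \lesssim (r^2+1)e^{-\varepsilon c'' r^2}$ is bounded. Condition \ref{asymptotics4} is precisely the second-derivative bound built into \eqref{accyl-symplecticboundary}, so nothing is needed. For \ref{asymptotics3}, the growth estimate \eqref{accyl-symplecticgrowth} gives $|u(x)| \leq C(|x|^a + 1)$, and since $v$ decays exponentially in $|x|$ we get $||u||_{C^0_\varepsilon(P)} = \sup_P |v^\varepsilon u| < \infty$; the $C^2_\varepsilon(P_\delta)$ bound follows identically from \eqref{accyl-symplecticgrowth2}, the exponential decay of $v$ again dominating the polynomial growth of the first and second derivatives of $u$.

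The remaining condition \ref{asymptotics2} is the crux, and the plan is to bound $|| d\mathbf{H} ||_\t^2 = \sum_{i,j,k}|H_{ij,k}|^2$ by a polynomial in $r$. Working on the dense orbit in action–angle coordinates, where $g = u_{ij}dx^i dx^j + u^{ij}d\theta_i d\theta_j$ and $\G = \H^{-1}$, one has $\nabla^g x^k = \sum_m H_{km}\p_{x^m}$, hence the decomposition
\begin{equation*}
 \frac{\p H_{ij}}{\p x^l} = \sum_k G_{lk}\,\big\langle \nabla^g x^k,\ \nabla^g H_{ij} \big\rangle_g .
\end{equation*}
Here $|\nabla^g x^k|_g = \sqrt{H_{kk}} \lesssim r$ by Lemma \ref{Handr-lemma}. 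To control $|\nabla^g H_{ij}|_g$ I would split $H_{ij} = g_0(Y_i,Y_j) + (g-g_0)(Y_i,Y_j)$: the explicit product term $g_0(Y_i,Y_j)$ satisfies $|\nabla^{g_0}(\cdot)|_{g_0} \lesssim r$ by a direct computation on the cone (differentiating $r^2 g_L(Y_i,Y_j)$), while the error term is controlled by $|\nabla^{g_0}(\pi_*g - g_0)|_{g_0} < Cr^{-1}$ of \eqref{asymptoticsproduct} together with $|Y_i|_{g_0} \lesssim r$, giving $|\nabla^{g_0}H_{ij}|_{g_0} \lesssim r$ and hence $|\nabla^g H_{ij}|_g \lesssim r$ by the uniform equivalence $g \approx g_0$ at infinity (forced by $|g-g_0|_{g_0} < Cr^{-2}\to 0$). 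The same equivalence yields $\G \approx \G_0 = \H_0^{-1}$, whose large eigenvalues come only from the compact $V$-directions (the cone directions contribute eigenvalues $O(r^{-2})$), so $||\G||$ is bounded and $|H_{ij,l}| \lesssim r^2$. Thus $v(\mu_\omega)^\varepsilon || d\mathbf{H} ||_\t^2 \lesssim r^4 e^{-\varepsilon c'' r^2}$ is bounded, establishing \ref{asymptotics2}.

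The main obstacle is exactly this last step: the asymptotic hypotheses \eqref{asymptoticsproduct} are phrased in terms of the background covariant derivative $\nabla^{g_0}$, whereas $|| d\mathbf{H} ||_\t$ records Euclidean coordinate derivatives in the symplectic coordinates, and the change of frame between them involves $\G = \H^{-1}$, a quantity not directly controlled by Lemma \ref{Handr-lemma}. The resolution is to push the comparison $g \approx g_0$ through to the inverse data $\G \approx \G_0$ and to exploit that $\G_0$ stays bounded at infinity even though $\H_0$ grows like $r^2$; care is needed to keep track of the three different norms ($g$, $g_0$, and the flat $||\cdot||_\t$) and to confirm that the resulting polynomial growth in $r$ is harmless against the factor $e^{-\varepsilon c'' r^2}$ for every $\varepsilon > 0$.
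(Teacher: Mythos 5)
Your overall architecture matches the paper's proof: conditions \ref{asymptotics1}, \ref{asymptotics4} and \ref{asymptotics3} are handled the same way (Lemma \ref{Handr-lemma} plus the comparison $r^2 \leq C(\norm{\mu_\omega}_{\t}+1)$ extracted from the Reeb field and Lemma \ref{approx-momentmap}, together with \eqref{accyl-symplecticgrowth}, \eqref{accyl-symplecticgrowth2}, \eqref{accyl-symplecticboundary}), and for the crux condition \ref{asymptotics2} your decomposition $H_{ij} = g_0(Y_i,Y_j) + (g-g_0)(Y_i,Y_j)$, the degree-one homogeneity of $\H_C$ from Lemma \ref{MSY-structure}, and the resulting bound $|\nabla^{g}H_{ij}|_{g} \leq Cr$ are exactly the paper's estimates (modulo the omitted cross terms $(g-g_0)(\nabla^{g_0}Y_i, Y_j)$, which require the paper's bound $|\nabla^{g_0}Y_i|_{g_0} \leq Cr^2$ but cause no trouble).

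The genuine gap is in your final conversion from $|\nabla^{g}H_{ij}|_{g}$ to the Euclidean coordinate derivatives $H_{ij,l}$. You assert that $\G \approx \G_0$ and that $\norm{\G}$ is bounded because the cone directions contribute eigenvalues $O(r^{-2})$. This is false near $\p P$: by the boundary conditions of Proposition \ref{boundaryconditions}, $\H(\nu_F, \cdot)$ vanishes along every facet, so $\G = \textnormal{Hess}(u)$ blows up along all of $\p P$ (compact facets and the unbounded facets with $i \in I_3$ alike), and your bound $|H_{ij,l}| \leq \norm{\G}\, |\nabla^g x^k|_g\, |\nabla^g H_{ij}|_g$ degenerates precisely in a region where the supremum in condition \ref{asymptotics2} must still be controlled — a region your argument never addresses. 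Moreover, even in the interior the step ``$g \approx g_0$ yields $\G \approx \G_0$'' is not automatic: $\H(\mu_\omega(p))$ and $\H_0(\mu_0(p))$ are matrices evaluated at different parameter points, and comparing them involves the map $\mu_0 \circ \mu_\omega^{-1}: P \to P_V \times C$, which the paper explicitly flags as ``difficult to manage'' in the remark following Definition \ref{asymptoticallyproduct}; this is exactly why \eqref{accyl-symplecticboundary} is built into the definition as a \emph{hypothesis} rather than derived. The repair is to cite that hypothesis instead of your eigenvalue heuristic: on $P_{\bar{\delta}}$ the assumed lower bound $\H \geq C^{-1}\Id$ gives, via Lemma \ref{gradexpressionlemma}, $\sum_k |H_{ij,k}|^2 \leq C\, H_{kl}H_{ij,k}H_{ij,l} = C\,|\nabla^g H_{ij}|^2_g \leq Cr^2 \leq C(\norm{x}_{\t}+1)$ — the paper's route, which is also sharper than your $|H_{ij,l}| \lesssim r^2$, though either suffices against the exponential decay of $v$ — while on the boundary strip $\overline{P}\backslash P_{\bar{\delta}}$ the first-derivative bound in \eqref{accyl-symplecticboundary} supplies $\norm{d\H}_{\t} < C$ directly. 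With these two substitutions your proof closes and is essentially the paper's.
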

 
 \begin{proof}
	To verify condition \ref{asymptotics1}, we first observe that the function $r^2$ on $M_0$ is the hamiltonian potential for the vector field $K_c$. By \cite{MSY}, we know that $K_c$ is determined by a $b_C \in \t_C$ which lies in the dual cone $C^*$.  Translating this back to $M$, it follows that the vector field $X_0$ which satisfies $\pi_{*}JX_0 = 0 \oplus K_c$ is determined by $b_{X_0} = (0, \, b_C)$ and that $b_{X_0}$ lies in the interior of the dual recession cone $C(P)^*$.  It's easy to see from the definition that this implies that the linear function $\langle x, \, b_{X_0} \rangle$ is comparable to $||x||_{\t}$ on $P$.  By Lemma \ref{approx-momentmap} we know that the hamiltonian potential $f_{X_0}=\langle \mu_\omega, \, b_{X_0} \rangle$ is comparable to $r^2$ on $M \backslash E$, in the sense that 
    \begin{equation}\label{normcompare-withlog}
        \left| f_{X_0} - \frac{r^2}{2} \right| \leq C\big( \log(r) + 1 \big).
    \end{equation}
    In particular, this gives us 
    \[ \frac{f_{X_0}}{\log(r)} \geq \frac{r^2}{2\log(r)} - C, \]
    which is unbounded as $r \to \infty$. Therefore there exists a constant $C$ with 
    \begin{equation*}
        C(f_{X_0} + 1) \geq \log(r). 
    \end{equation*}
    Putting this together with \eqref{normcompare-withlog} and the previous discussion, we get that 
    \begin{equation}
        r^2 \leq C \log(r) + 2 f_{X_0} \leq C(f_{X_0} + 1) \leq C(||\mu_\omega||_{\t} + 1).
    \end{equation}
    So by Lemma \ref{Handr-lemma} we have that, as a function on $P$,
	\begin{equation}\label{Handnorm}
		|| \mathbf{H} ||_{\t}^2 < C \big( ||x||_{\t} + 1 \big), 
	\end{equation}
	 which immediately gives condition \ref{asymptotics1} of Assumption \ref{asymptotic-metric-general} for any $\varepsilon > 0$ as long as $v$ is exponentially decaying.

Condition \ref{asymptotics3} is immediate from \eqref{accyl-symplecticgrowth} and \eqref{accyl-symplecticgrowth2}, and condition \ref{asymptotics4} from \eqref{accyl-symplecticboundary}, so it remains only to show \ref{asymptotics2}. We have that $\H_0$ satisfies $\H_0 = \mu_{0}^*(\H_V + \H_C)$,  where $\H_V \in C^\infty(P_V, \t_V^* \otimes \t_V^*)$, $\H_C \in C^\infty(P_C, \t_C^* \otimes \t_C^*)$, extended to $P_{V \times M_c}$ in the obvious way.  Moreover, by \cite{MSY}, we know that $\H_C$ is homogeneous of degree $1$ on $P_C$.  Consequently we see that $	|| d(\H_V + \H_C)||_{\t} $ is bounded on $P_{V \times M_c}$.  Hence if we take any $p$ lying in the dense orbit of $M_0$, we see from Lemma \ref{gradexpressionlemma} that, for any $i,j$, 
\begin{equation}\label{grad-of-H0}
 	| \nabla^{g_0} (H_0)_{ij}|^2_{g_0}(p) = (H_0)_{kl}(H_0)_{ij,l}(H_0)_{ij,k} \leq C ||\mu_0(p)||_{\t_C} \leq C r^2.
\end{equation}
Next we observe that 
\begin{equation*}
\begin{split}
    \left|d(g-g_0)(Y_i, \, Y_j) \right|_{g_0} &\leq \left|\nabla^{g_0}g (Y_i, Y_j) \right| + \left| (g - g_0)(\nabla^{g_0}Y_i, Y_j)\right| + \left| (g - g_0)(Y_i, \nabla^{g_0}Y_j)\right|  \\
    & \leq |\nabla^{g_0}g| \, |Y_i| \, |Y_j| + \left|(g - g_0)\right| \big( |\nabla^{g_0}Y_i| \, |Y_j| + |Y_i| \, |\nabla^{g_0}Y_j|  \big), 
\end{split}
\end{equation*}
where $Y_i$ is our fixed basis of $\t$, and we have dropped the notation $\pi_*g$ and consider $g$ as a metric away from the apex on $V \times M_c$, and all norms are taken with respect to $g_0$. Arguing in local coordinates $(y_1, \dots, y_n)$ where $y_1 = r$ and $(y_2, \dots, y_n)$ are local coordinates on $L \times V$,  one can compute directly from the Christoffel symbols of $g_0$ that $\left| \nabla^{g_0}Y_i \right|_{g_0} < C r^2$.  Hence by \eqref{asymptoticsproduct} we have
\begin{equation}\label{d-of-difference-metrics}
     \left|d(g-g_0)(Y_i, \, Y_j) \right|_{g_0} \leq C r. 
\end{equation}
It follows that for each $i, j$, and for any $p \in V \times M_c$,
\begin{equation*}
\begin{split}
    \left| \nabla^{g} H_{ij} \right|_{g}\!(p) &= \left| d H_{ij} \right|_{g}\!(p) \leq C \left| d H_{ij} \right|_{g_0}\!\!(p)  \\
            &\leq C \left| d g_0(Y_i, Y_j) \right|_{g_0}\!\!(p)   + C \left| d (g -g_0)(Y_i, Y_j) \right|_{g_0}\!\!(p) \\
            & = C | \nabla^{g_0} (H_0)_{ij}|_{g_0}\!(p) + C \left| d (g -g_0)(Y_i, Y_j) \right|_{g_0}\!\!(p) \leq C r(p), 
\end{split}
\end{equation*}
by \eqref{grad-of-H0} and \eqref{d-of-difference-metrics}. On the other hand we have by Lemma \ref{gradexpressionlemma} again
\begin{equation*}
     \left| \nabla^{g} H_{ij} \right|_{g}^2(p) = H_{kl} H_{ij,k}H_{ij,l}\big|_{x = \mu_\omega(p)}.
\end{equation*}
Hence if $\bar{\delta}$ is as in Definition \ref{asymptoticsproduct}, if we take any point $p \in \pi(\mu_\omega^{-1} (P_{\bar{\delta}})) \subset V \times M_c$, we see that 
\begin{equation*}
\begin{split}
    \sum_{k = 1}^n |H_{ij,k}|^2(\mu_\omega(p)) &\leq C\left| \nabla^{g} H_{ij} \right|_{g}^2(p) \leq Cr^2(p) \\
            &\leq C(f_{X_0}(\mu_\omega(p)) + 1) \leq C(||\mu_\omega(p)||_{\t} + 1).
\end{split}
\end{equation*}
In other words, $||d \H ||_{\t}^2 \leq C(||x||_{\t} + 1)$ on $P_{\bar{\delta}}$, where everything is measured with respect to the Euclidean norm. Together with the assumed bound $||d\H||_{\t} < C(|x|^a + 1)$ on $\overline{P}\backslash P_{\bar{\delta}}$, this gives condition \ref{asymptotics2} of Definition \ref{asymptotic-metric-general} for any $\varepsilon > 0$. 
\end{proof}

 \section{Semisimple principal fibrations}\label{section-ssfibrations}

 \subsection{Definitions}
We can apply the results of the previous sections to study certain non-toric manifolds as follows, using the \emph{semisimiple principal fibration} construction \cite{ACGT-overcurve, ApJuLa,LahdiliWeighted,simonYTD}. We begin with a K\"ahler manifold $(M, \omega)$ with $\dim_\C M = k$ together with an $\omega$-hamiltonian action of a $k$-dimensional real torus $\T$. As before we denote $\t = \textnormal{Lie}(\T)$ and $\Gamma \subset \t$ the associated lattice. Let $B = B_{1} \times \dots \times B_\ell$ be a compact K\"ahler manifold endowed with a product metric $\omega_B = \sum_{a=1}^\ell \omega_a$. We assume that $\omega_a$ are individually cscK and that $[\omega_{B_a}] \in H^2(B_a, 2\pi \Z)$ are integral. As such one can ask for a principal $\T$-bundle $\pi_B:U \to B$ and a connection form $\theta \in \Omega^1(U,\t)$ whose curvature satisfies 
\[ d\theta = \sum_{a=1}^\ell \pi_B^*\omega_{B_a} \!\otimes p_a, \hspace{.2in} p_a \in \Gamma. \]
Then we define, at the smooth level, the \emph{semisimple fibration over $B$ with fiber $M$} to be the associated bundle 
\[ Y = M \times_{\T} U, \hspace{.2in} \pi_B:Y \to B, \]
which by construction comes equipped with an effective $\T$-action. Here we use the same notation $\pi_B$ to denote the projection induced by $\pi_B: U \to B$. In fact, one can verify that the natural CR-structure $(\mathcal{D},J_B)$ on $U$ induced by $B$ gives rise to an associated integrable complex structure $J_Y$ on $Y$ \cite{ApJuLa}.

Before moving on to discuss the metric aspects of the construction, for simplicity we first specialize to the case of semisimple principal \emph{toric} fibrations (semisimple rigid toric fibrations in the language of \cite{ACGT-overcurve}). In this situation, we suppose that $M$ is toric in the sense of Definition \ref{noncompacttoric},  $\omega$ is an AK metric on $M$, and $U \to B$ is a $\T$-bundle, where $\T$ is now the real torus underlying the given $(\Cstar)^k$-action on $M$. In this setting we have a moment map $\mu_\omega: M \to \t^*$ whose image $\mu_\omega(M) = \overline{P}$ is a Delzant polyhedron fixed up to translation by the cohomology class $[\omega] \in H^2(M,\R)$. As usual we fix some representative for $P$ in its translation class. 

Given this setup, there is also an associated K\"ahler metric $\omega_Y$ on $Y$. This is defined on the product $M \times U$ by 
\begin{equation}\label{bundlecompatible}
\begin{split}
    \omega_Y &= \omega + \sum_{a=1}^\ell c_a \pi_B^*\omega_{B_a} + d(\langle \mu_\omega, \theta \rangle) \\
            & = \omega + \sum_{a=1}^\ell (\langle \mu_\omega, p_a \rangle + c_a) \pi_B^*\omega_{B_a} + \langle d\mu_{\omega} \wedge \theta \rangle,
\end{split}
\end{equation}
where $c_1, \dots, c_\ell$ are chosen such that the affine-linear function $\langle x, \, p_a \rangle + c_a > 0$ on $P$. Here we use the notation from \cite{ApJuLa}, and $\langle \cdot , \, \cdot  \rangle$ denotes the dual pairing between $\t$ and $\t^*$. Note that in the non-compact setting the existence of such $c_a$ is a non-trivial assumption, and indeed implies that $p_a \in C(P)^*.$ Then $\omega_Y$ defined by \eqref{bundlecompatible} descends to a well-defined K\"ahler metric on $Y$ \cite{ApJuLa}. Moreover, the induced $\T$-action on $Y$ is $\omega_Y$-hamiltonian. In fact, if we pull back the moment map $\mu_\omega$ to a function $\mu_\omega: M \times U \to \t^*$, then this descends as well to a function $\mu_{Y}:Y \to \t^*,$ which is precisely the moment map for the $\T$-action on $Y$ with respect to $\omega_Y$. In particular, the image of $\mu_{Y}$ is precisely $P$.

The following result is a local (albeit arduous) computation, which thus holds equally in the current non-compact setting:
\begin{lemma}[{\cite[Lemma 5.9]{ApJuLa}}]\label{fibrationweights}
    Set $s_a = \Scal(\omega_{B_a})$ which is constant by assumption, and set $n_a = \dim_{\C}B_a$. Define positive rational functions $p,q$ on $P$ by  
    \[ p(x) = \Pi_{a=1}^\ell (\langle p_a ,  x \rangle + c_a)^{n_a}, \hspace{.3in} q(x) = \sum_{a = 1}^\ell \frac{s_a}{\langle p_a ,  x \rangle + c_a}.\]
    Let $\omega$ be an AK metric on $M$ and $\omega_Y$ the associated bundle-compatible metric on $Y$. Then $\omega_Y$ is a $(v,\,w)$-cscK metric on $Y$ if and only if $\omega$ is a $(\tilde{v}, \tilde{w})$-cscK metric on $M$, where 
    \[\tilde{w}(x) = p(x)(w(x) - q(x)v(x)). \]
\end{lemma}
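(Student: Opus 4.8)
The whole statement reduces to a single pointwise identity on $X \times U$ relating $\Scalv(\omega_Y)$ to the $\tilde v$-scalar curvature of $\omega$ for $\tilde v = p\,v$; no compactness of the fibre $X$ is used, only that the base factors $B_a$ are cscK with $\Scal(\omega_{B_a}) = s_a$ constant. This is exactly why the computation of \cite[Lemma 5.9]{ApJuLa} for compact toric fibres applies verbatim here, once we observe that the AK hypothesis on $\omega$ together with $p_a \in C(P)^*$ (so that $h_a(x) := \langle p_a, x\rangle + c_a > 0$ on $P$) guarantees that \eqref{bundlecompatible} defines a genuine K\"ahler metric with moment image $P$. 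Writing $\tilde v = p v$, the target identity is
\begin{equation*}
    p(\mu_\omega)\,\Scalv(\omega_Y) \;=\; \Scal_{pv}(\omega) \,+\, p(\mu_\omega)\,q(\mu_\omega)\,v(\mu_\omega),
\end{equation*}
where $\Scal_{pv}(\omega) = -\sum_{ij}(p\,v\,H_{ij})_{ij}$ is the $pv$-weighted scalar curvature of the fibre in the sense of Proposition \ref{vscal-toric}. Granting it, the $(v,w)$-cscK equation $\Scalv(\omega_Y) = w$ becomes $\Scal_{pv}(\omega) = p\,(w - q v) = \tilde w$, i.e. $\omega$ is $(\tilde v, \tilde w)$-cscK, and conversely; this is the assertion.

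To prove the identity I would expand $\Scalv(\omega_Y)$ via its definition \eqref{vscal} and match the three resulting terms against fibre data using: \emph{(A)} the scalar-curvature decomposition $p(\mu_\omega)\Scal(\omega_Y) = \Scal_p(\omega) + p(\mu_\omega)q(\mu_\omega)$, with $\Scal_p(\omega) = -\sum_{ij}(p H_{ij})_{ij}$; \emph{(B)} the weighted-Laplacian formula $\Delta_{\omega_Y}(F \circ \mu_Y) = -p^{-1}\sum_{ij}\partial_{x^j}\!\big(p\,H_{ij}\,\partial_{x^i}F\big)$ for $F \in C^\infty(\t^*)$, the weight $p$ entering because the fibrewise volume element of $\omega_Y$ is $p(\mu_\omega)$ times that of $\omega$; and \emph{(C)} the equality $\langle g_Y, \mu_Y^*\textnormal{Hess}(v)\rangle = \sum_{ij} H_{ij} v_{ij}$, valid because the fundamental fields $Y_i$ of $\T$ are tangent to the fibre and $g_Y(Y_i, Y_j) = g(Y_i, Y_j) = H_{ij}$. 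With (A)--(C) the verification is purely algebraic: expanding $\Scal_{pv}(\omega) = -\sum_{ij}(pv H_{ij})_{ij}$ by Leibniz and comparing the coefficients of $v_{ij}$, $v_i$ and $v$ against $p v\Scal(\omega_Y) + 2p\,\Delta_{\omega_Y}v + p\sum_{ij}H_{ij}v_{ij}$ produces the identity. As a check, in the rank-one case $X = \C$ one has $H_{11} = \Theta$, so $\Scal_{pv}(\omega) = -(pv\,\Theta)''$ and the equation reduces to $(\tilde v \Theta)'' = -\tilde w$, recovering \eqref{profileODE}.

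The genuine work sits in ingredient \emph{(A)}, the scalar curvature of the bundle-compatible metric \eqref{bundlecompatible}; this is the arduous local ACGT-type computation and the main obstacle. The mechanism is that the volume form factorises,
\begin{equation*}
    \frac{\omega_Y^{\,n+N}}{(n+N)!} \;\propto\; p(\mu_\omega)\,\frac{\omega^n}{n!} \wedge \bigwedge_{a=1}^\ell \pi_B^*\omega_{B_a}^{n_a}, \qquad N = \sum_a n_a,
\end{equation*}
the factor $\prod_a h_a^{n_a} = p$ arising because $\omega_Y$ restricts to $h_a\,\pi_B^*\omega_{B_a}$ in the $B_a$-directions. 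Hence $-i\partial\bar\partial\log$ of the volume splits into a fibre part, the base Ricci forms, and the term $\sum_a n_a \log h_a$; tracing against $\omega_Y$, the base factors contribute $\sum_a s_a/h_a = q$ (the $1/h_a$ reflecting that the rescaled metric $h_a\omega_{B_a}$ has scalar curvature $s_a/h_a$), while the $\log p$ term supplies precisely the divergence corrections promoting $\Scal(\omega)$ to $\Scal_p(\omega)$. As this step is pointwise on $X \times U$ and uses only the constancy of each $s_a$, it carries over unchanged to the non-compact fibre, which completes the proof.
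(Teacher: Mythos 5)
Your proposal is correct and takes essentially the same approach as the paper, which offers no independent proof of this lemma: it justifies it exactly as you do, by observing that \cite[Lemma 5.9]{ApJuLa} (with $\tilde{v} = pv$) is a pointwise computation on $X \times U$ using only the constancy of the $s_a$, and hence carries over unchanged to the non-compact fibre. Your reduction to the identity $p(\mu_\omega)\,\Scalv(\omega_Y) = \Scal_{pv}(\omega) + p(\mu_\omega)q(\mu_\omega)v(\mu_\omega)$ via ingredients (A)--(C) is a correct and useful unpacking of that cited computation, and the Leibniz expansion of $-\sum_{i,j}(pvH_{ij})_{ij}$ does verify it.
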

In particular, if $(v,\, w)\in \mathcal{W}$ are exponentially decaying weights on $P$, then after perhaps shrinking $\beta^*$ we will have that $(\tilde{v}, \, \tilde{w}) \in \mathcal{W}$ as well. In this way, the semisimple principal fibration setup can be viewed as a form of dimensional reduction of the problem, where the extra structure is encoded in the new weights.  

 \subsection{K\"ahler-Ricci solitons}\label{section-KRS}

Let $Y$ be a complex manifold. A \emph{shrinking K\"ahler-Ricci soliton} on $Y$ is a pair $(\omega, \, X)$ where $\omega$ is a K\"ahler metric on $M$and $X$ is a holomorphic vector field, which solve the equation 
\begin{equation}\label{SKRS}
    \Ric_{\omega} + \frac{1}{2}\mathcal{L}_X\omega = \omega.
\end{equation}
We say that $(\omega, \, X)$ is \emph{gradient} if $X = \nabla^g f$ for some $f \in C^\infty(Y)$, where $g$ is the associated Riemannian metric to $\omega$. In the non-compact setting, complete shrinking K\"ahler-Ricci solitons are of interest with respect to the K\"ahler-Ricci flow \cite{EMT,Naber, BCCD}, whereas in the compact setting they serve as a natural generalization of positive K\"ahler-Einstein metrics, see for example \cite{BWN-optimal,DSz}. 

The soliton vector field $X$ of a gradient K\"ahler-Ricci soliton always has the property that $JX$ is Killing, and hence generates an action of a finite-dimensional real torus $\T$ on $Y$. In fact, at least when the Ricci curvature of $\omega$ is bounded, the action always complexifies to an effective and holomorphic action of $(\Cstar)^k$ on $Y$ (\cite[Theorem 5.1]{ConDerSun}, \cite[Theorem 4.2]{uniqueness}). In this way, shrinking gradient K\"ahler-Ricci solitons are a natural starting place for the study of complete toric or ``partially'' toric geometry, as we will see below. 

 \subsubsection{K\"ahler-Ricci solitons on toric manifolds}
 We begin with a brief discussion of the simplest case, where $Y = M$ is toric.
  \begin{lemma}\label{toricSGKRSweight}
 	Suppose that $\omega \in 2\pi c_1(M)$ is an AK metric on a toric manifold $M$ which is a $v$-soliton with weight function 
 	\begin{equation}
 		v(x) = e^{-\langle x, \, b_X \rangle},
 	\end{equation}
 	for $b_X \in \t$. Then $\omega$ is a shrinking gradient K\"ahler-Ricci soliton with respect to the fundamental vector field $X$ on $M$ associated to $b_X$. 
 \end{lemma}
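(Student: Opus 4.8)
The plan is to compare the two defining equations directly and reduce the lemma to a single pointwise identity on the dense orbit. Recall that a $v$-soliton satisfies \eqref{vsolitoneq}, namely $\Ric_\omega - \omega = i\p\bp\log v(\mu_\omega)$, while a shrinking gradient K\"ahler--Ricci soliton with field $W$ satisfies \eqref{SKRS}, i.e. $\Ric_\omega + \tfrac12\mathcal{L}_W\omega = \omega$. Since $v(x) = e^{-\langle x,\,b_W\rangle}$ gives $\log v(\mu_\omega) = -\langle \mu_\omega,\,b_W\rangle =: -h$, subtracting the two equations shows that the lemma amounts to proving
\[ \mathcal{L}_W\omega = 2\,i\p\bp h, \]
for an appropriate real holomorphic $W$, together with the verification that this $W$ is in fact a gradient field. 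I would carry out the computation on the dense orbit $\Cstarn\subset M$ and extend to all of $M$ by continuity, since both sides are smooth tensors.

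First I would pin down $W$. Using the normalization $\mu_\omega|_{\Cstarn} = \nabla\phi$ from \eqref{normalizedgradientmomentmap} and the metric $g = \phi_{ij}(d\xi^i d\xi^j + d\theta^i d\theta^j)$ from \eqref{metricexpression-complexcoord}, so that $h = \phi_i b_W^i$, a direct inversion of the Hessian shows that the $g$-gradient of the $\T$-invariant function $h$ is
\[ \nabla^g h = b_W^i\,\frac{\p}{\p\xi^i}. \]
This vector field is real holomorphic (a real combination of the real parts $\tfrac{\p}{\p\xi^i}$ of the holomorphic coordinate fields), and it satisfies $J(\nabla^g h) = b_W^i\tfrac{\p}{\p\theta^i} = Y_{b_W}$, which is Killing and generates the torus flow of $b_W$. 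Thus setting $W := \nabla^g h$ gives precisely the fundamental (real holomorphic) vector field attached to $b_W$, and in particular the resulting soliton is automatically gradient.

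The key computation is then $\mathcal{L}_W\omega = 2\,i\p\bp h$. Since $\omega$ is closed, $\mathcal{L}_W\omega = d\,i_W\omega$; contracting $\omega = \phi_{ij}\,d\xi^i\wedge d\theta^j$ with $W = b_W^i\tfrac{\p}{\p\xi^i}$ yields $i_W\omega = \phi_{ij}b_W^i\,d\theta^j$, and hence
\[ \mathcal{L}_W\omega = \phi_{ijl}\,b_W^i\,d\xi^l\wedge d\theta^j. \]
On the other hand, by the very formula that gives $\omega = 2i\p\bp\phi = \phi_{ij}\,d\xi^i\wedge d\theta^j$, the $\T$-invariant function $h = \phi_c b_W^c$ satisfies $2i\p\bp h = \phi_{abc}\,b_W^c\,d\xi^a\wedge d\theta^b$. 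Relabelling the summation indices and using the full symmetry of the third derivatives $\phi_{ijl}$, the two expressions agree. Feeding this into \eqref{vsolitoneq} gives $\Ric_\omega - \omega = -i\p\bp h = -\tfrac12\mathcal{L}_W\omega$, which is exactly \eqref{SKRS} with the gradient field $W = \nabla^g h$.

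The individual steps are elementary, so the real issue is bookkeeping of signs and normalizations: one must orient $W$ so that $JW = Y_{b_W}$ rather than $-Y_{b_W}$, since this is precisely what makes the equation \emph{shrinking} rather than expanding, and one must match the factor of $2$ in the paper's convention $\omega = 2i\p\bp\phi$ against the $\tfrac12$ appearing in \eqref{SKRS}. (The hypothesis $\omega\in c_1(M)$ enters only to guarantee $[\Ric_\omega] = [\omega]$, so that the $v$-soliton equation is cohomologically consistent; it plays no role in the pointwise identity above.)
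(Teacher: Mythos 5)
Your proof is correct, but note that the paper does not actually prove this lemma: it simply records it as ``well-known'' with citations to \cite{BB, BWN-optimal, uniqueness, HanLi}, so there is no internal argument to compare against. What you supply is the standard self-contained verification, and it checks out in the paper's conventions: with $\mu_\omega|_{\Cstarn} = \nabla\phi$ as in \eqref{normalizedgradientmomentmap} and $h = \langle \mu_\omega, b_W\rangle = \phi_i b_W^i$, the inversion $\nabla^g h = \phi^{ij}\phi_{jk}b_W^k \p_{\xi^i} = b_W^i\p_{\xi^i}$ is right, $W - iJW = 2b_W^i \p_{w^i}$ confirms real holomorphy with $JW = Y_{b_W}$, and the contraction $i_W\omega = \phi_{ij}b_W^i d\theta^j$ together with the general identity $2i\p\bp F = F_{ij}\,d\xi^i\wedge d\theta^j$ for $\T$-invariant $F(\xi)$ gives $\mathcal{L}_W\omega = \phi_{ijl}b_W^i\,d\xi^l\wedge d\theta^j = 2i\p\bp h$ by symmetry of third derivatives, which plugged into \eqref{vsolitoneq} yields \eqref{SKRS} exactly. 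Your density argument for extending from $\Cstarn$ to $M$ is fine since both sides are smooth tensors globally, and the ambiguity of the moment map normalization is harmless because shifting $h$ by a constant changes neither $i\p\bp h$ nor $\nabla^g h$. The only loose remark is the aside that choosing $JW = -Y_{b_W}$ would produce an \emph{expanding} soliton: flipping the sign of $W$ does not change the sign of $\omega$ on the right of \eqref{SKRS} (which is what distinguishes shrinking from expanding); it rather corresponds to the weight $e^{+\langle x,\,b_W\rangle}$, i.e.\ to replacing $b_W$ by $-b_W$. This is a side comment, not a gap, and the proof as a whole is a legitimate replacement for the paper's citation.
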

 This is well-known, see \cite{BB, BWN-optimal, uniqueness, HanLi}, among others. Thus, any AK K\"ahler-Ricci soliton $\omega$ on a toric manifold $M$ is a $v$-soliton with exponentially decaying weights with derivatives as in Definition \ref{exponential-decay}.  If $\omega$ is asymptotically c-cylindrical, then we are precisely in the setting of this paper. This is the geometry that we see, at least conjecturally, for the known examples. See in particular \cite[Conjecture 1.2]{CCD1} concerning the shrinker on the blowup $\Bl_p(\C\P^1 \times \C)$ of $\C\P^1 \times \C$ at a point constructed in \cite{BCCD}. In practice, however, it is often difficult to verify any precise asymptotics. We will see below (Proposition \ref{chili-asymptotics}) that this is indeed the geometry that we get for certain $v$-solitons on $\C$ which give rise to complete shrinking K\"ahler-Ricci solitons on the total space of a root $L \to B$ of the canonical bundle over a compact K\"ahler-Einstein Fano manifold.

\subsubsection{K\"ahler-Ricci solitons on semisimple fibrations}
The toric picture above can be seen as a limiting case of the semisimple principal toric fibration construction with zero-dimensional base. Let $B = \Pi_{a} (B_a, \omega_a)$ be a product of positive K\"ahler-Einstein manifolds $(B_a, \omega_{B_a})$ of complex dimensions $n_a$, and that $ [\omega_a] = \frac{c_1(B_a)}{k_a} \in H^2(B_a, 2\pi \Z)$.  Let $U$ be the principal $\T$-bundle over $B$  with Chern class 
 \begin{equation}
 2\pi c_1(U)=\sum [\omega_a] \otimes p_a, \hspace{.3in} p_a \in \Gamma.
 \end{equation}
 Let $(M, \,\omega)$ be an anticanonically polarized toric $k$-dimensional manifold with $\omega$ an AK metric. In particular $M$ is endowed with an isometric and hamiltonian $\T$-action with moment map $\mu_\omega$ and moment image $P \subset \t^*$, which we assume to be canonically normalized.
As a corollary of Lemma \ref{fibrationweights}, we have the following from \cite{ApJuLa}:
 \begin{lemma}[{\cite[Lemma 5.11]{ApJuLa}}]\label{fibrationsolitons}   Suppose that each factor $p_a x + k_a$ is strictly positive on $P$, and that $\omega$ is a shrinking $v$-soliton for the weight function
\begin{equation}
	v(x) :=\Pi_{a=1}^\ell (\langle p_a ,  x \rangle + k_a)^{n_a} e^{-\langle b_X,\, x \rangle}.
\end{equation}
 Then the compatible metric $\omega_Y$ associated to $\omega$ by \eqref{bundlecompatible} on 
\begin{equation}
	Y = M \times_{\T} U \to B 
\end{equation}
is a shrinking gradient K\"ahler-Ricci soliton with soliton vector field $ X_{b_X}$. 
 \end{lemma}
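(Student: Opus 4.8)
The plan is to run the dimensional reduction of Lemma \ref{fibrationweights} in reverse, using Lemma \ref{vsolitonw} as the bridge between the soliton equations and the $(v,w)$-cscK equations on both the total space $Y$ and the fiber $X$. Set $v_Y(x) = e^{-\langle b_W, x\rangle}$ and $N = \dim_\C Y = k + \sum_a n_a$. The first observation is that, by exactly the computation behind Lemma \ref{toricSGKRSweight} (which uses only the Hamiltonian structure of the $\T$-action on $Y$ and the identity $\tfrac12\mathcal{L}_{W_{b_W}}\omega_Y = -i\partial\bar\partial\langle\mu_Y, b_W\rangle$ for the real-holomorphic gradient field $W_{b_W}$, not the full-dimensionality of $\T$), the metric $\omega_Y$ is a shrinking gradient K\"ahler--Ricci soliton with soliton field $W_{b_W}$ if and only if $\omega_Y$ is a $v_Y$-soliton on $Y$ in the sense of \eqref{vsolitoneq}. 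Thus it suffices to produce the $v_Y$-soliton.

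Next I would apply Lemma \ref{vsolitonw} on $Y$: a $v_Y$-soliton is the same as a $(v_Y, w_Y)$-cscK metric with $w_Y = 2(N + \langle d\log v_Y(\mu), \mu\rangle)v_Y = 2(N - \langle b_W, x\rangle)v_Y$. By Lemma \ref{fibrationweights}, with the constants in \eqref{bundlecompatible} taken to be $c_a = k_a$, this $(v_Y, w_Y)$-cscK equation on $Y$ is equivalent to the $(\tilde v, \tilde w)$-cscK equation on $X$, where $\tilde v = p\,v_Y = \prod_a(\langle p_a, x\rangle + k_a)^{n_a}e^{-\langle b_W, x\rangle} = v$ is precisely the given weight, and $\tilde w = p(w_Y - q v_Y)$ with $p(x) = \prod_a(\langle p_a, x\rangle + k_a)^{n_a}$ and $q(x) = \sum_a \tfrac{s_a}{\langle p_a, x\rangle + k_a}$, $s_a = \Scal(\omega_{B_a})$. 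On the other hand, the hypothesis that $\omega$ is a (weighted) $v$-soliton, read through Lemma \ref{vsolitonw} on $X$, says exactly that $\omega$ is $(v, w_X)$-cscK with $w_X = 2(k + \langle d\log v(\mu), \mu\rangle)v$. Hence the entire lemma collapses to the single algebraic identity $\tilde w = w_X$.

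To verify this I would divide by $v = p\,v_Y$ and reduce everything to comparing $\tilde w/v = w_Y/v_Y - q$ with $w_X/v = 2k + 2\langle d\log v, x\rangle$; expanding $d\log v = \sum_a n_a \tfrac{p_a}{\langle p_a, x\rangle + k_a} - b_W$ and using $N = k + \sum_a n_a$, the two sides agree precisely when $q(x) = \sum_a \tfrac{2 n_a k_a}{\langle p_a, x\rangle + k_a}$, i.e. when $s_a = 2 n_a k_a$ for every $a$. This is the one genuinely geometric input: it is the scalar curvature of a K\"ahler--Einstein metric in the class $[\omega_a] = c_1(B_a)/k_a$, where the normalization forces $\Ric(\omega_{B_a}) = k_a\,\omega_{B_a}$ and hence, with $\Scal = 2\tr_\omega \Ric$ (the convention fixed by Proposition \ref{vscal-toric} and checked against Lemma \ref{vsolitonw}), $s_a = 2 n_a k_a$. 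With this in hand $\tilde w = w_X$, so the $v$-soliton hypothesis gives that $\omega$ is $(\tilde v, \tilde w)$-cscK, whence by Lemma \ref{fibrationweights} $\omega_Y$ is $(v_Y, w_Y)$-cscK, by Lemma \ref{vsolitonw} a $v_Y$-soliton, and by the first paragraph a shrinking gradient K\"ahler--Ricci soliton with field $W_{b_W}$.

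The step I expect to cause the most friction is not the PDE reduction, which is a mechanical chaining of the two cited lemmas, but the bookkeeping required to invoke the reverse direction of Lemma \ref{vsolitonw} on the total space, since being a $v_Y$-soliton presupposes $\omega_Y \in c_1(Y)$. One must check that the choices $c_a = k_a$ and the normalization $[\omega_a] = c_1(B_a)/k_a$ force the bundle-compatible class $[\omega_Y]$ to coincide with $c_1(Y)$ through the fibration/adjunction formula for $Y = X \times_\T U \to B$, using that $X$ is anticanonically polarized (which is automatic from the $v$-soliton hypothesis together with the preceding lemma identifying the moment image as $P_{-K_X}$). Making this cohomological normalization, and the companion factor $s_a = 2 n_a k_a$, consistent with the scalar-curvature and $2\pi$ conventions in force is the only delicate point; everything else is formal.
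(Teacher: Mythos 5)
Your proposal is correct and follows essentially the route the paper itself indicates: the lemma is presented as a corollary of Lemma \ref{fibrationweights} (citing \cite{ApJuLa}), and your chaining of Lemma \ref{vsolitonw} on both $X$ and $Y$ with the resulting weight identity $s_a = 2n_a k_a$ is exactly that reduction. You also correctly isolate the one genuinely new ingredient, which is precisely what the paper flags immediately after the statement: verifying that the bundle-compatible metric satisfies $\omega_Y \in 2\pi c_1(Y)$ under the normalizations $c_a = k_a$ and $[\omega_{B_a}] = c_1(B_a)/k_a$.
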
 
The main difference between this and Lemma \ref{fibrationweights} is that here one must verify that the bundle compatible metric $\omega_Y$ associated to $\omega$ has the property that $\omega_Y \in 2\pi c_1(Y)$.

 \subsubsection{K\"ahler-Ricci solitons on line bundles}\label{section-chili}

In this section we discuss the examples of non-compact complete shrinking K\"ahler-Ricci solitons which are built out of certain negative line bundles $L \to B$ over a compact Fano manifold $B$. Starting with Cao \cite{Caosoliton} and Feldman-Ilmanen-Knopf \cite{FIK}  in the case of negative line bundles over $\C\P^{n-1}$, many authors have successfully used the Calabi Ansatz to furnish complete non-compact solitons. Notably there is the work of Li \cite{ChiLiexamples}, extending the construction to direct sums of line bundles over $B$, and those of Futaki, Futaki-Wang \cite{Fut, FutWang} in the line bundle case but allowing for certain toric $B$ which are not necessarily K\"ahler-Einstein. In this section we will focus on the approach of \cite{ChiLiexamples}, and for simplicity we have attempted to keep more or less the same notation conventions that appear in \cite{ChiLiexamples}. 

Let $B$ be a $d$-dimensional K\"ahler-Einstein Fano manifold with $\Ric_{KE} = \tau \omega_{KE}$. We let $L \to B$ be a negative line bundle with a hermitian metric $h$ with the property that the curvature $c_1(L,h) = - \kappa \omega_{KE}$ (i.e. $L$ is a $\Q$-multiple of $K_B$). The main result for our purposes here is 
\begin{theorem}[{\cite{ChiLiexamples}}]\label{chiliexistence}
    For appropriate choices of $\tau, \kappa$, there exist complete shrinking gradient K\"ahler-Ricci solitons $\tilde{\omega}$ on the total space $Y$ of the direct sum bundle 
    \[ Y = L^{\oplus k} = L \oplus \dots \oplus L \to B, \]
    constructed via the Calabi Ansatz \eqref{calabiansatz}
    \[ \tilde{\omega} =  \pi_B^*\omega_{B} + i\p\bp P(s),\]
    where $s = ||\cdot ||_{h}^2$ is the induced norm function on $Y$ by $h$. Furthermore, the soliton vector field $X$ generates the $\R_+$-action on $Y$ given by multiplication by positive real scalars on the fibers. 
\end{theorem}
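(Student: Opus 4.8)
The plan is to recognize $Y = L^{\oplus k} \to M$ as an instance of the semisimple principal toric fibration construction and to exploit the $U(k)$-symmetry of the direct sum to reduce the shrinking soliton equation \eqref{SKRS} to the one-dimensional profile problem of Section \ref{section-testconfig}. Writing $U_h \to M$ for the principal $S^1$-bundle of $(L,h)$, the $k$-fold fibre product $U = U_h \times_M \cdots \times_M U_h$ is a principal $(S^1)^k$-bundle and $\C^k \times_{(S^1)^k} U \cong L^{\oplus k} = Y$; since $\C^k$ is anticanonically polarized toric with Delzant polyhedron $P = \{x \in \t^* \mid x_i \geq -1\}$, this places us in the hypotheses of Lemma \ref{fibrationsolitons} with $\ell = 1$, $B_1 = M$, $n_1 = d = \dim_\C M$, and diagonal curvature vector $p_1 \in \Gamma$ prescribed by $c_1(L,h) = -\kappa\,\omega_{KE}$. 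The soliton field we seek is the fundamental field $W = W_{b_W}$ of the diagonal direction $b_W = \lambda(1,\dots,1)\in\t$, whose flow is precisely the fibrewise multiplication by positive real scalars, so the last assertion of the theorem is automatic once such a soliton is produced.

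By Lemma \ref{fibrationsolitons}, a complete shrinking gradient K\"ahler-Ricci soliton $\tilde\omega$ on $Y$ with soliton field $W_{b_W}$ corresponds to an AK $v$-soliton $\omega$ on the fibre $\C^k$ for the weight $v(x) = (\langle p_1, x\rangle + k_1)^{d}\,e^{-\langle b_W,x\rangle}$, provided the compatible metric \eqref{bundlecompatible} lies in $2\pi c_1(Y)$. This cohomological requirement, together with the data $\Ric_{KE} = \tau\,\omega_{KE}$ and $c_1(L,h) = -\kappa\,\omega_{KE}$, is exactly what fixes the ``appropriate choices of $\tau,\kappa$'': it forces a linear relation among $\tau,\kappa,k,\lambda$ compatible with the anticanonical normalization \eqref{anticanonicalpolyhedron} of the total polyhedron. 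Because the direct sum is $U(k)$-invariant, the symplectic potential $u$ on $P$ depends only on the diagonal variable, and the $U(k)$-orbit volume contributes a further polynomial factor; folding this into $v$ as in \eqref{cbundles-newweights} reduces the generalized Abreu equation \eqref{genAbreu} to a single scalar ODE $(\tilde v\,\Theta)'' = -\tilde w$ on $[-1,\infty)$, of exactly the type \eqref{profileODE}, with $\tilde v > 0$ and $\Theta$ the fibre profile $\H = H_{11}$.

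Here the $v$-soliton structure plays the role that K-stability plays in Proposition \ref{Kstableimpliesexistence-cbundles}: Lemma \ref{vsolitonw} pins down $\tilde w$ explicitly, and Lemma \ref{futakivanishes?} (equivalently the anticanonical identity \eqref{vsolitonDFinvariantequiv}) guarantees that the Futaki obstruction vanishes on affine-linear functions. Integrating the ODE exactly as in the passage from \eqref{profilesolution} to \eqref{profile-futakivanishes} yields the closed-form candidate $(\tilde v\,\Theta)(y) = -\int_y^\infty (\sigma - y)\,\tilde w(\sigma)\,d\sigma$, while the boundary conditions \eqref{profileboundary} at $y=-1$ encode smoothness across the zero section $M \hookrightarrow Y$. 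Reversing the Legendre transform and the associated-bundle identification then reassembles $\tilde\omega = \pi_M^*\omega_M + i\p\bp P(s)$ in Calabi-Ansatz form \eqref{calabiansatz}, with $s = \|\cdot\|_h^2$ the total norm function, which is the structural content of the theorem.

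The main obstacle is the final analytic step, which is where the parameter restriction genuinely bites: one must show that the explicit profile $\Theta$ is \emph{strictly positive} on all of $(-1,\infty)$ and decays at the correct rate, since only then is $\omega$ a bona fide smooth AK metric and $\tilde\omega$ a \emph{complete} metric at infinity. Positivity is not automatic from \eqref{profile-futakivanishes}; it amounts to an inequality on $(\tau,\kappa)$ in the spirit of the Feldman--Ilmanen--Knopf balancing condition, which I would establish by tracking the sign of the integrand $\tilde w$ and using that $b_W \in \textnormal{int}\,C(P)^*$ makes the exponential factor dominate, thereby also securing the integrability $\int_P v\,dx < \infty$ of \eqref{vsoliton-finitevolume}. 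Completeness at infinity would then follow by estimating the fibre length $\int \Theta^{-1/2}\,dy$ against this decay, and smoothness at $y=-1$ from \eqref{profileboundary}; the delicate point throughout is ruling out any interior zero of $\Theta$ before the boundary at infinity is reached.
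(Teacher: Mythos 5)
The paper does not prove this statement: Theorem \ref{chiliexistence} is quoted verbatim from \cite{ChiLiexamples}, where the proof is a direct Calabi-Ansatz reduction of \eqref{SKRS} to an ODE for the profile $F(\phi)$, solved in closed form by \eqref{ChiLiF} and checked by hand for positivity, smoothness at the zero section, and the linear growth \eqref{FisOone} giving completeness. Your route --- packaging $Y = \C^k \times_{(S^1)^k} U$ as a semisimple principal toric fibration and invoking Lemmas \ref{fibrationsolitons} and \ref{vsolitonw} to reduce to a $v$-soliton on the fiber, then the one-dimensional profile analysis of Proposition \ref{Kstableimpliesexistence-cbundles} --- is the same reduction expressed in this paper's weighted language; indeed the paper itself records this dictionary (Lemma \ref{CalabiAnsatzidentificationlemma} and the remark following Proposition \ref{chili-asymptotics}, which notes that Li's metrics are $(\tilde{v},\tilde{w})$-cscK with exponential weights). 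So the skeleton is sound, but two steps as written do not close.

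First, there is a genuine circularity: you invoke Lemma \ref{futakivanishes?} to ``guarantee'' that $\mathcal{F}_{\tilde{v},\tilde{w}}$ vanishes on affine-linear functions, but the hypothesis of that lemma is the \emph{existence} of a $v$-soliton metric --- precisely what you are trying to construct. The vanishing must be arranged, not deduced: the diagonal soliton parameter $\lambda$ in $b_W = \lambda(1,\dots,1)$ (Li's constant $\mu$) has to be produced as a root of the scalar equation $\int_P x_1\, p(x)e^{-\lambda \langle (1,\dots,1),\, x\rangle}\,dx = 0$ (one equation by $U(k)$-symmetry), equivalently of the algebraic condition in \eqref{ChiLiF} forcing $F(0)=0$; this is a transcendental equation in $\lambda$, solvable by a Tian--Zhu-type convexity argument or by Li's direct root count. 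Relatedly, you attribute the parameter relation to the cohomological requirement $\omega_Y \in 2\pi c_1(Y)$; that condition only fixes the normalization constants $c_a = k_a$ in Lemma \ref{fibrationsolitons} and the positivity of $\langle p_1, x\rangle + k_1$ on $P$, and it does not determine $\lambda$. Second, the decisive analytic content --- strict positivity of $\Theta$ on $(-1,\infty)$ and the growth $F(\phi) = p\phi + O(1)$ with $p = \tau - k\kappa > 0$, which yields completeness via $\int^{\infty} F^{-1/2}\,d\phi = \infty$ --- is exactly where ``appropriate choices of $\tau,\kappa$'' bites, and your proposal only sketches it. Your reformulation via the piecewise-linear functions \eqref{simplePL}, $\mathcal{F}_{\tilde{v},\tilde{w}}(f_{x_0}) = \tilde{v}(x_0)\Theta(x_0)$, is valid (this is Proposition \ref{Kstableimpliesexistence-cbundles}), but you verify neither the stability nor the positivity, whereas \cite{ChiLiexamples} gets both from the explicit formula \eqref{ChiLiF} together with \eqref{FisOone}. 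Finally, for $k > 1$ the passage from the generalized Abreu equation \eqref{genAbreu} on the $k$-dimensional polyhedron to a single scalar ODE under the $U(k)$-invariant ansatz is legitimate for existence (one may construct within the symmetry class) but is itself a nontrivial computation --- in effect Li's --- which your appeal to ``folding the orbit-volume factor into $v$'' only gestures at. As it stands the proposal is a faithful outline of the cited proof with one circular step, not a complete argument.
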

A consequence of \cite[Lemma 5.5]{ApJuLa} is:

 \begin{lemma}\label{CalabiAnsatzidentificationlemma}
     Let $U_k \to B$ be the $\T = U(1)^k$-bundle associated to $Y$. Then any metric $\omega_Y$ of the form $\omega_Y = \pi_B^*\omega_{B} + i\p\bp P(s)$ on $Y$ is bundle compatible with respect to the fibration structure $Y = \C^k \times_{\T} U_k$, and moreover the associated metric $\omega$ on $\C^k$ is given by $\omega = i\p\bp P(|z|^2)$.
 \end{lemma}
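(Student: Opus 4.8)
The plan is to exhibit the Calabi-ansatz metric $\omega_Y = \pi_M^*\omega_M + i\p\bp P(s)$ as a special case of the bundle-compatible construction \eqref{bundlecompatible} with fiber $X = \C^k$, and then to read off the fiber metric. First I would fix the fibration structure: let $U_L \to M$ be the unit circle bundle of $(L,h)$ and let $U_k = U_L \times_M \cdots \times_M U_L$ be its $k$-fold fiberwise product, a principal $\T = U(1)^k$-bundle over $M$ on which $\T$ acts coordinatewise. The tautological map
\[ \C^k \times_\T U_k \longrightarrow L^{\oplus k} = Y, \qquad [(z_1,\dots,z_k),(e_1,\dots,e_k)] \longmapsto (z_1 e_1, \dots, z_k e_k) \]
is a $\T$-equivariant biholomorphism, realizing $Y$ as a semisimple principal toric fibration over the single-factor base $B = M$ with fiber $\C^k$. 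The central observation is that under this identification the norm function pulls back as $\pi_Y^* s = |z|^2 = \sum_{i=1}^k |z_i|^2$, since $|e_i|_h = 1$ on $U_k$; hence $\pi_Y^*\big(P(s)\big) = P(|z|^2)$.

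Next I would equip $U_k$ with the product connection $\theta = (\theta_L,\dots,\theta_L) \in \Omega^1(U_k,\t)$, where $\theta_L$ is the Chern connection form of $(L,h)$. Its curvature satisfies $d\theta = \pi_B^*\omega_M \otimes p$ for a single $p = (p_1,\dots,p_k) \in \Gamma$ with all components equal and determined by the constant $\kappa$ in $c_1(L,h) = -\kappa\omega_{KE}$; this is exactly the curvature normalization demanded by the construction of Section \ref{section-ssfibrations}, and one verifies that the resulting $p$ lies in $C(P)^*$ for the relevant normalization of the fiber polyhedron $P$, consistent with $Y$ being the total space of a negative bundle. With this data in place, the content of the lemma is precisely the local $\p\bp$-computation carried out in \cite[Lemma 5.5]{ApJuLa}: on $\C^k \times U_k$ one decomposes $i\p\bp P(|z|^2)$ into its fiber, horizontal, and mixed components using the splitting induced by $\theta$, and the moment map of the radial potential $P(|z|^2)$ on $\C^k$ reproduces both the factor $\langle\mu_\omega, p\rangle$ multiplying $\pi_B^*\omega_M$ and the cross term $\langle d\mu_\omega \wedge \theta\rangle$.

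Carrying out this decomposition and combining it with the base term $\pi_M^*\omega_M$ yields exactly \eqref{bundlecompatible},
\[ \omega_Y = \omega + \big(\langle\mu_\omega, p\rangle + c\big)\,\pi_B^*\omega_M + \langle d\mu_\omega \wedge \theta\rangle, \]
where the fiber term is $\omega = i\p\bp P(|z|^2)$ and the constant $c$ arises from absorbing $\pi_M^*\omega_M$ into the horizontal contribution, subject to the positivity $\langle x, p\rangle + c > 0$ on $P$. This simultaneously proves bundle-compatibility and identifies the fiber metric, as claimed. Positivity of $\omega$ as a genuine Kähler form on $\C^k$ follows from the fact that $\omega_Y$ is Kähler, which forces the usual convexity conditions $P'>0$ and $P' + sP'' > 0$ on the profile.

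The main obstacle is bookkeeping rather than conceptual: one must verify that the Calabi-ansatz data of Theorem \ref{chiliexistence} (the norm $s$, the curvature constant $\kappa$) translate into exactly the fibration data $(U_k,\theta,p)$ for which \cite[Lemma 5.5]{ApJuLa} applies, with all normalization and sign conventions consistent — in particular matching the curvature convention for $d\theta$ and confirming that $p$ can be placed in $C(P)^*$ so that an admissible shift $c$ exists. Once the identification $\pi_Y^* s = |z|^2$ and the connection $\theta$ are fixed, the heavy $\p\bp$-computation is supplied verbatim by the cited lemma.
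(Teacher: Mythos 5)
Your proposal matches the paper's approach exactly: the paper offers no proof beyond stating the lemma as ``a consequence of \cite[Lemma 5.5]{ApJuLa}'', and your argument likewise reduces everything to that cited lemma after setting up the identification $Y = \C^k \times_{\T} U_k$ and the pullback relation $\pi_Y^*s = |z|^2$. The bookkeeping you supply (the fiberwise product $U_k$ of unit circle bundles, the product Chern connection $\theta$ with curvature $d\theta = \pi_B^*\omega_M \otimes p$, and the positivity $\langle x, p\rangle + c > 0$ on $P$ required for \eqref{bundlecompatible}) is correct and in fact more explicit than what the paper records.
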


 The shrinker $\tilde{\omega} = \pi_B^{*}\omega_{KE} + i \p \bp P(s)$ on $Y$ given by Theorem \ref{chiliexistence} is determined in terms of some data that we will now very briefly summarize. For the full details of the construction, see \cite{ChiLiexamples}. Let $r = \log(s)$, so that $\p_r = s\p_s$. Then define 
 \begin{equation*}
    \begin{array}{ccc}
      \phi(r) = P_r(r) = sP'(s), &   & F(r) = \phi_r(r) = s(P''(s) + sP'(s)). 
    \end{array} 
 \end{equation*}
Then the function $\phi$ is monotone, so that $r = r(\phi)$, and we can write $F(\phi) = F(r(\phi))$. Moreover, $F \geq 0$ and $\phi \geq 0$. The metric we seek is then specified by solving an equation for $F$ whose solution in our setting is precisely:
 \begin{equation} \label{ChiLiF}
     F(\phi) = (1 + \kappa \phi)^{-d}\phi^{1-k}\left(\sum_{j=0}^{d+k} \frac{h^{(j)}(\phi)}{\mu^{j+1}}\right),
 \end{equation}
 where 
 \begin{equation*}
     h(\phi) = \tau(1+\kappa \phi)^d\phi^k - k(1 + \kappa \phi)^{d+1}\phi^{k-1}.
 \end{equation*}
 Fix a finite value $\phi_0$ of $\phi$ on $\C$, with corresponding values $r(\phi_0) = r_0$, $s(r_0) = e^{r_0} = s_0$. From \cite[Section 5.2]{ChiLiexamples}, we get 
 \begin{equation}\label{ChiLiPprime}
     \phi = \phi_0 s_0^{-p}s^{p} e^{-G_{\phi_0}(\phi)},
 \end{equation}
 where 
 \begin{equation}\label{ChiLiG}
     G_{\phi_0}(\phi) = \int_{\phi_0}^\phi \frac{pu - F(u)}{puF(u)} du
 \end{equation}
has the property that 
\begin{equation*}
    \lim_{\phi \to \infty} G_{\phi_0}(\phi) = C_0 
\end{equation*}
exists and is finite. Using this, Li shows in \cite[Equation (26)]{ChiLiexamples} that an after an appropriate scaling $D_0$, we have $||g - D_0g_p||_{g_0} \to 0$ at infinity using the flow of $X$. In what follows, we will show that, at least in the case when $k = 1$, this can be improved to to \eqref{asymptoticsproduct}. To this end, set $p = \tau - k \kappa$. Then we compute
\begin{equation}\label{FisOone}
\begin{split} 
F(\phi) &= (1 + \kappa \phi)^{-d}\phi^{1-k}\left(\sum_{j=0}^{d+k} \frac{h^{(j)}(\phi)}{\mu^{j+1}}\right) = (1 + \kappa \phi)^{-d}\phi^{1-k}\frac{h(\phi)}{\mu} + O(1) \\
    & = \frac{(\tau - k \kappa)}{\mu} \phi + O(1) = p \phi + O(1).
\end{split}
\end{equation}
Hence, for all all $u$ sufficiently large we have
\[ \left| \frac{pu - F(u)}{puF(u)}\right| \leq \frac{C_1}{(pu)^2 - C_1(pu)}.\]
Integrating gives us 
\begin{equation}\label{GtoGinfinitycompare}
    \left| G_{\phi_0}(\phi) - G_{\phi_0}(\infty) \right| \leq \int_{\phi}^\infty \frac{C_1}{(pu)^2 - C_1(pu)} du = \frac{1}{p}\log\left( \frac{p \phi - C}{p\phi}\right).
\end{equation}
Set 
\begin{equation*}
    D_0 = \frac{p}{2p-1} \phi_0 s_0^{-p}e^{-G_{\phi_0}(\infty)}.
\end{equation*}

 The main goal of this section will be to prove the following:
 \begin{prop}\label{chili-asymptotics}
     Let $Y$ be as in Theorem \ref{chiliexistence} with $k = 1$, i.e. $Y$ is the total space of a root $L \to B$ of the canonical bundle $K_B$. Then the $v$-soliton metric $\omega$ on $\C$ associated to the complete shrinker on $Y$ via Lemma \ref{CalabiAnsatzidentificationlemma} is asymptotic to the conical metric 
     \[ \omega_p = i D_0 \p \bp s^p, \]
     where $s = |z|^2$, in the sense of Definition \ref{asymptoticallyproduct}. 
 \end{prop}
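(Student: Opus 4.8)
The plan is to reduce everything to a one-dimensional ODE computation on the abstract fiber and then to match Li's explicit solution against the cone $\omega_p$. By Lemma \ref{CalabiAnsatzidentificationlemma} the associated metric on $\C$ is $\omega = i\p\bp P(|z|^2)$, which is $S^1$-invariant; here the recession cone $C(P)$ is one-dimensional and full, so the base $V$ of Definition \ref{asymptotic-complexstructure} is a point, $M_c = \C$, and the morphism $\pi$ is the identity on $\C^* = \C \setminus\{0\}$. Thus being asymptotically c-cylindrical reduces to comparing $\omega$ and $\omega_p$ at the same point of $\C^*$. Writing $w = \log z = \xi + i\theta$ and $s = e^{2\xi}$, a direct computation identifies the moment coordinate with $x = \phi = sP'(s)$ and shows that the single entry of the data \eqref{Hdef} is $H_{11} = g(\p_\theta,\p_\theta) = (\p_\theta\p_\theta\text{-coefficient}) = 2F$, where $F = \phi_r$ is exactly Li's profile. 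The cone $\omega_p = iD_0\p\bp s^p$ is governed in the same way by $(H_0)_{11} = 2D_0 p^2 s^p$, which is linear in its own moment coordinate, consistently with Lemma \ref{MSY-structure}.

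Next I would extract sharp asymptotics from Li's formulas. Equation \eqref{FisOone} gives $F(\phi) = p\phi + O(1)$, while \eqref{ChiLiG} and \eqref{GtoGinfinitycompare} give $G_{\phi_0}(\phi) = G_{\phi_0}(\infty) + O(\phi^{-1})$; feeding this into \eqref{ChiLiPprime} yields $\phi = A s^p + O(1)$ with $A = \phi_0 s_0^{-p}e^{-G_{\phi_0}(\infty)}$, and in fact $\phi = A s^p + \mathrm{const} + O(s^{-p})$ after one more term. Combining the two expansions gives $H_{11}(s) = 2F(\phi(s)) = 2pA s^p + \mathrm{const} + O(s^{-p})$. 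The normalization $D_0$ is chosen exactly so that the leading $s^p$-term cancels against $(H_0)_{11}$, leaving $H_{11}(s) - (H_0)_{11}(s) = O(1)$; it is here, and only here, that the value of $D_0$ fixed before the statement enters.

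The third step converts this into the geometric estimates \eqref{asymptoticsproduct}. Both metrics have the warped form $c(\xi)(d\xi^2 + d\theta^2)$ with $c = H_{11}$ and $c_0 = (H_0)_{11} \sim 2pA s^p$, so $|\pi_*g - g_0|_{g_0} = \sqrt{2}\,|c - c_0|/c_0 = O(s^{-p})$. Since the cone radial function satisfies $r^2 = 2D_0 s^p$, i.e. $s^{-p} = O(r^{-2})$, this yields the first bound $|\pi_*g - g_0|_{g_0} < Cr^{-2}$. For the covariant derivative I would differentiate in $\xi$ (using $\p_\xi = 2s\p_s$) and measure in the $g_0$-orthonormal frame $\{c_0^{-1/2}\p_\xi,\, c_0^{-1/2}\p_\theta\}$: each of the three inverse-metric factors contributes $c_0^{-1/2} \sim s^{-p/2}$, so even though the Christoffel symbols of $g_0$ are $O(1)$ one gets $|\nabla^{g_0}(\pi_*g - g_0)|_{g_0} = O(s^{-3p/2}) = O(r^{-3})$, comfortably inside the required $Cr^{-1}$.

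Finally I would check the technical symplectic conditions \eqref{accyl-symplecticgrowth}--\eqref{accyl-symplecticboundary}, which are routine given the explicit structure. From $u'' = H_{11}^{-1} = (2F)^{-1} \sim (2p\phi)^{-1}$ one gets $u' \sim \tfrac{1}{2p}\log\phi$ and $u \sim \tfrac{1}{2p}\phi\log\phi$, so $|u| \le C(|x|^a + 1)$ for any $a > 1$, giving \eqref{accyl-symplecticgrowth}, and the same bounds on the interior $P_\delta$ give \eqref{accyl-symplecticgrowth2}. Near the compact boundary $\p P$ the function $H_{11} = 2F$ is smooth and satisfies the Guillemin boundary conditions of Proposition \ref{boundaryconditions}, hence is bounded with bounded derivatives and bounded below on $P_{\bar\delta}$, giving \eqref{accyl-symplecticboundary}. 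The main obstacle is the sharp $r^{-2}$ rate in the first estimate of \eqref{asymptoticsproduct}: it requires both the exact cancellation of the leading $s^p$-coefficient (hence the precise $D_0$) and the quantitative control $G_{\phi_0}(\phi) - G_{\phi_0}(\infty) = O(\phi^{-1})$ from \eqref{GtoGinfinitycompare} together with $F(\phi) = p\phi + O(1)$; once these are in hand the derivative estimate and the symplectic conditions follow with room to spare.
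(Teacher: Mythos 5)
Your proposal is correct and rests on exactly the same analytic inputs as the paper's proof --- the expansion $F(\phi)=p\phi+O(1)$ from \eqref{FisOone}, the quantitative rate $|G_{\phi_0}(\phi)-G_{\phi_0}(\infty)|=O(\phi^{-1})$ from \eqref{GtoGinfinitycompare}, and the relation \eqref{ChiLiPprime} --- but the execution is genuinely different. The paper works directly in the complex coordinate: it writes $\omega = s^{-1}F\, i\,dz\wedge d\bar{z}$, estimates $|g-D_0g_p|_{g_p}$ using the $1$-Lipschitz bound on $e^{-x}$, and gets the derivative bound by computing $\Gamma^1_{11}=(p-1)/z$ for $g_p$ and proving $|\p_z\phi|\le Cs^{p-\frac{1}{2}}$ by implicit differentiation of \eqref{ChiLiPprime}, ending with $|\nabla^{g_p}g|^2\le Cs^{-p}$, i.e.\ exactly the required rate $r^{-1}$. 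You instead recast the comparison in the momentum-profile picture, $H_{11}=2F(\phi)$ versus $(H_0)_{11}=2D_0p^2s^p$ on the cylinder $(\xi,\theta)$, which reduces the $C^0$ estimate to a one-line comparison of warping functions and, via the orthonormal-frame scaling, yields the sharper derivative rate $O(r^{-3})$ where the paper obtains only the borderline $O(r^{-1})$. Both are sufficient for \eqref{asymptoticsproduct}, and your identification $x=\phi$, $H_{11}=2F$ is consistent with the paper's closing remark that ``$\phi$ corresponds to $x$'' and its use of $u''=F^{-1}$ (your $u''=(2F)^{-1}$ differs only by the $\omega=2i\p\bp\phi$ convention); your treatment of \eqref{accyl-symplecticgrowth}--\eqref{accyl-symplecticboundary} matches the paper's.

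Two caveats. First, your cancellation forces $D_0p^2=pA$, i.e.\ $D_0=A/p$ with $A=\phi_0 s_0^{-p}e^{-G_{\phi_0}(\infty)}$, which agrees with the displayed $D_0=\frac{p}{2p-1}A$ only when $p=1$. This is not a defect of your argument: the paper's own chain is not internally consistent on this constant either (its substitution step effectively uses $(2p-1)D_0=\phi_0s_0^{-p}e^{-G_{\phi_0}(\infty)}$, and there is a visible factor-of-$p$ convention slip between \eqref{ChiLiG} and \eqref{ChiLiPprime}), so this is typo-level; but since the Proposition is stated with the paper's $D_0$, you should say explicitly that your proof determines the unique scaling for which the leading $s^p$-terms cancel, rather than silently assuming agreement. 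Second, your refined expansions $\phi=As^p+\mathrm{const}+O(s^{-p})$ and, implicitly, $F'(\phi)=p+O(\phi^{-1})$ (needed so that $\p_\xi(c-c_0)=O(1)$ in your frame computation) do not follow from \eqref{FisOone} and \eqref{GtoGinfinitycompare} alone; they require that $F$ is a rational function of $\phi$, which holds by \eqref{ChiLiF} with $k=1$ since $h$ is a polynomial, so that $F$ admits a full expansion at infinity. The paper uses the same input tacitly when it bounds $|\p_\phi F|$ in the estimate of $|\nabla^{g_p}g|^2$; just make the appeal to rationality explicit and your argument is complete.
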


 \begin{remark}
     We already know from Proposition \ref{mainprop2} that the metric $\omega$ above lies in $\scaryH$ for any $\varepsilon > 0$. Indeed by Lemmas \ref{fibrationsolitons}, \ref{vsolitonw}, $\omega$ is a $(\tilde{v}, \, \tilde{w})$-cscK metric where both $\tilde{v}, \tilde{w}$ are given by a polynomial times $e^{- \lambda x}$ for some $\lambda > 0$.
 \end{remark}

\begin{proof}
 
Recall that our metric on $\C$ is given by 
\begin{equation*}
\begin{split}
    \omega = i\p\bp P(s) &= \left(P'(s) + sP''(s)\right) idz \wedge d\bar{z} \\
        & = s^{-1}P_{rr}idz \wedge d\bar{z} \\
        &= s^{-1}F idz \wedge d\bar{z} = (ps^{-1}\phi + O(s^{-1})) idz \wedge d\bar{z}
\end{split}
\end{equation*}
Then we have by \eqref{ChiLiPprime} that
\begin{equation*}
\begin{split}
    \left| g - D_0g_p \right|_{g_p}^2 &= \left(\frac{ps^{-(p-1)}}{2p -1}\right)^2 \left[ s^{-1} \phi - (2p-1)D_0 s^{p-1} + O(s^{-1})\right]^2 \\
    &\leq \left(\frac{ps^{-(p-1)}}{2p -1}\right)^2\left[\phi_0s_0^{-p}s^{(p-1)}\left( e^{-G_{\phi_0}(\phi)} - e^{-G_{\phi_0}(\infty)}\right)  + O(s^{-1})\right]^2
\end{split}
\end{equation*}
Using that $e^{-x}$ is uniformly 1-Lipshitz for $x \geq 0$, it follows that $|e^{-G_{\phi_0}(\phi)} - e^{-G_{\phi_0}(\infty)}| \leq |G_{\phi_0}(\phi)  -G_{\phi_0}(\infty)|$ as long as $\phi$ is sufficiently large. Therefore, by \eqref{GtoGinfinitycompare}, for all $\phi$ sufficiently large we have
\begin{equation*}
\begin{split}
    \left| g - D_0g_p \right|_{g_p}^2 &\leq D_0^2\left| G_{\phi_0}(\phi) - G_{\phi_0}(\infty) \right|^2 + \left| G_{\phi_0}(\phi) - G_{\phi_0}(\infty) \right| O(s^{-p})  + O(s^{-2p}) \\
     &\leq \frac{D_0^2}{p^2} \left|\log\left(1 - \frac{C}{p\phi}\right) \right|^2 + \left|\log\left(1 - \frac{C}{p\phi}\right) \right| O(s^{-p}) + O(s^{-2p}) \\
     &\leq \left(\frac{CD_0}{p^2}\right)^2 \phi^{-2} + \frac{CD_0}{p^2} \phi^{-1}O(s^{-p}) + O(s^{-2p}) \leq Cs^{-2p},
\end{split}
\end{equation*}
using \eqref{ChiLiPprime} and the fact that $|e^{-G_{\phi_0}(\phi)} -e^{-G_{\phi_0}(\infty)}| < C$. Since the distance function $\rho_0$ with respect to $g_p$ behaves like $\rho_0 \sim s^{\frac{p}{2}}$, it follows that
\begin{equation*}
\begin{split}
    \left| g - D_0g_p \right|_{g_p} \leq C s^{-p} \leq C \rho_0^{-2},
\end{split}
\end{equation*}
verifying the $C^0$ condition in \eqref{asymptoticallyproduct}. The Christoffel symbol $\Gamma^1_{11}$ for $g_p$ satisfies $\Gamma^1_{11} = g_p^{11}\p_z g_{p,11} = \frac{p-1}{z}$.  Hence we compute
\begin{equation}\label{ChiLigradg}
\begin{split}
    |\nabla^{g_p}g|_{g_p}^2 &= \left(\frac{s^{-(p-1)}}{2p -1}\right)^3\left[\p_z (s^{-1}F) - \frac{p-1}{z}(s^{-1}F) \right]^2\\
    &\leq \left(\frac{s^{-(p-1)}}{2p -1}\right)^3\left[s^{-1}\p_z F - \frac{p}{z}(s^{-1}F) \right]^2 \\
    & \leq \left(\frac{s^{-(p-1)}}{2p -1}\right)^3\left[s^{-1}|\p_\phi F||\p_{z}\phi| + \left| \frac{p}{z}(s^{-1}F)\right| \right]^2 \\
    &\leq \left(\frac{s^{-(p-1)}}{2p -1}\right)^3\left[Cs^{-1}|\p_{z}\phi| + \left| \frac{p}{z}(s^{-1}F)\right| \right]^2
\end{split}
\end{equation}
We have seen that $s^{-1}F = O(s^{p-1})$, so that $(zs)^{-1}F = O(s^{p - \frac{3}{2}})$. For the other term, we compute from \eqref{ChiLiPprime} that 
\begin{equation*}
    \p_z \phi = C \p_z (s^p e^{-G_{\phi_0}(\phi)}) = C p \bar{z}s^{p-1}e^{-G_{\phi_0}(\phi)} - \p_z G_{\phi_0}(\phi) s^p e^{-G_{\phi_0}(\phi)}.
\end{equation*}
Then we see that 
\[\p_z G_{\phi_0}(\phi) = \p_z \phi \left( \frac{p\phi - F(\phi)}{p\phi F(\phi)}\right),\]
which combining with the above gives 
\[\p_z\phi \left(1 -  s^p e^{-G_{\phi_0}(\phi)}\left( \frac{p\phi - F(\phi)}{p\phi F(\phi)}\right)\right) = C p \bar{z}s^{p-1}e^{-G_{\phi_0}(\phi)}. \]
Finally, we observe using \eqref{ChiLiF}, \eqref{ChiLiPprime} that 
\[s^p e^{-G_{\phi_0}(\phi)}\left( \frac{p\phi - F(\phi)}{p\phi F(\phi)}\right) = O(s^{-p}),\]
and hence 
\[|\p_z \phi| \leq C p \bar{z}s^{p-1}e^{-G_{\phi_0}(\phi)} \leq Cs^{p-\frac{1}{2}}. \]
Combining this with \eqref{ChiLigradg}, we get that 
\begin{equation*}
     |\nabla^{g_p}g|_{g_p}^2 \leq C \left(\frac{s^{-(p-1)}}{2p -1}\right)^3 s^{2p - 3} \leq C s^{-p},
\end{equation*}
which in particular gives us the first derivative estimate of \eqref{asymptoticallyproduct}. It is straightforward to verify \eqref{accyl-symplecticgrowth} and \eqref{accyl-symplecticgrowth2}, using the fact that in this setting the symplectic potential $u$ for $\omega$ satisfies $u'' = F^{-1}$. Note that, in the notation of the previous sections, the variable $\phi$ here corresponds to what we have typically named $x$.
\end{proof}

It seems likely that the same conclusion holds for arbitrary values of $k$, where $\omega_p = iD_0 \p \bp s^p$, where now $s = |z|^2$ on $\C^k$. 
\begin{conj}
    The metric $\omega$ on $\C^k$ determined by Lemma \ref{CalabiAnsatzidentificationlemma} is asymptotic to $\omega_p$ in the sense of Definition \ref{asymptoticallyproduct}.
\end{conj}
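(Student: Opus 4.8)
The plan is to adapt the proof of Proposition \ref{chili-asymptotics} to $\C^k$, exploiting the $U(k)$-invariance of $\omega = i\p\bp P(s)$ (Lemma \ref{CalabiAnsatzidentificationlemma}) to reduce the tensorial estimates in \eqref{asymptoticsproduct} to scalar statements about the profile $F(\phi)$. First I would generalize \eqref{FisOone}: power-counting on \eqref{ChiLiF} shows that as $\phi\to\infty$ we have $h(\phi) = \kappa^d p\,\phi^{d+k}(1+O(\phi^{-1}))$ and the remaining terms $\sum_{j\geq 1}h^{(j)}/\mu^{j+1}$ are of lower order, so that $F(\phi) = p\phi + O(1)$ for every $k$. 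The scalar relations \eqref{ChiLiPprime}--\eqref{GtoGinfinitycompare} that govern $\phi$ as a function of $s$ through $G_{\phi_0}$ are dimension-independent, so $\phi/s^p \to A := \phi_0 s_0^{-p}e^{-G_{\phi_0}(\infty)}$ with $|\phi - As^p| = O(1)$ and $|G_{\phi_0}(\phi) - G_{\phi_0}(\infty)| = O(\phi^{-1}) = O(s^{-p})$ hold verbatim. I then set $D_0 = A/p$.

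For the $C^0$ bound in \eqref{asymptoticsproduct}, write $g_{j\bar l} = P'\delta_{jl} + P'' z_l\bar z_j$ with $P' = \phi/s$ and $P'' = (F-\phi)/s^2$, and compare with $g_{p,j\bar l} = D_0\big(p s^{p-1}\delta_{jl} + p(p-1)s^{p-2}z_l\bar z_j\big)$. The decisive point is that the \emph{single} constant $D_0 = A/p$ cancels the leading order of both tensor components at once, a manifestation of the cone identity $F = p\phi$ holding to leading order: indeed $P' - D_0 ps^{p-1} = s^{-1}(\phi - As^p) = O(s^{-1})$ and $P'' - D_0 p(p-1)s^{p-2} = s^{-2}\big((p-1)(\phi - As^p) + O(1)\big) = O(s^{-2})$, using $F - \phi = (p-1)\phi + O(1)$ together with $\phi - As^p = \phi_0 s_0^{-p}s^p\big(e^{-G_{\phi_0}(\phi)}-e^{-G_{\phi_0}(\infty)}\big) = O(1)$. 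Evaluating $|g - D_0 g_p|_{g_p}$ in the orthonormal radial/angular frame of the cone, whose radial and angular $g_p$-eigenvalues are both $\sim s^{p-1}$, then yields $|g - D_0 g_p|_{g_p} \leq C s^{-p} \leq C\rho_0^{-2}$ since $\rho_0 \sim s^{p/2}$; this is the matrix analog of the one-dimensional computation, whose only structural input is the cancellation just described.

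For the first-derivative bound, since $\nabla^{g_p}g_p = 0$ it suffices to estimate $\nabla^{g_p}g$. I would compute this using the Christoffel symbols of the cone $g_p$ on $\C^k$, which by homogeneity scale like $|z|^{-1}\sim s^{-1/2}$; the only analytic input beyond cone geometry is the gradient bound $|\p_z\phi|\leq Cs^{p-\frac12}$, obtained by differentiating \eqref{ChiLiPprime} and using that $\p_\phi F = p + O(\phi^{-2})$ is bounded, exactly as in \eqref{ChiLigradg}. Assembling the terms gives $|\nabla^{g_p}g|_{g_p}\leq Cs^{-p/2}\leq C\rho_0^{-1}$. The symplectic conditions \eqref{accyl-symplecticgrowth}--\eqref{accyl-symplecticboundary} would then be checked through $\textnormal{Hess}(u) = \H^{-1}$: the data $\H$ is comparable to the homogeneous cone data $\H_C$ of Lemma \ref{MSY-structure} (degree $1$) up to corrections controlled by $F,\phi$ and their reciprocals, which are rational in $s$, yielding the polynomial growth \eqref{accyl-symplecticgrowth}, \eqref{accyl-symplecticgrowth2} and the boundary behavior \eqref{accyl-symplecticboundary} via Proposition \ref{boundaryconditions}.

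The hard part will be the uniformity of all these estimates along the non-compact boundary. When $k=1$ the boundary of $P=[-1,\infty)$ is the single point $\{-1\}$, which --- as stressed in the remark after Definition \ref{asymptoticallyproduct} --- is precisely what makes the comparison of $u$ with the Guillemin/cone potential tractable and lets one treat the heuristic $u - u_P = O(|x|\log|x|)$ rigorously. For $k\geq 2$ the polyhedron $P=\R^k_{\geq-1}$ has $k$ non-compact facets meeting along non-compact faces, and one must control $g$, $\nabla^{g_p}g$, and especially the second-derivative bound in \eqref{accyl-symplecticboundary} uniformly as one escapes to infinity along these faces, i.e. near the coordinate hyperplanes $\{z_i=0\}$ where $s\to\infty$ but $g$ degenerates in the fibre directions. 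Carrying out the comparison with the cone model uniformly there, rather than on a compact boundary, is the crux, and is presumably why the statement is left as a conjecture.
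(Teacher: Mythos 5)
The statement you are addressing is a \emph{conjecture} in the paper: the author proves only the case $k=1$ (Proposition \ref{chili-asymptotics}) and explicitly leaves $k\geq 2$ open, so there is no paper proof to compare against. Your proposal does not close this gap, and indeed says so itself: the final paragraph declares the uniform verification of the symplectic conditions \eqref{accyl-symplecticgrowth}--\eqref{accyl-symplecticboundary} along the non-compact boundary of $P=\R^k_{\geq -1}$ to be ``the crux'' and leaves it undone. A write-up whose last step is the acknowledged open difficulty is a program, not a proof, so the conjecture remains exactly that.

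That said, the parts you do carry out are correct and the structural points are well chosen. The power counting in \eqref{ChiLiF} giving $F(\phi)=p\phi+O(1)$ is indeed dimension-independent (the paper's display \eqref{FisOone} never uses $k=1$), the scalar analysis \eqref{ChiLiPprime}--\eqref{GtoGinfinitycompare} transfers verbatim, and your observation that a \emph{single} constant $D_0$ simultaneously matches the radial eigenvalue $F/s\sim pAs^{p-1}$ of $g$ against $D_0p^2s^{p-1}$ and the transverse eigenvalue $\phi/s\sim As^{p-1}$ (multiplicity $k-1$) against $D_0ps^{p-1}$, precisely because $F=p\phi$ to leading order, is the right tensorial heart of the $C^0$ estimate; your normalization $D_0=A/p$ is in fact the consistent one, and the constant $\frac{p}{2p-1}A$ together with the prefactors in the paper's $k=1$ display appear to contain bookkeeping typos, so your cleaner accounting is a point in your favor. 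However, your diagnosis of where the residual difficulty sits is partly misplaced. By $U(k)$-invariance every quantity entering \eqref{asymptoticsproduct} is a function of $s$ alone, so the $C^0$ and $C^1$ Riemannian estimates are automatically uniform in direction; $g$ does not degenerate along $\{z_i=0\}$, where the transverse eigenvalue $P'=\phi/s\sim As^{p-1}$ remains comparable to the cone. The genuinely open step is \eqref{accyl-symplecticboundary} (together with \eqref{accyl-symplecticgrowth}, \eqref{accyl-symplecticgrowth2}) on the non-compact strip $\overline{P}\backslash P_{\bar{\delta}}$, and there you have not exploited the strongest available tool: $U(k)$-invariance yields, in (suitably normalized) moment coordinates $y_i$ with $\phi=\sum_i y_i$, the closed formula
\[ H_{ij} \;=\; y_i\,\delta_{ij} \;+\; \frac{F(\phi)-\phi}{\phi^2}\, y_i\, y_j \]
(up to the paper's convention constants; for $k=1$ this recovers $H_{11}=F$). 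Since $F$ is a rational function of $\phi$ by \eqref{ChiLiF}, one has $F=p\phi+O(1)$, $F'=p+O(\phi^{-2})$, $F''=O(\phi^{-3})$ at infinity, and all boundary derivative bounds in \eqref{accyl-symplecticboundary} reduce to these scalar asymptotics applied with $y_iy_j\leq\phi^2$ and $y_i\leq\phi$, uniformly on the faces as $\phi\to\infty$; likewise a Sherman--Morrison inversion gives $\textnormal{Hess}(u)$ as a diagonal Guillemin-type part plus a rank-one radial term with coefficient $(F-\phi)/(\phi F)=O(\phi^{-1})$, from which the polynomial growth \eqref{accyl-symplecticgrowth}--\eqref{accyl-symplecticgrowth2} follows. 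So the missing step looks more tractable than your closing paragraph suggests --- but until it, or something equivalent, is actually carried out with the uniformity checked, you have an outline consistent with the paper's $k=1$ proof, not a proof of the conjecture.
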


\bibliographystyle{abbrv}
\bibliography{references}

\end{document}